\documentclass[preprint,3p,times,numbers]{elsarticle}

\usepackage{amssymb}
\usepackage{amsmath}
\usepackage{enumitem}
\setlist[itemize]{noitemsep,nolistsep}
\setlist[enumerate]{noitemsep,nolistsep}
\usepackage{amsthm}
\usepackage[normalem]{ulem}
\usepackage[utf8]{inputenc}
\usepackage[T1]{fontenc}
\usepackage{tikz-cd}
\usepackage{adjustbox}

\usepackage{mathtools}
\usepackage{dsfont}
\usepackage{mathrsfs}
\usepackage{csquotes}

\usepackage{hyperref} 
\hypersetup{
  breaklinks=true,
  colorlinks=true,
  linkcolor=blue,
  urlcolor=orange!75!black,
  citecolor=green,
  hypertexnames=false,
  }
\usepackage[noabbrev,capitalize,nameinlink]{cleveref}
\usepackage{thmtools}

\newtheorem{theorem}{Theorem}[section]
\newtheorem{proposition}[theorem]{Proposition}
\newtheorem{corollary}[theorem]{Corollary}

\theoremstyle{definition}
\newtheorem{definition}[theorem]{Definition}
\newtheorem{remark}[theorem]{Remark}
\newtheorem{assumption}[theorem]{Assumption}
\newtheorem{example}[theorem]{Example}
\newtheorem{notation}[theorem]{Notation}
\newtheorem{cexample}[theorem]{Counterexample}

\newcommand*\dif{\mathop{}\!\mathrm{d}}
\newcommand{\cL}{\mathcal{L}^0}
\newcommand{\cLs}{\mathcal{L}^0_{\mathrm{s}}}

\newcommand{\overlinecL}{\overline{\mathcal{L}}{}^0}

\newcommand{\cLph}{\mathcal{L}^p_h}

\newcommand{\Lph}{L^p_h}

\DeclareMathOperator*{\esssup}{ess\,sup}
\DeclareMathOperator{\lebesgue}{\mathscr{L}^1}

\makeatletter 
\def\ps@pprintTitle{
 \let\@oddhead\@empty
 \let\@evenhead\@empty
 \def\@oddfoot{}
 \let\@evenfoot\@oddfoot}
\makeatother

\begin{document}

\begin{frontmatter}

\title{Nonlinear Lebesgue spaces: Curves and geometry}

\corref{cor}
\cortext[cor]{Corresponding author}

\affiliation[1]{organization={Université Paris Cité, CNRS, MAP5},
 postcode={F-75006},
 city={Paris},
 country={France}}

\author[1]{Guillaume Sérieys}
\ead{guillaume.serieys@u-pariscite.fr}

\begin{abstract}
This paper is the second in a series by the author and collaborators devoted to the study of geometric and analytic properties of \emph{nonlinear Lebesgue spaces}, that is,\mathversion{normal} $L^p$ spaces of mappings taking values in arbitrary metric spaces. The present article formalizes the pointwise description of their geometric properties---their length structure, bounds on their Alexandrov curvature as well as the definition of a speed for absolutely continuous curves despite the lack of differential structure. To obtain this pointwise description, we first prove a nonlinear analogue of the Fubini--Lebesgue theorem, which yields an identification of $L^p$ curves in nonlinear Lebesgue spaces to mappings taking values in the space of $L^p$ curves. This identification of $L^p$ curves then enables a similar identification for absolutely continuous curves, from which the pointwise description of the geometric properties of nonlinear Lebesgue spaces follows.
\end{abstract}

\begin{keyword}
Nonlinear Lebesgue spaces \sep Absolutely continuous curves \sep Càdlàg curves of bounded variation \sep Length structure \sep Geodesics \sep Curvature bounds \sep Finsler manifolds
\end{keyword}

\end{frontmatter}

\tableofcontents

\section{Introduction}
\label{sec:intro}

Physical phenomena inherently involve noncontinuous signals with values in nonlinear spaces. For instance, in medical imaging the images cannot be assumed continuous due to the multiphase nature of anatomy and can be valued in non-Euclidean spaces, such as the space of symmetric positive definite matrices (as in diffusion tensor imaging, to measure the diffusion of water in tissues \cite{basser1994mr}), the probability simplex (to account for uncertainty in the identification of the nature of tissues \cite{ashburner2005unified} or to construct probabilistic atlases of organs \cite{mazziotta1995probabilistic}) or the space of probability measures on the sphere (as encountered in Q-ball imaging \cite{tuch2004q} or constrained spherical deconvolution \cite{tournier2004direct}). When working with this type of data, be it for reconstruction or statistical analysis purposes, there is a stake on the choice of a \emph{meaningful} metric on the space of values, which usually makes it nonlinear. In addition, while images are signals supported on flat spaces, signals supported on curved spaces such as functional shapes \cite[Definition~2.1.1]{charon2013analysis} are also commonly encountered. For all those reasons, studying merely measurable mappings defined on measurable spaces with values in arbitrary metric spaces is of importance from an applicative viewpoint.

From a more theoretical perspective, $L^p$ spaces of measurable mappings taking values in arbitrary metric spaces, which we call \emph{nonlinear Lebesgue spaces}, first emerged in the study of Sobolev spaces of measurable mappings taking values in arbitrary metric spaces and, in particular, of harmonic mappings, starting with the seminal work of N.~J.~Korevaar and R.~M.~Schoen \cite{korevaar1993sobolev} followed by the works of K.-T.~Sturm \cite{Sturm2001,sturm2002nonlinear} and J.~Jost \cite{jost1994equilibrium,jost1997generalized,jost1997nonpositive}. Nonlinear Lebesgue spaces typically model the space of transport maps in optimal transport theory \cite[Section~5.4]{ambrosio2005gradient}, but are also useful to study curves in arbitrary metric spaces such as Wasserstein spaces, as studied by S.~Lisini \cite[Section~2.3]{lisini2007characterization}, and recently emerged in a generalization, proposed by M.~Bauer, F.~Mémoli, T.~Needham and M.~Nishino \cite[Section~2.1.3]{bauer2025z}, of the Gromov--Wasserstein metric to measure spaces equipped with a kernel taking values in an arbitrary metric space instead of a standard metric. The analytical properties of these spaces derived in particular cases in the mentioned works---the characterizations of their completeness and separability, as well as the density of the spaces of simple, continuous, and smooth mappings---are gathered and generalized under a unified exposition in a recent work by the author and A.~Trouvé \cite{serieys2025nonlinear}. The main behaviour of nonlinear Lebesgue spaces that emerge from these previous studies is the fact that their properties are essentially pointwise in nature, in the sense that they are mostly determined by the properties of the space of values and vice versa. This is particularly true for their geometric properties, as first highlighted by K.-T.~Sturm \cite{Sturm2001,sturm2002nonlinear} and J.~Jost \cite{jost1994equilibrium,jost1997generalized,jost1997nonpositive}, who derived curvature properties when the target space is nonpositively curved (in the sense of Alexandrov, see \cite[Chapter 4]{burago2022course} for a definition) through the construction of geodesics using geodesics in the space of values. Yet, this pointwise behaviour is, to the best of our knowledge, not rigorously formalized, so that the study of the geometry of nonlinear Lebesgue spaces is limited by a lack of a proper characterization of their curves. In fact, one would expect a behaviour that is analogous to that of curves in Wasserstein spaces, studied by S.~Lisini \cite{lisini2006absolutely,lisini2007characterization}, who exhibited a superposition principle for absolutely continuous curves (i.e., any such curve can be identified to a probability measure on absolutely continuous curves in the space supporting the measures). Let us conclude with a note on terminology. As highlighted in \cite[TVS V.86]{bourbaki2013topological}, $L^p$ spaces of real-valued mappings were first introduced by F.~Riesz in \cite{riesz1910untersuchungen}. Yet, they are commonly referred to as \enquote{Lebesgue} spaces \cite[Chapter III, Section 3]{dunford1988linear} or \enquote{Bochner} spaces when mappings are valued in a Banach space \cite[Section~1.2.b]{hytonen2016analysis}. For consistency with the literature, we use the terminology of \enquote{nonlinear Lebesgue spaces} to designate $L^p$ spaces of mappings taking values in arbitrary metric spaces, terminology that was first used by M.~Bačák in \cite[p.~18]{bacak2014convex}. 

In this article, we formalize the pointwise description of the geometric properties of nonlinear Lebesgue spaces resulting from a characterization of absolutely continuous (a.c.) curves, thus providing a complementary perspective to the analytic properties of nonlinear Lebesgue spaces studied in \cite{serieys2025nonlinear}. To do so, we first prove a nonlinear analogue of the Fubini--Lebesgue theorem (\cref{th:extension_embed_iterated_simple}) that allows for the characterization of $L^p$ curves in nonlinear Lebesgue spaces through the identification of a canonical isometry (\cref{th:isom_lebesgue}). Using \cref{th:isom_lebesgue}, we characterize a.c.~curves (\cref{th:charact_ac_curves}) as well as càdlàg curves of bounded variation (\cref{th:charact_bv_curves}), depending on the value of the parameter $p$ of nonlinear Lebesgue spaces. The characterization of a.c.~curves given by \cref{th:charact_ac_curves} notably allows for the pointwise description of the length structure of nonlinear Lebesgue spaces (\cref{th:length_structure_lebesgue}), for the definition of a notion of speed for a.c.~curves in nonlinear Lebesgue spaces despite the lack of a differential structure (\cref{th:speed_ac_lebesgue}), and for the identification of geodesics as mappings into the space of geodesics (\cref{th:charact_geodesics}). This particular characterization of geodesics then enables the pointwise description of the curvature, in the sense of Alexandrov, of nonlinear Lebesgue spaces (\cref{th:curvature_lebesgue}). To put it visually:

\newcommand\block[2]{\begin{tabular}{@{}c@{}}{#1}\\[-0.5mm]{\footnotesize (#2)}\end{tabular}}
\begin{center}
\adjustbox{scale=1,center}{%
\begin{tikzcd}[column sep=.8cm]
\block{\cref{th:extension_embed_iterated_simple}}{Fubini--Lebesgue}
\ar[r]
&
\block{\cref{th:isom_lebesgue}}{$L^p$ curves}
\ar[r, "p\,>\,1"]
\ar[d, "p\,=\,1"]
&
\block{\cref{th:charact_ac_curves}}{a.c.~curves}
\ar[r]
\ar[d]
\ar[dr]
&
\block{\cref{th:charact_geodesics}}{geodesics}
\ar[dr]
\\
&
\block{\cref{th:charact_bv_curves}}{càdlàg curves of b.v.}
&
\block{\cref{th:speed_ac_lebesgue}}{speed of a.c.~curves}
&
\block{\cref{th:length_structure_lebesgue}}{length structure}
&
\block{\cref{th:curvature_lebesgue}}{curvature bounds}
\end{tikzcd}
}
 \end{center}

\noindent The paper is structured as follows: 
\begin{itemize}
\item \cref{sec:preliminaries} introduces the basic setting of this article regarding minimal assumptions and introduces the core definitions and results needed to construct and study nonlinear Lebesgue spaces (\cref{sec:nonlinear_lebesgue_spaces}) as well as Sobolev curves and curves of bounded variation in metric spaces (\cref{sec:curves_metric_space}). \item \cref{sec:main_results} gathers the main results of this article, including the characterization of $L^p$ curves in nonlinear Lebesgue spaces (\cref{sec:embeddings_isomorphisms}) and its consequence for the characterization of a.c.~curves (\cref{sec:charact_ac}) and càdlàg curves of bounded variation (\cref{sec:charact_bv}) in these spaces. 
\item \cref{sec:applications} exposes applications of the main results established in \cref{sec:main_results}, including the consequences of \cref{sec:charact_ac} for the pointwise characterization of the geometry of nonlinear Lebesgue spaces (\cref{sec:geometry}) as well as the definition of a notion of speed for a.c.~curves in these spaces (\cref{sec:speed_ac}). 
\end{itemize}

\section{Preliminaries}
\label{sec:preliminaries}
\subsection{Nonlinear Lebesgue spaces}
\label{sec:nonlinear_lebesgue_spaces}

This section introduces notations, definitions and results that were presented in the paper \cite{serieys2025nonlinear} by the author and A.~Trouvé, only keeping the ones that will be needed in the present article. The reader already familiar with \cite{serieys2025nonlinear} may skip to \cref{sec:curves_metric_space}.

\subsubsection{Basic assumptions}
\label{sec:basic_setting}

Let us first introduce the basic assumptions of this article.

\begin{assumption}[Basic assumptions]
\label{assum:minimal}
    We consider mappings from a nonempty set $M$, called the \emph{base space}, to a nonempty set $N$, called the \emph{target space}. $M$ and $N$ are assumed to carry the following structures: 
    \begin{enumerate}[label=(\roman*)]
        \item \label{assum:base_space} $(M, \Sigma_M,\mu_M)$ is a measure space, that is, the set $M$ is paired with a $\sigma$-algebra $\Sigma_M$ on $M$ and a measure $\mu_M: \Sigma_M \to [0,\infty]$ on $(M,\Sigma_M)$.
    \item \label{assum:target_space} $(N,d_N)$ is a metric space with finite metric $d_N: N^2 \to [0,\infty)$.
    \end{enumerate}
\end{assumption}

These basic assumptions will not be repeated in the statements of subsequent results; yet, any additional assumption relative to \cref{assum:minimal} will be specified.

Notation and terminology are introduced progressively and collected for reference in a dedicated section (p.~\pageref{sec:notations}) at the end of this article. %

\subsubsection{Measurable mappings}
\label{sec:measurable_mappings}

In this section, we introduce several classes of mappings that will serve as core ingredients in the construction of nonlinear Lebesgue spaces, namely, the set of measurable mappings and its variant.

\begin{definition}[Measurable mappings]
\label{def:measurable_map}
Define:
\begin{enumerate}[label=(\roman*)]
    \item $\cL(M,N)$ the set of \emph{measurable mappings}, that is, all mappings $f: M\to N$ such that for every set $B$ in $\mathcal{B}(N)$, the Borel $\sigma$-algebra of $N$, it satisfies $f^{-1}(B)\in \Sigma_M$.
    \item $\cLs(M,N)$ the set of \emph{separably valued measurable mappings}, that is, all mappings $f\in \cL(M,N)$ such that its range, denoted by $f(M)$, is separable. 
\end{enumerate}

\end{definition}

\begin{remark}[\enquote{Borel measurable} mappings]
When $M$ is a topological space with $\Sigma_M = \mathcal{B}(M)$, the Borel $\sigma$-algebra of $M$, a mapping belonging to $\cL(M,N)$ is usually called \enquote{Borel measurable}.
\end{remark}

A first well-known result on this class of mappings is that the pointwise limit of measurable and separably valued mappings, when it exists, is itself measurable and separably valued \cite[Proposition 2.6]{serieys2025nonlinear}. 

\begin{proposition}[$\cLs(M,N)$ is closed under pointwise limit]
\label{prop:measurability_pointwise_limit}
    Let $(f_n)_{n\in\mathbb{N}}$ be a sequence in $\cLs(M,N)$ and assume that $f(x)\coloneqq \lim_{n\to\infty} f_n(x)$ exists in $N$ for all $x\in M$. Then, $f\in \cLs(M,N)$.
\end{proposition}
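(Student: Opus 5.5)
Two things must be shown: $f(M)$ is separable, and $f^{-1}(B)\in\Sigma_M$ for every Borel $B\subseteq N$.

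\textbf{Separability.} Each $f_n(M)$ is separable, so I first fix a countable set $D_n\subseteq f_n(M)$ dense in $f_n(M)$. Put $D=\bigcup_n D_n$ and let $S=\overline{D}$, a closed separable subset of $N$. Each $f_n(M)$ lies in $S$, and $f(x)$ is a limit of points $f_n(x)\in S$, so $f(M)\subseteq S$ since $S$ is closed. A subset of a separable metric space is separable, so $f(M)$ is separable.

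\textbf{Measurability.} Since the Borel $\sigma$-algebra is generated by open sets, it suffices to show $f^{-1}(U)\in\Sigma_M$ for every open $U\subseteq N$. For $k\ge1$ define the open set
\[
U_k=\{y\in N:\ d_N(y,N\setminus U)>1/k\},
\]
with the convention $U_k=N$ if $U=N$. Then $\overline{U_k}\subseteq\{y: d_N(y,N\setminus U)\ge 1/k\}\subseteq U$, and $\bigcup_k U_k=U$ because $U$ is open. I claim
\[
f^{-1}(U)=\bigcup_{k\ge1}\bigcup_{m\ge1}\bigcap_{n\ge m} f_n^{-1}(U_k).
\]
For ``$\subseteq$'': if $f(x)\in U$, choose $k$ with $d_N(f(x),N\setminus U)>2/k$. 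By $1$-Lipschitzness of $d_N(\cdot,N\setminus U)$, eventually $d_N(f_n(x),N\setminus U)>1/k$, i.e.\ $f_n(x)\in U_k$. For ``$\supseteq$'': if $f_n(x)\in U_k$ for all $n\ge m$, then $f(x)\in\overline{U_k}\subseteq U$. Every set on the right lies in $\Sigma_M$, being a countable union of countable intersections of the measurable sets $f_n^{-1}(U_k)$. Hence $f\in\cL(M,N)$.

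Under this plan there is no real obstacle; the only point needing care is the choice of the shrunken open sets $U_k$ with closures inside $U$. Notably, separability is not needed for measurability here, since the Borel $\sigma$-algebra is generated by open sets regardless. Separability of the range is only carried along as part of the conclusion.
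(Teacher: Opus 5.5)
Your proof is correct and complete. The separability part traps every $f_n(M)$, and hence $f(M)$, in the closed separable set $\overline{\bigcup_n D_n}$. The identity
\[
f^{-1}(U)=\bigcup_{k\ge 1}\bigcup_{m\ge 1}\bigcap_{n\ge m} f_n^{-1}(U_k)
\]
is checked correctly in both directions. It uses only that $y\mapsto d_N(y,N\setminus U)$ is $1$-Lipschitz and that $\overline{U_k}\subseteq U$, and the degenerate cases $U=N$ and $U=\emptyset$ also work.

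The paper does not prove this statement itself; it imports it from \cite[Proposition 2.6]{serieys2025nonlinear}. So there is no in-paper argument to compare against.

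Another common route uses the separable range directly. One writes $f^{-1}(U)$ as a countable union of sets $\{x\in M: d_N(f(x),y_i)<q\}$, with $y_i$ in a countable dense subset of $\overline{f(M)}$ and $q\in\mathbb{Q}_{>0}$. One then uses that $x\mapsto d_N(f(x),y_i)=\lim_n d_N(f_n(x),y_i)$ is a pointwise limit of real measurable functions. Your shrinking-open-sets argument is slightly stronger: as you remark, it shows that $\cL(M,N)$ itself is closed under pointwise limits for an arbitrary metric target.

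In the paper, separability of the range matters elsewhere rather than here. It is what guarantees that $x\mapsto d_N(f(x),f'(x))$ is measurable for $f,f'\in\cLs(M,N)$, so that $D_p$ in \cref{def:lp_metrics} is well defined.
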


In particular, \cref{prop:measurability_pointwise_limit} holds for sequences of \emph{simple mappings}, that is, measurable mappings with a finite range.

\begin{definition}[Simple mappings]
    \label{def:simple_map}
    Define $\mathcal{E}(M,N)$ the set of \emph{simple mappings}, that is, all mappings $f\in \cL(M,N)$ such that $\lvert f(M)\rvert$, the cardinality of the range of $f$, is finite. 
\end{definition}

Showing that a mapping belongs to $\cLs(M,N)$ thus usually comes down to verifying that it is the pointwise limit of a sequence of simple mappings.

Now, one can define an equivalence relation on the set of measurable mappings.
\begin{proposition}[Equivalence relation on $\cL(M,N)$]
\label{prop:equivalence}
    Define the relation $\sim_M$ on $\cL(M,N)$ by  $$f\sim_M f' \iff f(x)=f'(x)\quad\text{for $\mu_M$-a.e.~$x\in M$.}$$ Then, $\sim_M$ is an equivalence relation on $\cL(M,N)$.
\end{proposition}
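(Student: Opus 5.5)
Reflexivity and symmetry follow directly from the definition, so the only real content is transitivity. There the key point is that the exceptional set where two measurable mappings differ is measurable, or at least contained in a null set.

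First I would make precise what "$f(x)=f'(x)$ for $\mu_M$-a.e. $x$" means. The safest reading is that there exists $A\in\Sigma_M$ with $\mu_M(A)=0$ such that $f(x)=f'(x)$ for all $x\in M\setminus A$. This formulation does not require the set $\{f\neq f'\}$ itself to be measurable, which matters here: for non-separably valued $f,f'\in\cL(M,N)$, the mapping $x\mapsto (f(x),f'(x))$ need not be measurable for $\mathcal{B}(N\times N)$, so measurability of $\{f\neq f'\}$ is not guaranteed. With the "contained in a null set" definition, no such measurability question arises.

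\emph{Reflexivity.} For any $f\in\cL(M,N)$, take $A=\emptyset\in\Sigma_M$, which has measure zero. Then $f(x)=f(x)$ for every $x\in M\setminus A=M$, so $f\sim_M f$.

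\emph{Symmetry.} Equality in $N$ is symmetric, so the same null set $A$ that witnesses $f\sim_M f'$ also witnesses $f'\sim_M f$.

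\emph{Transitivity.} Suppose $f\sim_M f'$ with exceptional null set $A$, and $f'\sim_M f''$ with exceptional null set $B$. Then $A\cup B\in\Sigma_M$, and by subadditivity $\mu_M(A\cup B)\le\mu_M(A)+\mu_M(B)=0$. For every $x\notin A\cup B$ we have $f(x)=f'(x)=f''(x)$, hence $f\sim_M f''$.

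The only step requiring care is settling on the definition of "a.e." at the outset. Once the exceptional set is only required to be contained in a measurable null set, each of the three properties is immediate.
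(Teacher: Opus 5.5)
Your proof is correct. It reads ``$\mu_M$-a.e.'' exactly as the paper's notation section does: the exceptional set is contained in some $A\in\Sigma_M$ with $\mu_M(A)=0$. Your caution is also well founded, since for non-separably valued $f,f'$ the set $\{f\neq f'\}$ need not belong to $\Sigma_M$.

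The paper gives no proof of this proposition; it is imported from the companion article. Your reflexivity, symmetry and subadditivity argument is the standard one that is intended.
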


Using this equivalence relation, one can define the sets of equivalence classes of measurable and simple mappings.

\begin{definition}[Equivalence classes of measurable mappings]
\label{def:equiv_class_measurable}
Let
    $$q_M: \overlinecL(M,N)\to \overlinecL(M,N)/\sim_M$$ 
    be the quotient mapping associated to $\sim_M$, given by 
    $$q_M(f)\coloneqq [f]_M, \quad [f]_M\coloneqq \{ f'\in \overlinecL(M,N) : f\sim_M f'\}.$$
    Then, define:
    \begin{enumerate}[label=(\roman*)]
        \item $L^0(M,N) \coloneqq q_M(\cLs(M,N))$ the set of equivalence classes admitting a separably valued measurable representative.
        \item $E(M,N) \coloneqq q_M(\mathcal{E}(M,N))$ the set of equivalence classes admitting a simple representative.
    \end{enumerate}
\end{definition}

From these first building blocks, we move on to the construction of nonlinear Lebesgue spaces.

\subsubsection{Lebesgue mappings}
\label{sec:lebesgue_mappings}

Before defining nonlinear Lebesgue spaces, let us first define the $\mathcal{L}^p$ semi-metrics (see \cite[Definition~1.1.4]{burago2022course} for a definition of the notion of semi-metric).

\begin{definition}[$\mathcal{L}^p$ semi-metrics]
\label{def:lp_metrics}
Let $p\in [1,\infty]$. Then, define the mapping $D_p: \cLs(M,N)^2\to [0,\infty]$ as 
$D_p(f,f') \coloneqq \lVert d_N(f,f')\rVert_{p,\mu_M}$, where $d_N(f,f')(x)\coloneqq d_N(f(x),f'(x))$. Precisely, for $p\in [1,\infty)$,
$$D_p(f, f') = \left(\int_M d_N(f(x), f'(x))^p\dif \mu_M(x)\right)^{1/p}$$
and, for $p=\infty$, 
$$D_\infty(f,f') = \mu_M\text{-}\esssup_{x\in M} d_N(f(x),f'(x))$$
with the $\mu_M$-essential supremum defined as
$$\mu_M\text{-}\esssup_{x\in M} d_N(f(x),f'(x))\coloneqq \inf\left\{C\in\mathbb{R}_+: d_N(f(x),f'(x)) \leq C \text{ for $\mu_M$-a.e. $x\in M$}\right\}.$$
\end{definition}

At this point, $D_p$ satisfies the symmetry and triangle inequality axioms of a metric on $\cLs(M,N)$ thanks to the symmetry of $d_N$ and the fact that both $d_N$ and $\lVert \cdot \rVert_{p,\mu_M}$ satisfy the triangle inequality for $p\in [1,\infty]$. However, $D_p$ is not necessarily finite and cannot distinguish two distinct measurable mappings that agree $\mu_M$-almost everywhere. In fact, $D_p$ separates equivalence classes in $L^0(M,N)$. 

\begin{proposition}[$\mathcal{L}^p$ semi-metrics separate equivalence classes]
\label{prop:lp_separates_equiv_class}
    Let $p\in [1,\infty]$ and $(f,f')\in \cLs(M,N)$. Then, $D_p(f,f')= 0$ if and only if $f\sim_M f'$.
\end{proposition}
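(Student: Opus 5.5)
The statement reduces to the standard fact that a nonnegative measurable function has vanishing $L^p$ norm if and only if it vanishes almost everywhere. We apply that fact to $g\coloneqq d_N(f,f')$, so the plan is to check measurability of $g$ first and then treat the two directions.

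\textbf{Measurability of $g$.} This is the only point needing care, and it is where separability of the ranges enters. The pair map $x\mapsto (f(x),f'(x))$ takes values in $S\times S'$, where $S\coloneqq \overline{f(M)}$ and $S'\coloneqq\overline{f'(M)}$ are separable.
\begin{itemize}
\item Since $S\times S'$ is separable, $\mathcal{B}(S\times S')=\mathcal{B}(S)\otimes\mathcal{B}(S')$.
\item The pair map is therefore $\Sigma_M$-measurable, because the preimages of measurable rectangles are intersections of preimages under $f$ and $f'$.
\item The function $d_N$ is continuous on $S\times S'$, so $g$ is a composition of measurable maps and is $\Sigma_M$-measurable.
\end{itemize}
Alternatively, one can write $g$ as a pointwise limit of $d_N(f_n,f'_n)$ for simple approximants. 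This avoids the product $\sigma$-algebra, but the route above is more direct.

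\textbf{Case $p\in[1,\infty)$.} We have $D_p(f,f')=0$ if and only if $\int_M g^p\dif\mu_M=0$. A nonnegative measurable function has zero integral if and only if it vanishes $\mu_M$-a.e. For the forward direction, use Markov's inequality on each set $\{g^p>1/k\}$ and take a countable union. The converse is immediate.

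\textbf{Case $p=\infty$.}
\begin{itemize}
\item If $g=0$ a.e., then $C=0$ is admissible in the infimum, so $D_\infty(f,f')=0$.
\item Conversely, if the infimum is $0$, then for each $k$ we have $g\le 1/k$ outside a null set $Z_k$. Outside the null set $\bigcup_k Z_k$ we get $g=0$.
\end{itemize}

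Finally, since $d_N$ is a genuine metric, $g(x)=0$ if and only if $f(x)=f'(x)$. Hence $g=0$ $\mu_M$-a.e. is exactly $f\sim_M f'$. The main obstacle is the measurability step; everything else is routine.
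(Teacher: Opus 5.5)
Your proof is correct and complete. It is the standard argument: you get measurability of $d_N(f,f')$ from the separability of the ranges via $\mathcal{B}(S\times S')=\mathcal{B}(S)\otimes\mathcal{B}(S')$, then use the usual fact that a nonnegative measurable function has zero $L^p$ norm (resp.\ zero essential supremum) if and only if it vanishes a.e. You are also right that the measurability step is the only place where separability is needed.

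The paper gives no proof of this statement. It recalls it from the companion paper \cite{serieys2025nonlinear}, and its definition of $D_p$ on $\cLs(M,N)^2$ already takes the measurability of $d_N(f,f')$ for granted. So there is no different route in the paper to compare yours against.
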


\begin{remark}[$\mathcal{L}^p$ semi-metrics metrize $L^0(M,N)$]
    Note that $L^0(M,N)$ equipped with $D_p$, for any choice of $p\in [1,\infty]$, satisfies all the axioms of a metric space, but with a metric $D_p$ that might be infinite.
\end{remark}

First, to make $D_p$ finite, consider its restriction to \emph{$\mathcal{L}^p$ mappings}, which are defined as mappings at a finite $D_p$-distance from a \emph{base mapping} $h$ in $L^0(M,N)$.

\begin{definition}[$\mathcal{L}^p$ spaces] Let $h\in L^0(M,N)$ (recall that $h$ has a representative with separable range by \cref{def:equiv_class_measurable}) and $p\in [1,\infty]$. Then, define the set of measurable mappings at a finite $D_p$-distance from $h$ as
$$\cLph(M,N)\coloneqq\{ f\in \cLs(M,N): D_p(f, h) < \infty\}.$$ 
    
\end{definition}

\begin{remark}[Choice of the base mapping in the linear case]
    When $N$ is a normed vector space, the base mapping $h$ is usually taken as $h\equiv 0_N$, with $0_N$ the identity element of $N$. 
\end{remark}

Then, $(\cLph(M,N),D_p)$ satisfies all axioms of a metric space, except separation: it is a \emph{semi-metric} space (see \cite[Definition~1.1.4]{burago2022course} for a definition). We will often write $\cLph(M,N)$ instead of $(\cLph(M,N),D_p)$.

Now, to make it a metric space, $\mu_M$-a.e.~identical mappings should be identified through the equivalence relation defined in \cref{prop:equivalence}.

\begin{definition}[Nonlinear Lebesgue spaces]
\label{def:lebesgue_spaces}
Let $h\in L^0(M,N)$ and $p\in [1,\infty]$. Then, the nonlinear $p$-Lebesgue space $L_{h}^p(M,N)$ is defined as the quotient space 
$$L_{h}^p(M,N) \coloneqq q_M(\cLph(M,N)),$$
where we recall that $f\sim_M f'$ if and only if $D_p(f,f')=0$ (\cref{prop:lp_separates_equiv_class}).
Also, we will usually omit $p$ when referring to nonlinear $p$-Lebesgue spaces and call \enquote{Lebesgue mappings} the elements of nonlinear Lebesgue spaces. 
\end{definition}
\begin{remark}[Extension to $\mu_M$-measurable mappings]
\label{rem:extension_mu_measurable_mappings}
    Nonlinear Lebesgue spaces can equivalently be defined using $\mu_M$-measurable mappings, that is, mappings $f$ satisfying for all $B$ in $\mathcal{B}(N)$ that $f^{-1}(B)$ belongs to $\overline{\Sigma}_M$, the completion of the $\sigma$-algebra $\Sigma_M$ under $\mu_M$ \cite[Proposition 2.22]{serieys2025nonlinear}.
\end{remark}

The construction of nonlinear Lebesgue spaces developed above eventually yields that they are metric spaces \cite[Proposition 2.25]{serieys2025nonlinear}.

\begin{proposition}[Nonlinear Lebesgue spaces are metric spaces]
Let $p\in [1,\infty]$. When equipped with $D_p$, $\Lph(M,N)$ becomes a metric space with finite metric. 
Also, we will usually only write $\Lph(M,N)$ when referring to the metric space $(\Lph(M,N),D_p)$.
\end{proposition}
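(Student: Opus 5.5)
The plan is to check the metric axioms for $D_p$ on $\Lph(M,N)$, after first making sure that $D_p$ is well defined on equivalence classes, and then to prove finiteness separately.

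\textbf{Well-definedness.} Let $f\sim_M f_1$ and $f'\sim_M f'_1$ be elements of $\cLph(M,N)$. Then $d_N(f(x),f'(x)) = d_N(f_1(x),f'_1(x))$ for $\mu_M$-a.e.\ $x$. Measurability of $x\mapsto d_N(f(x),f'(x))$ holds because $f$ and $f'$ are separably valued. Indeed, the product of their ranges is a separable subset of $N^2$, so the Borel $\sigma$-algebra of that product coincides with the product of the Borel $\sigma$-algebras. Hence $x\mapsto (f(x),f'(x))$ is measurable and $d_N$ composed with it is measurable. Since the $\lVert\cdot\rVert_{p,\mu_M}$ norm only depends on the $\mu_M$-a.e.\ class of a function, $D_p(f,f') = D_p(f_1,f'_1)$. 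Therefore $D_p$ descends to the quotient $q_M(\cLph(M,N))$.

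\textbf{Metric axioms and finiteness.} Symmetry and the triangle inequality were noted after \cref{def:lp_metrics}: they follow from the pointwise triangle inequality for $d_N$ together with the monotonicity and Minkowski inequality of $\lVert\cdot\rVert_{p,\mu_M}$ for $p\in[1,\infty]$. Separation, meaning $D_p([f]_M,[f']_M)=0$ implies $[f]_M=[f']_M$, is exactly \cref{prop:lp_separates_equiv_class}. For finiteness, take $f,f'\in\cLph(M,N)$. By the triangle inequality, $D_p(f,f')\le D_p(f,h)+D_p(h,f') <\infty$. Here $h$ is understood as a separably valued measurable representative of the base class, and the definition of $\cLph(M,N)$ does not depend on which representative is chosen, by well-definedness.

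\textbf{Main point of care.} The only genuinely subtle step is the measurability of $d_N(f,f')$ used above. This is where separability of the ranges is essential, since for nonseparable $N$ the product $\sigma$-algebra may fail to contain $\mathcal{B}(N^2)$. Everything else is a direct transfer of the properties of the $L^p$ norm on real functions.
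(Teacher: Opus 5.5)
Your proposal is correct and follows the same route as the paper. The paper defers the details to its companion article, but its construction rests on the same three ingredients: symmetry and the triangle inequality come from $d_N$ together with Minkowski's inequality for $\lVert\cdot\rVert_{p,\mu_M}$, separation is \cref{prop:lp_separates_equiv_class}, and finiteness comes from the triangle inequality through the base mapping $h$. Your additional checks (that $D_p$ does not depend on the chosen representatives, and that $x\mapsto d_N(f(x),f'(x))$ is measurable because the ranges are separable) are exactly the points the paper leaves implicit, and you handle them correctly.
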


We highlight the fact that we will frequently not distinguish representatives from their associated equivalence class, so that we can write $f\in \Lph(M,N)$ while treating $f$ as a $\mathcal{L}^p$ mapping when it is clear from the context. Also, to ensure that our notation remains consistent with the literature on linear Lebesgue spaces, we denote $\mathcal{L}^p(M,N)\coloneqq \cLph(M,N)$ and $L^p(M,N)\coloneqq \Lph(M,N)$ when $N$ is a normed vector space and the base mapping is set to $h\equiv 0_N$ with $0_N$ the identity element of $N$.

An important matter to know about nonlinear Lebesgue spaces is whether they are reduced to the equivalence class of their base mapping, that is, $\Lph(M,N) =\left\{h\right\}$ for $h\in L^0(M,N)$. In such cases, nonlinear Lebesgue spaces are called \emph{trivial}. Nonlinear Lebesgue spaces are then called \emph{nontrivial} when they are strictly bigger, in the sense of inclusion, than the equivalence class of the base mapping. When $p\in [1,\infty)$, this is the case if and only if $\mu_M$ is not purely infinite, that is, the range of $\mu_M$ is not reduced to $\{0,\infty\}$, and $\lvert N\rvert > 1$. When $p=\infty$, this is the case if and only if $\mu_M$ is not trivial, that is, the range of $\mu_M$ is not reduced to $\{0\}$, and $\lvert N\rvert > 1$. These results are proved in \cite[Proposition 3.3]{serieys2025nonlinear} and collected in the following proposition. 

\begin{proposition}[Nontrivial nonlinear Lebesgue spaces]
\label{prop:trivial_lebesgue}
    Let $h\in L^0(M,N)$. Then, the following assertions hold:
    \begin{enumerate}[label=(\roman*)]
        \item for every $p\in [1,\infty)$, the space $\Lph(M,N)$ is nontrivial if and only if $\mu_M$ is not purely infinite and $\lvert N\rvert > 1$.
        \item the space $L^\infty_h(M,N)$ is nontrivial if and only if $\mu_M$ is nontrivial and $\lvert N\rvert > 1$.
    \end{enumerate}
\end{proposition}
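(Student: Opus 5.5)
We prove both assertions by treating the two directions separately. In each case we reduce nontriviality to finding a single mapping $f\in\cLph(M,N)$ with $f\not\sim_M h$; by \cref{prop:lp_separates_equiv_class} this is the same as $0<D_p(f,h)<\infty$. Throughout, fix a representative $h\in\cLs(M,N)$ of the base mapping.

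\emph{Necessity.} First, if $\lvert N\rvert=1$, then every mapping $M\to N$ is constant and equal to $h$, so $\Lph(M,N)=\{h\}$. Next, suppose $p\in[1,\infty)$ and $\mu_M$ is purely infinite. Take any $f\in\cLph(M,N)$ and set $g\coloneqq d_N(f,h)^p$. This function is measurable, because $f$ and $h$ are separably valued: the pair $(f,h)$ is measurable into the separable space $f(M)\times h(M)$, whose Borel $\sigma$-algebra is the product one. For each $n\geq 1$ the level set $A_n\coloneqq\{g\geq 1/n\}$ satisfies, by Markov's inequality, $\mu_M(A_n)\leq n\int_M g\dif\mu_M<\infty$. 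Since $\mu_M$ only takes the values $0$ and $\infty$, this forces $\mu_M(A_n)=0$. Hence $\mu_M(\{g>0\})=\mu_M\bigl(\bigcup_n A_n\bigr)=0$, so $f\sim_M h$ and the space is trivial. For $p=\infty$ with $\mu_M\equiv 0$, every measurable $f$ satisfies $f\sim_M h$, so again the space is trivial.

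\emph{Sufficiency.} This is the main step. The natural candidate is a modification of $h$ on a set of finite positive measure, namely
$$f\coloneqq y\,\mathds{1}_A+h\,\mathds{1}_{M\setminus A}$$
for a suitable $A\in\Sigma_M$ and $y\in N$. This $f$ is measurable, since preimages of Borel sets are of the form $(A\cap\{y\in B\})\cup(h^{-1}(B)\setminus A)$, and it is separably valued because $f(M)\subseteq h(M)\cup\{y\}$. Its distance to $h$ is
$$D_p(f,h)=\bigl\lVert \mathds{1}_A\, d_N(y,h)\bigr\rVert_{p,\mu_M}.$$
The difficulty is to make this quantity both finite and positive, because $d_N(y,h(x))$ may be unbounded on $A$ and may vanish on part of $A$.

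To handle this, fix two distinct points $y_1,y_2\in N$. By the triangle inequality, at each $x$ one has $d_N(y_i,h(x))\geq d_N(y_1,y_2)/2$ for at least one $i\in\{1,2\}$. Start from a set $A_0$ with $0<\mu_M(A_0)<\infty$ when $p<\infty$ (which exists since $\mu_M$ is not purely infinite), or with $\mu_M(A_0)>0$ when $p=\infty$. Intersect it with $\{d_N(y_1,h)\leq k\}$, where $k$ is chosen large enough that the intersection still has positive measure; such a $k$ exists by continuity from below, since $\bigcup_k\{d_N(y_1,h)\leq k\}=M$. Note that on this set $d_N(y_2,h)\leq k+d_N(y_1,y_2)$ is bounded as well. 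Then split the set according to which $y_i$ is at distance at least $d_N(y_1,y_2)/2$ from $h$, and let $A$ be a piece of positive measure with $y$ the corresponding $y_i$. On $A$, the function $d_N(y,h)$ is bounded above and bounded below by a positive constant, and $A$ has finite positive measure when $p<\infty$ (positive measure when $p=\infty$). Therefore $0<D_p(f,h)<\infty$, which gives the required mapping $f$.
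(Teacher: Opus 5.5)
Your proof is correct and complete. The Markov-inequality argument settles necessity for $p<\infty$, reading \enquote{purely infinite} as range contained in $\{0,\infty\}$; this is the reading under which (i) also covers $\mu_M\equiv 0$. For sufficiency, the truncation to $\{d_N(y_1,h)\leq k\}$ followed by the pointwise choice between $y_1$ and $y_2$ does produce an $f$ with $0<D_p(f,h)<\infty$. It also handles $p=\infty$ when $A_0$ has infinite measure.

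The paper does not reprove this statement; it imports it from \cite{serieys2025nonlinear}. So there is no in-text argument to compare against. The closest ingredient in the paper is the embedding $\iota$ of \cref{prop:closed_embedding_target}, whose values $\iota(y)$ have exactly the shape of your $f$.

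That viewpoint gives a slightly shorter sufficiency step. Choose $A$ with $h$ bounded on $A$, with $0<\mu_M(A)<\infty$ if $p<\infty$ and $\mu_M(A)>0$ if $p=\infty$. Then set $f_i\coloneqq y_i\mathds{1}_A+h\mathds{1}_{M\setminus A}$ for $i=1,2$. Both $f_i$ lie in $\mathcal{L}^p_h(M,N)$, and $D_p(f_1,f_2)=\mu_M(A)^{1/p}d_N(y_1,y_2)>0$ (or $d_N(y_1,y_2)$ when $p=\infty$). Hence $[f_1]_M\neq[f_2]_M$, so at least one of them is not $[h]_M$.

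This variant removes the splitting according to which $y_i$ is far from $h$. What it gives up is an explicit element known to differ from $h$. Your version supplies that element, along with two-sided bounds $(\delta/2)^p\mu_M(A)\leq D_p(f,h)^p\leq (k+\delta)^p\mu_M(A)$.
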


Also, nonlinear Lebesgue spaces inherit completeness from their target space (see \cite[Proposition 4.1]{serieys2025nonlinear} or \cite[Proposition~3.3]{Sturm2001}). 

\begin{proposition}[Completeness of nonlinear Lebesgue spaces]
\label{prop:completeness_lebesgue}
Let $h\in L^0(M,N)$ and $p\in [1,\infty]$. Suppose that $N$ is complete. Then, $\Lph(M,N)$ is complete.
\end{proposition}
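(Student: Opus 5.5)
We follow the classical Riesz--Fischer argument and treat $p\in[1,\infty)$ and $p=\infty$ in parallel. Let $(f_n)_{n\in\mathbb{N}}$ be a Cauchy sequence in $\Lph(M,N)$, and fix representatives $f_n\in\cLph(M,N)$. Since a Cauchy sequence with a convergent subsequence converges, it is enough to find a subsequence converging in $D_p$ to some element of $\Lph(M,N)$. We therefore extract indices $n_1<n_2<\dots$ with $D_p(f_{n_{k+1}},f_{n_k})\le 2^{-k}$. For brevity we write $g_k\coloneqq f_{n_k}$.

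\textbf{Step 1: pointwise Cauchy property a.e.} Define $G\coloneqq\sum_k d_N(g_{k+1},g_k)$, a measurable function with values in $[0,\infty]$.
\begin{itemize}
\item For $p<\infty$, Minkowski's inequality combined with monotone convergence gives $\lVert G\rVert_{p,\mu_M}\le\sum_k 2^{-k}<\infty$. Hence $G<\infty$ outside a $\mu_M$-null set $Z$.
\item For $p=\infty$, each inequality $d_N(g_{k+1},g_k)\le 2^{-k}$ holds outside a null set $Z_k$. We take $Z\coloneqq\bigcup_k Z_k$, which is still null.
\end{itemize}
In both cases, for every $x\notin Z$ the sequence $(g_k(x))_k$ is Cauchy in $N$. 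Since $N$ is complete, it converges to some limit $f(x)$.

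\textbf{Step 2: measurability and separability of the limit.} This is the only point where the nonlinear setting requires care, because $f$ is only an a.e. limit. We fix a point $y_0$ in the range of $h$ and set $\tilde g_k\coloneqq g_k\mathds{1}_{M\setminus Z}+y_0\mathds{1}_Z$, meaning $\tilde g_k=g_k$ off $Z$ and $\tilde g_k=y_0$ on $Z$. Each $\tilde g_k$ is measurable and separably valued, because $Z\in\Sigma_M$. Moreover $\tilde g_k\to\tilde f$ pointwise everywhere, where $\tilde f=f$ off $Z$ and $\tilde f=y_0$ on $Z$. \cref{prop:measurability_pointwise_limit} then gives $\tilde f\in\cLs(M,N)$.

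\textbf{Step 3: convergence in $D_p$ and membership in $\cLph$.}
\begin{itemize}
\item For $p<\infty$, Fatou's lemma yields $D_p(\tilde f,g_k)^p\le\liminf_j D_p(g_j,g_k)^p$, which can be made arbitrarily small for $k$ large. Alternatively, on $M\setminus Z$ we have the bound $d_N(\tilde f,g_k)\le\sum_{j\ge k}d_N(g_{j+1},g_j)$, whose $p$-norm is at most $2^{1-k}$.
\item For $p=\infty$, this same bound gives $d_N(\tilde f,g_k)\le 2^{1-k}$ a.e.
\end{itemize}
In both cases, the triangle inequality $D_p(\tilde f,h)\le D_p(\tilde f,g_1)+D_p(g_1,h)<\infty$ shows that $\tilde f\in\cLph(M,N)$. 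Its class $q_M(\tilde f)$ is then the limit of the subsequence, and hence of the whole Cauchy sequence.

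The main obstacle is Step 2, namely guaranteeing that the a.e. limit has a measurable, separably valued representative. The modification on the null set $Z$, followed by \cref{prop:measurability_pointwise_limit}, resolves it. The remaining steps are the standard real-valued estimates applied to the functions $d_N(\cdot,\cdot)$.
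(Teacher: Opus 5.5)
Your proof is correct. It is the standard Riesz--Fischer argument that the paper relies on; the paper does not reprove the result but defers to \cite[Proposition 4.1]{serieys2025nonlinear} and \cite[Proposition~3.3]{Sturm2001}. You handle the one nonlinear subtlety as it should be handled: redefine the subsequence on the measurable null set $Z$ and invoke \cref{prop:measurability_pointwise_limit}. Since your Steps 1--3 never use $h$, the same argument also gives \cref{cor:completeness_measurable}, which is how the paper obtains it as a by-product.
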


A subsidiary result of the proof of \cref{prop:completeness_lebesgue} is that the set of equivalence classes of measurable mappings also inherits completeness from the target space.

\begin{corollary}[Completeness of the space of measurable mappings]
\label{cor:completeness_measurable}
    Let $p\in [1,\infty]$. Suppose that $N$ is complete. Then, $(L^0(M,N),D_p)$ is complete.
\end{corollary}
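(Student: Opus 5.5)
We argue directly with Cauchy sequences, repeating the proof of \cref{prop:completeness_lebesgue} but without the base mapping $h$. Let $(f_n)_{n\in\mathbb{N}}$ be a $D_p$-Cauchy sequence in $L^0(M,N)$, with a fixed representative $f_n\in\cLs(M,N)$ for each $n$. Since $D_p$ may take the value $\infty$ on $L^0(M,N)$, being Cauchy means that $D_p(f_n,f_m)$ is finite and small for large $n,m$. So we pass to a subsequence, still denoted $(f_n)$, with $D_p(f_n,f_{n+1})\le 2^{-n}$ for all $n$. It suffices to find a limit of this subsequence, since a Cauchy sequence with a convergent subsequence converges.

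\textbf{Case $p<\infty$.} Set $g\coloneqq\sum_n d_N(f_n,f_{n+1})$, which is a measurable function with values in $[0,\infty]$. Minkowski's inequality together with monotone convergence gives $\lVert g\rVert_{p,\mu_M}\le \sum_n 2^{-n}<\infty$, so $g<\infty$ outside a $\mu_M$-null set $Z$. For $x\notin Z$, the sequence $(f_n(x))$ is Cauchy in $N$, because $d_N(f_n(x),f_m(x))\le\sum_{k\ge n}d_N(f_k(x),f_{k+1}(x))$. Hence it converges by completeness of $N$.

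\textbf{Case $p=\infty$.} For each $n$, the bound $d_N(f_n(x),f_{n+1}(x))\le 2^{-n}$ holds for all $x$ outside a null set $Z_n$. We take $Z=\bigcup_n Z_n$, which is again null, and conclude in the same way that $(f_n(x))$ converges for $x\notin Z$.

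We then define $f(x)=\lim_n f_n(x)$ for $x\notin Z$, and $f(x)=y_0$ for $x\in Z$, where $y_0\in N$ is any fixed point. The step needing the most care is showing that $f\in\cLs(M,N)$. We apply \cref{prop:measurability_pointwise_limit} to the modified mappings $\tilde f_n\coloneqq f_n\mathds{1}_{M\setminus Z}+y_0\mathds{1}_Z$, that is, $f_n$ off $Z$ and $y_0$ on $Z$. Each $\tilde f_n$ is measurable because $Z\in\Sigma_M$: we choose $Z$ measurable, which is possible since $g$ and the sets $\{d_N(f_n,f_{n+1})>2^{-n}\}$ are measurable. Each $\tilde f_n$ is also separably valued, since its range lies in $f_n(M)\cup\{y_0\}$. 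Moreover $\tilde f_n\to f$ pointwise everywhere on $M$. Hence $f\in\cLs(M,N)$, and $[f]_M\in L^0(M,N)$.

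Finally we show that $D_p(f_n,f)\to0$.
\begin{itemize}
\item For $p<\infty$: for $\mu_M$-a.e.\ $x$ we have $d_N(f_n(x),f(x))\le\sum_{k\ge n}d_N(f_k(x),f_{k+1}(x))$. Minkowski's inequality then gives $D_p(f_n,f)\le\sum_{k\ge n}2^{-k}=2^{1-n}$.
\item For $p=\infty$: the same tail bound holds off $Z$, so $D_\infty(f_n,f)\le 2^{1-n}$.
\end{itemize}
Alternatively, in both cases one can apply Fatou's lemma to $d_N(f_n,f_m)\to d_N(f_n,f)$ as $m\to\infty$. Either way, the subsequence converges to $[f]_M$ in $(L^0(M,N),D_p)$, so the original Cauchy sequence converges too.

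The main point to watch is that no base mapping is available to guarantee finiteness. However, finiteness is never needed: the limit only has to be measurable and separably valued, and that is exactly what \cref{prop:measurability_pointwise_limit} provides.
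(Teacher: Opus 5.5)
Your proof is correct and follows the paper's route. The paper gives no separate proof; it obtains this corollary as a by-product of the standard completeness proof behind \cref{prop:completeness_lebesgue}: extract a fast Cauchy subsequence, get $\mu_M$-a.e.\ pointwise convergence from completeness of $N$, obtain measurability of the limit from \cref{prop:measurability_pointwise_limit}, then show convergence in $D_p$, simply dropping the final check that $D_p(f,h)<\infty$. Your explicit handling of the measurable null set $Z$ (so that the modified $\tilde f_n$ stay in $\cLs(M,N)$) and of the case $p=\infty$ supplies details the paper leaves implicit.
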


In addition, when $\mu_M$ is not purely infinite, the target space is isometric to a closed subset of nonlinear Lebesgue spaces \cite[Corollary 3.8]{serieys2025nonlinear}.

\begin{proposition}[Closed isometric image of the target space into nonlinear Lebesgue spaces]
\label{prop:closed_embedding_target}
    Let $h\in L^0(M,N)$ and $p\in [1,\infty]$. Suppose that $\mu_M$ is not purely infinite. Then, there exists an isometric (up to a scaling factor) mapping $\iota : N\to \Lph(M,N)$ such that $\iota(N)$ is closed in $\Lph(M,N)$ and there exists a measurable set $A$ with $\mu_M(A)\in (0,\infty)$ for which $\iota(y)|_A\equiv y$ and $\iota(y)|_{M\setminus A} = h|_{M\setminus A}$ for all $y\in N$.
\end{proposition}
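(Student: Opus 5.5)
Since $\mu_M$ is not purely infinite, there is a set $A\in\Sigma_M$ with $\mu_M(A)\in(0,\infty)$. We fix such an $A$ and a separably valued measurable representative of $h$, still denoted $h$. For $y\in N$ we define
$$\iota(y)(x)\coloneqq \begin{cases} y, & x\in A,\\ h(x), & x\in M\setminus A.\end{cases}$$
The proof then has three steps: show that $\iota(y)$ lies in $\cLph(M,N)$, show that $\iota$ is isometric up to a constant, and show that $\iota(N)$ is closed.

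\emph{Step 1: $\iota(y)\in\cLph(M,N)$.} Measurability follows from $\iota(y)^{-1}(B)=(A\cap\{y\in B\})\cup((M\setminus A)\cap h^{-1}(B))$, which lies in $\Sigma_M$. The range is contained in $h(M)\cup\{y\}$, which is separable. For finiteness of the distance, $d_N(\iota(y),h)$ vanishes off $A$, and on $A$ it equals $d_N(y,h(x))$. We bound this by $d_N(y,h(x))\le d_N(y,y_0)+d_N(y_0,h(x))$, so finiteness of $D_p(\iota(y),h)$ reduces to finiteness of $\lVert d_N(y_0,h)\mathds{1}_A\rVert_p$.

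This is the main obstacle. Since $h$ is only separably valued measurable, $d_N(y_0,h)$ need not be integrable on $A$. We handle it by shrinking $A$. Because $d_N(y_0,h(\cdot))$ is finite everywhere, we have $A=\bigcup_k A\cap\{d_N(y_0,h)\le k\}$. By continuity of measure, some $k$ gives $0<\mu_M(A\cap\{d_N(y_0,h)\le k\})<\infty$, and we replace $A$ by this set. On the new $A$, $d_N(y,h)\le d_N(y,y_0)+k$ is bounded, and $\mu_M(A)<\infty$. Hence $D_p(\iota(y),h)<\infty$ for every $p\in[1,\infty]$.

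\emph{Step 2: $\iota$ is isometric up to scaling.} Here $d_N(\iota(y),\iota(y'))=d_N(y,y')\mathds{1}_A$. Therefore
$$D_p(\iota(y),\iota(y'))=\mu_M(A)^{1/p}\,d_N(y,y')\quad\text{for }p<\infty,\qquad D_\infty(\iota(y),\iota(y'))=d_N(y,y').$$
The latter uses $\mu_M(A)>0$. So $\iota$ is isometric with scaling factor $c\coloneqq\mu_M(A)^{1/p}\in(0,\infty)$, and $c=1$ when $p=\infty$. In particular, $\iota$ passes injectively to equivalence classes.

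\emph{Step 3: $\iota(N)$ is closed.} Let $\iota(y_n)\to f$ in $\Lph(M,N)$. By Step 2, $(y_n)$ is Cauchy in $N$. Since $N$ need not be complete, we cannot simply take a limit in $N$ and must argue pointwise. Convergence in $D_p$ gives a subsequence with $\iota(y_{n_k})(x)\to f(x)$ for $\mu_M$-a.e.\ $x$; for $p=\infty$ this holds for the whole sequence. Since $\mu_M(A)>0$, there is at least one $x\in A$ where this convergence holds. At that point $y_{n_k}\to f(x)\eqqcolon y$ in $N$, and then the whole Cauchy sequence converges, $y_n\to y$. By Step 2, $\iota(y_n)\to\iota(y)$. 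Uniqueness of limits gives $f=\iota(y)\in\iota(N)$, so $\iota(N)$ is closed.

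Finally, $\iota(y)|_A\equiv y$ and $\iota(y)|_{M\setminus A}=h|_{M\setminus A}$ hold by construction.
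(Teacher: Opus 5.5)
Your proof is correct and matches the paper's approach: the paper does not reprove this result but cites \cite[Corollary 3.8]{serieys2025nonlinear}, and \cref{rem:embedding_target_stronger_conclusion} (which allows $A=M$ once $\mu_M$ is finite and $h$ is bounded) indicates that $A$ plays there exactly the role of your shrinking step to a set of finite positive measure on which $h$ is bounded, with $\iota(y)$ equal to $y$ on $A$ and to $h$ elsewhere. Your computation $D_p(\iota(y),\iota(y'))=\mu_M(A)^{1/p}d_N(y,y')$ is sound, and so is your closedness argument, which picks a point of $A$ where a subsequence converges $\mu_M$-a.e.\ and so needs no completeness of $N$.
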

\begin{remark}[On a stronger conclusion when $\mu_M$ is finite and $h$ is bounded]
\label{rem:embedding_target_stronger_conclusion}
    When it is additionally assumed that $\mu_M$ is finite and that $h$ is bounded, we can actually choose $A=M$, so that $\iota(y)\equiv y$ for all $y\in N$ \cite[Proposition 3.7]{serieys2025nonlinear}.
\end{remark}

Finally, a class of mappings that will play an important role in propagating properties to nonlinear Lebesgue spaces through density arguments is the set of \emph{almost simple} mappings, that is, measurable mappings that are simple on a measurable set of finite $\mu_M$-measure and ($\mu_M$-a.e.) equal to the base mapping otherwise. 
\begin{definition}[Almost simple mappings]
\label{def:almost_simple}
Let $h\in L^0(M,N)$. Then, define:
    \begin{enumerate}[label=(\roman*)] 
        \item $\mathcal{E}_h(M,N)$ the set of \emph{almost simple mappings}, that is, all mappings $f\in \cLs(M,N)$ such that there exists $B\in \mathcal{F}_{\mu_M}$, that is, a measurable set of finite $\mu_M$-measure, for which $f$ satisfies $f|_B\in \mathcal{E}(B,N)$ and $f|_{M\setminus B}\sim_M h|_{M\setminus B}$.
        \item $E_h(M,N) \coloneqq q_M(\mathcal{E}_h(M,N))$ the set of equivalence classes admitting an almost simple representative.
    \end{enumerate}
\end{definition}

Indeed, almost simple mappings are dense in nonlinear Lebesgue spaces for $p\geq 1$ \cite[Proposition 3.10]{serieys2025nonlinear}.

\begin{proposition}[Density of almost simple mappings for $p\geq 1$]
\label{prop:density_almost_simple}
Let $h\in L^0(M,N)$ and $p\in [1,\infty)$. Then, $E_h(M,N)\cap \Lph(M,N)$ is a dense subspace of $\Lph(M,N)$.  
\end{proposition}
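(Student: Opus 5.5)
Fix $f\in\cLph(M,N)$ and $\varepsilon>0$; we produce $g\in\mathcal{E}_h(M,N)$ with $D_p(f,g)<\varepsilon$. Note that $E_h(M,N)\cap\Lph(M,N)$ is a subset of $\Lph(M,N)$ by definition, so only density needs proof. The plan has two stages. First, reduce to a set of finite measure on which $f$ stays away from $h$ in a controlled way. Second, approximate $f$ by a simple mapping there using separability of the ranges.

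For the truncation stage, set $\phi\coloneqq d_N(f,h)\in L^p(\mu_M)$. This function is measurable because $f$ and $h$ are separably valued measurable mappings, so $(f,h)$ is measurable into a separable subset of $N^2$. For $0<\delta<R$, let $B_{\delta,R}\coloneqq\{\delta\le\phi\le R\}$. By Chebyshev's inequality, $\mu_M(B_{\delta,R})\le\delta^{-p}\int\phi^p<\infty$. By dominated convergence, $\int_{M\setminus B_{\delta,R}}\phi^p\to0$ as $\delta\to0$ and $R\to\infty$. Define $\tilde f\coloneqq f$ on $B\coloneqq B_{\delta,R}$ and $\tilde f\coloneqq h$ off $B$. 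Then $D_p(f,\tilde f)^p=\int_{M\setminus B}\phi^p$ is small, and $\tilde f$ is still measurable and separably valued.

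For the simple-approximation stage, choose a countable dense set $\{y_k\}$ in the separable set $f(M)$. For $\eta>0$, set $C_k\coloneqq\{x\in B: d_N(f(x),y_k)<\eta\}\setminus\bigcup_{j<k}C_j$. These sets are measurable, disjoint, and cover $B$. Since $\mu_M(B)<\infty$, we can pick $K$ with $\mu_M(B\setminus\bigcup_{k\le K}C_k)$ small. Define $g\coloneqq y_k$ on $C_k$ for $k\le K$, and $g\coloneqq h$ elsewhere. Then $g|_{B'}$ is simple on $B'\coloneqq\bigcup_{k\le K}C_k$, which has finite measure, and $g=h$ off $B'$, so $g\in\mathcal{E}_h(M,N)$. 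Measurability of $g$ follows by gluing $h$ on a measurable set.

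For the estimate, note that $d_N(\tilde f,g)$ vanishes off $B$ and is less than $\eta$ on $B'$. On $B\setminus B'$ it equals $\phi\le R$. Hence
\[
D_p(\tilde f,g)^p\le \eta^p\mu_M(B)+R^p\mu_M(B\setminus B').
\]
Choose $\delta,R$ first, then $\eta$ and $K$, to make this small. Finally, $D_p(g,h)\le D_p(g,f)+D_p(f,h)<\infty$, so $g\in\cLph(M,N)$.

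The main obstacle is the bounded cutoff $R$. It is needed because $g=h$ on the leftover set $B\setminus B'$, and the error there must be controlled; the truncation stage handles this. Measurability of $\phi$ and of the sets $C_k$ also needs the separability assumption, but this is routine.
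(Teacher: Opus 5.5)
Your proof is correct. The paper does not prove this statement itself; it imports it from \cite[Proposition 3.10]{serieys2025nonlinear}. The natural in-paper comparison is therefore the analogous density argument in the proof of assertion (i) of \cref{prop:density_iterated_almost_simple}.

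That argument reduces to a set of finite measure by exhausting the $\sigma$-finite set where the mapping differs from the base mapping with an increasing sequence of finite-measure sets. Each cut-off is controlled by dominated convergence through the pointwise bound $d_N(\hat c_n',\hat c_{n_0})\le d_N(\hat h,\hat c_{n_0})$: replacing a mapping by the base mapping on a set never costs more than its distance to the base mapping there. Your Chebyshev bound $\mu_M(\{\phi\ge\delta\})\le\delta^{-p}D_p(f,h)^p$ is exactly the mechanism behind that $\sigma$-finiteness, used directly, so your argument is self-contained.

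The same domination trick shows that your upper cutoff $R$ is convenient but not \enquote{needed}. Set $B'_K\coloneqq\bigcup_{k\le K}C_k$. The error on $B\setminus B'_K$ is exactly $\phi$, and $\int_{B\setminus B'_K}\phi^p\dif\mu_M\to0$ as $K\to\infty$ by dominated convergence, since $B\setminus B'_K$ decreases to $\emptyset$ and $\phi^p$ is integrable. The essential cutoff is the lower one, $\delta$: it gives $\mu_M(B)<\infty$, which makes $\eta^p\mu_M(B)$ small and lets $B'_K$ qualify in the definition of $\mathcal{E}_h(M,N)$.

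One small omission: when concluding $g\in\mathcal{E}_h(M,N)$, you should also record that $g$ is separably valued. This is immediate from $g(M)\subset h(M)\cup\{y_1,\dots,y_K\}$.
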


We now move to a reminder on curves in metric spaces.

\subsection{Curves in metric spaces}
\label{sec:curves_metric_space}

\subsubsection{Basic assumptions}

Before delving into the study of curves in metric spaces, we clarify the assumptions on the set representing time.
\begin{assumption}[Time interval]
\label{assum:time}
    In the rest of the article, the nonempty set $I$, representing time, is such that $I=[a,b]$ with $(a,b)\in \mathbb{R}^2$ satisfying $a< b$.
\end{assumption}

We also clarify the assumptions on the generic metric space used in this section.

\begin{assumption}[Generic metric space]
\label{assum:generic}
    In this section only, the generic nonempty set $X$ is assumed to carry the structure of a metric space with finite metric $d_X: X^2\to [0,\infty)$.
\end{assumption}

Throughout this article and this section, we shall not recall these assumptions in the statements of the results; yet, any additional assumption relative to \cref{assum:time} or \cref{assum:generic} will be specified.

\subsubsection{Sobolev curves and continuous representations}

This section follows \cite[Section 2.4]{lisini2007characterization}, but with slight relaxations on the assumptions of the results used in the present article. A first important class of curves in metric spaces is the set of \emph{Sobolev curves}.

\begin{definition}[Sobolev curves]
\label{def:sobolev_curves}
Let $p\in (1,\infty)$. Then, the set of \emph{Sobolev curves} is defined as
$$W^{1,p}(I,X)\coloneqq \left\{ c\in L^p(I,X): \sup_{0 < \tau < b-a} \int_a^{b-\tau} (\Delta_{X,\tau} c(t))^p\dif t < \infty \right\},$$
where $\Delta_{X,\tau} c(t)\coloneqq d_X(c(t),c(t+\tau))/\tau$. Also, define the finite metric $d_p: L^p(I,X)^2\to [0,\infty)$ as
$$d_p(c,c')\coloneqq \left(\int_I d_X(c(t),c'(t))^p \dif t\right)^{1/p}.$$
\end{definition}

\begin{remark}[On the definition of $L^p(I,X)$]
\label{rem:separable_range_sobolev}
    Note that unlike in \cite[Section 2.4]{lisini2007characterization}, the definition of $L^p(I,X)$ used in \cref{def:sobolev_curves} is the one of \cref{def:lebesgue_spaces}, so that we can always pick a separably valued representative instead of assuming that $X$ is separable in most proofs. Also, the base mapping is a mapping constant equal to some $z_0\in X$ and thus any $y\in X$, since $I$ has finite $\lebesgue$-measure.
\end{remark}

\begin{remark}[$W^{1,p}(I,X)$ is a Borel subset of $L^p(I,X)$]
\label{rem:sobolev_borel}
    Let $p\in (1,\infty)$. As a direct consequence of \cref{def:sobolev_curves} and the fact that the functional 
    $c\mapsto \sup_{0<\tau <b-a}\int_a^{b-\tau} (\Delta_{X,\tau}c(t))^p\dif t$ is lower semicontinuous from $L^p(I,X)$ to $[0,\infty]$, $W^{1,p}(I,X)$ is a Borel subset of $L^p(I,X)$.
\end{remark}

We will see that Sobolev curves are related to another important class of curves in metric spaces, namely the set of \emph{absolutely continuous curves}, whose definition is recalled below and borrowed from \cite[Definition 1.1.1]{ambrosio2005gradient}.

\begin{definition}[Absolutely continuous curves]
\label{def:ac_curves}
A curve $c:I\to X$ belongs to $\mathcal{AC}^p(I,X)$, for $p\in [1,\infty]$, if and only if there exists $m\in L^p(I)$ such that for all $(s,t)\in I^2$ such that $s \leq t$ we have
\begin{align}
\label{eq:control_ac_def}
d_X(c(s),c(t))\leq \int_s^t m(r)\dif r.
\end{align}
\end{definition}

Absolutely continuous curves have the nice property of being almost everywhere differentiable in the metric sense \cite[Theorem~1.1.2]{ambrosio2005gradient}.

\begin{theorem}[Metric derivative]
\label{th:metric_derivative}
Let $p\in[1,\infty]$. Then, if $c\in \mathcal{AC}^p(I,X)$, the limit
$$\lvert c'\rvert_X(t) \coloneqq \lim\limits_{s\to t}\frac{d_X(c(s),c(t))}{\lvert s - t \rvert}$$
exists for $\lebesgue$-a.e. $t\in I$. In addition, the mapping $t \to \lvert c'\rvert_X(t)$ belongs to $L^p(I)$, is an admissible integrand for the right-hand side of \cref{eq:control_ac_def} and is minimal in the sense that for all $m\in L^p(I)$ verifying \cref{eq:control_ac_def} it satisfies 
$$\lvert c'\rvert_X(t) \leq m(t)\quad \text{for $\lebesgue$-a.e. $t\in I$.}$$
\end{theorem}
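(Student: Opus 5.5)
The plan is to follow the classical argument of Ambrosio--Gigli--Savaré. We reduce to real-valued $1$-Lipschitz functions of the form $d_X(\cdot,y_n)$, with $y_n$ dense in the range of $c$. The key points are as follows.

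\textbf{Step 1 (separability).} Since $c$ is continuous, being dominated by $\int_s^t m$, the range $c(I)$ is compact and hence separable. Pick a countable dense set $\{y_n\}_{n\in\mathbb{N}}\subset c(I)$. For each $n$, set $\varphi_n(t)\coloneqq d_X(c(t),y_n)$. By the triangle inequality,
$$\lvert \varphi_n(s)-\varphi_n(t)\rvert\le d_X(c(s),c(t))\le \int_s^t m(r)\dif r ,$$
so $\varphi_n$ is absolutely continuous on $I$. It is therefore differentiable $\lebesgue$-a.e., with $\lvert\varphi_n'\rvert\le m$ a.e.

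\textbf{Step 2 (candidate speed).} Define $\tilde m(t)\coloneqq\sup_n\lvert\varphi_n'(t)\rvert$, which is measurable and satisfies $\tilde m\le m$ a.e. In particular $\tilde m\in L^p(I)$, and it lies below every admissible $m$.

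By density of $\{y_n\}$ in $c(I)$,
$$d_X(c(s),c(t))=\sup_n\lvert\varphi_n(s)-\varphi_n(t)\rvert\le\sup_n\int_s^t\lvert\varphi_n'\rvert\le\int_s^t\tilde m .$$
Hence $\tilde m$ is itself admissible in \cref{eq:control_ac_def}.

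\textbf{Step 3 (identification with the limit).} At every point $t$ where all $\varphi_n$ are differentiable,
$$\liminf_{s\to t}\frac{d_X(c(s),c(t))}{\lvert s-t\rvert}\ge\sup_n\lvert\varphi_n'(t)\rvert=\tilde m(t).$$
Conversely, at Lebesgue points $t$ of $\tilde m$, Step 2 gives
$$\frac{d_X(c(s),c(t))}{\lvert s-t\rvert}\le\frac{1}{\lvert s-t\rvert}\Bigl\lvert\int_s^t\tilde m\Bigr\rvert\to\tilde m(t),$$
so the $\limsup$ is at most $\tilde m(t)$. Both conditions hold for a.e.\ $t$, so the limit $\lvert c'\rvert_X(t)$ exists a.e.\ and equals $\tilde m(t)$. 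Together with Step 2, this yields membership in $L^p(I)$, admissibility, and minimality.

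\textbf{Main obstacle.} The delicate step is Step 2: the supremum must be exchanged with the integral to obtain the bound by $\int\tilde m$. This relies on density of $\{y_n\}$, which gives $d_X(c(s),c(t))=\sup_n\lvert\varphi_n(s)-\varphi_n(t)\rvert$ upon taking $y_n\to c(s)$. It also relies on the monotone bound $\lvert\varphi_n'\rvert\le\tilde m$ holding pointwise a.e.

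A minor measurability issue also needs checking: $\tilde m$ is a countable supremum of a.e.-defined measurable functions, so it is measurable. When $p=\infty$ the same argument applies unchanged.
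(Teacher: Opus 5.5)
Your proposal is correct and is essentially the paper's approach. The paper does not reprove this statement; it cites Ambrosio--Gigli--Savar\'e (Theorem 1.1.2). That proof is exactly your argument: the functions $t\mapsto d_X(c(t),y_n)$ over a countable dense subset of $c(I)$, the candidate $\sup_n\lvert\varphi_n'\rvert$ shown to be admissible, and the $\liminf$ and Lebesgue-point $\limsup$ comparison that identifies it with the metric derivative.
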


Now, when $p\in (1,\infty)$ and $X$ is complete, the set of absolutely continuous curves is isometric to the set of Sobolev curves for the metric $d_p$.

\begin{proposition}[Absolutely continuous representations of Sobolev curves]
\label{prop:ac_representation_sobolev}
    Let $p \in (1,\infty)$. Suppose that $X$ is complete. Then, the restriction of the quotient mapping 
    $$q_I: \left\{\begin{array}{ccl}\mathcal{L}^0_s(I,X)&\longrightarrow& L^0(I,X)\\
    c&\longmapsto& {[c]_I}
    \end{array}\right.$$ 
    to $\mathcal{AC}^p(I,X)$, denoted by $q_p$, is an isometry 
    between $(\mathcal{AC}^p(I,X),d_p)$ and $(W^{1,p}(I,X),d_p)$.
\end{proposition}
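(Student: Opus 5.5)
Three things have to be checked: $q_p$ maps $\mathcal{AC}^p(I,X)$ into $W^{1,p}(I,X)$; it is injective and distance preserving for $d_p$; and it is onto $W^{1,p}(I,X)$. Completeness of $X$ is needed only for the last point.

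\emph{Well-definedness and isometry.} Let $c\in\mathcal{AC}^p(I,X)$. Since $c$ is continuous on the compact interval $I$, its range is compact, hence separable, and $c$ is Borel, so $c\in\cLs(I,X)$. Because $\lebesgue(I)<\infty$ and $c$ is bounded, $d_p(c,z_0)<\infty$ for the constant base mapping of \cref{rem:separable_range_sobolev}, so $[c]_I\in L^p(I,X)$. For $0<\tau<b-a$, the control \cref{eq:control_ac_def} with $m=\lvert c'\rvert_X$ from \cref{th:metric_derivative}, followed by Jensen's inequality, gives
\[
\Delta_{X,\tau}c(t)^p\le \frac1\tau\int_t^{t+\tau}\lvert c'\rvert_X(r)^p\dif r .
\]
Integrating over $t\in[a,b-\tau]$ and using Fubini yields $\int_a^{b-\tau}\Delta_{X,\tau}c(t)^p\dif t\le \lVert \lvert c'\rvert_X\rVert_p^p$, uniformly in $\tau$. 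Hence $[c]_I\in W^{1,p}(I,X)$.

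The metric $d_p$ depends only on the equivalence classes, so $d_p(q_p c,q_p c')=d_p(c,c')$ holds trivially. Injectivity means that $(\mathcal{AC}^p,d_p)$ is genuinely a metric space. This holds because two continuous curves that agree a.e.\ agree on a dense set, and therefore everywhere.

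\emph{Surjectivity.} This is the main obstacle. Let $u\in W^{1,p}(I,X)$ and take a separably valued representative, so that $u(I)\subset Y$ for some separable set $Y$; replacing $Y$ by its closure, $Y$ is complete since $X$ is. Pick a countable dense set $(y_n)$ in $Y$ and set $\phi_n(t)=d_X(u(t),y_n)$, which are real $L^p$ functions. Since $\lvert\phi_n(t+\tau)-\phi_n(t)\rvert\le d_X(u(t),u(t+\tau))$, each $\phi_n$ has difference quotients bounded in $L^p$ uniformly in $n$ and $\tau$. By the classical real characterization (valid because $p>1$), each $\phi_n$ lies in $W^{1,p}(I)$ and has an absolutely continuous representative $\tilde\phi_n$. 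The uniform bound on the difference quotients also gives $\lvert\phi_n'\rvert\le g$ for a single $g\in L^p(I)$.

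To get $g$ concretely, take a weak-$L^p$ limit point of the functions $t\mapsto\Delta_{X,\tau}u(t)$ along a sequence $\tau\to0$. Reflexivity of $L^p$ for $p\in(1,\infty)$ is used here. One then obtains
\[
\lvert\tilde\phi_n(t)-\tilde\phi_n(s)\rvert\le\int_s^t g\quad\text{for all } n \text{ and all } s\le t.
\]

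Let $G\subset I$ be a full-measure set on which $\tilde\phi_n=\phi_n$ for every $n$. For $s,t\in G$, density of $(y_n)$ gives
\[
d_X(u(s),u(t))=\sup_n\lvert\phi_n(s)-\phi_n(t)\rvert\le\int_s^t g .
\]
Thus $u|_G$ is uniformly continuous, with modulus controlled by $g$. Since $G$ is dense in $I$ and $Y$ is complete, $u|_G$ extends uniquely to a continuous curve $c$ on $I$. The integral bound passes to the limit, so $c\in\mathcal{AC}^p(I,X)$, and $c=u$ a.e., i.e.\ $q_p(c)=[u]_I$.

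The delicate points are the reduction to countably many real functions and the existence of a single dominating $g\in L^p$. The second is exactly where $p>1$ enters, since weak compactness in $L^p$ fails for $p=1$.
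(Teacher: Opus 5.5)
Your proof is correct and follows the paper's route. The paper defers both the inclusion $q_p(\mathcal{AC}^p(I,X))\subset W^{1,p}(I,X)$ (your Jensen bound on difference quotients) and the bijectivity (countably many real functions $d_X(u(\cdot),y_n)$, one weak-$L^p$ dominating function $g$, and extension from a full-measure set by completeness) to the proof of Lisini's Lemma~1, with exactly your modification of taking a separably valued representative instead of assuming $X$ separable, and it obtains the isometry, as you do, from the invariance of $d_p$ under changes on $\lebesgue$-null sets.
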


\begin{proof}
    Let $p\in (1,\infty)$. 
    
    \par\medskip \noindent \emph{Step 1: Definition of $q_p$.} Note that $q_p$ is well-defined since any curve in $\mathcal{AC}^p(I,X)$ has separable range as the continuous image of a separable set and since Borel measurability follows from continuity. Also, the fact that $q_p(\mathcal{AC}^p(I,X))\subset W^{1,p}(I,X)$ follows from the first part of the proof of \cite[Lemma 1]{lisini2007characterization}. 
    
    \par\medskip \noindent \emph{Step 2: Bijectivity of $q_p$.} The bijectivity of $q_p$ from $\mathcal{AC}^p(I,X)$ to $W^{1,p}(I,X)$ follows from the second part of the proof of \cite[Lemma 1]{lisini2007characterization} which shows that to each element $c\in W^{1,p}(I,X)$ we can associate a unique element $\tilde{c}\in \mathcal{AC}^p(I,X)$  such that $q_p(\tilde{c})= c$. The only variation in our case is that, instead of assuming that $X$ is separable, the proof can be carried out the same way by using the fact that elements of $W^{1,p}(I,X)$ have separably valued representatives (\cref{rem:separable_range_sobolev}).

    \par\medskip \noindent \emph{Step 3: Isometric property of $q_p$.} The isometric property of $q_p$ for $d_p$ follows from the invariance of $d_p$ to variations on $\lebesgue$-null sets.
\end{proof}

\cref{prop:ac_representation_sobolev} and \cref{def:sobolev_curves} yield an equivalent definition of the set of absolutely continuous curves, when $p\in (1,\infty)$ and $X$ is complete.

\begin{corollary}[Equivalent definition of absolutely continuous curves for $p> 1$]
\label{cor:equiv_def_ac}
    Let $p\in (1,\infty)$. Suppose that $X$ is complete. Then, the set of absolutely continuous curves is equivalently defined as 
    \begin{align*}
        \mathcal{AC}^p(I,X) = \left\{c\in \mathcal{C}(I,X): \sup_{0<\tau < b-a} \int_a^{b-\tau} (\Delta_{X,\tau} c(t))^p \dif t <\infty \right\}.
    \end{align*}
\end{corollary}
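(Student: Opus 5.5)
We prove the two inclusions separately. Write $\mathcal{S}$ for the right-hand side, namely the set of continuous curves $c:I\to X$ with $\sup_{0<\tau<b-a}\int_a^{b-\tau}(\Delta_{X,\tau}c(t))^p\dif t<\infty$. Both directions use \cref{prop:ac_representation_sobolev}; the only extra ingredient is that two continuous curves which agree $\lebesgue$-a.e.\ on $I$ agree everywhere.

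For the inclusion $\mathcal{AC}^p(I,X)\subset\mathcal{S}$, let $c\in\mathcal{AC}^p(I,X)$.
\begin{itemize}
\item By \cref{def:ac_curves}, $c$ is continuous, since $t\mapsto\int_a^t m(r)\dif r$ is continuous for $m\in L^p(I)\subset L^1(I)$.
\item By Step~1 of the proof of \cref{prop:ac_representation_sobolev}, $c$ is Borel measurable with separable range, and $q_p(c)=[c]_I\in W^{1,p}(I,X)$.
\item The quantity $\sup_\tau\int_a^{b-\tau}(\Delta_{X,\tau}c)^p\dif t$ is unchanged when $c$ is modified on a $\lebesgue$-null set, because for each fixed $\tau$ the translate of a null set is null. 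It is therefore well defined on the class $[c]_I$, and it is finite by \cref{def:sobolev_curves}.
\end{itemize}
Hence $c\in\mathcal{S}$. If one prefers not to rely on the cited lemma, the bound can be checked directly: $\Delta_{X,\tau}c(t)\le\frac1\tau\int_t^{t+\tau}m$. By Jensen and Fubini this gives $\int_a^{b-\tau}(\Delta_{X,\tau}c)^p\dif t\le\lVert m\rVert_p^p$, uniformly in $\tau$.

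For the reverse inclusion, let $c\in\mathcal{S}$. Since $I$ is compact and $c$ is continuous, $c(I)$ is compact and hence separable, and $c$ is Borel. Thus $c\in\mathcal{L}^0_s(I,X)$. Its $d_p$-distance to any constant curve is finite because $c(I)$ is bounded and $I$ has finite measure, so $[c]_I\in L^p(I,X)$; the base mapping is irrelevant by \cref{rem:separable_range_sobolev}. The finiteness of the supremum then gives $[c]_I\in W^{1,p}(I,X)$. Since $X$ is complete, \cref{prop:ac_representation_sobolev} provides a unique $\tilde c\in\mathcal{AC}^p(I,X)$ with $q_p(\tilde c)=[c]_I$, i.e.\ $\tilde c=c$ $\lebesgue$-a.e. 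Both $c$ and $\tilde c$ are continuous, and the set where they agree is closed and of full measure, hence dense in $I$. They therefore agree on all of $I$, so $c=\tilde c\in\mathcal{AC}^p(I,X)$.

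The main obstacle is this last upgrade from almost-everywhere to everywhere equality. It is exactly where continuity in the definition of $\mathcal{S}$ is needed, since without it only an a.e.-representative would be absolutely continuous. The other point requiring care is that the Sobolev functional depends only on the equivalence class, which justifies applying it to both representatives interchangeably.
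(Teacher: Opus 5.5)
Your proof is correct and takes the same route as the paper, which obtains the corollary directly from \cref{prop:ac_representation_sobolev} and \cref{def:sobolev_curves}. You additionally spell out two points the paper leaves implicit: the Sobolev functional is invariant under $\lebesgue$-null modifications, and a.e.\ equality of two continuous curves upgrades to equality everywhere on $I$.
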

\begin{remark}[$\mathcal{AC}^p(I,X)$ is a Borel subset of $\mathcal{C}(I,X)$]
\label{rem:ac_borel}
    Let $p\in (1,\infty)$. As a direct consequence of \cref{cor:equiv_def_ac} and the fact that the functional 
    $c\mapsto \sup_{0<\tau <b-a}\int_a^{b-\tau} (\Delta_{X,\tau}c(t))^p\dif t$ is lower semicontinuous from $\mathcal{C}(I,X)$ to $[0,\infty]$, $\mathcal{AC}^p(I,X)$ is a Borel subset of $\mathcal{C}(I,X)$.
\end{remark}

Finally, note that the Borel structures induced by $d_p$ and $d_\infty$ on absolutely continuous curves coincide for all $p\in (1,\infty)$ as well as the corresponding notions of separable sets. 

\begin{proposition}[Coincidence of the Borel structures induced by $d_p$ and $d_\infty$ on $\mathcal{AC}^p(I,X)$]
\label{prop:comparison_dp_dinf}
Let $p\in (1,\infty)$. Then, the following assertions hold:
\begin{enumerate}[label=(\roman*)]
    \item the Borel $\sigma$-algebras induced by $d_p$ and $d_\infty$ on $\mathcal{AC}^p(I,X)$ coincide, that is, 
    $$\mathcal{B}\big(\mathcal{AC}^p(I,X),d_p\big) = \mathcal{B}\big(\mathcal{AC}^p(I,X),d_\infty\big).$$

    \item a subset $\mathcal{S}\subset\mathcal{AC}^p(I,X)$ is separable for
    $d_p$ if and only if it is separable for $d_\infty$.
\end{enumerate}
\end{proposition}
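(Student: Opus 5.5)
Both assertions follow from one observation: on $\mathcal{AC}^p(I,X)$ the two metrics satisfy the following.
\begin{enumerate}[label=(\alph*)]
\item $d_p\le (b-a)^{1/p} d_\infty$ everywhere, so the identity map $(\mathcal{AC}^p(I,X),d_\infty)\to(\mathcal{AC}^p(I,X),d_p)$ is Lipschitz. This is immediate from the definitions.
\item On each sublevel set $K_R\coloneqq\{c\in\mathcal{AC}^p(I,X): \lVert \lvert c'\rvert_X\rVert_{L^p}\le R\}$, the identity $(K_R,d_p)\to(K_R,d_\infty)$ is uniformly continuous.
\end{enumerate}
Since $\mathcal{AC}^p(I,X)=\bigcup_{R\in\mathbb{N}}K_R$, these two facts give both assertions once we check that each $K_R$ is Borel for both metrics.

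\emph{Proof of (b).} By \cref{th:metric_derivative} and Hölder's inequality, every $c\in K_R$ is $(1-1/p)$-Hölder:
$$d_X(c(s),c(t))\le R\lvert t-s\rvert^{1-1/p}.$$
Take $c,c'\in K_R$ and suppose $d_X(c(t_0),c'(t_0))=\eta$ for some $t_0$. Then on the interval $J=[t_0-\delta,t_0+\delta]\cap I$ we have $d_X(c(t),c'(t))\ge \eta-2R\delta^{1-1/p}$. Choose $\delta$ with $2R\delta^{1-1/p}=\eta/2$; note $J$ has length at least $\min(\delta,b-a)$. This yields
$$d_p(c,c')^p\ge (\eta/2)^p\min(\delta,b-a),$$
so $d_\infty(c,c')\le \omega_R(d_p(c,c'))$ for an explicit modulus $\omega_R$ with $\omega_R(0^+)=0$. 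Hence $d_p$ and $d_\infty$ induce the same topology and the same uniform structure on $K_R$. This equality is the key point.

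\emph{Borel measurability of $K_R$.} For each $\tau$, the functional $c\mapsto \int_a^{b-\tau}(\Delta_{X,\tau}c)^p\,\dif t$ is lower semicontinuous for $d_p$; this is the fact used in \cref{rem:sobolev_borel}, and it uses Fatou's lemma along a.e.-convergent subsequences. It is then also lower semicontinuous for the finer metric $d_\infty$. By the argument behind \cref{cor:equiv_def_ac} (Lisini's Lemma 1), the supremum over $\tau$ equals $\lVert\lvert c'\rvert_X\rVert_{L^p}^p$ for absolutely continuous $c$. If that identity is awkward, it suffices to redefine $K_R$ with this supremum in place of the norm, since the Hölder bound still holds with a comparable constant. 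Thus $K_R$ is $d_p$-closed, hence also $d_\infty$-closed, in $\mathcal{AC}^p(I,X)$.

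\emph{Assertion (i).} Since $d_p\lesssim d_\infty$, every $d_p$-open set is $d_\infty$-open, so $\mathcal{B}(d_p)\subset\mathcal{B}(d_\infty)$. Conversely, let $U$ be $d_\infty$-open. Each $U\cap K_R$ is relatively $d_\infty$-open in $K_R$, hence relatively $d_p$-open in $K_R$ by (b). It is therefore the intersection of a $d_p$-open set with the $d_p$-closed set $K_R$, so it lies in $\mathcal{B}(d_p)$. Taking the countable union over $R$ gives $U\in\mathcal{B}(d_p)$.

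\emph{Assertion (ii).} If $\mathcal{S}$ is $d_\infty$-separable, a $d_\infty$-dense countable subset is also $d_p$-dense, by (a). Conversely, suppose $\mathcal{S}$ is $d_p$-separable. Each $\mathcal{S}\cap K_R$ is then $d_p$-separable, as a subset of a separable metric space. By (b) it is also $d_\infty$-separable, since the two topologies agree on $K_R$. A countable union of separable sets is separable, which gives the claim.

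The main obstacle is the Borel measurability of the sublevel sets $K_R$, specifically the identification between the supremum of difference quotients and the $L^p$ norm of the metric derivative. I would handle it by defining $K_R$ directly through the supremum functional, which sidesteps the identity entirely.
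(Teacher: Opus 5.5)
Your proof is correct and follows the paper's scheme: the Lipschitz bound $d_p\le \lebesgue(I)^{1/p}d_\infty$, then continuity of the identity $(K_R,d_p)\to(K_R,d_\infty)$ from the uniform $(1-1/p)$-Hölder estimate (\cref{prop:continuity_id_dp_dinf}, where the paper argues by contradiction rather than with your explicit modulus), then Borel measurability of the sublevels $K_R$ for $d_p$, and finally the countable exhaustion by the $K_R$ for both (i) and (ii). The only difference is how you show $K_R$ is $d_p$-closed: you identify the energy with $\sup_{\tau}\int_a^{b-\tau}(\Delta_{X,\tau}c)^p\,\dif t$, a supremum of $d_p$-lower semicontinuous functionals (Jensen gives $\le$, Fatou gives $\ge$, and neither needs completeness), whereas \cref{cor:lsc_energy} transfers the lower semicontinuity of the energy under uniform convergence through \cref{prop:continuity_id_dp_dinf}; your fallback of defining $K_R$ through that supremum does not actually avoid the identity, because exhausting $\mathcal{AC}^p(I,X)$ uses the Jensen half and the Hölder bound uses the Fatou half, but both halves are elementary, so nothing is lost.
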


\begin{proof}
    See \ref{appendix:comparison_dp_dinf}. 
\end{proof}

\begin{remark}[Related results in the literature]
Assertion (i) of \cref{prop:comparison_dp_dinf} already appeared at the end of the proof of \cite[Lemma 1]{lisini2007characterization} under the additional assumptions that $X$ is complete and separable.
\end{remark}

As an immediate consequence of \cref{prop:comparison_dp_dinf}, the evaluation mapping is measurable on the set of absolutely continuous curves equipped with the metric $d_p$ as soon as $p\in (1,\infty)$.

\begin{corollary}[Measurability of the evaluation mapping for $d_p$]
\label{prop:measurability_evaluation_ac}
    Let $p\in (1,\infty)$. Then, for all $t\in I$, the restriction of the evaluation mapping 
    $$e_t: \left\{\begin{array}{ccl}
        \mathcal{C}(I,X)&\longrightarrow& X\\
        c&\longmapsto& c(t)
    \end{array}\right.$$ 
    to $\mathcal{AC}^p(I,X)$, denoted by $e_{t,p}$, is Borel measurable from $(\mathcal{AC}^p(I,X),d_p)$ to $X$.
\end{corollary}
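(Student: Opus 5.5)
The measurability of $e_{t,p}$ will follow from continuity for the uniform metric together with the coincidence of Borel structures in \cref{prop:comparison_dp_dinf}. I plan three steps.

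First, for every $t\in I$ the evaluation mapping $e_t$ is $1$-Lipschitz from $(\mathcal{C}(I,X),d_\infty)$ to $X$. Indeed, for $c,c'\in\mathcal{C}(I,X)$,
$$d_X(e_t(c),e_t(c'))=d_X(c(t),c'(t))\leq \sup_{s\in I} d_X(c(s),c'(s)) = d_\infty(c,c').$$
Here I use that for continuous curves the $\lebesgue$-essential supremum of the continuous function $s\mapsto d_X(c(s),c'(s))$ equals its pointwise supremum. This holds because a continuous function exceeding some level at one point exceeds it on a nonempty relatively open subset of $I=[a,b]$, and such a set has positive Lebesgue measure.

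Second, the restriction $e_{t,p}$ of $e_t$ to $\mathcal{AC}^p(I,X)$ is therefore continuous for the metric $d_\infty$ restricted to $\mathcal{AC}^p(I,X)$. Hence it is Borel measurable from $(\mathcal{AC}^p(I,X),\mathcal{B}(\mathcal{AC}^p(I,X),d_\infty))$ to $(X,\mathcal{B}(X))$: for every open $U\subset X$, the set $e_{t,p}^{-1}(U)$ is $d_\infty$-open in $\mathcal{AC}^p(I,X)$. Since open sets generate $\mathcal{B}(X)$, this is enough.

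Third, by assertion (i) of \cref{prop:comparison_dp_dinf}, $\mathcal{B}(\mathcal{AC}^p(I,X),d_\infty)=\mathcal{B}(\mathcal{AC}^p(I,X),d_p)$. So $e_{t,p}^{-1}(B)\in\mathcal{B}(\mathcal{AC}^p(I,X),d_p)$ for every $B\in\mathcal{B}(X)$, which is the claim.

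There is no real obstacle, since all the work is in \cref{prop:comparison_dp_dinf}. One small point needs care. $d_p$ is defined on $L^p(I,X)$, so on $\mathcal{AC}^p(I,X)$ it means the metric pulled back through $q_p$, namely $d_p(c,c')=(\int_I d_X(c,c')^p)^{1/p}$. This is a genuine metric on continuous curves, because two continuous curves agreeing $\lebesgue$-a.e.\ agree everywhere; this is consistent with \cref{prop:ac_representation_sobolev}. Note also that $e_{t,p}$ need not be continuous for $d_p$, so the detour through $d_\infty$ is essential rather than cosmetic.
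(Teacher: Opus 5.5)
Your proposal is correct and follows the paper's own argument: the paper also deduces the claim from the continuity of $e_t$ on $(\mathcal{C}(I,X),d_\infty)$ combined with assertion (i) of \cref{prop:comparison_dp_dinf}. Your extra details (the $1$-Lipschitz bound, why $d_p$ is a genuine metric on continuous curves, and why $e_{t,p}$ can fail to be $d_p$-continuous) are accurate but not needed beyond that.
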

\begin{proof}
    This follows from assertion (i) of \cref{prop:comparison_dp_dinf} as $e_t$ is continuous on $(\mathcal{C}(I,X),d_\infty)$ for every $t\in I$.
\end{proof}

We now continue with another important class of curves in metric spaces beyond continuous curves, namely the curves of bounded variation.

\subsubsection{Curves of bounded variation and càdlàg representations}

This section follows \cite[Section 2.2]{abedi2024absolutely}, but with slight relaxations on the assumptions of the results used in the present article. First, let us recall the definition of the \emph{variation} of curves.

\begin{definition}[Variation of curves]
\label{def:variation}
 The \emph{variation} of a curve $c: I\to X$ on any subset $J\subset I$ is defined as 
 \begin{align*}
     \operatorname{Var}_X(c; J) \coloneqq \sup\left\{\sum_{i=0}^n d_X(c(t_i),c(t_{i+1})): \{t_i\}_{0\leq i \leq n+1} \subset J\text{ and } t_0< \ldots < t_{n+1}\right\}
 \end{align*}
 and its \emph{essential variation} is defined as
 \begin{align*}
     \operatorname{ess\,Var}_X(c;J)\coloneqq \inf  \{\operatorname{Var}_X(\tilde{c};J): \text{$\tilde{c}=c$ $\lebesgue$-a.e.} \}.
 \end{align*}
    
\end{definition}

Using \cref{def:variation}, the set of \emph{curves of bounded variation} can be defined.

 \begin{definition}[Curves of bounded variation] 
 \label{def:bv_curves}
 The set of \emph{curves of bounded variation}, which we call BV-curves for short, is defined as
$$BV(I,X)\coloneqq \left\{ c\in L^1(I,X): \operatorname{ess\,Var}_X(c;I) < \infty \right\}.$$
Also, define the finite metric $d_1: L^1(I,X)^2\to [0,\infty)$ as
$$d_1(c,c')\coloneqq \int_I d_X(c(t),c'(t)) \dif t.$$
\end{definition}

\begin{remark}[On the definition of $L^1(I,X)$]
\label{rem:separable_range_bv}
    Note that unlike in \cite[Section 2.2]{abedi2024absolutely}, the definition of $L^1(I,X)$ used in \cref{def:bv_curves} is the one of \cref{def:lebesgue_spaces}, so that we can always pick a separably valued representative instead of assuming that $X$ is separable in most proofs. Also, the base mapping is a mapping constant equal to some $z_0\in X$ and thus any $y\in X$, since $I$ has finite $\lebesgue$-measure.
\end{remark}

\begin{remark}[$BV(I,X)$ is a Borel subset of $L^1(I,X)$]
\label{rem:bv_borel}
    As a direct consequence of \cref{def:bv_curves} and the fact that the essential variation is lower semicontinuous from $L^1(I,X)$ to $[0,\infty]$ \cite[Lemma 2.7]{abedi2024absolutely}, $BV(I,X)$ is a Borel subset of $L^1(I,X)$.
\end{remark}

Using the definition of the essential variation of a curve, one can define the \emph{variation measure} associated to a BV-curve.

\begin{definition}[Variation measure]
\label{def:variation_measure}
     The \emph{variation measure} of a BV-curve $c\in BV(I,X)$ is defined as the Lebesgue--Stieltjes measure $\lvert Dc\rvert_X$ on $I$ induced by the non-decreasing function $V : I \to \mathbb{R}$ defined as $V(t)\coloneqq\operatorname{ess\,Var}_X(c;(a,t))$ (see \cite[Chapter 6, Section 3.3]{stein2009real} for the construction of $\lvert Dc\rvert_X$). In particular, $\lvert Dc\rvert_X$ satisfies for all $(s,t)\in I^2$ with $s< t$ that
     $$\lvert Dc\rvert_X((s,t)) = \lim_{u\to t^-} V(u) - \lim_{u\to s^+} V(u).$$
\end{definition}

Now, when $X$ is complete, we have an equivalent definition of the set of BV-curves.

\begin{proposition}[Equivalent definition of $BV(I,X)$] 
\label{prop:equiv_def_bv}
Suppose that $X$ is complete. Then, the set of curves of bounded variation is equivalently defined as
$$BV(I,X)= \left\{ c\in L^1(I,X): \sup_{0 < \tau < b-a} \int_a^{b-\tau} \Delta_{X,\tau} c(t)\dif t < \infty \right\}$$
and for all $c\in BV(I,X)$ we have for all $(s,t)\in I^2$ with $s<t $ that
$$\lvert Dc\rvert_X((s,t))= \operatorname{ess\,Var}_X(c;(s,t))=\sup_{0 < \tau < t-s} \int_s^{t-\tau} \Delta_{X,\tau} c(t)\dif t.$$
\end{proposition}

\begin{proof}
    The only variation with respect to the proof of \cite[Theorem 2.17]{abedi2024absolutely} is the fact that, instead of assuming that $X$ is separable, we can use the fact that elements of $BV(I,X)$ have separably valued representatives (\cref{rem:separable_range_bv}), so that, rather than taking a dense set in $X$ in the proof that (4)$\implies$(1), it is sufficient to take a dense set in the range of a separably valued representative.
\end{proof}

We will see that BV-curves are related to another important class of curves in metric spaces, namely the set of \emph{càdlàg curves of bounded variation}. Let us first recall the definition of \emph{càdlàg curves}.

\begin{definition}[Càdlàg curves]
 A curve $c: I\to X$ belongs to $\mathcal{D}(I,X)$ if it is right continuous at every $t\in [a,b)$, that is, $c(t) = \lim_{s\to t^+} c(s)$, and admits a left limit at every $t\in (a,b]$, that is, $\lim_{s \to t^-} c(s)$ exists in $X$. This set becomes a metric space when equipped with the \emph{Skorokhod metric} $d_{\operatorname{Sk}}: \mathcal{D}(I,X)^2 \to [0,\infty)$ defined as 
 $$d_{\operatorname{Sk}}(c,c')\coloneqq \inf_{\lambda}\max\{\lVert \lambda\rVert_B,d_\infty(c,c'\circ\lambda)\},$$
 where the infimum is taken over all increasing homeomorphisms $\lambda: I\to I$ and
 $$\lVert\lambda\rVert_B\coloneqq \sup_{(s,t)\in I^2, s <t}\left\lvert \log\left(\frac{\lambda(t) -\lambda(s)}{t - s}\right)\right\rvert.$$
\end{definition}

Then, one can define the set of \emph{càdlàg curves of bounded variation}.

\begin{definition}[Càdlàg curves of bounded variation]
\label{def:cadlag_bv}
A curve $c: I\to X$ belongs to $\mathcal{BV}(I,X)$ if it is càdlàg, continuous on the left at $t=b$ and has finite variation, that is, $c\in \mathcal{D}(I,X)$, $c(b)=\lim_{t\to b^-}c(t)$ and $\operatorname{Var}_X(c) < \infty$.
\end{definition}
\begin{remark}[On the condition of left-continuity at $t=b$]
    Note that unlike \cite[p.~16]{abedi2024absolutely} we define $\mathcal{BV}(I,X)$ by adding the condition of left-continuity at $t=b$. Indeed, for notational consistency, we argue that it makes more sense for $\mathcal{BV}(I,X)$ to denote the set that is in bijection with $BV(I,X)$ via the quotient mapping on $\cLs(I,X)$ (\cref{prop:cadlag_representation_bv}), rather than a strictly larger set. In addition, using the definition of $\mathcal{BV}(I,X)$ from \cref{def:cadlag_bv} instead of the one from \cite[p.~16]{abedi2024absolutely} does not affect the exposition of this section since the definition of $\mathcal{BV}(I,X)$ from \cref{def:cadlag_bv} is closed under limits for the Skorokhod metric, so that the results on the set of all càdlàg curves of finite variation mentioned in \cite{abedi2024absolutely} all hold for this definition.
\end{remark}

\begin{remark}[$\mathcal{BV}(I,X)$ is a Borel subset of $\mathcal{D}(I,X)$]
\label{rem:cadlag_borel}
    As a direct consequence of \cref{def:cadlag_bv} and the fact that the variation is lower semicontinuous from $\mathcal{D}(I,X)$ to $[0,\infty]$ \cite[Lemma 2.13]{abedi2024absolutely}, $\mathcal{BV}(I,X)$ is a Borel subset of $\mathcal{D}(I,X)$.
\end{remark}

When $X$ is complete, the set of BV-curves is isometric to the set of càdlàg curves of bounded variation for the metric $d_1$ \cite[Lemma 2.5]{abedi2024absolutely}.

\begin{proposition}[Càdlàg representations of BV-curves]
\label{prop:cadlag_representation_bv}
    Suppose that $X$ is complete. Then, the following assertions hold:
    \begin{enumerate}[label=(\roman*)]
        \item for all $c\in \mathcal{BV}(I,X)$ and all $(s,t)\in I^2$ such that $s\leq t$, we have
        $$\lvert D c\rvert_X((s,t)) = \operatorname{ess\,Var}_X(c;(s,t))= \operatorname{Var}_X(c;(s,t)).$$
        \item the restriction of the quotient mapping 
    $$q_I: \left\{\begin{array}{ccl}\mathcal{L}^0_s(I,X)&\longrightarrow& L^0(I,X)\\
    c&\longmapsto& [c]_I
    \end{array}\right.$$ 
    to $\mathcal{BV}(I,X)$, denoted by $q_1$, is an isometry between $(\mathcal{BV}(I,X),d_1)$ and $(BV(I,X),d_1)$.
    \end{enumerate}
\end{proposition}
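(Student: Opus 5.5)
The proof mirrors that of \cref{prop:ac_representation_sobolev}: it follows \cite[Lemma 2.5]{abedi2024absolutely}, replacing separability of $X$ by the existence of separably valued representatives (\cref{rem:separable_range_bv}). The plan has four steps: check that $q_1$ is well defined, prove (i), then bijectivity, then the isometric property.

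\emph{Step 1: well-definedness of $q_1$.} Let $c\in\mathcal{BV}(I,X)$.
\begin{itemize}
\item A càdlàg curve has separable range: $c(I)$ lies in the closure of $c(I\cap(\mathbb{Q}\cup\{b\}))$, by right continuity and continuity at $b$.
\item $c$ is Borel measurable, being the pointwise limit of the step curves $c_n(t)\coloneqq c(\lceil t 2^n\rceil_{I})$, where $\lceil\cdot\rceil_I$ denotes rounding up to the dyadic grid intersected with $I$. These are simple mappings converging pointwise by right continuity, so \cref{prop:measurability_pointwise_limit} applies.
\item $c$ is bounded, since it has finite variation, so $c\in L^1(I,X)$ because $\lebesgue(I)<\infty$.
\item By definition $\operatorname{ess\,Var}_X(c;I)\le\operatorname{Var}_X(c;I)<\infty$.
\end{itemize}
Hence $q_1(c)\in BV(I,X)$.

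\emph{Step 2: assertion (i).} I want $\operatorname{Var}_X(c;(s,t))\le \operatorname{ess\,Var}_X(c;(s,t))$, the reverse inequality being trivial. Take any $\tilde c=c$ $\lebesgue$-a.e. and a partition $t_0<\dots<t_{n+1}$ in $(s,t)$. Each $t_i$ is a limit from the right of points $t_i^k$ where $\tilde c(t_i^k)=c(t_i^k)$. By right continuity, $\sum d_X(c(t_i),c(t_{i+1}))=\lim_k\sum d_X(\tilde c(t_i^k),\tilde c(t_{i+1}^k))\le \operatorname{Var}_X(\tilde c;(s,t))$, with a left-limit variant for a point at the right end. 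Finally, the equality with $\lvert Dc\rvert_X((s,t))$ follows from \cref{def:variation_measure} once $V$ is identified with the variation function of $c$. Its one-sided limits are computed by additivity of $\operatorname{Var}_X$ over adjacent intervals, together with the fact that the jumps at the endpoints $s$ and $t$ are excluded because the interval $(s,t)$ is open.

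\emph{Step 3: bijectivity, the main step.}
\begin{itemize}
\item \emph{Injectivity.} Two càdlàg curves that agree a.e. agree on a dense set. Right continuity on $[a,b)$ then gives equality there, and left continuity at $b$ gives equality at $b$.
\item \emph{Surjectivity.} Take $c\in BV(I,X)$ and a separably valued representative. By definition of the essential variation, pick $\tilde c=c$ a.e. with $\operatorname{Var}_X(\tilde c;I)<\infty$. A curve of finite variation is Cauchy along any monotone sequence, so completeness of $X$ yields one-sided limits everywhere. Define $\hat c(t)\coloneqq\lim_{s\to t^+}\tilde c(s)$ for $t<b$ and $\hat c(b)\coloneqq\lim_{s\to b^-}\tilde c(s)$. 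Then:
\begin{itemize}
\item $\hat c$ is càdlàg and left continuous at $b$;
\item $\operatorname{Var}_X(\hat c;I)\le\operatorname{Var}_X(\tilde c;I)$, by approximating partitions of $\hat c$ with partitions of $\tilde c$;
\item $\hat c=\tilde c$ off the at most countable set of discontinuities of $\tilde c$. This set is countable because the variation function $t\mapsto \operatorname{Var}_X(\tilde c;[a,t])$ is monotone and controls the jumps of $\tilde c$.
\end{itemize}
Hence $q_1(\hat c)=c$.
\end{itemize}
The main obstacle is this surjectivity step. Completeness is what guarantees the limits exist, and separability plays no role, which is exactly the relaxation the paper needs.

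\emph{Step 4: isometric property.} Since $d_1$ is invariant under modifications on $\lebesgue$-null sets, $d_1(q_1(c),q_1(c'))=d_1(c,c')$ for all $c,c'\in\mathcal{BV}(I,X)$. Together with Step 3, $q_1$ is an isometry between $(\mathcal{BV}(I,X),d_1)$ and $(BV(I,X),d_1)$.
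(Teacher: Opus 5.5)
Your proposal is correct and takes essentially the same route as the paper, which gives no proof of its own and simply invokes \cite[Lemma 2.5]{abedi2024absolutely}. Your Steps 1--4 (right-limit regularization of a finite-variation representative, completeness giving the one-sided limits, agreement off a countable set, and invariance of $d_1$ under null modifications) spell that argument out, and you rightly note that separability is never used. The one point left implicit in (i) is that $\operatorname{Var}_X(c;(s,u))\to 0$ as $u\to s^+$, which is needed to get $\lim_{u\to s^+}\operatorname{Var}_X(c;(a,u))=\operatorname{Var}_X(c;(a,s])$; this is a standard consequence of finite variation.
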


Finally, note that the Borel structures induced by $d_1$ and $d_{\operatorname{Sk}}$ on càdlàg curves of bounded variation coincide as soon as $X$ is complete and separable. 

\begin{proposition}[Coincidence of the Borel structures induced by $d_1$ and $d_{\operatorname{Sk}}$ on $\mathcal{BV}(I,X)$]
\label{prop:comparison_d1_dsk}
Suppose that $X$ is complete and separable. Then, the Borel $\sigma$-algebras induced by $d_1$ and $d_{\operatorname{Sk}}$ on $\mathcal{BV}(I,X)$ coincide, that is, 
    $$\mathcal{B}\big(\mathcal{BV}(I,X),d_1\big) = \mathcal{B}\big(\mathcal{BV}(I,X),d_{\operatorname{Sk}}\big).$$
\end{proposition}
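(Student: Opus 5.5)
To prove the coincidence of the two Borel $\sigma$-algebras on $\mathcal{BV}(I,X)$, the plan is to show two inclusions separately, exploiting that $X$ is complete and separable so that both $(\mathcal{BV}(I,X),d_1)$ and $(\mathcal{BV}(I,X),d_{\operatorname{Sk}})$ are separable metric spaces. For separable metric spaces, the Borel $\sigma$-algebra is generated by open balls, so it suffices to show that every $d_1$-ball is $d_{\operatorname{Sk}}$-Borel and every $d_{\operatorname{Sk}}$-ball is $d_1$-Borel. Separability of $(\mathcal{D}(I,X),d_{\operatorname{Sk}})$ is classical when $X$ is separable, and it passes to the subset $\mathcal{BV}(I,X)$. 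Separability for $d_1$ follows from the isometry of \cref{prop:cadlag_representation_bv} together with separability of $L^1(I,X)$, or directly from the continuity of the identity established next.

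\emph{Step 1: $\mathcal{B}(d_1)\subset\mathcal{B}(d_{\operatorname{Sk}})$.} I will show that $d_1$ is continuous on $(\mathcal{BV}(I,X),d_{\operatorname{Sk}})^2$, so that the identity $(\mathcal{BV},d_{\operatorname{Sk}})\to(\mathcal{BV},d_1)$ is continuous. If $d_{\operatorname{Sk}}(c_n,c)\to 0$, then $c_n(t)\to c(t)$ at every continuity point of $c$, hence for $\lebesgue$-a.e.\ $t$, since a càdlàg curve has at most countably many jumps. Moreover, $\sup_t d_X(c_n(t),c(t_0))$ is uniformly bounded, because $d_\infty(c_n,c\circ\lambda_n)\to 0$ and $c$ has bounded range. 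Dominated convergence then gives $d_1(c_n,c)\to 0$. Continuity of the identity yields the first inclusion.

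\emph{Step 2: $\mathcal{B}(d_{\operatorname{Sk}})\subset\mathcal{B}(d_1)$.} This is the main obstacle, since the identity is not continuous in this direction: $d_1$-convergence does not control jumps. I would use the evaluation maps instead. For $t\in[a,b)$, the map $e_t$ is $d_1$-Borel on $\mathcal{BV}(I,X)$ because, by right-continuity, $c(t)=\lim_{k}$ of averages. More precisely, for any $z\in X$,
\[
d_X(c(t),z)=\lim_{k\to\infty} k\int_t^{t+1/k} d_X(c(s),z)\dif s ,
\]
and each right-hand side term is $d_1$-continuous in $c$. Hence $c\mapsto d_X(c(t),z)$ is $d_1$-Borel for every $z$ in a countable dense subset of $X$, which makes $e_t$ Borel because $X$ is separable. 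The case $t=b$ is handled in the same way using left-continuity at $b$.

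It then remains to show that the Borel $\sigma$-algebra of $(\mathcal{D}(I,X),d_{\operatorname{Sk}})$, restricted to $\mathcal{BV}(I,X)$, is generated by the evaluations $e_t$ for $t$ in a countable dense set $T\ni b$. This is the classical result (Billingsley, Theorem 12.5) for separable $X$. Its proof rests on two facts. First, $d_{\operatorname{Sk}}(c,c')$ is a measurable function of $(c(t))_{t\in T}$, since càdlàg paths are determined by their values on $T$. Second, the Skorokhod distance to a fixed element can be expressed through countable operations: it suffices to approximate $c'$ by step functions built on finite subsets of $T$, and to take the infimum over a countable family of piecewise-linear time changes with rational nodes.

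I expect adapting this classical argument to the Skorokhod metric with the $\lVert\cdot\rVert_B$ normalization to be the technical core. It amounts to writing $d_{\operatorname{Sk}}(c,c')$ as a countable infimum of maxima of functions of the form $d_X(c(t_i),c'(s_j))$. If needed, I would instead cite the fact that $\sigma\{e_t:t\in T\}$ equals the Skorokhod Borel $\sigma$-algebra and use only the trace on $\mathcal{BV}(I,X)$. Combined with the $d_1$-measurability of each $e_t$ from this step, this gives the second inclusion and concludes the proof.
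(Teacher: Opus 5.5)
Your argument is correct, but it takes a different route from the paper.

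\textbf{The paper's route.} The paper's proof is descriptive-set-theoretic. With $X$ Polish, $(\mathcal{D}(I,X),d_{\operatorname{Sk}})$ and $L^1(I,X)$ are Polish. $\mathcal{BV}(I,X)$ and $BV(I,X)$ are Borel subsets of them, hence Lusin and so Suslin, and $(\mathcal{BV}(I,X),d_1)$ is Suslin through the isometry $q_1$. Schwartz's result that comparable Suslin topologies have the same Borel sets then concludes. That argument simply asserts that the $d_1$-topology is weaker than the Skorokhod one, which is exactly what your Step~1 verifies.

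\textbf{Your route.} Your Step~2 replaces the Suslin machinery with two concrete facts.
\begin{itemize}
\item The classical generation of the Skorokhod Borel $\sigma$-algebra by $\{e_t : t\in T\}$, with $T$ countable, dense and containing $b$. Its trace on $\mathcal{BV}(I,X)$ is generated by the restricted evaluations.
\item The $d_1$-measurability of each $e_t$ via your averaging formula. This is sound: the averages are $k$-Lipschitz for $d_1$, and curves in $\mathcal{BV}(I,X)$ are right-continuous on $[a,b)$ and left-continuous at $b$.
\end{itemize}

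\textbf{What each buys.} Your route is more elementary and uses only the separability of $X$; completeness plays no role. It also proves the $d_1$-measurability of the evaluation maps (\cref{prop:measurability_evaluation_bv}) directly, whereas the paper derives that corollary from this proposition. The paper's route is shorter and parallels its $d_p$ versus $d_\infty$ argument, at the cost of heavier machinery and the Polish assumption.

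\textbf{Two side remarks.}
\begin{itemize}
\item The $\lVert\cdot\rVert_B$ normalization is not a real issue. This $d_{\operatorname{Sk}}$ is Billingsley's $d^\circ$, which is topologically equivalent to the usual $J_1$ metric, so the classical theorem applies verbatim.
\item Your sketch of that theorem, via an infimum over piecewise-linear time changes with rational nodes, does not work as stated. On $I=[0,1]$, take $c=\mathds{1}_{[\sqrt{2}/2,1]}$ and $c'=\mathds{1}_{[1/2,1]}$. Every such $\lambda$ has $\lambda^{-1}(1/2)$ rational, so $d_\infty(c,c'\circ\lambda)=1$. Yet $d_{\operatorname{Sk}}(c,c')<1$, using the time change sending $\sqrt{2}/2$ to $1/2$. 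Either cite the result, or use its standard proof. The step interpolants of $c$ on finite subsets of $T$ are $\sigma\{e_t:t\in T\}$-measurable; this is where $\mathcal{B}(X^{k})=\mathcal{B}(X)^{\otimes k}$, i.e.\ separability, enters. They converge to $c$ for $d_{\operatorname{Sk}}$, so the identity is measurable as a pointwise limit.
\end{itemize}
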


\begin{proof}
    The proof of \cref{prop:comparison_d1_dsk} follows the same main argument as the end of the proof of \cite[Lemma 1]{lisini2007characterization}. Recall that $BV(I,X)$ and $\mathcal{BV}(I,X)$ are Borel sets of $L^1(I,X)$ and $\mathcal{D}(I,X)$, respectively (see \cref{rem:bv_borel,rem:cadlag_borel}). As such, they are Lusin sets, hence Suslin sets as soon as $X$ is Polish (i.e., complete and separable), since, under such assumptions on $X$, $L^1(I,X)$ and $\mathcal{D}(I,X)$ become Polish (see \cite[Theorem 2, p.~95]{schwartz1973radon} for the proof that a Borel subset of a Lusin space is Lusin and \cite[Theorem 2.10]{abedi2024absolutely} for the proof that $\mathcal{D}(I,X)$ is Polish when $X$ is). Furthermore, $(\mathcal{BV}(I,X),d_1)$ being isometric to $(BV(I,X),d_1)$ by $q_1$ (\cref{prop:cadlag_representation_bv}), it is also Suslin. Hence, the conclusion follows from the fact that $d_1$ induces on $\mathcal{BV}(I,X)$ a Suslin topology that is weaker than the one induced by $d_{\operatorname{Sk}}$ \cite[Corollary 2, p.~101]{schwartz1973radon}.
\end{proof}

As an immediate consequence of \cref{prop:comparison_d1_dsk}, the evaluation mapping is measurable on the set of càdlàg curves of bounded variation equipped with the metric $d_1$ as soon as $X$ is complete and separable.

\begin{corollary}[Measurability of the evaluation mapping for $d_1$]
\label{prop:measurability_evaluation_bv}
 Suppose that $X$ is complete and separable. For all $t\in I$, the restriction of the evaluation mapping 
 $$e_t:\left\{\begin{array}{ccl}
     \mathcal{D}(I,X)&\longrightarrow& X\\
     c&\longmapsto& c(t)
 \end{array}\right.$$
 to $\mathcal{BV}(I,X)$, denoted by $e_{t,1}$, is Borel measurable from $(\mathcal{BV}(I,X),d_1)$ to $X$.
\end{corollary}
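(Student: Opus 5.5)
This follows from \cref{prop:comparison_d1_dsk}, in the same way that \cref{prop:measurability_evaluation_ac} follows from \cref{prop:comparison_dp_dinf}. The plan is to show that $e_{t,1}$ is Borel measurable for the Skorokhod $\sigma$-algebra on $\mathcal{BV}(I,X)$. Since $X$ is complete and separable, \cref{prop:comparison_d1_dsk} identifies $\mathcal{B}(\mathcal{BV}(I,X),d_1)$ with $\mathcal{B}(\mathcal{BV}(I,X),d_{\operatorname{Sk}})$, so measurability for $d_1$ then follows.

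The main obstacle is that, unlike the case of $d_\infty$ on continuous curves, $e_t$ is \emph{not} continuous on $(\mathcal{D}(I,X),d_{\operatorname{Sk}})$ at interior times where the curve jumps. For instance, with $u_n=\mathds{1}_{[t+1/n,b]}$ and $u=\mathds{1}_{[t,b]}$ we have $u_n\to u$ in $d_{\operatorname{Sk}}$, yet $u_n(t)\not\to u(t)$. So I cannot simply say \enquote{continuous hence measurable}. Instead I will use right-continuity.

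For $t\in[a,b)$ and $h>0$ small, define
$$e_t^h(c)\coloneqq \text{the average of } c \text{ over } [t,t+h],$$
or, since $X$ is not linear, avoid averaging and argue with real-valued functionals instead. Because $X$ is separable, its Borel $\sigma$-algebra is generated by the functions $y\mapsto d_X(y,z)$ for $z$ in a countable dense set. It therefore suffices to show that, for each $z$, the map
$$\phi_z(c)\coloneqq d_X(c(t),z)$$
is measurable. By right-continuity of $c$ at $t$,
$$\phi_z(c)=\lim_{n\to\infty} n\int_t^{t+1/n} d_X(c(s),z)\dif s .$$
Each map $c\mapsto \int_t^{t+1/n} d_X(c(s),z)\dif s$ is $1$-Lipschitz for $d_1$. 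This is the same estimate as $\lvert\int d(c,z)-\int d(c',z)\rvert\le d_1(c,c')$, so these maps are continuous for $d_1$ directly. Thus $\phi_z$ is a pointwise limit of $d_1$-continuous functions, hence $d_1$-Borel.

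For $t=b$, use the left-continuity at $b$ built into \cref{def:cadlag_bv}, averaging instead over $[b-1/n,b]$.

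Since the functions $\phi_z$ for countably many $z$ determine the Borel structure of the separable space $X$, $e_{t,1}$ is Borel measurable. This route in fact bypasses \cref{prop:comparison_d1_dsk} entirely. If one insists on the paper's approach, the alternative is to note that $e_t$ is $d_{\operatorname{Sk}}$-Borel on $\mathcal{D}(I,X)$, being a limit of continuous averaged functionals as above, and then invoke \cref{prop:comparison_d1_dsk}.
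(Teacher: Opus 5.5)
Your argument is correct, but it takes a different route from the paper. The paper never works with $d_1$ directly. It quotes \cite[Proposition 2.15]{abedi2024absolutely} for the fact that $e_t$ is Borel on $(\mathcal{D}(I,X),d_{\operatorname{Sk}})$; note this is Borel measurability, not continuity, so your jump example is consistent with it. It then restricts to $\mathcal{BV}(I,X)$ and transfers measurability to $d_1$ through \cref{prop:comparison_d1_dsk}. That proposition rests on Suslin-space theory (Polishness of $\mathcal{D}(I,X)$, Borel subsets of Polish spaces, and a weaker Suslin topology having the same Borel sets), which is why $X$ must be complete and separable.

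You instead write $d_X(c(t),z)$ as the limit of the averages $n\int_t^{t+1/n} d_X(c(s),z)\dif s$. This uses right-continuity on $[a,b)$, and at $t=b$ the left-continuity built into $\mathcal{BV}(I,X)$. Each average is Lipschitz for $d_1$, and you conclude with the countable generating family $\{d_X(\cdot,z)\}$.

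This is self-contained and elementary, and it never uses completeness. It does not even need separability: replace $d_X(\cdot,z)$ by $d_X(\cdot,F)$ for an arbitrary closed $F\subset X$. The same averaging shows that $c\mapsto d_X(c(t),F)$ is $d_1$-Borel, hence $e_{t,1}^{-1}(F)=\{c\in\mathcal{BV}(I,X): d_X(c(t),F)=0\}$ is $d_1$-Borel.

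As far as this corollary is concerned, your route makes \cref{prop:comparison_d1_dsk} unnecessary. The paper's route buys brevity once that proposition is available and an exact parallel with \cref{prop:measurability_evaluation_ac}. It also yields the stronger structural fact that the $d_1$ and $d_{\operatorname{Sk}}$ Borel $\sigma$-algebras on $\mathcal{BV}(I,X)$ coincide. The opening plan via \cref{prop:comparison_d1_dsk} and the false start about averaging $c$ itself can simply be dropped from your write-up.
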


\begin{proof}
    The mapping $e_t$ being Borel measurable from $(\mathcal{D}(I,X),d_{\operatorname{Sk}})$ to $X$ for all $t\in I$ \cite[Proposition 2.15]{abedi2024absolutely}, its restriction to $\mathcal{BV}(I,X)$, denoted by $e_{t,1}$, is Borel measurable from $(\mathcal{BV}(I,X),d_{\operatorname{Sk}})$ to $X$ for all $t\in I$. Hence, the conclusion of \cref{prop:measurability_evaluation_bv} follows from \cref{prop:comparison_d1_dsk}. 
\end{proof}

To characterize the behaviours of all the classes of curves presented in \cref{sec:curves_metric_space} for the particular case of nonlinear Lebesgue spaces, we will need to characterize $L^p$ curves in nonlinear Lebesgue spaces since Sobolev and BV curves are defined using $L^p$ curves (\cref{def:sobolev_curves,def:bv_curves}). The next section thus presents such a characterization of $L^p$ curves and its consequence for the characterization of absolutely continuous curves and càdlàg curves of bounded variation in nonlinear Lebesgue spaces.

\section{Main results}
\label{sec:main_results}
\subsection{\texorpdfstring{Characterization of $L^p$ curves in nonlinear Lebesgue spaces}{Characterization of Lp curves in nonlinear Lebesgue spaces}}
\label{sec:embeddings_isomorphisms}

We first clarify some recurrent notation used in this section and thereafter.
\begin{notation}
\label{not:lp}
    Let $h\in L^0(M,N)$,  $p\in [1,\infty)$ and $\lebesgue\otimes \mu_M$ be the product measure obtained using Carathéodory's process \cite[Chapter II]{dibenedetto2002real}. From thereon:
    \begin{enumerate}[label=(\roman*)]
        \item $\bar{h}$ denotes the mapping from $I$ to $\Lph(M,N)$ defined as $\bar{h}\equiv h$.
        \item $\underline{h}$ denotes the mapping from $M$ to $L^p(I,N)$ defined as $\underline{h}\coloneqq \iota \circ h$, which is measurable from $M$ to  $L^p(I,N)$ as the composition of the measurable mapping $h$ and the isometric mapping $\iota$ defined in \cref{rem:embedding_target_stronger_conclusion}. The mapping $\underline{h}$ is also measurable from $M$ to $(E(I,N),d_p)$ since the range of $\iota$ is closed in $L^p(I,N)$ and included in $E(I,N)$.
\item $\hat{h}$ denotes the mapping from $I\times M$ to $N$ defined as $\hat{h}(t,x) = h(x)$ for $\lebesgue \otimes \mu_M$-a.e.~$(t,x)\in I\times  M$.
        \item $L^p(I,\Lph(M,N))$ denotes the nonlinear Lebesgue space with base mapping set as $\bar{h}$.
        \item $d_{p,p}$ denotes the finite metric on $L^p(I,\Lph(M,N))$ defined as 
$$d_{p,p}(c,c')\coloneqq \left(\int_I D_p(c(t),c'(t))^p\dif t\right)^{1/p}.$$
\item $D_{p,p}$ denotes the finite metric on $L^p_{\underline{h}}(M,L^p(I,N))$ defined as 
$$D_{p,p}(f,f')\coloneqq \left(\int_M d_p(f(x),f'(x))^p\dif\mu_M(x)\right)^{1/p}.$$
\item $\hat{D}_p$ denotes the finite metric on $L^p_{\hat{h}}(I\times M,N)$ defined as 
$$\hat{D}_p(\hat{c},\hat{c}')\coloneqq \left(\int_{I\times M} d_N(\hat{c}(t,x),\hat{c}'(t,x))^p\dif\lebesgue\otimes \mu_M(t,x)\right)^{1/p}.$$
\end{enumerate}
\end{notation}

Establishing a characterization of $L^p$ curves in nonlinear Lebesgue spaces essentially consists in the identification of a canonical isometry of spaces of simple mappings that extends to nonlinear Lebesgue spaces through a density argument. To identify such a canonical isometry, a core ingredient is the set of \emph{rectangular almost simple mappings} defined below. 

\begin{definition}[Rectangular almost simple mappings]
\label{def:rectangle_almost_simple}
    Let $h\in L^0(M,N)$. Then, define:
    \begin{enumerate}[label=(\roman*)]
        \item $\mathcal{E}_{\hat{h},\sigma}(I\times M,N)$ the set of \emph{rectangular almost simple mappings}, that is, all mappings $\hat{c}\in \cL(I\times M,N)$ such that there exist a finite collection $(y_J)_{j\in J}$ of elements of $N$ and a finite collection $(I_j\times M_j)_{j\in J}$ of disjoint measurable rectangles of finite $\lebesgue\otimes \mu_M$-measure such that $\hat{c}|_{I_j\times M_j}\equiv y_j$ for all $j\in J$ and $\hat{c}|_{(I\times M)\setminus R}\sim_{I\times M} \hat{h}|_{(I\times M)\setminus R}$ with $R\coloneqq \cup_{j\in J} I_j\times M_j$.
        \item $E_{\hat{h},\sigma}(I\times M,N)\coloneqq q_{I\times M}(\mathcal{E}_{\hat{h},\sigma}(I\times M,N))$ the set of equivalence classes admitting a rectangular almost simple representative.
    \end{enumerate}
    Note that, by definition, $E_{\hat{h},\sigma}(I\times M,N) \subset E_{\hat{h}}(I\times M,N)$.
\end{definition}

On the set of rectangular almost simple mappings, the section mappings on $I$ and $M$ define isometries. 

\begin{proposition}[Isometries induced by the section mappings on $I$ and $M$]
\label{prop:embed_iterated_simple}
    Let $h\in L^0(M,N)$ and $p\in [1,\infty)$. Then, the following assertions hold:
    \begin{enumerate}[label=(\roman*)]
    \item the section mapping on $I$ 
    $$\operatorname{sec}_{I,p}:\left\{\begin{array}{ccl}
       E_{\hat{h},\sigma}(I\times M,N)\cap L^p_{\hat{h}}(I\times M,N) &\longrightarrow& E(I,E_h(M,N)\cap L^p_h(M,N))\\
        {[\hat{c}]_{I\times M}} &\longmapsto & {[t\mapsto {[\hat{c}(t,\cdot)]_M}]_I}
    \end{array}\right.$$
    is an isometry between those sets equipped with the metrics $\hat{D}_p$ and $d_{p,p}$, respectively.
    \item the section mapping on $M$ 
    $$\operatorname{sec}_{M,p}:\left\{\begin{array}{ccl}
        E_{\hat{h},\sigma}(I\times M,N)\cap L^p_{\hat{h}}(I\times M,N) &\longrightarrow& E_{\underline{h}}(M,E(I,N))\cap L^p_{\underline{h}}(M,L^p(I,N))\\
        {[\hat{c}]_{I\times M}} &\longmapsto & {[x\mapsto {[\hat{c}(\cdot,x)]_I}]_M}
    \end{array}\right.$$
    is an isometry between those sets equipped with the metrics $\hat{D}_p$ and $D_{p,p}$, respectively.
    \end{enumerate}
\end{proposition}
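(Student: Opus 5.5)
The plan is to handle both assertions the same way. Fix a rectangular almost simple representative $\hat{c}$ together with its data $(y_j)_{j\in J}$ and disjoint rectangles $(I_j\times M_j)_{j\in J}$, and compute both sections explicitly. The argument then splits into three steps: well-definedness (the sections land in the stated target sets and do not depend on the representative), the isometry identity via Tonelli's theorem, and surjectivity.

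\emph{Step 1: computing the sections.} For fixed $t\in I$, let $J_t\coloneqq\{j\in J: t\in I_j\}$. Then the section $\hat{c}(t,\cdot)$ equals $y_j$ on $M_j$ for $j\in J_t$. Off $\bigcup_{j\in J_t}M_j$ it equals $h$, at least for a.e.~$t$: since $\hat{c}\sim_{I\times M}\hat{h}$ off $R$, Tonelli's theorem applied to the indicator of the null set $\{\hat{c}\neq\hat{h}\}\setminus R$ shows that for $\lebesgue$-a.e.~$t$ the section differs from $h$ only on a $\mu_M$-null set outside $R_t$. Hence $\hat{c}(t,\cdot)$ is almost simple, and it is measurable with separable range (a finite set together with $h(M)$).

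The map $t\mapsto[\hat{c}(t,\cdot)]_M$ takes only finitely many values, one for each subset $J'\subset J$ of the form $J_t$. Its level sets $\{t: J_t=J'\}$ are Boolean combinations of the $I_j$, so the map is simple. On the exceptional null set of times we redefine it as $h$; this does not change its class in $E(I,\cdot)$.

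Symmetrically, for fixed $x$, $\hat{c}(\cdot,x)$ is simple on $I$ with finitely many values. The map $x\mapsto[\hat{c}(\cdot,x)]_I$ equals $\underline{h}(x)=\iota(h(x))$ off $\bigcup_j M_j$ and is piecewise given on the finitely many atoms generated by the $M_j$. It is measurable into $(E(I,N),d_p)$ as a finite patching of the measurable map $\underline{h}$ with constant maps on measurable pieces. It is almost simple relative to $\underline{h}$ because $\bigcup_j M_j$ can be taken of finite measure: rectangles with $\lebesgue(I_j)=0$ or $\mu_M(M_j)=0$ can be discarded beforehand, since changing $\hat{c}$ on a null set is harmless.

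\emph{Step 2: the isometry identity and independence of representatives.} For two rectangular almost simple $\hat{c},\hat{c}'$, the function $(t,x)\mapsto d_N(\hat{c}(t,x),\hat{c}'(t,x))^p$ is nonnegative and $\lebesgue\otimes\mu_M$-measurable. Tonelli's theorem, valid here because every set involved is $\sigma$-finite up to the region where both maps agree with $\hat{h}$ and the integrand vanishes, gives
$$\hat{D}_p(\hat{c},\hat{c}')^p=\int_I D_p(\hat{c}(t,\cdot),\hat{c}'(t,\cdot))^p\dif t=\int_M d_p(\hat{c}(\cdot,x),\hat{c}'(\cdot,x))^p\dif\mu_M(x).$$
The first equality is $d_{p,p}$ of the sections on $I$, and the second is $D_{p,p}$ of the sections on $M$.

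Taking $\hat{c}'$ to be another representative of the same class gives $0$ on both sides, so $\operatorname{sec}_{I,p}$ and $\operatorname{sec}_{M,p}$ are well defined. Taking $\hat{c}'=\hat{h}$ shows finiteness, so the images lie in $L^p_h$ and $L^p_{\underline{h}}$ respectively. The isometry property (hence injectivity) follows from the same identity.

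\emph{Step 3: surjectivity, the expected obstacle.} The main difficulty is that the measurability and $\sigma$-finiteness in the Tonelli step rest on $\hat{h}(t,x)=h(x)$, which needs a separably valued representative of $h$ so that $d_N(\hat{c},\hat{h})$ is jointly measurable. Beyond that, I expect surjectivity to be the real sticking point.

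For (i), take a simple curve $t\mapsto f_k$ on a finite partition $(A_k)$ of $I$, with each $f_k$ almost simple, equal to $y_{k,l}$ on the disjoint sets $B_{k,l}$ of finite measure and to $h$ elsewhere. Define $\hat{c}\coloneqq y_{k,l}$ on $A_k\times B_{k,l}$ and $\hat{h}$ elsewhere. This $\hat{c}$ is rectangular almost simple, and its section on $I$ recovers the given curve.

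For (ii), take an almost simple $x\mapsto g_m$ on disjoint sets $C_m$ of finite measure, with each $g_m\in E(I,N)$ given by values $z_{m,n}$ on a partition $(I_{m,n})$ of $I$. Set $\hat{c}\coloneqq z_{m,n}$ on $I_{m,n}\times C_m$ and $\hat{h}$ elsewhere. The one care point is that $g_m\in E(I,N)$ covers all of $I$, so the rectangles $I_{m,n}\times C_m$ exhaust $I\times C_m$, and each has finite measure because $\lebesgue(I)<\infty$. Its section on $M$ recovers the given map.
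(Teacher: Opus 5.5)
Your route is the paper's: compute the sections of a rectangular almost simple representative, get the distance identity from Fubini--Tonelli on a finite union of rectangles of finite measure, and prove surjectivity by building rectangles $A_k\times B_{k,l}$ (resp.\ $I_{m,n}\times C_m$) from a given simple element. You are more careful than the paper on two points: $\hat c\sim\hat h$ rather than $\hat c=\hat h$ off $R$, and null rectangles with $\mu_M(M_j)=\infty$. One step fails, though: the claim in Step~1 that $x\mapsto[\hat c(\cdot,x)]_I$ is a patching of $\underline h$ with \emph{constant} maps on the atoms generated by the $M_j$. Take the atom $A_{J'}\coloneqq\bigcap_{j\in J'}M_j\setminus\bigcup_{j\notin J'}M_j$. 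For $x\in A_{J'}$, the curve $\hat c(\cdot,x)$ equals $y_j$ on $I_j$ for $j\in J'$, but it equals $h(x)$ on $I\setminus\bigcup_{j\in J'}I_j$. Unless those $I_j$ cover $I$ up to a null set, it therefore varies with $h(x)$, and for non-simple $h$ it is not almost simple. A concrete case: let $I=M=[0,1]$ with Lebesgue measures, $N=\mathbb{R}$, $h(x)=x$, and set $\hat c\coloneqq 1$ on $[0,1/2]\times[0,1]$ and $\hat c(t,x)\coloneqq x$ elsewhere. Then $\hat c$ is rectangular almost simple and lies in $\mathcal{L}^p_{\hat h}(I\times M,N)$. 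Yet $\operatorname{sec}_{M,p}(\hat c)$ is $x\mapsto[\mathds{1}_{[0,1/2]}+x\,\mathds{1}_{(1/2,1]}]_I$, whose values are pairwise distinct and differ from $\underline h(x)$ for every $x<1$. No null-set modification makes it almost simple with respect to $\underline h$, so it does not belong to $E_{\underline h}(M,E(I,N))$.

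So assertion (ii), as stated, fails for non-simple $h$. The paper's proof of (ii), \enquote{same arguments as (i), but using almost simple mappings}, passes over exactly this point, so no better argument on your side would close it. What your Steps~2--3 do prove is the following.
\begin{itemize}
\item $\operatorname{sec}_{M,p}$ is an isometry into $L^p_{\underline h}(M,L^p(I,N))$. Measurability and separable range hold because on $A_{J'}$ the section is $h$ followed by $y\mapsto[\sum_{j\in J'}y_j\mathds{1}_{I_j}+y\,\mathds{1}_{I\setminus\bigcup_{j\in J'}I_j}]_I$, which is Lipschitz from $N$ to $L^p(I,N)$.
\item Its range contains $E_{\underline h}(M,E(I,N))\cap L^p_{\underline h}(M,L^p(I,N))$.
\end{itemize}
This is all that \cref{th:extension_embed_iterated_simple} needs: dense domain, dense range, complete spaces. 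However, \cref{cor:isom_iterated_simple} must then be read with range $\operatorname{sec}_{M,p}\bigl(E_{\hat h,\sigma}(I\times M,N)\cap L^p_{\hat h}(I\times M,N)\bigr)$, which strictly contains the stated target. To get (ii) literally, add the hypothesis $h\in E(M,N)$. Then on each atom the section takes at most $\lvert h(M)\rvert$ values, and your argument goes through unchanged.
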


\begin{remark}[On the choice of product measure $\lebesgue \otimes \mu_M$]
    The choice of product measure $\lebesgue \otimes \mu_M$ does not matter, since we can always reduce to integration over measurable rectangles on which all product measures coincide.
\end{remark}

\begin{proof}[Proof of \cref{prop:embed_iterated_simple}]
    Let $h\in L^0(M,N)$ and $p\in [1,\infty)$. 
    \begin{enumerate}[label=(\roman*),wide]
    \item \emph{Step 1: Definition of $\operatorname{sec}_{I,p}$.} Let $\hat{c}\in E_{\hat{h},\sigma}(I\times M,N)\cap L^p_{\hat{h}}(I\times M,N)$. Then, there is $\tilde{c}\in \mathcal{E}_{\hat{h},\sigma}(I\times M,N)\cap \mathcal{L}^p_{\hat{h}}(I\times M,N)$ such that $\hat{D}_p(\tilde{c},\hat{c})=0$. By definition of $\tilde{c}$, there exists $n\in \mathbb{N}$, a collection $(y_i)_{0\leq i\leq n}$ of elements of $N$ and a collection $(I_i\times M_i)_{0\leq i \leq n}$ of disjoint measurable rectangles of finite $\lebesgue\otimes \mu_M$-measure such that $\tilde{c}|_{I_i\times M_i}\equiv y_i$ for all $0\leq i\leq n$ and $\tilde{c}|_{(I\times M)\setminus R} = h|_{(I\times M)\setminus R}$ with $R\coloneqq\cup_{i=0}^n (I_i\times M_i)$. Then, $t\mapsto [\tilde{c}(t,\cdot)]_M$ is simple from $I$ to $(E_h(M,N),D_p)$ and, since $\tilde{c}\in\mathcal{L}^p_{\hat{h}}(I\times M,N)$ and $R$ is the finite union of measurable rectangles of finite $\lebesgue\otimes \mu_M$-measure, we have, by the Fubini--Tonelli theorem \cite[Proposition 5.2.1]{cohn2013measure}, that $D_p(\tilde{c}(t,\cdot),h)<\infty $ for $\lebesgue$-a.e.~$t\in I$, hence $[\tilde{c}(t,\cdot)]_M\in E_h(M,N)\cap \Lph(M,N)$ for $\lebesgue$-a.e.~$t\in I$. This shows that $\operatorname{sec}_{I,p}(\hat{c})=[t\mapsto [\tilde{c}(t,\cdot)]_M]_I$ belongs to $ E(I,E_h(M,N)\cap \Lph(M,N))$.

    \par\medskip \noindent \emph{Step 2: Isometric property of $\operatorname{sec}_{I,p}$.} Let $(\hat{c},\hat{c}')\in (E_{\hat{h},\sigma}(I\times M,N))\cap L^p_{\hat{h}}(I\times M,N))^2$. Since $\hat{c}$ and $\hat{c}'$ differ on the finite union of measurable rectangles of finite $\lebesgue\otimes \mu_M$-measure, we have, by the Fubini--Tonelli theorem \cite[Proposition 5.2.1]{cohn2013measure}, that 
    \begin{equation*}d_{p,p}(\sec_{I,p}(\hat{c}),\operatorname{\sec}_{I,p}(\hat{c}')) = \hat{D}_p(\hat{c},\hat{c}').\end{equation*}

    \par\medskip \noindent \emph{Step 3: Surjectivity of $\operatorname{sec}_{I,p}$.} Let $c\in E(I,E_h(M,N)\cap L^p_h(M,N))$. Then, there is $\tilde{c}\in \mathcal{E}(I,\mathcal{E}_h(M,N)\cap \cLph(M,N))$ such that $d_{p,p}(\tilde{c},c) = 0$. By definition of $\tilde{c}$, there exists a finite indexing set $I_c$ such that $\tilde{c}(I) = \{\tilde{s}_i\in \mathcal{E}_h(M,N)\cap \cLph(M,N): i\in I_c\}$. Also, define $I_i\coloneqq \tilde{c}^{-1}(\{\tilde{s}_i\})$ for all $i\in I_c$. In addition, there exist $B_i\in \mathcal{F}_{\mu_M}$ outside which $\tilde{s}_i$ agrees with $h$ and a finite indexing set $J_i$ such that $\tilde{s}_i(B_i)\coloneqq \{y_{i,j}\in N: j\in J_i\}$ and $\tilde{s}_i|_{M\setminus B_i}= h|_{M\setminus B_i}$. Then, define $B_{i,j}\coloneqq B_i\cap \tilde{s}_i^{-1}(\{y_{i,j}\})$ for all $j\in J_i$. Since the $I_i$'s are pairwise disjoint and, for every $i\in I_c$, the $(B_{i,j})_{j\in J_i}$ are pairwise disjoint, defining $\Gamma_c \coloneqq \{(i,j): i\in I_c, j\in J_i\}$, we observe that $(I_i\times B_{i,j})_{(i,j)\in \Gamma_c}$ is a finite collection of disjoint measurable rectangles of finite $\lebesgue\otimes \mu_M$-measure and we can define the mapping $\hat{c}: I\times M\to N$ as $\hat{c}|_{I_i\times B_{i,j}}\equiv y_{i,j}$ for all $(i,j)\in \Gamma_c$ and $\hat{c}(t,x) = h(x)$ for all $(t,x)\in(I\times M)\setminus P$ with $P\coloneqq\cup_{(i,j)\in \Gamma_c} I_i\times B_{i,j}$. By construction, $D_p(\hat{c}(t,\cdot),c(t))=0$ for $\lebesgue$-a.e.~$t\in I$ and $\hat{c}$ differs from $\hat{h}$ on the finite union of measurable rectangles of finite $\lebesgue\otimes \mu_M$-measure, hence, by the Fubini--Tonelli theorem \cite[Proposition 5.2.1]{cohn2013measure}, $\hat{D}_p(\hat{c},\hat{h}) = d_{p,p}(c,\bar{h}) < \infty$. Thus, up to the identification with its equivalence class, $\hat{c}$ belongs to $E_{\hat{h},\sigma}(I\times M,N)\cap L^p_{\hat{h}}(I\times M,N)$ and $c=\operatorname{sec}_{I,p}(\hat{c})$, that is, $c$ belongs to the range of $\operatorname{sec}_{I,p}$.
    \item The proof follows from the same arguments as the proof of (i), but using almost simple mappings.\qedhere
    \end{enumerate}
\end{proof}
\begin{remark}[On relaxing the assumptions on $I$]
\label{rem:relax_isom_simple}
    Note that the proof of \cref{prop:embed_iterated_simple} does not rely on any specific property of $I$ other than the finiteness of the measure on it to ensure that $E_{\underline{h}}(M,E(I,N))\cap L^p_{\underline{h}}(M,L^p(I,N))$ is nonempty. Hence, \cref{prop:embed_iterated_simple} holds for a measure space $(I,\Sigma_I,\mu_I)$ such that $\mu_I$ is finite. Alternatively, to ensure that $E_{\underline{h}}(M,E(I,N))\cap L^p_{\underline{h}}(M,L^p(I,N))$ is nonempty, we can replace the finiteness assumption on $\mu_I$ by the assumption that $h\in E(M,N)$. In that case, $E_{\underline{h}}(M,E(I,N))\cap L^p_{\underline{h}}(M,L^p(I,N))$ should be replaced by $E_{\underline{h}}(M,E(I,N))\cap L^p_{\underline{h}}(M,L^0(I,N))$ in assertion (ii) of \cref{prop:embed_iterated_simple}, with $L^p_{\underline{h}}(M,L^0(I,N))$ denoting the set of equivalence classes of separably valued measurable mappings from $M$ to $(L^0(I,N),d_p)$ at a finite $D_{p,p}$-distance from $\underline{h}$. If $h$ is not only simple, but constant, the assertion (ii) of \cref{prop:embed_iterated_simple} holds as is. The assertion (i) of \cref{prop:embed_iterated_simple} even holds when $(I,\Sigma_I,\mu_I)$ is just a measure space, with no further assumptions, by reasoning with almost simple mappings instead of simple mappings. 
\end{remark}

As a direct consequence of \cref{prop:embed_iterated_simple}, we can define an isometry between the target spaces of $\operatorname{sec}_{I,p}$ and $\operatorname{sec}_{M,p}$.

\begin{corollary}[Isometry between the target spaces of $\operatorname{sec}_{I,p}$ and $\operatorname{sec}_{M,p}$]
\label{cor:isom_iterated_simple}
    Let $h\in L^0(M,N)$ and $p\in [1,\infty)$. Then, the isometries  $\operatorname{sec}_{I,p}$ and $\operatorname{sec}_{M,p}$ defined in \cref{prop:embed_iterated_simple} induce the isometry
    $$i_p\coloneqq \operatorname{sec}_{M,p}\circ \operatorname{sec}_{I,p}^{-1}: E(I,E_h(M,N)\cap L^p_h(M,N))\to E_{\underline{h}}(M,E(I,N))\cap L^p_{\underline{h}}(M,L^p(I,N)).$$
\end{corollary}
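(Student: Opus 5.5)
The corollary is a formal consequence of \cref{prop:embed_iterated_simple}. The plan is to check that $\operatorname{sec}_{I,p}$ is a bijective isometry, so that its inverse exists and is itself an isometry. The composition $\operatorname{sec}_{M,p}\circ\operatorname{sec}_{I,p}^{-1}$ is then an isometry between the stated sets.

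First, $\operatorname{sec}_{I,p}$ is injective. An isometry between metric spaces, here $(E_{\hat{h},\sigma}(I\times M,N)\cap L^p_{\hat{h}}(I\times M,N),\hat{D}_p)$ and $(E(I,E_h(M,N)\cap L^p_h(M,N)),d_{p,p})$, is automatically injective. Indeed, if $\operatorname{sec}_{I,p}(\hat{c})=\operatorname{sec}_{I,p}(\hat{c}')$ then $\hat{D}_p(\hat{c},\hat{c}')=0$, hence $\hat{c}=\hat{c}'$ as equivalence classes by \cref{prop:lp_separates_equiv_class}. Here I use that $\hat{D}_p$ and $d_{p,p}$ are genuine finite metrics on these quotient sets. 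For $d_{p,p}$ this holds because $E_h(M,N)\cap\Lph(M,N)$ with $D_p$ is a metric space, so $L^p(I,\Lph(M,N))$ is one as well.

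Second, $\operatorname{sec}_{I,p}$ is surjective by Step 3 of the proof of \cref{prop:embed_iterated_simple}(i). So $\operatorname{sec}_{I,p}^{-1}$ is well defined on all of $E(I,E_h(M,N)\cap L^p_h(M,N))$. It is an isometry because $\hat{D}_p(\operatorname{sec}_{I,p}^{-1}(c),\operatorname{sec}_{I,p}^{-1}(c'))=d_{p,p}(c,c')$ follows directly from applying the isometric property to the preimages.

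Third, $\operatorname{sec}_{M,p}$ is an isometry from the same domain into $E_{\underline{h}}(M,E(I,N))\cap L^p_{\underline{h}}(M,L^p(I,N))$ by \cref{prop:embed_iterated_simple}(ii). Composing, $D_{p,p}(i_p(c),i_p(c'))=\hat{D}_p(\operatorname{sec}_{I,p}^{-1}(c),\operatorname{sec}_{I,p}^{-1}(c'))=d_{p,p}(c,c')$.

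The only step with any substance is the surjectivity of $\operatorname{sec}_{I,p}$, which is needed to define the inverse on the whole space. It is already established in \cref{prop:embed_iterated_simple}. If one also wants $i_p$ to be bijective, the surjectivity argument for $\operatorname{sec}_{M,p}$ is the analogous construction with the roles of $I$ and $M$ exchanged, as indicated in part (ii) of that proof.
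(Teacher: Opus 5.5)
Your argument is correct and is exactly the reasoning the paper leaves implicit when it calls this a direct consequence of \cref{prop:embed_iterated_simple}: $\operatorname{sec}_{I,p}$ is a bijective isometry (surjectivity being Step 3 of part (i)), so its inverse is an isometry, and composing it with $\operatorname{sec}_{M,p}$ gives the isometry $i_p$. Your closing remark also matches the paper: part (ii) of \cref{prop:embed_iterated_simple} likewise gives surjectivity of $\operatorname{sec}_{M,p}$, so $i_p$ is bijective, which the later use of $\bar{i}_p^{-1}$ relies on.
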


Let us now identify the closures of the target and base spaces of $\operatorname{sec}_{I,p}$ and $\operatorname{sec}_{M,p}$. 
\begin{proposition}[Closures of the base and target spaces of $\operatorname{sec}_{I,p}$ and $\operatorname{sec}_{M,p}$]
\label{prop:density_iterated_almost_simple}
Let $h\in L^0(M,N)$ and $p\in [1,\infty)$. Then, the following assertions hold:
\begin{enumerate}[label=(\roman*)]
    \item the closure of the base space of $\operatorname{sec}_{I,p}$ and $\operatorname{sec}_{M,p}$ is 
    $$\overline{E_{\hat{h},\sigma}(I\times M,N)\cap L^p_{\hat{h}}(I\times M,N)}^{\,\hat{D}_p}=L^p_{\hat{h}}(I\times M,N).$$
    \item the closure of the target space of $\operatorname{sec}_{I,p}$ is 
    $$\overline{E(I, E_h(M,N)\cap L^p_h(M,N))}^{\,d_{p,p}}=L^p(I,L^p_h(M,N)).$$
    \item the closure of the target space of $\operatorname{sec}_{M,p}$ is 
    $$\overline{E_{\underline{h}}(M,E(I,N))\cap L^p_{\underline{h}}(M,L^p(I,N))}^{\,D_{p,p}}=L^p_{\underline{h}}(M,L^p(I,N)).$$
\end{enumerate}
\end{proposition}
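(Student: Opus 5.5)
All three assertions have the same structure. The closure is contained in the ambient space because the ambient space is closed in the larger space of measurable classes; this follows from \cref{cor:completeness_measurable} when the target is complete, or from the finite-distance characterization in general, since $\mathcal{L}^p$ limits stay at finite distance from the base mapping. So the real content is density. For each assertion we reduce to \cref{prop:density_almost_simple} and then approximate almost simple mappings by the more structured class appearing in the statement.

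For (i), we apply \cref{prop:density_almost_simple} on the measure space $(I\times M,\lebesgue\otimes\mu_M)$ with base mapping $\hat{h}$. This gives density of $E_{\hat{h}}(I\times M,N)\cap L^p_{\hat{h}}(I\times M,N)$. Take an almost simple $\hat{c}$, equal to $y_j$ on disjoint sets $A_j$ of finite measure and to $\hat{h}$ off $\cup_j A_j$. By the Carathéodory construction of $\lebesgue\otimes\mu_M$, each $A_j$ can be approximated in measure by a finite disjoint union of measurable rectangles $R_j$ of finite measure, with $(\lebesgue\otimes\mu_M)(A_j\,\triangle\,R_j)<\varepsilon$. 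We then make the $R_j$ pairwise disjoint, by subtracting the earlier ones and re-decomposing into rectangles, and define $\hat{c}_\varepsilon\coloneqq y_j$ on $R_j$ and $\hat{h}$ elsewhere.

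The error $\hat{D}_p(\hat{c},\hat{c}_\varepsilon)^p$ is an integral over the symmetric difference set $S_\varepsilon$ of quantities like $d_N(y_j,y_k)^p$ and $d_N(y_j,\hat{h})^p$. The first kind is bounded by a constant times $\varepsilon$. The second kind is not bounded, because $\hat{h}$ may be unbounded. This is the main obstacle. We handle it by absolute continuity of the integral: the functions $d_N(y_j,\hat{h})^p$ are integrable on $\cup_j A_j$, but on the added parts $R_j\setminus A_j$ they need not be. So we first truncate. We restrict $A_j$ to the set where $d_N(y_j,\hat{h})\le K$, placing the discarded part back on $\hat{h}$, which costs little by dominated convergence. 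We then intersect the rectangles with a set where $\hat{h}$ is close to a fixed value. Simpler still: choose rectangles inside sets of the form $\{d_N(y_j,\hat{h})\le K\}$. Since $\hat{h}(t,x)=h(x)$, this set is $I\times\{x: d_N(y_j,h(x))\le K\}$, which is itself a rectangle. The intersections therefore remain rectangles, and all error terms are bounded by $K^p\varepsilon$ plus a small tail.

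For (ii), \cref{prop:density_almost_simple} with base $\bar{h}$ (and $I$ of finite measure) gives density of simple curves $I\to L^p_h(M,N)$. Each value is a single element of $L^p_h(M,N)$, and we replace it by a nearby almost simple mapping, again by \cref{prop:density_almost_simple}. Since finitely many values are involved and $\lebesgue(I)<\infty$, the $d_{p,p}$ error is at most $(b-a)^{1/p}\max_i\delta_i$.

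For (iii), we likewise start from almost simple mappings $M\to L^p(I,N)$ with base $\underline{h}$, given by \cref{prop:density_almost_simple}. Off a set $B$ of finite measure they equal $\underline{h}=\iota\circ h$, which already takes values in $E(I,N)$ by \cref{rem:embedding_target_stronger_conclusion}. On $B$ there are finitely many values in $L^p(I,N)$, and we approximate each by an element of $E(I,N)$, again via \cref{prop:density_almost_simple} on $I$. The error is at most $\mu_M(B)^{1/p}\max_i\delta_i$.

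The step needing the most care is therefore (i): we must keep rectangle approximations compatible with the possibly unbounded base mapping $\hat{h}$. The observation that $\hat{h}$ depends only on $x$ is what keeps the truncation sets rectangular.
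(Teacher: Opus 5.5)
Your proposal is correct and follows essentially the paper's proof. Assertions (ii) and (iii) are the same two-stage approximation: first density of simple (respectively almost simple) mappings, then approximation of their finitely many values. In (i) the paper likewise exploits that $\hat h(t,x)=h(x)$, so that the truncation sets $I\times\{x: d_N(z_0,h(x))\le n\}$ are rectangles on which the error is uniformly bounded before passing to finite unions of rectangles. The only difference is the order of steps. The paper truncates, and restricts to a finite-measure piece of the $\sigma$-finite support, before invoking \cref{prop:density_almost_simple}. You invoke \cref{prop:density_almost_simple} first and then truncate per value $y_j$, which makes the finite-measure reduction automatic.
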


\begin{proof}
    Let $h\in \cLs(M,N)$, $p\in [1,\infty)$ and $\varepsilon > 0$. 
    
    \begin{enumerate}[label=(\roman*),wide]
        \item Let $\hat{c}\in \mathcal{L}^p_{\hat{h}}(I\times M,N)$. Then, $\hat{c}$ differs from $\hat{h}$ on a measurable set $\hat{P}$ of $\sigma$-finite $\lebesgue\otimes \mu_M$-measure, that is, there exists a sequence $(P_n)_{n\in\mathbb{N}}$ of measurable sets, which can be assumed increasing, such that $\hat{P}=\cup_{n\in\mathbb{N}}P_n$.

    \par\medskip \noindent \emph{Step 1: Approximation by a Lebesgue mapping such that the set where it differs from $\hat{h}$ is contained in a measurable set where $\hat{h}$ is bounded.} Let $z_0\in N$ and $H_n\coloneqq\{x\in M:d_N(z_0,h(x))\leq n\}$ for all $n\in\mathbb{N}$, so that, $d_N$ being assumed finite, $M=\cup_{n\in\mathbb{N}} H_n$. Also, define the mapping $\hat{c}_n: I\times M\to N$ as 
    $$\hat{c}_n(t,x)\coloneqq\begin{cases}
        \hat{c}(t,x),&\text{if $x\in H_n$}\\
        \hat{h}(t,x),&\text{otherwise}
    \end{cases}$$
    Then, since $(H_n)_{n\in\mathbb{N}}$ is an increasing sequence of sets in $M$, we have for all $(t,x)\in I\times M$ that $\hat{c}_n(t,x)\to \hat{c}(t,x)$ as $n\to \infty$ and $d_N(\hat{c}_n(t,x),\hat{c}(t,x))\leq d_N(\hat{h}(t,x),\hat{c}(t,x))$ for all $n\in \mathbb{N}$. Since $\hat{c}\in \mathcal{L}^p_{\hat{h}}(I\times M,N)$, we have, by Lebesgue's dominated convergence theorem \cite[Theorem~2.4.5]{cohn2013measure}, that $\hat{D}_p(\hat{c}_n,\hat{c})\to 0$ as $n\to \infty$. Hence, there exists $n_0\in\mathbb{N}$ such that $\hat{D}_p(\hat{c}_{n_0},\hat{c}) < \varepsilon /4$. In addition, $\hat{c}_{n_0}$ differs from $\hat{h}$ on a measurable set contained in the measurable rectangle $I\times H_{n_0}$.

    \par\medskip \noindent \emph{Step 2: Approximation by a Lebesgue mapping such that the set where it differs from $\hat{h}$ is contained in a measurable set of finite $\lebesgue\otimes \mu_M$-measure.} For all $n\in \mathbb{N}$, define the mapping $\hat{c}_n': I\times M\to N$ as 
    $$\hat{c}_n'(t,x)\coloneqq\begin{cases}
        \hat{c}_{n_0}(t,x),&\text{if $(t,x)\in P_n$}\\
        \hat{h}(t,x),&\text{otherwise}
    \end{cases}$$
    Then, since $(P_n)_{n\in\mathbb{N}}$ is an increasing sequence of sets in $\hat{P}$, we have for all $(t,x)\in I\times M$ that $\hat{c}_n'(t,x)\to \hat{c}_{n_0}(t,x)$ as $n\to \infty$ and $d_N(\hat{c}_n'(t,x),\hat{c}_{n_0}(t,x))\leq d_N(\hat{h}(t,x),\hat{c}_{n_0}(t,x))$ for all $n\in \mathbb{N}$. Since $\hat{c}_{n_0}\in \mathcal{L}^p_{\hat{h}}(I\times M,N)$, we have, by Lebesgue's dominated convergence theorem \cite[Theorem~2.4.5]{cohn2013measure}, that $\hat{D}_p(\hat{c}_n',\hat{c}_{n_0})\to 0$ as $n\to \infty$. Hence, there exists $n_1\in\mathbb{N}$ such that $\hat{D}_p(\hat{c}_{n_1}',\hat{c}_{n_0}) < \varepsilon/4$. In addition, $\hat{c}_{n_1}'$ differs from $\hat{h}$ on a measurable set contained in $P_{n_1}'\coloneqq P_{n_1}\cap (I\times H_{n_0})$, which has finite $\lebesgue\otimes\mu_M$-measure.

    \par\medskip \noindent \emph{Step 3: Approximation by an almost simple mapping such that the set where it differs from $\hat{h}$ is contained in a measurable set of finite $\lebesgue\otimes \mu_M$-measure.} Using \cref{prop:density_almost_simple}, we can construct $\hat{s}\in \mathcal{E}_{\hat{h}}(I\times M,N)$ such that it differs from $\hat{h}$ on a measurable set $\hat{S}$ contained in $P_{n_1}'$ and $\hat{D}_p(\hat{c}_{n_1}',\hat{s}) < \varepsilon/4$. Hence, we can assume that there exists a finite indexing set $J$ such that $\hat{s}(\hat{S})=\{y_j\in N: j\in J\}$ and let $\hat{S}_j\coloneqq \hat{S}\cap \hat{s}^{-1}(\{y_j\})$ for all $j\in J$. Also, define 
    $$C \coloneqq \max \left\{\max_{i\neq j} d_N(y_i,y_j), \max_{j\in J}\{d_N(y_j,z_0) + n_0\}\right\}.$$
     \par\medskip \noindent \emph{Step 4: Approximation by a rectangular almost simple mapping.} For all $j\in J$, $\hat{S}_j$ has finite $\lebesgue\otimes \mu_M$-measure, hence, by construction of $\lebesgue\otimes \mu_M$, there exists a countable collection of measurable rectangles $(\hat{I}_{j,n}\times \hat{M}_{j,n})_{n\in\mathbb{N}}$ such that, defining $\hat{R}_j\coloneqq \cup_{n\in\mathbb{N}} (\hat{I}_{j,n}\times \hat{M}_{j,n})_{n\in\mathbb{N}}$, it satisfies  $\hat{S}_j\subset \hat{R}_j$ and $\lebesgue\otimes \mu_M(\hat{R}_j\setminus \hat{S}_j) < \varepsilon^p/(2\lvert J\rvert C^p4^p)$ \cite[Chapter II, Proposition 10.2]{dibenedetto2002real}. In addition, since $\hat{S}\subset I\times H_{n_0}$, it can be assumed that $\hat{M}_{j,n}\subset H_{n_0}$ for all $(j,n)\in J\times \mathbb{N}$ since the intersection of measurable rectangles remains a measurable rectangle. Then, define $\hat{R}_{j,n}\coloneqq \cup_{k=0}^n(\hat{I}_{j,k}\times \hat{M}_{j,k})$, so that $(\cup_{j\in J}\hat{R}_{j,n})_{n\in\mathbb{N}}$ is an increasing sequence of sets in $\cup_{j\in J}\hat{R}_j$. Hence, $J$ being finite, there exists $n_2\in \mathbb{N}$ such that $\sum_{j\in J}\lebesgue\otimes \mu_M(\hat{R}_j\setminus  \hat{R}_{j,n_2})< \varepsilon^p/(2 C^p4^p)$. Since $(\hat{R}_{j,n_2})_{j\in J}$ is a finite collection of finite unions of measurable rectangles and since the set of all measurable rectangles is a semi-algebra \cite[Chapter III, Section 12]{dibenedetto2002real}, the $\hat{R}_{j,n_2}$'s can be assumed disjoint, up to replacing them with the collection $(\hat{R}_{j,n_2}')_{j\in J}$ such that, assuming that $J=\{0,1,\ldots,m\}$ for some $m\in \mathbb{N}$, $\hat{R}_{0,n_2}'\coloneqq \hat{R}_{0,n_2}$ and $\hat{R}_{j,n_2}'\coloneqq \hat{R}_{j,n_2}\setminus (\cup_{k=0}^{j-1} \hat{R}_{k,n_2})$, which can be decomposed in the finite union of measurable rectangles, for all $0< j\leq m$. Now, define the mapping $\hat{s}_{n_2}: I\times M\to N$ as $\hat{s}_{n_2}|_{\hat{R}_{j,n_2}}\equiv y_j$ for all $j\in J$ and $\hat{s}_{n_2}|_{(I\times M)\setminus \hat{R}} = \hat{h}|_{(I\times M)\setminus \hat{R}}$ with $\hat{R}\coloneqq \cup_{j\in J}\hat{R}_{j,n_2}$. Since $\hat{R}$ is the finite union of measurable rectangles, it can be decomposed into the finite union of disjoint measurable rectangles. Hence, by construction, $\hat{s}_{n_2}$ belongs to $\mathcal{E}_{\hat{h},\sigma}(I\times M,N)$ and satisfies 
    \begin{align*}\hat{D}_p(\hat{s},\hat{s}_{n_2})^p&\leq C^p\sum_{j\in J} \lebesgue\otimes \mu_M(\hat{S}_j\Delta \hat{R}_{j,n_2})\\
    &\leq C^p\bigg(\sum_{j\in J} \lebesgue\otimes \mu_M(\hat{R}_j\setminus \hat{R}_{j,n_2}) + \sum_{j\in J} \lebesgue\otimes \mu_M(\hat{R}_j\setminus \hat{S}_j)\bigg) < (\varepsilon/4)^p.
    \end{align*}

    \noindent Finally, we have, by the triangle inequality, that 
    \begin{equation*}
        \hat{D}_p(\hat{c},\hat{s}_{n_2})\leq \hat{D}_p(\hat{c},\hat{c}_{n_0}) + \hat{D}_p(\hat{c}_{n_0},\hat{c}_{n_1}') + \hat{D}_p(\hat{c}_{n_1}',\hat{s}) + \hat{D}_p(\hat{s},\hat{s}_{n_2}) < \varepsilon.
    \end{equation*}
    
        \item Let $c\in L^p(I,L^p_h(M,N))$. 
        
        \par\medskip \noindent \emph{Step 1: Approximation by a simple mapping.} By \cref{prop:density_almost_simple}, there exists $s\in E(I,L^p_h(M,N))$ such that $d_{p,p}(c,s) < \varepsilon/2$. Up to its identification with an appropriate representative, we can assume that $s$ is a simple mapping, hence there exists a finite indexing set $I_s$ such that $s(I)\coloneqq \{ s_i \in L^p_h(M,N): i\in I_s\}$ and define $I_i \coloneqq s^{-1}(\{s_i\})$.
        
        \par\medskip \noindent \emph{Step 2: Approximation of the values of the simple mapping by almost simple mappings.} For each $i\in I_s$, still using \cref{prop:density_almost_simple}, there exists $\tilde{s}_i\in E_h(M,N)\cap \Lph(M,N)$ such that $D_p(s_i, \tilde{s_i}) < \varepsilon /(2^p\lvert I_s\rvert\,\lebesgue(I_i))^{1/p}$. Then, define the mapping $\tilde{s}: I\to E_h(M,N)\cap \Lph(M,N)$ as $\tilde{s}|_{I_i}\equiv \tilde{s}_i$ for all $i\in I_s$, so that we get 
    $$d_{p,p}(s,\tilde{s})^p = \sum_{i\in I_s} D_p(s_i,\tilde{s}_i)^p\lebesgue(I_i) < (\varepsilon/2)^p.$$
    
    \noindent Finally, we have, by the triangle inequality, that
    \begin{equation*}d_{p,p}(c, \tilde{s}) \leq d_{p,p}(c,s) + d_{p,p}(s,\tilde{s}) < \varepsilon.\end{equation*}
    \item The proof follows from the same arguments as the proof of (ii), but using almost simple mappings.\qedhere
    
    \end{enumerate}
\end{proof}

\begin{remark}[On relaxing the assumptions on $I$]
\label{rem:relax_density}
    Note that, similarly to what was highlighted in \cref{rem:relax_isom_simple} for \cref{prop:embed_iterated_simple}, the proof of \cref{prop:density_iterated_almost_simple} does not rely on any specific property of $I$ other than the finiteness of the measure on it to ensure that $E_{\underline{h}}(M,E(I,N))\cap L^p_{\underline{h}}(M,L^p(I,N))$ is nonempty. Hence, the result holds for a measure space $(I,\Sigma_I,\mu_I)$ such that $\mu_I$ is finite. The assumption of finiteness of $\mu_I$ can be replaced by the assumption that $h\in E(M,N)$. In that case, the assertion (iii) of \cref{prop:density_iterated_almost_simple} should be replaced by \cref{prop:alternative_density}. If $h$ is not only simple, but constant, the assertion (iii) of \cref{prop:density_iterated_almost_simple} holds as is. The assertions (i) and (ii) of \cref{prop:density_iterated_almost_simple} even hold when $(I,\Sigma_I,\mu_I)$ is just a measure space, with no further assumptions, by reasoning with almost simple mappings instead of simple mappings.
\end{remark}

Using \cref{prop:density_iterated_almost_simple}, the isometries defined in \cref{prop:embed_iterated_simple} can be uniquely extended to nonlinear Lebesgue spaces as soon as $N$ is complete. The latter statement actually yields a nonlinear analogue of the Fubini--Lebesgue theorem (see \cref{rem:related_extension} for a further discussion on that matter).

\begin{theorem}[Extension of $\operatorname{sec}_{I,p}$ and $\operatorname{sec}_{M,p}$]
    \label{th:extension_embed_iterated_simple}
    Let $h\in L^0(M,N)$ and $p\in [1,\infty)$. Suppose that $N$ is complete. Then, the following assertions hold:
    \begin{enumerate}[label=(\roman*)]
    \item the isometry $\operatorname{sec}_{I,p}$ defined in (i) of \cref{prop:embed_iterated_simple} extends uniquely to the isometry
    $$\overline{\operatorname{sec}}_{I,p}:\left\{\begin{array}{ccl}
       L^p_{\hat{h}}(I\times M,N) &\longrightarrow& L^p(I,L^p_h(M,N))\\
        {[\hat{c}]_{I\times M}} &\longmapsto & {[t\mapsto {[\hat{c}(t,\cdot)]_M}]_I}
    \end{array}\right..$$
    \item the isometry $\operatorname{sec}_{M,p}$ defined in (ii) of \cref{prop:embed_iterated_simple} extends uniquely to the isometry
    $$\overline{\operatorname{sec}}_{M,p}:\left\{\begin{array}{ccl}
        L^p_{\hat{h}}(I\times M,N) &\longrightarrow& L^p_{\underline{h}}(M,L^p(I,N))\\
        {[\hat{c}]_{I\times M}} &\longmapsto & {[x\mapsto {[\hat{c}(\cdot,x)]_I}]_M}
    \end{array}\right..$$
    \end{enumerate}
\end{theorem}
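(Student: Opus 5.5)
The plan is the standard extension-by-density argument, followed by an identification of the extension with the section formula. First, I would use that an isometry defined on a dense subset of a metric space, with values in a complete metric space, extends uniquely to an isometry on the closure. By \cref{prop:density_iterated_almost_simple}(i), the domain $E_{\hat{h},\sigma}(I\times M,N)\cap L^p_{\hat{h}}(I\times M,N)$ is dense in $L^p_{\hat{h}}(I\times M,N)$. Since $N$ is complete, \cref{prop:completeness_lebesgue} applied twice gives that $L^p_h(M,N)$ is complete, hence $L^p(I,L^p_h(M,N))$ is complete, and likewise $L^p(I,N)$ and $L^p_{\underline{h}}(M,L^p(I,N))$ are complete. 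Thus $\operatorname{sec}_{I,p}$ and $\operatorname{sec}_{M,p}$ extend uniquely to isometries $\overline{\operatorname{sec}}_{I,p}$ and $\overline{\operatorname{sec}}_{M,p}$ on $L^p_{\hat{h}}(I\times M,N)$. Their images are closed, because they are complete, and they contain the dense target spaces from \cref{prop:density_iterated_almost_simple}(ii),(iii). Hence both extensions are onto $L^p(I,L^p_h(M,N))$ and $L^p_{\underline{h}}(M,L^p(I,N))$ respectively.

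The substantive step is to show that the abstract extension is actually given by the displayed formula $[\hat{c}]\mapsto[t\mapsto[\hat{c}(t,\cdot)]_M]_I$, and in particular that this formula is well defined. Fix a separably valued representative $\hat{c}\in\mathcal{L}^p_{\hat{h}}(I\times M,N)$ and choose rectangular almost simple $\hat{s}_n\to\hat{c}$ in $\hat{D}_p$ with $\sum_n\hat{D}_p(\hat{s}_n,\hat{c})<\infty$. By Tonelli, $\int_I\sum_n D_p(\hat{s}_n(t,\cdot),\hat{c}(t,\cdot))^p\,dt<\infty$, so for a.e.\ $t$ the section $\hat{c}(t,\cdot)$ is measurable, separably valued, lies at finite $D_p$-distance from $h$, and satisfies $D_p(\hat{s}_n(t,\cdot),\hat{c}(t,\cdot))\to 0$. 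I must then show that $t\mapsto[\hat{c}(t,\cdot)]_M$ belongs to $\mathcal{L}^0_s(I,L^p_h(M,N))$. The maps $t\mapsto[\hat{s}_n(t,\cdot)]_M$ are simple and converge pointwise a.e.\ to it, so after redefining on a null set \cref{prop:measurability_pointwise_limit} gives measurability and separable range. Finally, $d_{p,p}(\operatorname{sec}_{I,p}(\hat{s}_n),[t\mapsto\hat{c}(t,\cdot)])^p=\hat{D}_p(\hat{s}_n,\hat{c})^p\to0$ by Tonelli, so the formula coincides with the limit defining $\overline{\operatorname{sec}}_{I,p}$. Independence of the representative follows from the same Tonelli identity: if $\hat{c}\sim\hat{c}'$, then the sections agree for a.e.\ $t$. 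The argument for $\overline{\operatorname{sec}}_{M,p}$ is symmetric, with sections $x\mapsto[\hat{c}(\cdot,x)]_I$ in $L^p(I,N)$.

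I expect the main obstacle to be this measurability and separability step, for two reasons. First, Tonelli has to be applied to the nonnegative measurable function $d_N(\hat{s}_n,\hat{c})^p$ on the product, possibly for the completed product measure; this is where \cref{rem:extension_mu_measurable_mappings} is needed to deal with sections that are only $\mu$-measurable. Second, the pointwise-limit argument only holds almost everywhere. For the $M$-sections, $\mu_M$ need not be $\sigma$-finite, but $\hat{c}$ differs from $\hat{h}$ only on a $\sigma$-finite set, and I would first reduce to that set as in Step~2 of \cref{prop:density_iterated_almost_simple}. Off that set the $x$-sections equal $\underline{h}(x)$, which is measurable by \cref{not:lp}.
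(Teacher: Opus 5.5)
Your proposal is correct and follows essentially the same route as the paper. Like the paper, you extend uniquely and isometrically from the dense rectangular almost simple class (\cref{prop:density_iterated_almost_simple}(i)) into the complete spaces given by \cref{prop:completeness_lebesgue}, and then identify the abstract extension with the section formula through a Tonelli identity, an a.e.-convergent subsequence, and a reduction to a rectangle $I\times\hat{M}$ with $\hat{M}$ of $\sigma$-finite $\mu_M$-measure.

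You are more explicit than the paper in places (surjectivity via closedness of the image, measurability of $t\mapsto[\hat{c}(t,\cdot)]_M$ as a pointwise limit, independence of the representative), but two small adjustments are needed. First, the $\sigma$-finite reduction is already needed for Tonelli in part (i), not only for the $M$-sections. Second, in part (ii) the approximants $x\mapsto[\hat{s}_n(\cdot,x)]_I$ are only measurable and separably valued, since they depend on $h(x)$, rather than simple; this still suffices for \cref{prop:measurability_pointwise_limit}.
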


\begin{remark}[On relaxing the finiteness assumption on $d_N$]
\label{rem:metric_nonfinite}
    \cref{th:extension_embed_iterated_simple} extends to the case where $d_N$ might take infinite values (see \cref{assum:minimal} for the assumption on $d_N$) by assuming that $h$ is constant equal to some $z_0\in N$. In that case, we can define the open set, hence Borel set, $N_0\coloneqq\{y\in N: d_N(z_0,y)<\infty\}$ in $N$ on which $d_N$ is finite. Then, any mapping in $L^p_{\hat{h}}(I\times M,N)$ takes its values $\lebesgue\otimes \mu_M$-a.e.~in $N_0$ and $L^p_{\hat{h}}(I\times M,N)$ can be identified with $L^p_{\hat{h}}(I\times M,N_0)$. Hence,  when $h$ is constant, the extension to the case where $d_N$ might take infinite values reduces to \cref{th:extension_embed_iterated_simple} by replacing $N$ with $N_0$.
\end{remark}

\begin{proof}[Proof of \cref{th:extension_embed_iterated_simple}]
    Let $h\in L^0(M,N)$ and $p\in [1,\infty)$.

    \begin{enumerate}[label=(\roman*),wide]
    \item \emph{Step 1: Extension of $\operatorname{sec}_{I,p}$.} Recall that $\operatorname{sec}_{I,p}$ is an isometry from $(E_{\hat{h},\sigma}(I\times M,N)\cap L^p_{\hat{h}}(I\times M,N),\hat{D}_p)$ to $(E(I,E_h(M,N)\cap L^p_h(M,N)),d_{p,p})$. Now, $L^p_{\hat{h}}(I\times M,N)$ being the completion of $E_{\hat{h},\sigma}(I\times M,N)\cap L^p_{\hat{h}}(I\times M,N)$, by (i) of \cref{prop:density_iterated_almost_simple} and \cref{prop:completeness_lebesgue}, and $L^p(I,L^p_h(M,N))$ being the completion of $E(I,E_h(M,N)\cap L^p_h(M,N))$, by (ii) of \cref{prop:density_iterated_almost_simple} and \cref{prop:completeness_lebesgue}, we get that $\operatorname{sec}_{I,p}$ extends uniquely to an isometry $\overline{\operatorname{sec}}_{I,p}$ from $L^p_{\hat{h}}(I\times M,N)$ to $L^p(I,L^p_h(M,N))$ by setting $\overline{\operatorname{sec}}_{I,p}(\hat{c}) \coloneqq \operatorname{sec}_{I,p}(\hat{c})$ for all $\hat{c}\in E_{\hat{h},\sigma}(I\times M,N)\cap L^p_{\hat{h}}(I\times M,N)$ and $\overline{\operatorname{sec}}_{I,p}(\hat{c}) \coloneqq \lim_{n\to \infty} \operatorname{sec}_{I,p}(\hat{c}_n)$ for all $\hat{c}\in L^p_{\hat{h}}(I\times M,N)\setminus (E_{\hat{h},\sigma}(I\times M,N)\cap L^p_{\hat{h}}(I\times M,N))$ with $(\hat{c}_n)_{n\in\mathbb{N}}$ being any sequence in $E_{\hat{h},\sigma}(I\times M,N)\cap L^p_{\hat{h}}(I\times M,N)$ converging to $\hat{c}$, since, $\operatorname{sec}_{I,p}$ being an isometry, the choice of sequence does not matter.
    
    \par\medskip \noindent \emph{Step 2: Explicit expression of the extension of $\operatorname{sec}_{I,p}$.} Let $\hat{c}\in L^p_{\hat{h}}(I\times M,N)$. Then, there is $\hat{c}'\in \mathcal{L}^p_{\hat{h}}(I\times M, N)$ such that $\hat{D}_p(\hat{c},\hat{c}')=0$. By (i) of \cref{prop:density_iterated_almost_simple}, there is a sequence $(\hat{c}_n')_{n\in\mathbb{N}}$ in $\mathcal{E}_{\hat{h},\sigma}(I\times M,N)\cap \mathcal{L}^p_{\hat{h}}(I\times M,N)$ such that $\hat{D}_p(\hat{c}_n',\hat{c}')\to 0$ as $n\to \infty$ and define $\hat{c}_n\coloneqq[\hat{c}_n']_{I\times M}$ as well as $c_n\coloneqq \operatorname{sec}_{I,p}(\hat{c}_n)$. By definition of the $\hat{c}_n'$'s (\cref{def:rectangle_almost_simple}), the union of the sets on which the $\hat{c}_n'$'s differ from $\hat{h}$ is contained in the countable union of a collection $(I_m\times M_m)_{m\in \mathbb{N}}$ of measurable rectangles of finite $\lebesgue\otimes \mu_M$-measure. Thus, the set $\hat{M}\coloneqq\cup_{m\in\mathbb{N}} M_m$ has $\sigma$-finite $\mu_M$-measure. In addition, since $\hat{D}_p(\hat{c}_n',\hat{c}')\to 0$ as $n\to \infty$, we get that
    $$\int_{I\times (M\setminus \hat{M})} d_N(\hat{h}(t,x),\hat{c}'(t,x))^p\dif\lebesgue\otimes \mu_M(t,x) = 0,$$
    so that, up to the modification on a $\lebesgue\otimes \mu_M$-null set, we can assume that $\hat{c}'(t,x) = \hat{h}(t,x)$ for all $(t,x)\in I\times (M\setminus \hat{M})$. Now, define $c: t\mapsto \hat{c}'(t,\cdot)$ for which we know that $c(t)\in \cLs(M,N)$ for all $t\in I$. Then, since $I$ has finite $\lebesgue$-measure and $\hat{M}$ has $\sigma$-finite $\mu_M$-measure, we have, by the Fubini--Tonelli theorem \cite[Proposition 5.2.1]{cohn2013measure}, that $d_{p,p}(c_n,c)=\hat{D}_p(\hat{c}_n',\hat{c}')\to 0$ as $n\to \infty$, so that, up to the extraction of a subsequence, we can assume for $\lebesgue$-a.e.~$t\in I$ that $D_p(c_n(t),c(t))\to 0$ as $n\to \infty$. Finally, using the triangle inequality and the definition of $\operatorname{sec}_{I,p}$, we get that for $\lebesgue$-a.e.~$t\in I$ 
    $$D_p(\overline{\operatorname{sec}}_{I,p}(\hat{c})(t),\hat{c}'(t,\cdot))\leq D_p(\overline{\operatorname{sec}}_{I,p}(\hat{c})(t),\operatorname{sec}_{I,p}(\hat{c}_n)(t)) + D_p(c_n(t),c(t))$$
    holds for all $n\in\mathbb{N}$. Thus, using the definition of $\overline{\operatorname{sec}}_{I,p}$ and up to the further extraction of subsequence to obtain for $\lebesgue$-a.e.~$t\in I$ that $D_p(\overline{\operatorname{sec}}_{I,p}(\hat{c})(t),\operatorname{sec}_{I,p}(\hat{c}_n)(t))\to 0$ as $n\to \infty$, we get that $$D_p(\overline{\operatorname{sec}}_{I,p}(\hat{c})(t),[\hat{c}'(t,\cdot)]_M)= 0\quad \text{for $\lebesgue$-a.e.~$t\in I$}.$$
    \item The proof follows from the same arguments as the proof of (i), but using (iii) of \cref{prop:density_iterated_almost_simple} instead of (ii) of \cref{prop:density_iterated_almost_simple}.\qedhere
    \end{enumerate}
\end{proof}

\begin{remark}[On relaxing the assumptions on $I$]
\label{rem:relax_extension}
Since the proof of \cref{th:extension_embed_iterated_simple} only relies on the additional use of \cref{prop:density_iterated_almost_simple} relative to \cref{prop:embed_iterated_simple} and we can define, if necessary, $\hat{I}\coloneqq \cup_{m\in\mathbb{N}} I_m$ to use the Fubini--Tonelli theorem in step 2 of the proof of assertion (i) of \cref{th:extension_embed_iterated_simple}, the relaxations of \cref{prop:embed_iterated_simple,prop:density_iterated_almost_simple} given by \cref{rem:relax_isom_simple,rem:relax_density} ensure that \cref{th:extension_embed_iterated_simple} holds when $(I,\Sigma_I,\mu_I)$ is just a measure space and either $\mu_I$ is finite or $h$ is simple while replacing $L^p_{\underline{h}}(M,L^p(I,N))$ with $L^p_{\underline{h}}(M,L^0(I,N))$ (see \cref{rem:relax_isom_simple} for a definition of this space) in assertion (ii) of \cref{th:extension_embed_iterated_simple} for that last case and recalling that $(L^0(I,N),d_p)$ is complete when $N$ is (\cref{cor:completeness_measurable}). If $h$ is not only simple, but constant, the assertion (ii) of \cref{th:extension_embed_iterated_simple} holds as is. The assertion (i) of \cref{th:extension_embed_iterated_simple} even holds when $(I,\Sigma_I,\mu_I)$ is just a measure space, with no further assumptions.
\end{remark}

\begin{remark}[Related results in the literature]
\label{rem:related_extension}
    \cref{th:extension_embed_iterated_simple} is a well-known result for linear Lebesgue spaces in the case where $N$ is a Banach space, $h$ is constant, and both $\mu_I$ (see \cref{rem:relax_extension} for the definition of this measure) and $\mu_M$ are assumed $\sigma$-finite \cite[Proposition 1.2.24]{hytonen2016analysis}. When $(I,\Sigma_I,\mu_I)$ is just a measure space, with no further assumptions, $d_N$ is non-finite and $h$ is constant (\cref{rem:metric_nonfinite,rem:relax_extension}), \cref{th:extension_embed_iterated_simple} is actually a nonlinear version of the Fubini--Lebesgue theorem that requires neither that $\mu_I$ and $\mu_M$ are both $\sigma$-finite \cite[Theorem 5.2.2]{cohn2013measure}, nor that they are both complete \cite[Chapter II, Theorem 14.1]{dibenedetto2002real}. 
\end{remark}

As a direct consequence of \cref{th:extension_embed_iterated_simple}, the isometry defined in \cref{cor:isom_iterated_simple} extends uniquely to an isometry yielding a characterization of $L^p$ curves in nonlinear Lebesgue spaces.

\begin{theorem}[Characterization of $L^p$ curves in nonlinear Lebesgue spaces]
\label{th:isom_lebesgue}
    Let $h\in L^0(M,N)$ and $p\in [1,\infty)$. Suppose that $N$ is complete. Then, the isometry $i_p$ defined in \cref{cor:isom_iterated_simple} extends uniquely to the isometry
    $$\bar{i}_p\coloneqq \overline{\operatorname{sec}}_{M,p}\circ \overline{\operatorname{sec}}_{I,p}^{-1}: L^p(I,\Lph(M,N)) \to L^p_{\underline{h}}(M,L^p(I,N)).$$
\end{theorem}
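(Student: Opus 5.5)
The plan is to obtain the statement as a formal consequence of \cref{th:extension_embed_iterated_simple}, \cref{cor:isom_iterated_simple} and \cref{prop:density_iterated_almost_simple}. There are three things to check: $\bar{i}_p$ is a well-defined isometry, it extends $i_p$, and such an extension is unique.

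\emph{Step 1: well-definedness and isometry.} By \cref{th:extension_embed_iterated_simple}, $\overline{\operatorname{sec}}_{I,p}$ and $\overline{\operatorname{sec}}_{M,p}$ are isometries defined on the same complete space $L^p_{\hat{h}}(I\times M,N)$, and each is injective. To invert $\overline{\operatorname{sec}}_{I,p}$ on all of $L^p(I,\Lph(M,N))$, I will show it is surjective. Its range is the isometric image of a complete space, hence complete and so closed in $L^p(I,\Lph(M,N))$. The range also contains the range of $\operatorname{sec}_{I,p}$, which by (i) of \cref{prop:embed_iterated_simple} is all of $E(I,E_h(M,N)\cap L^p_h(M,N))$. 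That set is dense by (ii) of \cref{prop:density_iterated_almost_simple}, so the range is everything. Hence $\overline{\operatorname{sec}}_{I,p}^{-1}$ is a bijective isometry, and the composite $\bar{i}_p=\overline{\operatorname{sec}}_{M,p}\circ\overline{\operatorname{sec}}_{I,p}^{-1}$ is an isometry. The same argument, using (iii) of \cref{prop:density_iterated_almost_simple}, shows that $\overline{\operatorname{sec}}_{M,p}$ is onto $L^p_{\underline{h}}(M,L^p(I,N))$, so $\bar{i}_p$ is in fact a bijection.

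\emph{Step 2: extension property.} Take $c\in E(I,E_h(M,N)\cap L^p_h(M,N))$. Then $\overline{\operatorname{sec}}_{I,p}^{-1}(c)=\operatorname{sec}_{I,p}^{-1}(c)$, because the two maps agree on rectangular almost simple classes and are injective. Since $\overline{\operatorname{sec}}_{M,p}$ restricts to $\operatorname{sec}_{M,p}$ on those classes, we get $\bar{i}_p(c)=i_p(c)$.

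\emph{Step 3: uniqueness.} Any two isometric (hence continuous) extensions of $i_p$ agree on the dense set $E(I,E_h(M,N)\cap L^p_h(M,N))$. By (ii) of \cref{prop:density_iterated_almost_simple}, they therefore agree everywhere on $L^p(I,\Lph(M,N))$.

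The only point needing care is the surjectivity in Step 1, which is what allows $\overline{\operatorname{sec}}_{I,p}^{-1}$ to be defined on all of $L^p(I,\Lph(M,N))$. It relies on the completeness of $L^p_{\hat{h}}(I\times M,N)$ from \cref{prop:completeness_lebesgue}, which is where the assumption that $N$ is complete enters. I expect no real obstacle beyond this.
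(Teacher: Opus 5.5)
Your proposal is correct and takes essentially the paper's route. The paper presents \cref{th:isom_lebesgue} as a direct consequence of \cref{th:extension_embed_iterated_simple}, whose proof identifies $L^p_{\hat{h}}(I\times M,N)$, $L^p(I,\Lph(M,N))$ and $L^p_{\underline{h}}(M,L^p(I,N))$ as completions of the dense sets of \cref{prop:density_iterated_almost_simple}, so both extended section mappings are bijective isometries; your closed-range-plus-density argument for surjectivity and your density argument for uniqueness make this explicit.
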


\begin{remark}[On relaxing the finiteness assumption on $d_N$]
Thanks to \cref{rem:metric_nonfinite}, the conclusion of \cref{th:isom_lebesgue} holds when $d_N$ is not assumed finite (see \cref{assum:minimal} for the assumption on $d_N$), but  requires $h$ to be constant in that case.
\end{remark}

\begin{remark}[On relaxing the assumptions on $I$]
    Thanks to \cref{rem:relax_extension}, the conclusion of \cref{th:isom_lebesgue} holds when $(I,\Sigma_I,\mu_I)$ is just a measure space such that either $\mu_I$ is finite or $h$ is simple while replacing $L^p_{\underline{h}}(M,L^p(I,N))$ with $L^p_{\underline{h}}(M,L^0(I,N))$ (see \cref{rem:relax_isom_simple} for a definition of this space) in that last case. If $h$ is not only simple, but constant, the conclusion of \cref{th:isom_lebesgue} holds as is.
\end{remark}

\begin{remark}[Related results in the literature]
    This result is well-known in the context of linear Lebesgue spaces where $N$ is a Banach space and $h$ is constant \cite[Corollary 1.2.23]{hytonen2016analysis}.
\end{remark}

Let us end this section with a discussion of an edge case to better understand the scope of application of \cref{th:extension_embed_iterated_simple,th:isom_lebesgue}.

\begin{example}[Edge case for \cref{th:extension_embed_iterated_simple,th:isom_lebesgue}]
    One sometimes encounters the claim that the Fubini--Lebesgue theorem, like the Fubini--Tonelli theorem, requires the measures to be $\sigma$-finite. In this context, the indicator function of the diagonal with the Lebesgue and counting measures is frequently mentioned as an edge case \cite[Remark 1.2.8]{hytonen2016analysis}. In fact, this example contradicts neither \cref{th:extension_embed_iterated_simple} nor \cref{th:isom_lebesgue}, but remains an interesting edge case to better understand the scope of application of \cref{th:extension_embed_iterated_simple,th:isom_lebesgue}. Indeed, suppose that $I=M=[0,1]$ with its standard topology and its Borel $\sigma$-algebra, that $\mu_I= \lebesgue|_I$, that $\mu_M=\#$, that $N=\mathbb{R}$ with its standard metric and that $h\equiv 0$. Also, let $D\coloneqq \{(x,x): x\in [0,1]\}$ be the diagonal, which is a Borel set, and let $f\coloneqq \mathds{1}_D$. Then, we have  
    $$\int_{I}\left(\int_{M} \mathds{1}_D(t,x) \dif\mu_M(x)\right)\dif\mu_I(t) = \int_{I}\left(\int_{M} \mathds{1}_{\{t\}}(x) \dif\mu_M(x)\right)\dif\mu_I(t) = \int_{[0,1]} \#(\{t\})\dif \lebesgue_I(t) = 1$$
    and 
    $$\int_{M}\left(\int_{I} \mathds{1}_D(t,x) \dif\mu_I(t)\right)\dif\mu_M(x) = \int_{M}\left(\int_{I} \mathds{1}_{\{x\}}(t) \dif\mu_I(t)\right)\dif\mu_M(x) =\int_{[0,1]} \lebesgue|_I(\{x\})\dif \#(x) = 0.$$
    Hence, we get   $$\int_{I}\left(\int_{M} \mathds{1}_D(t,x) \dif\mu_M(x)\right)\dif\mu_I(t)\neq \int_{M}\left(\int_{I} \mathds{1}_D(t,x) \dif\mu_I(t)\right)\dif\mu_M(x).$$    However, the above conclusion contradicts neither \cref{th:extension_embed_iterated_simple} nor \cref{th:isom_lebesgue}. Indeed, the diagonal $D$ has infinite $\mu_I\otimes \mu_M$-measure, as, otherwise, $D$ would be contained in a countable union of measurable rectangles $(A_n\times B_n)_{n\in\mathbb{N}}$ of finite $\mu_I\otimes \mu_M$-measure \cite[Chapter II, Proposition 10.2]{dibenedetto2002real}. For every $n\in\mathbb{N}$, the set $A_n \cap B_n$ has null Lebesgue measure: if $B_n$ is finite, $A_n \cap B_n$ is finite, while if $B_n$ is infinite, the finiteness of $\mu_I\otimes \mu_M(A_n\times B_n) = \lebesgue(A_n)\#(B_n)$ forces $\lebesgue(A_n)=0$. Since the rectangles cover $D$, we would have $[0,1]\subset \cup_{n\in\mathbb{N}} (A_n\cap B_n)$, which is impossible. Thus, the equivalence class of the indicator function of $D$ does not belong to $L^1(I\times M,\mathbb{R})$ and neither $\overline{\operatorname{sec}}_{I,p}$ nor $\overline{\operatorname{sec}}_{M,p}$ is defined on $f$ (\cref{th:extension_embed_iterated_simple}). Furthermore, $f_I:t\mapsto f(t,\cdot)=\mathds{1}_{\{t\}}$ does not have a separable range in $L^1(M,\mathbb{R})$ since the family $\{\mathds{1}_{\{t\}}: t\in [0,1]\}$ is uncountable and $\int_M \lvert \mathds{1}_{\{t\}}(x) - \mathds{1}_{\{s\}}(x)\rvert \dif\mu_M(x) = 2> 0$ as soon as $s\neq t$, so that $\mathds{1}_{\{t\}}$ and $\mathds{1}_{\{s\}}$ cannot belong to the same equivalence class. Also, $f_I$ does not admit an equivalent mapping with separable range in $L^1(M,\mathbb{R})$ since the counting measure imposes a pointwise equality on $M$ for the values of $f_I$, so it cannot be approximated by simple mappings and $\overline{\operatorname{sec}}_{I,p}^{-1}$ is actually not defined on the equivalence class of $f_I$ (\cref{th:extension_embed_iterated_simple}). On the contrary, $f_M:x\mapsto f(\cdot,x)=\mathds{1}_{\{x\}}$ does satisfy the conditions of \cref{th:extension_embed_iterated_simple,th:isom_lebesgue}. Indeed, $D_{p,p}(f_M,\underline{h})=0$, so that $f_M$ is assimilated to the equivalence class of $\underline{h}$ whose image by $\overline{\operatorname{sec}}_{M,p}^{-1}$ is the equivalence class of the mapping constant equal to $0$ on $I\times M$ and by $\bar{i}_p^{-1}$ is the equivalence class of the mapping constant equal to $h$ on $I$. The mappings $f$ and $f_I$ are thus in fact not related to $f_M$ through the section mappings on $I$ and $M$ defined in \cref{th:extension_embed_iterated_simple}.
\end{example}

We now continue with the characterization of curves in nonlinear Lebesgue spaces using the isometry defined in \cref{th:isom_lebesgue}.

\subsection{\texorpdfstring{Characterization of absolutely continuous curves for $p> 1$}{Characterization of absolutely continuous curves for p> 1}}
\label{sec:charact_ac}

We first clarify some recurrent notation used in this section and thereafter.
\begin{notation}
\label{not:ac}
    Let $h\in L^0(M,N)$ and $p\in (1,\infty)$. From thereon:
    \begin{enumerate}[label=(\roman*)]
        \item $L^p_{\underline{h}}(M,\mathcal{AC}^p(I,N))$ denotes the set of equivalence classes of separably valued measurable mappings from $M$ to $(\mathcal{AC}^p(I,N),d_p)$ at a finite $D_{p,p}$-distance from $\underline{h}$ (see \cref{not:lp} for the definition of $\underline{h}$).
        \item $\lvert c'\rvert_p\coloneqq \lvert c'\rvert_{\Lph(M,N)}$ denotes the metric derivative of the curve $c\in \mathcal{AC}^p(I,\Lph(M,N))$ (\cref{th:metric_derivative}).
        \item $Q_p$ denotes the restriction to $\mathcal{AC}^p(I,L^p_h(M,N))$ of the quotient mapping 
$$Q_I: \left\{\begin{array}{ccl}
    \mathcal{L}^0_s(I,L^p_h(M,N))&\longrightarrow& L^0(I,L^p_h(M,N))\\
    c&\longmapsto& {[c]_I}
\end{array}.\right.$$
\end{enumerate}
\end{notation}

Then, the isometry given by \cref{th:isom_lebesgue} yields a characterization of absolutely continuous curves in nonlinear Lebesgue spaces for $p > 1$.

\begin{theorem}[Characterization of absolutely continuous curves for $p> 1$]
\label{th:charact_ac_curves}
    Let $h\in L^0(M,N)$ and $p\in (1,\infty)$. Suppose that $N$ is complete. Then, the isometry $\bar{i}_p$ defined in \cref{th:isom_lebesgue} 
    induces the isometric mapping
    \begin{align*}
        \tilde{i}_p\coloneqq (q_p^{-1})_* \circ \bar{i}_p\circ Q_p: (\mathcal{AC}^p(I,\Lph(M,N)),d_{p,p})\to (L^p_{\underline{h}}(M,\mathcal{AC}^p(I,N)),D_{p,p})
    \end{align*}
    whose inverse mapping on its range is $\tilde{i}_p^{-1}\coloneqq Q_p^{-1}\circ \bar{i}_p^{-1}\circ (q_p)_*$ and satisfies 
    \begin{align}
    \label{eq:inverse_map_evaluation_map}
    \tilde{i}_p^{-1}(f)(t) = (e_{t,p})_*(f)\quad\text{for all $t\in I$  and $f\in \tilde{i}_p( \mathcal{AC}^p(I,L^p_h(M,N)))$.}
    \end{align}
    In addition, $\tilde{i}_p$ satisfies for all $c\in \mathcal{AC}^p(I,\Lph(M,N))$ that
    \begin{align}
    \label{eq:derivative_equals_pw_derivative}
        \lvert c'\rvert_p^p(t) = \int_M \lvert \tilde{i}_p(c)(x)'\rvert_N^p(t)\dif \mu_M(x)\quad\text{for $\lebesgue$-a.e.~$t\in I$}
    \end{align}
    and its range is the set of all $f\in L^p_{\underline{h}}(M,\mathcal{AC}^p(I,N))$ such that
    $$\int_M \int_I \lvert f(x)'\rvert_N^p(t)\dif t\dif\mu_M(x) < \infty.$$
    
\end{theorem}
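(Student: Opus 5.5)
The plan is to go through the difference-quotient characterization of Sobolev curves (\cref{def:sobolev_curves}), applied both in $\Lph(M,N)$ and in $N$, and to use the isometry $\bar{i}_p$ of \cref{th:isom_lebesgue} to pass between the two.

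\emph{Step 1: well-definedness of $\tilde{i}_p$.} Take $c\in\mathcal{AC}^p(I,\Lph(M,N))$. Its class $Q_p(c)$ lies in $L^p(I,\Lph(M,N))$, and $\bar{i}_p(Q_p(c))$ is represented by some $f: M\to L^p(I,N)$. The key computation comes from Fubini--Tonelli on a representative $\hat{c}\in\mathcal{L}^p_{\hat{h}}(I\times M,N)$, using the explicit formulas of \cref{th:extension_embed_iterated_simple}. For every $\tau\in(0,b-a)$ it gives
\begin{align*}
\int_a^{b-\tau}(\Delta_{\Lph,\tau}c(t))^p\dif t=\int_M\int_a^{b-\tau}(\Delta_{N,\tau}f(x)(t))^p\dif t\dif\mu_M(x).
\end{align*}
The left-hand side is bounded by $\int_I\lvert c'\rvert_p^p$, uniformly in $\tau$. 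So for each fixed $\tau$ the inner integral is finite for a.e.~$x$. To get the supremum over $\tau$ inside, I will use Fatou along $\tau\to0$ in the form of the $\liminf$ characterization (Lisini's Lemma 1: for Sobolev curves the $\liminf_{\tau\to0}$ of the integral equals $\int\lvert f(x)'\rvert^p$). The cleaner route is to restrict to rational $\tau$, which keeps the null sets countable, and to use the lower semicontinuity of \cref{rem:sobolev_borel}. This should show that $f(x)\in W^{1,p}(I,N)$ for a.e.~$x$, with
\begin{align*}
\int_M\int_I\lvert f(x)'\rvert_N^p\dif t\dif\mu_M\le\int_I\lvert c'\rvert_p^p.
\end{align*}
Then $(q_p^{-1})_*$ applies by \cref{prop:ac_representation_sobolev}. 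Measurability and separable range for $d_p$ transfer because $q_p$ is an isometry and $W^{1,p}$ is Borel. The isometry property is inherited from $\bar{i}_p$ and $q_p$, since $Q_p$ is isometric for $d_{p,p}$.

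\emph{Step 2: the inverse formula \cref{eq:inverse_map_evaluation_map}.} The evaluation $(e_{t,p})_*f$ is measurable by \cref{prop:measurability_evaluation_ac}. By \cref{th:extension_embed_iterated_simple} it agrees with $c(t)$ for a.e.~$t$. Upgrading this to every $t$ uses continuity of both sides in $t$. For $t\mapsto c(t)$ this is given. For $t\mapsto(e_{t,p})_*f$ in $D_p$ it follows from
\begin{align*}
d_N(f(x)(s),f(x)(t))\le\int_s^t\lvert f(x)'\rvert_N,
\end{align*}
together with Hölder and Fubini, which gives the bound
\begin{align*}
D_p\big((e_{s,p})_*f,(e_{t,p})_*f\big)\le\lvert t-s\rvert^{1-1/p}\Big(\int_M\int_s^t\lvert f(x)'\rvert_N^p\Big)^{1/p}.
\end{align*}
The same estimate shows that $t\mapsto(e_{t,p})_*f$ is $p$-absolutely continuous with speed at most $(\int_M\lvert f(x)'\rvert_N^p\dif\mu_M)^{1/p}$.

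\emph{Step 3: \cref{eq:derivative_equals_pw_derivative} and the range.} The last estimate gives $\lvert c'\rvert_p^p(t)\le\int_M\lvert f(x)'\rvert_N^p(t)\dif\mu_M$ by minimality in \cref{th:metric_derivative}. For the reverse inequality, compare
\begin{align*}
\Big(\frac{D_p(c(t),c(t+\tau))}{\tau}\Big)^p=\int_M\big(\Delta_{N,\tau}f(x)(t)\big)^p\dif\mu_M(x),
\end{align*}
and apply Fatou as $\tau\to0$ at points where all metric derivatives exist. Getting a joint full-measure set in $(t,x)$ needs measurability of $(t,x)\mapsto\lvert f(x)'\rvert(t)$, which I will obtain as a $\limsup$ over rational $\tau$.

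For the range: any $f$ with finite double integral defines, by Step 2, a curve $t\mapsto(e_{t,p})_*f$ in $\mathcal{AC}^p(I,\Lph(M,N))$. It lies in $\Lph$ because $f$ is at finite $D_{p,p}$-distance from $\underline h$ and $\sup_t d_N(f(x)(t),\cdot)$ is controlled by the $L^p$ norm plus the speed. Its image under $\tilde{i}_p$ is $f$. Conversely, every element of the range satisfies the integrability bound by Step 1.

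The main obstacle is Step 1: the interchange of the supremum over $\tau$ with the integral over $M$, and the measurability issues this raises.
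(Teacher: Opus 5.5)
\paragraph{The gap is at the crux of your Step~1.} Write $g_\tau(x)\coloneqq\int_a^{b-\tau}(\Delta_{N,\tau}f(x)(t))^p\dif t$. Your Fubini identity is correct, but it only gives $\sup_{0<\tau<b-a}\int_M g_\tau\dif\mu_M\le\int_I\lvert c'\rvert_p^p(t)\dif t$. Membership $f(x)\in W^{1,p}(I,N)$ requires instead $\sup_\tau g_\tau(x)<\infty$.

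For general families the first bound does not even give a.e.~finiteness of the supremum. The bumps $k\mathds{1}_{[j/k,(j+1)/k]}$ on $[0,1]$ all have integral $1$, yet their supremum is $+\infty$ everywhere. So the structure of difference quotients must be used, and neither of your suggestions does so.
\begin{itemize}
\item Lisini's Lemma~1, in the form you quote it, describes $\lim_{\tau\to0}$ for curves already known to be in $W^{1,p}$. Applying it to $f(x)$ is circular.
\item Restricting to rational $\tau$ only gives finiteness of each $g_\tau(x)$ separately for a.e.~$x$, not a uniform bound.
\item The lower semicontinuity of \cref{rem:sobolev_borel} concerns limits $\eta_n\to\eta$ in $L^p(I,N)$. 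It only helps along an approximating sequence whose full functional you control pointwise in $x$, and you have none.
\end{itemize}
Your Steps~2 and~3 use $\int_M\int_I\lvert f(x)'\rvert_N^p<\infty$ for $f=\bar{i}_p(Q_p(c))$. Hence \cref{eq:inverse_map_evaluation_map}, \cref{eq:derivative_equals_pw_derivative}, and the inclusion of the range into the integrability set are all unproved. Only the reverse inclusion for the range, where integrability is assumed, is unaffected.

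\paragraph{What is missing is one of two ingredients.} The first is a genuinely stronger lemma. For $p>1$, $N$ complete and $\eta\in L^p(I,N)$, finiteness of $\liminf_k\int_a^{b-\tau_k}(\Delta_{N,\tau_k}\eta)^p$ along some $\tau_k\to0$ already forces $\eta\in W^{1,p}(I,N)$, with $\int_I\lvert\eta'\rvert_N^p$ bounded by that $\liminf$. The proof uses weak $L^p$-compactness of the difference quotients, which is where $p>1$ enters, tested against $t\mapsto d_N(\eta(t),y_n)$ for $(y_n)$ dense in the separable range. With this lemma, Fatou along $\tau_k$ closes your Step~1, since each $g_\tau$ is continuous on $L^p(I,N)$. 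It even gives constant $1$ rather than the paper's $2^p$.

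The second is the paper's device. Replace $c$ by dyadic piecewise constant curves $c_n$. For these, $\sup_{\tau>\lebesgue(I)2^{-n}}\int_a^{b-\tau}(\Delta_{N,\tau}\bar{i}_p(c_n)(x)(t))^p\dif t$ is bounded \emph{pointwise in $x$} by $2^p(\lebesgue(I)2^{-n})^{1-p}\sum_i d_N(c_{n,i}(x),c_{n,i+1}(x))^p$. The integral of this bound over $M$ is at most $2^p\int_I\lvert c'\rvert_p^p$. Lower semicontinuity of the truncated functionals $\psi_m$, Fatou as $n\to\infty$, and monotone convergence in $m$ then move the supremum inside the integral.

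\paragraph{Two smaller points.}
\begin{itemize}
\item Minimality in \cref{th:metric_derivative} does not apply to your H\"older-type bound $\lvert t-s\rvert^{1-1/p}(\int_s^tF)^{1/p}$, where $F(u)\coloneqq\int_M\lvert f(x)'\rvert_N^p(u)\dif\mu_M(x)$. Either use Minkowski's integral inequality to get $D_p(c(s),c(t))\le\int_s^tF^{1/p}$, or use Lebesgue differentiation.
\item Invoking \cref{th:extension_embed_iterated_simple} in Step~2 requires joint measurability of $(t,x)\mapsto f(x)(t)$. You can obtain it as a pointwise limit of dyadic evaluations via \cref{prop:measurability_evaluation_ac}.
\end{itemize}
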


\begin{remark}[On the definition of pushforwards for equivalence classes]
\label{rem:pushforward_equiv}
    The pushforward of an equivalence class by a mapping is meant as the equivalence class of the composition of the given mapping with a compatible representative of the input equivalence class. 
\end{remark}

\begin{remark}[On relaxing the finiteness assumption on $d_N$]
\label{rem:relax_metric_charact_ac}
The conclusion of \cref{th:charact_ac_curves} holds when $d_N$ is not assumed finite (see \cref{assum:minimal} for the assumption on $d_N$) by assuming that $h$ is constant equal to some $z_0\in N$. In that case, we can define the open set, hence Borel set, $N_0\coloneqq\{y\in N: d_N(z_0,y)<\infty\}$ in $N$ on which $d_N$ is finite. Then, any mapping in $\Lph(M,N)$ takes its values $\mu_M$-a.e.~in $N_0$ and $\Lph(M,N)$ can be identified with $\Lph(M,N_0)$. Hence,  when $h$ is constant, the extension to the case where $d_N$ might take infinite values reduces to \cref{th:charact_ac_curves} by replacing $N$ with $N_0$.
\end{remark}

\begin{proof}[Proof of \cref{th:charact_ac_curves}]
 The proof follows the same approach used in \cite[Theorems 4 and 5]{lisini2007characterization} for Wasserstein spaces, which consists in using piecewise constant approximations of absolutely continuous curves to prove inequalities and propagate the results to absolutely continuous curves by passing to the limit.
 
 \noindent Let $h\in \cLs(M,N)$, $p\in (1,\infty)$. 

 \par\medskip \noindent \emph{Step 1: Reduction to the case where $N$ is separable.} Let us first reduce the proof to the case where $N$ is separable. Indeed, when $c\in \mathcal{AC}^p(I,\Lph(M,N))$, the set $N_0\coloneqq \overline{h(M)\cup(\cup_{t\in \mathbb{Q}\cap I}c(t)(M))}$ is separable as the closure of the countable union of separable sets. Then, since $c$ is continuous, we have for all $t\in I$ that $c(t)= \lim_{s\to t,s\in \mathbb{Q}\cap I} c(s)$ in $\Lph(M,N)$, hence, up to the extraction of a subsequence, $c(t)(x) = \lim_{s\to t, s\in \mathbb{Q}\cap I} c(s)(x)$ in $N$ for $\mu_M$-a.e.~$x\in M$, that is, $c(t)(x)\in N_0$ for $\mu_M$-a.e.~$x\in M$. Thus, we can replace $N$ with $N_0$ in the rest of the proof, so that $\cL(M,N) = \cLs(M,N)$.

 \par\medskip \noindent \emph{Step 2: Definition of $\tilde{i}_p$.} Let $c\in \mathcal{AC}^p(I,L^p_h(M,N))$. Then, $Q_p(c)$ belongs to $W^{1,p}(I,\Lph(M,N))$, by \cref{prop:ac_representation_sobolev}, hence $(\bar{i}_p \circ Q_p)(c)$ belongs to $L^p_{\underline{h}}(M,L^p(I,N))$, by \cref{th:isom_lebesgue}. Define for all $n\in \mathbb{N}^*$ the set $I_n\coloneqq \{0,1,\ldots, 2^n\}$ and the sequence $(t_{n,i})_{i\in I_n}$ such that $t_{n,i}\coloneqq a+ \lebesgue(I)i2^{-n}$. Also, define the simple mapping $c_n: I \to \Lph(M,N)$ as $c_n|_{[t_{n,i},t_{n,i+1})} \equiv c(t_{n,i})$ for all $i\in I_{n-1}$ and $c_n(b)\coloneqq c(b)$. Since the set of dyadic rationals (scaled by $\lebesgue(I)$ and translated by $a$) is dense in $I$ and $c$ is continuous on $[a,b)$ and $\lebesgue(\{b\})=0$, we have for $\lebesgue$-a.e.~$t\in I$ that $c_n(t)\to c(t)$ in $\Lph(M,N)$ as $n\to \infty$. In addition, if $t\in [t_{n,i},t_{n,i+1})$,
 \begin{align*}
     D_p(c_n(t),c(t)) = D_p(c(t_{n,i}),c(t)) \leq \int_{t_{n,i}}^{t}\lvert c'\vert_p(u)\dif u \leq \int_I \lvert c'\rvert_p(u)\dif u. 
 \end{align*}
 Hence, we get, using Hölder's inequality, that 
 $$D_p(c_n(t),c(t))^p\leq \lebesgue(I)^{p-1}\int_I \lvert c'\rvert_p^p(u)\dif u\quad\text{for all $(t,n)\in I\times \mathbb{N}^*$}.$$
 Since $I$ has finite $\lebesgue$-measure, the mapping constant equal to $\lebesgue(I)^{p-1}\int_I \lvert c'\rvert_p^p(u)\dif u <\infty$ on $I$ is integrable, so that, by Lebesgue's dominated convergence theorem \cite[Theorem~2.4.5]{cohn2013measure}, we have $d_{p,p}(c_n,c) \to 0$ as $n\to \infty$. In addition, for all $n\in\mathbb{N}^*$ and all $i\in I_n$ there is $c_{n,i}\in \cLph(M,N)$ such that $D_p(c_{n,i},c(t_{n,i}))=0$, so we can define $c_n': I\to \cLph(M,N)$ as $c_n'|_{[t_{n,i},t_{n,i+1})} \equiv c_{n,i}$ for all $i\in I_{n-1}$ and $c_n'(b)\coloneqq c_{n,2^n}$, which satisfies $D_p(c_n(t),c_n'(t))=0$ for all $t\in I$. Thus, we can define $\hat{c}_n':I\times M\to N$ as $\hat{c}_n'(t,x)\coloneqq c_n'(t)(x)$, which belongs to $\mathcal{L}^p_{\hat{h}}(I\times M,N)$ (simple on $I$ and by definition of the $c_{n,i}$'s) and satisfies $D_p(\hat{c}_n'(t,\cdot),c_n(t))=0$ for all $t\in I$. Hence, $\overline{\operatorname{sec}}_{I,p}$ being an isometry (\cref{th:extension_embed_iterated_simple}), we get that $\hat{D}_p(\hat{c}_n',\overline{\operatorname{sec}}_{I,p}^{-1}(c_n))=0$. Now, defining the mapping $f_n': M\to \mathcal{L}^p(I,N)$ as $f_n'(x)\coloneqq \hat{c}_n'(\cdot,x)$ and recalling that $\overline{\operatorname{sec}}_{M,p}$ is an isometry (\cref{th:extension_embed_iterated_simple}), we get that $D_{p,p}(f_n',\bar{i}_p(c_n))=0$, hence that $d_p(f_n'(x),\bar{i}_p(c_n)(x))=0$ for $\mu_M$-a.e.~$x\in M$. Then, define for all $n\in \mathbb{N}^*$ the functional $\psi_n: L^p(I,N)\to [0,\infty]$ as 
    $$\psi_n(\eta)\coloneqq\sup_{\lebesgue(I) 2^{-n} < \tau < b-a} \int_a^{b-\tau} (\Delta_{N,\tau} \eta(t))^p\dif t, $$
   so that $\psi_n(\eta) \leq \psi_{n+1}(\eta)$ for all $(n,\eta) \in \mathbb{N}^*\times L^p(I,N)$ and let $\psi_\infty(\eta) \coloneqq \lim_{n\to \infty} \psi_n(\eta)$, which is lower semicontinuous.
    Now, observe that for all $(m,n)\in (\mathbb{N}^*)^2$ such that $m\leq n$ we have for $\mu_M$-a.e.~$x\in M$ that
    \begin{align*}
        \psi_m(\bar{i}_p(c_n)(x)) &= \sup_{\lebesgue(I) 2^{-m} < \tau < b-a} \int_a^{b-\tau} (\Delta_{N,\tau} \bar{i}_p(c_n)(x)(t))^p\dif t \\
        &\leq \sup_{\lebesgue(I) 2^{-n} < \tau < b-a} \int_a^{b-\tau} (\Delta_{N,\tau} \bar{i}_p(c_n)(x)(t))^p\dif t\\
        &= \sup_{\lebesgue(I) 2^{-n} < \tau < b-a} \int_a^{b-\tau} (\Delta_{N,\tau} f_n'(x)(t))^p\dif t.
    \end{align*}
    Then, take the integer $k\geq 1$ such that $k\lebesgue(I)2^{-n} \leq \tau \leq (k+1)\lebesgue(I) 2^{-n}$ and define $u_{n,i}(t) \coloneqq t+ \lebesgue(I)i2^{-n}$, so that, using the triangle inequality, we get 
    \begin{align*}d_N(f_n'(x)(t),f_n'(x)(t+\tau)) &\leq \sum_{i=0}^k d_N(f_n'(x)(u_{n,i}(t)), f_n'(x)(u_{n,i+1}(t)))\\
    &= \sum_{i=0}^k d_N(c_n'(u_{n,i}(t))(x), c_n'(u_{n,i+1}(t))(x)).
    \end{align*}
    Hence, Hölder's discrete inequality yields that 
    $$d_N(f_n'(x)(t),f_n'(x)(t+\tau))^p \leq (k+1)^{p-1}\sum_{i=0}^k d_N(c_n'(u_{n,i}(t))(x), c_n'(u_{n,i+1}(t))(x))^p.$$
    Thus, we get 
    \begin{align*}
        \int_a^{b-\tau} (\Delta_{N,\tau}f_n'(x)(t))^p\dif t &\leq \tau^{-p}\int_a^{b-k\lebesgue(I)2^{-n}} d_N(f_n'(x)(t),f_n'(x)(t+\tau))^p\dif t\\
        &\leq \tau^{-p}\int_a^{a+(2^n -k)\lebesgue(I)2^{-n}} (k+1)^{p-1}\sum_{i=0}^k d_N(c_n'(u_{n,i}(t))(x), c_n'(u_{n,i+1}(t))(x))^p \dif t  \\
        &= \tau^{-p}(k+1)^{p-1}\sum_{i=0}^k \lebesgue(I)2^{-n}\sum_{j=0}^{2^n - k - 1} d_N(c_{n,i+j}(x), c_{n,i+j+1}(x))^p.
    \end{align*}
    Since $d_N(c_{n,i+j}(x), c_{n,i+j+1}(x))^p$ is counted at most $k+1$ times, we get 
    \begin{align*}
        \int_a^{b-\tau} (\Delta_{N,\tau}f_n'(x)(t))^p\dif t &\leq (\tfrac{k+1}{\tau})^p\lebesgue(I)2^{-n}\sum_{i=0}^{2^n-1} d_N(c_{n,i}(x), c_{n,i+1}(x))^p.
    \end{align*}
    The latter inequality together with the fact that $\lebesgue(I)2^{-n}\leq (\lebesgue(I)2^{-n})^{1-p}\frac{\tau^p}{k^p}$ yields that 
    \begin{align*}
    \psi_m(\bar{i}_p(c_n)(x)) &\leq\left(\frac{k+1}{k}\right)^p (\lebesgue(I)2^{-n})^{1-p}\sum_{i=0}^{2^n-1} d_N(c_{n,i}(x), c_{n,i+1}(x))^p\\
        &\leq2^p(\lebesgue(I)2^{-n})^{1-p}\sum_{i=0}^{2^n-1} d_N(c_{n,i}(x), c_{n,i+1}(x))^p\\
        &= 2^p(\lebesgue(I)2^{-n})^{1-p}\sum_{i=0}^{2^n-1} d_N(c_{n,i}(x), c_{n,i+1}(x))^p.
    \end{align*}
    Then, integrating over $M$, we get for all $(m,n)\in (\mathbb{N}^*)^2$ satisfying $m\leq n$ that
    \begin{align*}
        \int_M \psi_m(\bar{i}_p(c_n)(x)) \dif\mu_M(x)& \leq 2^p(\lebesgue(I)2^{-n})^{1-p}\int_M\sum_{i=0}^{2^n-1} d_N(c_{n,i}(x), c_{n,i+1}(x))^p\dif\mu_M(x)\\
        &= 2^p(\lebesgue(I)2^{-n})^{1-p} \sum_{i=0}^{2^n-1} D_p(c(t_{n,i}), c(t_{n,i+1}))^p\\
        &\leq 2^p(\lebesgue(I)2^{-n})^{1-p} \sum_{i=0}^{2^n-1} \left(\int_{t_{n,i}}^{t_{n,i+1}} \lvert c'\vert_p(t)\dif t\right)^p\\
        &\leq 2^p \sum_{i=0}^{2^n-1} \int_{t_{n,i}}^{t_{n,i+1}} \lvert c'\vert_p^p(t)\dif t\\
        &=2^p  \int_I \lvert c'\rvert_p^p(t)\dif t,
    \end{align*}
    where Hölder's inequality was used in the second to last inequality. Now, $\bar{i}_p$ being isometric and using the definition of $Q_p$ (\cref{not:ac}), we also have $D_{p,p}(\bar{i}_p(c_n),(\bar{i}_p\circ Q_p)(c))\to 0$ as $n\to \infty$. Then, up to the extraction of a subsequence, we can assume for $\mu_M$-a.e.~$x\in M$  that $\bar{i}_p(c_n)(x)\to (\bar{i}_p \circ Q_p)(c)(x)$ in $L^p(I,N)$ as $n\to\infty $, so that, by lower semicontinuity of $\psi_m$ for all $m\in \mathbb{N}^*$ and using Fatou's lemma \cite[Theorem 2.4.4]{cohn2013measure}, we get for all $m\in \mathbb{N}^*$ that 
    \begin{align*}
        \int_M \psi_m((\bar{i}_p \circ Q_p)(c)(x)) \dif\mu_M(x) &\leq\int_M \underset{n\to \infty}{\lim\inf}\,\psi_m (\bar{i}_p(c_n)(x)) \dif\mu_M(x)\\
        &\leq\underset{n\to \infty}{\lim\inf}\int_M \psi_m (\bar{i}_p(c_n)(x)) \dif\mu_M(x)\\
        &\leq 2^p \int_I \lvert c'\rvert_p^p(t)\dif t < \infty.
    \end{align*}
    In addition, $\psi_m(\gamma)$ increases to $\psi_\infty(\gamma)$ as $m\to \infty$ for every $\gamma\in L^p(I,N)$. Thus, using the monotone convergence theorem \cite[Theorem 2.4.1]{cohn2013measure}, we get 
    $$\int_M \psi_\infty((\bar{i}_p\circ Q_p)(c)(x))\dif\mu_M(x) \leq 2^p\int_I \lvert c'\rvert_p^p(t)\dif t.$$
    The latter inequality yields that for $\mu_M$-a.e.~$x\in M$ 
    $$\psi_\infty((\bar{i}_p \circ Q_p)(c)(x)) = \sup_{0 < \tau < b-a} \int_a^{b-\tau} (\Delta_{N,\tau} (\bar{i}_p \circ Q_p)(c)(x)(t))^p\dif t <\infty,$$
    which implies that $(\bar{i}_p \circ Q_p)(c)(x)$ belongs to $W^{1,p}(I,N)$ (\cref{def:sobolev_curves}). Hence, $W^{1,p}(I,N)$ being a Borel subset of $L^p(I,N)$ (\cref{rem:sobolev_borel}), $(\bar{i}_p\circ Q_p)(c)$ belongs to $L^p_{\underline{h}}(M,W^{1,p}(I,N))$. Therefore, the isometric mapping 
 $$\tilde{i}_p\coloneqq (q_p^{-1})_* \circ \bar{i}_p \circ Q_p: \mathcal{AC}^p(I,\Lph(M,N)) \to L^p_{\underline{h}}(M,\mathcal{AC}^p(I,N))$$
 is well-defined and its inverse mapping on its range is $\tilde{i}_p^{-1}\coloneqq Q_p^{-1}\circ \bar{i}_p^{-1}\circ (q_p)_*$.

    \par\medskip \noindent \emph{Step 3: Proof of \cref{eq:inverse_map_evaluation_map}.} Let $f$ be in the range of $\tilde{i}_p$. Then, there is a unique $c\in \mathcal{AC}^p(I,L^p_h(M,N))$ such that $f= \tilde{i}_p(c)$ and define 
    $$\hat{f}\coloneqq (\overline{\operatorname{sec}}_{I,p}^{-1}\circ \bar{i}_p^{-1}\circ (q_p)_*)(f) = (\overline{\operatorname{sec}}_{M,p}^{-1}\circ  (q_p)_*)(f).$$
    Also, note that $\hat{f}= (\overline{\operatorname{sec}}_{I,p}^{-1}\circ Q_p)(c)$ since $\tilde{i}_p= (q_p^{-1})_* \circ \bar{i}_p \circ Q_p$. Now, there is $f'\in \mathcal{L}_{\underline{h}}^p(M,\mathcal{AC}^p(I,N))$ such that $D_{p,p}(f',f)= 0$, which implies that $f'(x)=f(x)$ in $\mathcal{C}(I,N)$ for $\mu_M$-a.e.~$x\in M$ (\cref{prop:ac_representation_sobolev}). Thus, we can define the mapping $\tilde{f}: I\times M\to N$ as $\tilde{f}(t,x)\coloneqq f'(x)(t)=(e_{t,p}\circ f')(x)$. To show that $\tilde{f}$ is measurable, consider for all $n\in\mathbb{N}^*$ the mapping $\tilde{f}_n: I\times M\to N$ defined as $\tilde{f}_n(t,x)\coloneqq (e_{t_{n,i},p}\circ f')(x)$ if $t\in [t_{n,i},t_{n,i+1})$ with $i\in I_{n-1}$ and $\tilde{f}_n(b,x)\coloneqq(e_{b,p}\circ f')(x)$, so that $\tilde{f}_n$ belongs to $\mathcal{L}^0(I\times M,N)$ since $e_{t,p}\circ f'\in \mathcal{L}^0(M,N)$ for all $t\in I$ (\cref{prop:measurability_evaluation_ac}). Then, the set of dyadic rationals (scaled by $\lebesgue(I)$ and translated by $a$) being dense in $I$ and $f'(x)$ being continuous on $[a,b)$ for all $x\in M$, we have $\tilde{f}(t,x) = \lim_{n\to \infty} \tilde{f}_n(t,x)$ for all $(t,x)\in I\times M$. Therefore, $\tilde{f}$ is measurable as the pointwise limit of measurable mappings (\cref{prop:measurability_pointwise_limit}). In addition, $D_p(\tilde{f}(t,\cdot),(e_{t,p})_*(f))=0$ for all $t\in I$ since $\tilde{f}(\cdot,x)= f(x)$ in $\mathcal{C}(I,N)$ for $\mu_M$-a.e.~$x\in M$. Then, using the definitions of $\hat{f}$, $\overline{\operatorname{sec}}_{M,p}$ (\cref{th:extension_embed_iterated_simple}) and $q_p$ (\cref{prop:ac_representation_sobolev}), we have $d_p(\tilde{f}(\cdot,x),\hat{f}(\cdot,x)) =0$ for $\mu_M$-a.e.~$x\in M$. Hence, $\overline{\operatorname{sec}}_{M,p}$ being an isometry, we get $\hat{D}_p(\tilde{f},\hat{f}) = 0$. Thus, using the definitions of $\hat{f}$, $\overline{\operatorname{sec}}_{I,p}$ (\cref{th:extension_embed_iterated_simple}) and $Q_p$ (\cref{not:ac}), we get $D_p(\tilde{f}(t,\cdot),c(t)) =0$ for $\lebesgue$-a.e.~$t\in I$, which yields $\tilde{i}_p^{-1}(f)(t) = (e_{t,p})_*(f)$ in $\Lph(M,N)$ for $\lebesgue$-a.e.~$t\in I$. It remains to prove the equality for all $t\in I$. To do so, define for all $\tau\neq 0$ the mappings 
    $$g_\tau:\left\{\begin{array}{ccl}
    I\times \mathcal{AC}^p(I,N)&\longrightarrow& [0,\infty)\\
    (t,u)&\longmapsto& d_N(u(t),\bar{u}(t+\tau))/\lvert \tau\rvert
    \end{array}\right.,$$
    where $\bar{u}$ is the extension of $u$ such that $\bar{u}(t)= u(a)$ if $t< a$ and $\bar{u}(t)= u(b)$ if $t > b$. Then, $g_\tau$ is continuous, hence Borel measurable, for all $\tau \neq 0$. Thus, the set 
    \begin{align*}
    A&\coloneqq \{(t,x)\in I\times M: \lvert \tilde{i}_p(c)(x)'\rvert_N(t)\text{ does not exist}\}\\
    &= \{(t,x)\in I\times M: \underset{\tau,\tau'\to 0}{\lim\,\sup}\, \lvert g_\tau(t,\tilde{i}_p(c)(x)) - g_{\tau'}(t,\tilde{i}_p(c)(x))\rvert > 0\}
    \end{align*}
    is measurable since $g_\tau$ is Borel measurable and $\tilde{i}_p(c)$ is measurable from $M$ to $(\mathcal{AC}^p(I,N),d_\infty)$ (\cref{prop:comparison_dp_dinf}), so that $(t,x)\mapsto g_\tau(t,\tilde{i}_p(c)(x))$ is measurable from $I\times M$ to $[0,\infty)$ for every $\tau \neq 0$. In addition, the mapping $(t,x)\mapsto \lvert \tilde{i}_p(c)(x)'\rvert_N(t)$, set to $0$ on $A$, is measurable from $I\times M$ to $[0,\infty)$, as the pointwise limit of measurable mappings (\cref{prop:measurability_pointwise_limit}) since $\lvert \tilde{i}_p(c)(x)'\rvert_N(t) = \lim_{\tau\to 0}g_\tau(t,\tilde{i}_p(c)(x))$ for $(t,x)\in (I\times M)\setminus A$. Furthermore, $\lebesgue(A^x) =0$ since $\tilde{i}_p(c)(x)\in \mathcal{AC}^p(I,N)$ for $\mu_M$-a.e.~$x\in M$. Then, recalling that $d_p$ separates elements of $\mathcal{AC}^p(I,N)$ (\cref{prop:ac_representation_sobolev}) and defining the measurable set  
    $$M_c\coloneqq \{x\in M: d_p(\tilde{i}_p(c)(x),\underline{h}(x)) >0\} = \{x\in M: d_\infty(\tilde{i}_p(c)(x),\underline{h}(x)) >0\}$$
    of $\sigma$-finite $\mu_M$-measure, we have $\lvert \tilde{i}_p(c)(x)'\rvert_N(t) = 0$ for all $(x,t)\in (M\setminus M_c)\times I$ since $\underline{h}(x)$ is constant over $I$. Thus, $A$ is a subset of the measurable rectangle $I\times M_c$ and, $M_c$ having $\sigma$-finite $\mu_M$-measure and $I$ having finite $\lebesgue$-measure, we get, by the Fubini--Tonelli theorem \cite[Proposition 5.2.1]{cohn2013measure}, that $\lebesgue\otimes \mu_M(A)=0$ and $\mu_M(A_t)=0$ for $\lebesgue$-a.e.~$t\in I$. Hence, for all $(s,t)\in I^2$ such that $s< t$, we have
    \begin{align*}
        D_p((e_{s,p})_*(\tilde{i}_p(c)), (e_{t,p})_*(\tilde{i}_p(c)))^p &= \int_M d_N(\tilde{i}_p(c)(x)(s), \tilde{i}_p(c)(x)(t))^p \dif\mu_M(x)\\
        &\leq \int_M \left( \int_s^t\lvert \tilde{i}_p(c)(x)'\rvert_N(t)\dif t\right)^p\dif\mu_M(x)\\
        &\leq \lvert t - s\rvert^{p-1}\int_M \int_s^t\lvert \tilde{i}_p(c)(x)'\rvert_N^p(t)\dif t\dif\mu_M(x)\\
        &= \lvert t - s\rvert^{p-1}\int_{M_c} \int_s^t\lvert \tilde{i}_p(c)(x)'\rvert_N^p(t)\dif t\dif\mu_M(x)\\
        &= \lvert t - s\rvert^{p-1}\int_s^t\int_{M_c} \lvert \tilde{i}_p(c)(x)'\rvert_N^p(t)\dif\mu_M(x)\dif t\\
        &= \lvert t - s\rvert^{p-1}\int_s^t\int_M \lvert \tilde{i}_p(c)(x)'\rvert_N^p(t)\dif\mu_M(x)\dif t,
    \end{align*}
    where the second to last equality follows from the Fubini--Tonelli theorem \cite[Proposition 5.2.1]{cohn2013measure}. The latter inequality yields that $t\mapsto (e_{t,p})_*(\tilde{i}_p(c))$ belongs to $\mathcal{AC}^p(I,\Lph(M,N))$, hence, $d_{p,p}$ separating the elements of $\mathcal{AC}^p(I,\Lph(M,N))$ (\cref{prop:ac_representation_sobolev}), we get $\tilde{i}_p^{-1}(f)(t) = (e_{t,p})_*(f)$ in $\Lph(M,N)$ for all $t\in I$, which ends the proof of \cref{eq:inverse_map_evaluation_map}.
    
    \par\medskip \noindent \emph{Step 4: Proof of \cref{eq:derivative_equals_pw_derivative}.} Let $c\in \mathcal{AC}^p(I,\Lph(M,N))$. Using Lebesgue's differentiation theorem \cite[Theorem 11.1]{dibenedetto2002real} in the previous inequality, we obtain that 
    \begin{equation}
    \label{eq:md_lt_pmd}
    \lvert c'\rvert_p^p(t) \leq \int_M \lvert \tilde{i}_p(c)(x)'\rvert_N^p(t)\dif \mu_M(x)\quad\text{for $\lebesgue$-a.e.~$t\in I$}.\end{equation}
    In addition, using Fatou's lemma \cite[Theorem 2.4.4]{cohn2013measure} and recalling that $c(t) =\tilde{i}_p^{-1}(\tilde{i}_p(c))(t) = (e_{t,p})_*(\tilde{i}_p(c))$ in $\Lph(M,N)$ for all $t\in I$, we have for all $(s,t)\in I^2 $ such that $s\leq t$
 \begin{align*}
      \int_s^t\int_M \lvert \tilde{i}_p(c)(x)'\rvert_N^p(u) \dif\mu_M(x) \dif u &= \int_s^t\int_M \underset{\tau\to 0}{\lim\inf} (\Delta_{N,\tau} \tilde{i}_p(c)(x)(u))^p \dif\mu_M(x)\dif u\\
     &\leq  \int_s^t \underset{\tau\to 0}{\lim\inf}\int_M  (\Delta_{N,\tau} \tilde{i}_p(c)(x)(u))^p \dif\mu_M(x)\dif u\\
     &=  \int_s^t \underset{\tau\to 0}{\lim\inf}\int_M  (\Delta_{N,\tau} c(u)(x))^p \dif\mu_M(x)\dif u\\
     &=  \int_s^t \lvert c'\rvert_p^p(u)\dif u.
 \end{align*}
 Therefore, using Lebesgue's differentiation theorem \cite[Theorem 11.1]{dibenedetto2002real}, we get that 
 $$\int_M \lvert \tilde{i}_p(c)(x)'\vert_p^p(t)\dif\mu_M(x)\leq \lvert c'\rvert_p^p(t)\quad \text{for $\lebesgue$-a.e.~$t\in I$}.$$
 Together with \cref{eq:md_lt_pmd}, this yields the equality
    $$\lvert c'\rvert_p^p(t) = \int_M \lvert \tilde{i}_p(c)(x)'\rvert_N^p(t)\dif \mu_M(x)\quad\text{for $\lebesgue$-a.e.~$t\in I$ },$$
    so that \cref{eq:derivative_equals_pw_derivative} is proved.

    \par\medskip \noindent \emph{Step 5: Characterization of the range of $\tilde{i}_p$}. By the previous step and the Fubini--Tonelli theorem \cite[Proposition 5.2.1]{cohn2013measure}, we have for all $c\in \mathcal{AC}^p(I,\Lph(M,N))$ that 
    $$\int_M \int_I \lvert \tilde{i}_p(c)(x)'\rvert_N^p(u)\dif u\dif\mu_M(x) \leq \int_I \lvert c'\rvert_p^p(u)\dif u <\infty.$$
    Hence, the range of $\tilde{i}_p$ is contained in the set of all $f\in L^p_{\underline{h}}(M,\mathcal{AC}^p(I,N))$ such that
    \begin{align*}
    \int_M \int_I \lvert f(x)'\rvert_N^p(t)\dif t \dif\mu_M(x)< \infty.
    \end{align*}
 Let us now prove the reverse inclusion. Thus, let $f\in L^p_{\underline{h}}(M,\mathcal{AC}^p(I,N))$ such that
    \begin{align*}
    \int_M \int_I \lvert f(x)'\rvert_N^p(t)\dif t \dif\mu_M(x)< \infty.
    \end{align*}
    We already know that $\bar{i}_p^{-1}\circ (q_p)_*(f) \in L^p(I,\Lph(M,N))$, by \cref{prop:ac_representation_sobolev,th:isom_lebesgue}, and, by \cref{th:extension_embed_iterated_simple}, that
    $$\hat{f}\coloneqq (\overline{\operatorname{sec}}_{I,p}^{-1}\circ \bar{i}_p^{-1}\circ (q_p)_*)(f) = (\overline{\operatorname{sec}}_{M,p}^{-1}\circ  (q_p)_*)(f)$$
    belongs to $L^p_{\hat{h}}(I\times M, N)$. Also, by the definitions of $\overline{\operatorname{sec}}_{M,p}$, $\overline{\operatorname{sec}}_{I,p}$ and $q_p$  (\cref{th:extension_embed_iterated_simple,prop:ac_representation_sobolev}), we have $\hat{f}(\cdot,x)=f(x)$ in $L^p(I,N)$ for $\mu_M$-a.e.~$x\in M$ and $\hat{f}(t,\cdot)=(\bar{i}_p^{-1}\circ (q_p)_*)(f)(t)$ in $\Lph(M,N)$ for $\lebesgue$-a.e.~$t\in I$. However, $f$ differs from $\underline{h}$ on a measurable set $M_f$ of $\sigma$-finite $\mu_M$-measure, so that $\hat{f}$ differs from $\hat{h}$ on a measurable set contained in the measurable rectangle $I\times M_f$. Hence, we get, using the Fubini--Tonelli theorem \cite[Proposition 5.2.1]{cohn2013measure} and Hölder's inequality, that 
    \begin{align*}\sup\limits_{0< \tau < b-a}\int_a^{b-\tau}(\Delta_{p,\tau} (\bar{i}_p^{-1}\circ (q_p)_*)(f)(u))^p\dif u &= \sup\limits_{0< \tau < b-a}\int_a^{b-\tau} \int_{M_f} (\Delta_{N,\tau} \hat{f}(u,x))^p\dif\mu_M(x)\dif u\\
    &= \sup\limits_{0< \tau < b-a} \int_{M_f} \int_a^{b-\tau} (\Delta_{N,\tau} \hat{f}(u,x))^p\dif u\dif\mu_M(x)\\
    &= \sup\limits_{0< \tau < b-a} \int_M \int_a^{b-\tau} (\Delta_{N,\tau} f(x)(u))^p\dif u\dif\mu_M(x)\\
    &\leq \int_M \sup\limits_{0< \tau < b-a}\int_a^{b-\tau}(\Delta_{N,\tau} f(x)(u))^p\dif u \dif\mu_M(x)\\
    &\leq \int_M \sup\limits_{0< \tau < b-a}\int_a^{b-\tau}\tau^{-p}\left(\int_u^{u+\tau}\lvert f(x)'\rvert_N(\theta)\dif\theta\right)^p\dif u \dif\mu_M(x)\\
    &\leq \int_M \sup\limits_{0< \tau < b-a}\int_a^{b-\tau}\tau^{-1}\int_u^{u+\tau}\lvert f(x)'\rvert_N^p(\theta)\dif\theta \dif u \dif\mu_M(x)\\
    &= \int_M \sup\limits_{0< \tau < b-a}\int_I\lvert f(x)'\rvert_N^p(\theta)\tau^{-1}\left(\int_a^{b-\tau}\mathds{1}_{\{u\leq \theta \leq u+\tau\}}(\theta) \dif u\right) \dif\theta \dif\mu_M(x)\\
    &= \int_M \sup\limits_{0< \tau < b-a}\int_I\lvert f(x)'\rvert_N^p(\theta)\tau^{-1}\left(\int_a^{b-\tau}\mathds{1}_{\{\theta - \tau \leq u \leq \theta\}}(u) \dif u\right) \dif\theta \dif\mu_M(x),
    \end{align*}
    where the Fubini--Tonelli theorem \cite[Proposition 5.2.1]{cohn2013measure} was used to interchange the order of integration over $u$ and $\theta$. Now, we have the following 
    \begin{align*}
    \int_a^{b-\tau}\mathds{1}_{\{\theta - \tau \leq u \leq \theta\}}(u) \dif u = \begin{cases}
        \theta - a,& \text{if $a \leq \theta < a +\tau $}\\
        \tau,& \text{if $a+\tau \leq \theta \leq b -\tau $}\\
        b -\theta,& \text{if $b-\tau < \theta \leq b$}
    \end{cases}.
    \end{align*}
    In any case, we get that $\tau^{-1}\int_a^{b-\tau}\mathds{1}_{\{\theta - \tau \leq u \leq \theta\}}(u) \dif u \leq 1$ for all $\tau\in (0,b-a)$ and $\theta \in I$, so that we eventually get the following inequality
    \begin{align*}
        \sup_{0 < \tau < b-a} \int_a^{b-\tau} (\Delta_{p,\tau} (\bar{i}_p^{-1}\circ (q_p)_*)(f)(u))^p\dif u &\leq\int_M \int_I\lvert f(x)'\rvert_N^p(\theta)\dif \theta \dif\mu_M(x)< \infty.
    \end{align*}
    Thus, $(\bar{i}_p^{-1}\circ (q_p)_*)(f)$ belongs to $ W^{1,p}(I,\Lph(M,N))$ (\cref{def:sobolev_curves}) and we can define $c \coloneqq (Q_p^{-1}\circ \bar{i}_p^{-1}\circ (q_p)_*)(f)$, which belongs to $\mathcal{AC}^p(I,\Lph(M,N))$ and satisfies $f=\tilde{i}_p(c)$. Hence, $f$ belongs to the range of $\tilde{i}_p$. We thus proved that the range of $\tilde{i}_p$ is the set of all $f\in L^p_{\underline{h}}(M,\mathcal{AC}^p(I,N))$ such that
    \begin{equation*}
    \int_M \int_I \lvert f(x)'\rvert_N^p(t)\dif t \dif\mu_M(x)< \infty.\qedhere
    \end{equation*}
\end{proof}

In the next section, we show that, when $p=1$, the appropriate class of curves is the set of càdlàg curves of bounded variation.

\subsection{\texorpdfstring{Characterization of càdlàg curves of bounded variation for $p=1$}{Characterization of càdlàg curves of bounded variation for p=1}}
\label{sec:charact_bv}

We first clarify some recurrent notation used in this section and thereafter.
\begin{notation}
\label{not:bv}
    Let $h\in L^0(M,N)$. From thereon, it is assumed that:
    \begin{enumerate}[label=(\roman*)]
        \item $L^1_{\underline{h}}(M,\mathcal{BV}(I,N))$ denotes the set of equivalence classes of separably valued measurable mappings from $M$ to $(\mathcal{BV}(I,N),d_1)$ at a finite $D_{1,1}$-distance from $\underline{h}$ (see \cref{not:lp} for the definition of $\underline{h}$).
        \item $\operatorname{Var}_1\coloneqq \operatorname{Var}_{L^1_h(M,N)}$ denotes the variation of curves in $L^1_h(M,N)$ (\cref{def:variation}).
        \item $\lvert Dc\rvert_1\coloneqq \lvert Dc\rvert_{L^1_h(M,N)}$ denotes the variation measure of the curve $c\in \mathcal{BV}(I, L^1_h(M,N))$ (\cref{def:variation_measure}).
\item $Q_1$ denotes the restriction to $\mathcal{BV}(I,L^1_h(M,N))$ of the quotient mapping $Q_I$ (see \cref{not:ac} for the definition of $Q_I$).
\end{enumerate}
\end{notation}

Then, the isometry given by \cref{th:isom_lebesgue} yields a characterization of càdlàg curves of bounded variation in nonlinear Lebesgue spaces for $p=1$.

\begin{theorem}[Characterization of càdlàg curves of bounded variation for $p=1$]
\label{th:charact_bv_curves}
    Let $h\in L^0(M,N)$. Suppose that $N$ is complete. Then, the isometry $\bar{i}_1$ defined in \cref{th:isom_lebesgue} induces the isometric mapping \begin{align*}
        \tilde{i}_1\coloneqq (q_1^{-1})_* \circ \bar{i}_1\circ Q_1: (\mathcal{BV}(I,L^1_h(M,N)),d_{1,1})\to (L^1_{\underline{h}}(M,\mathcal{BV}(I,N)),D_{1,1})
    \end{align*}
     whose inverse mapping on its range is $\tilde{i}_1^{-1}\coloneqq Q_1^{-1}\circ \bar{i}_1^{-1}\circ (q_1)_*$ and satisfies 
     \begin{align}
     \label{eq:inverse_map_eval_map_bv}
     \tilde{i}_1^{-1}(f)(t) = (e_{t,1})_*(f)\quad\text{for all $t\in I$ and $f\in \tilde{i}_1( \mathcal{BV}(I,L^1_h(M,N)))$.}
     \end{align}
     In addition, $\tilde{i}_1$ satisfies for all $c\in \mathcal{BV}(I,L^1_h(M,N))$ that 
    \begin{align}
    \label{eq:meausure_equals_pointwise_measure}
        \lvert Dc\rvert_1 = \int_M \lvert D\tilde{i}_1(c)(x)\rvert_N\dif \mu_M(x) 
    \end{align}
    and its range is the set of all $f\in L^1_{\underline{h}}(M,\mathcal{BV}(I,N))$ such that 
    $$\int_M \lvert Df(x)\rvert_N(I)\dif \mu_M(x) < \infty.$$
\end{theorem}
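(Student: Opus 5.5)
The plan is to mirror the proof of \cref{th:charact_ac_curves}, replacing Sobolev curves by BV-curves, $q_p$ by $q_1$ from \cref{prop:cadlag_representation_bv}, and the functional $\sup_\tau\int(\Delta_\tau\eta)^p$ by its $p=1$ analogue, which computes the essential variation by \cref{prop:equiv_def_bv}.

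\emph{Step 1: reduction to separable $N$.} As in Step 1 of the proof of \cref{th:charact_ac_curves}, given $c\in\mathcal{BV}(I,L^1_h(M,N))$, set $N_0$ to be the closure of $h(M)$ together with the ranges of $c(t)$ for $t$ in a countable set. The countable set consists of the rationals and the countably many jump times of $c$. Right-continuity then shows that $c(t)$ is $\mu_M$-a.e.\ valued in $N_0$ for every $t$. This reduction matters because \cref{prop:comparison_d1_dsk,prop:measurability_evaluation_bv} need $N$ complete and separable.

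\emph{Step 2: definition of $\tilde i_1$.} By \cref{prop:cadlag_representation_bv}, $Q_1(c)\in BV(I,L^1_h(M,N))$. Hence $\bar i_1(Q_1(c))\in L^1_{\underline h}(M,L^1(I,N))$ by \cref{th:isom_lebesgue}. To show that it is a.e.\ valued in $BV(I,N)$, I approximate $c$ by dyadic piecewise-constant curves $c_n$ and estimate $\psi_m(\eta)\coloneqq\sup_{\lebesgue(I)2^{-m}<\tau<b-a}\int_a^{b-\tau}\Delta_{N,\tau}\eta$ on $\bar i_1(c_n)(x)$. For a step curve this quantity is bounded by $\sum_i d_N(c_{n,i}(x),c_{n,i+1}(x))$; the $p=1$ case of the paper's computation gives this with constant $1$ rather than $2^p$. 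Integrating over $M$ yields $\sum_i D_1(c(t_{n,i}),c(t_{n,i+1}))\le\operatorname{Var}_1(c;I)$. Then Fatou's lemma, lower semicontinuity, and monotone convergence in $m$ give
\begin{equation*}
\int_M \operatorname{ess\,Var}_N(\bar i_1(Q_1(c))(x);I)\dif\mu_M(x)\le \operatorname{Var}_1(c;I)<\infty.
\end{equation*}
By \cref{rem:bv_borel} this places $\bar i_1(Q_1(c))$ in $L^1_{\underline h}(M,BV(I,N))$, so $\tilde i_1$ is a well-defined isometry with the stated inverse. Convergence $c_n\to c$ in $d_{1,1}$ holds by dominated convergence, because $c$ is bounded and continuous off a countable set.

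\emph{Step 3: proof of \cref{eq:inverse_map_eval_map_bv}.} First I build the jointly measurable map $\tilde f(t,x)=f'(x)(t)$. I take limits of dyadic right-endpoint-free approximations $\tilde f_n(t,x)=f'(x)(t_{n,i+1}')$ chosen so that right-continuity yields pointwise convergence, using \cref{prop:measurability_evaluation_bv} and \cref{prop:measurability_pointwise_limit}. The isometries of \cref{th:extension_embed_iterated_simple} then give equality for a.e.\ $t$.

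To upgrade to all $t$, I show that $t\mapsto (e_{t,1})_*f$ lies in $\mathcal{BV}(I,L^1_h(M,N))$. Its variation is at most $\int_M\operatorname{Var}_N(f(x);I)\dif\mu_M$, by summing pointwise over partitions. It is càdlàg with left-continuity at $b$ by dominated convergence, using $d_N(f(x)(s),f(x)(t))\le \lvert Df(x)\rvert_N((t,s])$ as the integrable dominant. Two elements of $\mathcal{BV}$ agreeing a.e.\ coincide, by \cref{prop:cadlag_representation_bv}.

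\emph{Step 4: the measure identity and the range.} For $s<t$, the pointwise bound gives
\begin{equation*}
\operatorname{Var}_1(c;(s,t))\le\int_M\operatorname{Var}_N(\tilde i_1(c)(x);(s,t))\dif\mu_M(x).
\end{equation*}
The reverse inequality follows from Step 2 localized to $(s,t)$, via \cref{prop:equiv_def_bv} and Fatou. Both sides are Lebesgue--Stieltjes measures agreeing on open intervals, so they coincide; measurability of $x\mapsto\lvert D\tilde i_1(c)(x)\rvert_N((s,t))$ follows from lower semicontinuity of the variation. The range characterization is then exactly Step 3 together with the bound of Step 2.

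The main obstacle I expect is the localized lower bound in Step 4, together with the measurability and all-$t$ issues at jump times, where continuity arguments from the a.c.\ case are unavailable. For these I would rely on right-continuity and the atoms of $\lvert Dc\rvert_1$.
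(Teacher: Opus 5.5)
Your outline is sound. It differs from the paper mainly in the order of the steps.

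\textbf{Comparison with the paper.} You first obtain a global bound. You then prove \eqref{eq:inverse_map_eval_map_bv} for all $t$ by showing directly, via dominated convergence and injectivity of $q_1$, that $t\mapsto (e_{t,1})_*f$ lies in $\mathcal{BV}(I,L^1_h(M,N))$. You reuse this identity twice: for $\lvert Dc\rvert_1\le \int_M \lvert D\tilde{i}_1(c)(x)\rvert_N\dif\mu_M(x)$, by summing over partitions, and for the range, by building the preimage curve explicitly.

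The paper proceeds differently. It proves $\int_M \lvert D\tilde{i}_1(c)(x)\rvert_N\dif\mu_M(x)\le \lvert Dc\rvert_1$ by localized step-curve estimates. It gets the range and the opposite inequality from Tonelli and the difference-quotient formula of \cref{prop:equiv_def_bv}. It proves \eqref{eq:inverse_map_eval_map_bv} last, with a much terser justification than yours. Your right-endpoint sampling for $\tilde f_n$ is also the correct choice: left-endpoint sampling, as in the paper's Step 5, only converges to $f'(x)(t^-)$ at non-dyadic jump times.

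\textbf{Two corrections.}
\begin{enumerate}
\item The constant $1$ in your step-curve bound is true, but it does not come from specializing the proof of \cref{th:charact_ac_curves}; that computation gives $(k+1)/k\le 2$. It comes from the direct count that the jump at $t_{n,i}$ enters $d_N(\eta(u),\eta(u+\tau))$ only for $u\in[t_{n,i}-\tau,t_{n,i})$ (the paper cites \cite[Lemma 2.18]{abedi2024absolutely} here). The sharp constant is indispensable if you derive the measure inequality by a localized Fatou argument.
\item Your range argument evaluates an arbitrary $f$, so you need the separability reduction for $f$ itself, not only for a given $c$. It works because a $d_1$-separable family in $\mathcal{BV}(I,N)$ has a separable union of ranges, by right-continuity and left-continuity at $b$.
\end{enumerate}

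\textbf{The step you flag.} This is where a naive localization fails. With $\psi_{s,t}$, the first grid increment starts at a grid point $\le s$ and can carry the jump of $c$ at $s$. So Fatou only gives $\lvert Dc\rvert_1([s,t))$ on the right. That bound still suffices if you apply it to $(s',t)$ and let $s'\downarrow s$ on both sides; this is presumably what your remark on atoms amounts to. The paper instead uses $\psi_{s+\varepsilon,t}$ with mesh below $\varepsilon$, so all increments lie in $(s,t)$, and then lets $\varepsilon\downarrow 0$.

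With your ordering there is a simpler route, needing neither the sharp constant nor any endpoint bookkeeping. Once $c(u)=(e_{u,1})_*\tilde{i}_1(c)$ for every $u$, take finite sets $F_k=\{u^k_0<\dots<u^k_{m_k}\}$ increasing to $\mathbb{Q}\cap(s,t)$. The partition sums $S_k(x)$ of $\tilde i_1(c)(x)$ over $F_k$ have three properties:
\begin{itemize}
\item they are measurable;
\item they are nondecreasing in $k$, by the triangle inequality;
\item by right-continuity, they converge to $\operatorname{Var}_N(\tilde{i}_1(c)(x);(s,t))$.
\end{itemize}
Monotone convergence then gives
\begin{equation*}
\int_M \operatorname{Var}_N(\tilde{i}_1(c)(x);(s,t))\dif\mu_M(x)=\lim_{k\to\infty}\sum_{j=0}^{m_k-1} D_1\big(c(u^k_j),c(u^k_{j+1})\big)\le \operatorname{Var}_1(c;(s,t)).
\end{equation*}
Combined with your partition bound, this yields equality on every open interval, together with measurability of $x\mapsto \lvert D\tilde{i}_1(c)(x)\rvert_N((s,t))$. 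Step 2 is then needed only for well-definedness and for the finiteness of $\int_M\operatorname{Var}_N(\tilde i_1(c)(x);I)\dif\mu_M(x)$ on which your Step 3 relies.
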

\begin{remark}[On the definition of pushforwards for equivalence classes]
    The pushforward of an equivalence class by a mapping is meant as the equivalence class of the composition of the given mapping with a compatible representative of the input equivalence class. 
\end{remark}

\begin{remark}[On relaxing the finiteness assumption on $d_N$]
Using the same arguments as \cref{rem:relax_metric_charact_ac}, the conclusion of \cref{th:charact_bv_curves} holds when $d_N$ is not assumed finite (see \cref{assum:minimal} for the assumption on $d_N$), but requires $h$ to be constant in that case.
\end{remark}

\begin{proof}[Proof of \cref{th:charact_bv_curves}]
    The proof follows the same approach used in \cite[Theorems 3.1 and 3.3]{abedi2024absolutely} for Wasserstein spaces, which itself follows the approach of \cite[Theorems 4 and 5]{lisini2007characterization}. It consists in using piecewise constant approximations of càdlàg curves of bounded variation to prove inequalities and propagate the results to càdlàg curves of bounded variation  by passing to the limit.

    \noindent Let $h\in\mathcal{L}^0(M,N)$ and let $c \in \mathcal{BV}(I,L^1_h(M,N))$. 

     \par\medskip \noindent \emph{Step 1: Reduction to the case where $N$ is separable.} Let us first reduce the proof to the case where $N$ is separable. Indeed, when $c\in \mathcal{BV}(I,L^1_h(M,N))$, the set $N_0\coloneqq \overline{h(M)\cup(\cup_{t\in \mathbb{Q}\cap I}c(t)(M))}$ is separable as the closure of the countable union of separable sets. Then, since $c$ is right-continuous on $[a,b)$ and left-continuous at $t=b$, we have for all $t\in I$ that $c(t)= \lim_{s\to t^+,s\in \mathbb{Q}\cap I} c(s)$ and $c(b)= \lim_{s\to b^-,s\in \mathbb{Q}\cap I} c(s)$ in $L^1_h(M,N)$, hence, up to the extraction of a subsequence $c(t)(x) = \lim_{s\to t^+, s\in \mathbb{Q}\cap I} c(s)(x)$ and $c(b)(x) = \lim_{s\to b^-, s\in \mathbb{Q}\cap I} c(s)(x)$ in $N$ for $\mu_M$-a.e.~$x\in M$, that is, $c(t)(x)\in N_0$ and $c(b)(x)\in N_0$ for $\mu_M$-a.e.~$x\in M$. Thus, we can replace $N$ with $N_0$ in the rest of the proof, so that $\cL(M,N) = \cLs(M,N)$.

    \par\medskip \noindent \emph{Step 2: Definition of $\tilde{i}_1$.} Let $c\in \mathcal{BV}(I,L^1_h(M,N))$. Then, $Q_1(c)$ belongs to $BV(I,L^1_h(M,N))$, by \cref{prop:cadlag_representation_bv}, hence $(\bar{i}_1\circ Q_1)(c)$ belongs to $L^1_{\underline{h}}(M,L^1(I,N))$, by \cref{th:isom_lebesgue}. Define for all $n\in \mathbb{N}^*$ the set $I_n\coloneqq \{0,1,\ldots, 2^n\}$ and the sequence $(t_{n,i})_{i\in I_n}$ such that $t_{n,i}\coloneqq a+\lebesgue(I)i2^{-n}$. Also, define the simple mapping $c_n: I \to L^1_h(M,N)$ as $c_n|_{[t_{n,i},t_{n,i+1})} \equiv c(t_{n,i})$ for all $i\in I_{n-1}$ and $c_n(b)\coloneqq c(b)$. Since the set of dyadic rationals (scaled by $\lebesgue(I)$ and translated by $a$) is dense in $I$ and $c$ is right-continuous on $[a,b)$ and $\lebesgue(\{b\})=0$, we have for $\lebesgue$-a.e.~$t\in I$ that $c_n(t)\to c(t)$ in $L^1_h(M,N)$ as $n\to \infty$. In addition, if $t\in [t_{n,i},t_{n,i+1})$,
 \begin{align*}
     D_1(c_n(t),c(t)) = D_1(c(t_{n,i}),c(t)) \leq \operatorname{Var}_1(c;I). 
 \end{align*}
 Hence, we get that 
 $D_1(c_n(t),c(t))\leq \operatorname{Var}_1(c;I)$ for all $(t,n)\in I\times \mathbb{N}^*$. Since $I$ has finite $\lebesgue$-measure, the mapping constant equal to $\operatorname{Var}_1(c;I)<\infty$ on $I$ is integrable, so that, by Lebesgue's dominated convergence theorem \cite[Theorem~2.4.5]{cohn2013measure}, we have $d_{1,1}(c_n,c) \to 0$ as $n\to \infty$. In addition, for all $n\in\mathbb{N}^*$ and all $i\in I_n$ there is $c_{n,i}\in \mathcal{L}^1_h(M,N)$ such that $D_1(c_{n,i},c(t_{n,i}))=0$, so we can define $c_n': I\to \mathcal{L}^1_h(M,N)$ as $c_n'|_{[t_{n,i},t_{n,i+1})} \equiv c_{n,i}$ for all $i\in I_{n-1}$ and $c_n'(b)\coloneqq c_{n,2^n}$, which satisfies $D_1(c_n(t),c_n'(t))=0$ for all $t\in I$. Thus, we can define $\hat{c}_n':I\times M\to N$ as $\hat{c}_n'(t,x)\coloneqq c_n'(t)(x)$, which belongs to $\mathcal{L}^1_{\hat{h}}(I\times M,N)$ (simple on $I$ and by definition of the $c_{n,i}$'s) and satisfies $D_1(\hat{c}_n'(t,\cdot),c_n(t))=0$ for all $t\in I$. Hence, $\overline{\operatorname{sec}}_{I,1}$ being an isometry (\cref{th:extension_embed_iterated_simple}), we get that $\hat{D}_1(\hat{c}_n',\overline{\operatorname{sec}}_{I,1}^{-1}(c_n))=0$. Now, defining the mapping $f_n': M\to \mathcal{L}^1(I,N)$ as $f_n'(x)\coloneqq \hat{c}_n'(\cdot,x)$ and recalling that $\overline{\operatorname{sec}}_{M,1}$ is an isometry (\cref{th:extension_embed_iterated_simple}), we get that $D_{1,1}(f_n',\bar{i}_1(c_n))=0$, hence that $d_1(f_n'(x),\bar{i}_1(c_n)(x))=0$ for $\mu_M$-a.e.~$x\in M$. Let $(s,t)\in I^2$ be such that $s < t$ and define the mapping
    $$\psi_{s,t}: \gamma\mapsto \sup_{0<\tau<t-s} \int_s^{t-\tau}\Delta_{N,\tau}\gamma(u)\dif u.$$
 Then, letting $\varepsilon \in (0, t - s)$ and taking $n$ large enough such that $\lebesgue(I)2^{-n} < \varepsilon$, we have 
    \begin{align*}
    \int_M \psi_{s+\varepsilon,t}(\bar{i}_1(c_n)(x))\dif \mu_M(x) &= \int_M \psi_{s+\varepsilon,t}(f_n'(x))\dif \mu_M(x)\\
    &= \int_M \sup\limits_{0<\tau<\lebesgue(I)2^{-n}}\int_{s+\varepsilon}^{t-\tau}\Delta_{N,\tau} f_n'(x)(u)\dif u\dif \mu_M(x) \\
    &= \int_M \sum_{\substack{1\leq i\leq 2^n\\
s+\varepsilon<t_{n,i}<t}} d_N(f_n'(x)(t_{n,i-1}), f_n'(x)(t_{n,i})) \dif \mu_M(x)\\
    &=  \sum_{\substack{1\leq i\leq 2^n\\
s+\varepsilon<t_{n,i}<t}} \int_M d_N(c_n'(t_{n,i-1})(x), c_n'(t_{n,i})(x)) \dif \mu_M(x)\\
    &=  \sum_{\substack{1\leq i\leq 2^n\\
s+\varepsilon<t_{n,i}<t}} D_1(c_n(t_{n,i-1}), c_n(t_{n,i}))\\
    &=  \sum_{\substack{1\leq i\leq 2^n\\
s+\varepsilon<t_{n,i}<t}} D_1(c(t_{n,i-1}), c(t_{n,i}))\\
    &\leq \operatorname{Var}_1(c;(s,t))= \lvert Dc\rvert_1((s,t)) <\infty,
    \end{align*}
    where the second equality follows from \cite[Lemma 2.18]{abedi2024absolutely} and the last equality follows from assertion (i) of \cref{prop:cadlag_representation_bv}. The third equality follows from the fact, if $s+\varepsilon < t_{n,i} < t$, then $t_{n,i-1} = t_{n,i} - \lebesgue(I)2^{-n} > s$.
    The mapping $\bar{i}_1$ being isometric, we also have $D_{1,1}(\bar{i}_1(c_n),(\bar{i}_1\circ Q_1)(c))\to 0$ as $n\to \infty$. Hence, up to the extraction of a subsequence, we can assume for $\mu_M$-a.e.~$x\in M$ that $\bar{i}_1(c_n)(x)\to (\bar{i}_1\circ Q_1)(c)(x)$ in $L^1(I,N)$ as $n\to \infty$. Therefore, recalling that $\psi_{s+\varepsilon,t}$ is lower semicontinuous on $L^1(I,N)$ and using Fatou's lemma \cite[Theorem 2.4.4]{cohn2013measure}, the latter inequality yields that 
    \begin{align}
    \int_M \psi_{s+\varepsilon,t}((\bar{i}_1\circ Q_1)(c)(x))\dif \mu_M(x) &\leq \int_M \underset{n\to \infty}{\lim\inf} \psi_{s+\varepsilon,t}(\bar{i}_1(c_n)(x))\dif \mu_M(x) \notag\\
    &\leq \underset{n\to \infty}{\lim\inf} \int_M \psi_{s+\varepsilon,t}(\bar{i}_1(c_n)(x))\dif \mu_M(x) \notag\\
    &\leq \lvert D c\rvert_1((s,t)).\label{eq:bound_bv}
    \end{align}
    Taking $s=a$ and $t=b$ in \cref{eq:bound_bv}, and letting $\varepsilon\to 0$, we use the fact that $\psi_{a+\varepsilon,b}(\gamma)$ increases to $\psi_{a,b}(\gamma)$ as $\varepsilon \to 0$ for every $\gamma\in L^1(I,N)$. Hence, using the monotone convergence theorem \cite[Theorem 2.4.1]{cohn2013measure}, we get 
    $$\int_M \psi_{a,b}((\bar{i}_1\circ Q_1)(c)(x))\dif\mu_M(x) \leq \lvert Dc\rvert_1(I) <\infty.$$
    This yields that $\psi_{a,b}((\bar{i}_1\circ Q_1)(c)(x)) < \infty$ for $\mu_M$-a.e.~$x\in M$, which implies that $(\bar{i}_1\circ Q_1)(c)(x)$ belongs to $BV(I,N)$ (\cref{prop:equiv_def_bv}). Hence, $BV(I,X)$ being a Borel subset of $L^1(I,X)$ (\cref{rem:bv_borel}), the mapping $(\bar{i}_1\circ Q_1)(c)$ belongs to $L^1_{\underline{h}}(M,BV(I,N))$. Therefore, the isometric mapping
    $$\tilde{i}_1\coloneqq (q_1^{-1})_*\circ \bar{i}_1\circ Q_1: \mathcal{BV}(I,L^1_h(M,N))\to L^1_{\underline{h}}(M,\mathcal{BV}(I,N))$$
    is well-defined and its inverse mapping on its range is $\tilde{i}_1^{-1}\coloneqq Q_1^{-1}\circ \bar{i}_1^{-1}\circ (q_1)_*$. In addition, since $\psi_{s+\varepsilon,t} ((\bar i_1\circ Q_1)(c)(x)) = \lvert D \tilde{i}_1(c)(x)\rvert_N((s+\varepsilon,t))$ (\cref{prop:equiv_def_bv}) for $\mu_M$-a.e.~$x\in M$, \cref{eq:bound_bv} yields that 
    $$\int_M\lvert D\tilde{i}_1(c)(x)\rvert_N((s+\varepsilon,t))\dif\mu_M(x) \leq \lvert Dc\rvert_1((s,t)).$$
    Letting $\varepsilon\to 0$, the intervals $(s+\varepsilon,t)$ increase to $(s,t)$. Thus, by continuity from below of the variation measures \cite[Proposition 1.2.5 (a)]{cohn2013measure} and the monotone convergence theorem \cite[Theorem 2.4.1]{cohn2013measure}, we get
    $$\int_M \lvert D\tilde{i}_1(c)(x)\rvert_N((s,t)) \dif\mu_M(x) \leq \lvert Dc\rvert_1((s,t)).$$
    Since the latter inequality holds for every open interval $(s,t)\subset I$, we finally obtain 
    \begin{equation}
    \label{eq:measure_greater_thant_pw_measure}
    \int_M \lvert D\tilde{i}_1(c)(x)\rvert_N\dif\mu_M(x)\leq \lvert Dc\vert_1.
    \end{equation}

    \par\medskip \noindent \emph{Step 3: Characterization of the range of $\tilde{i}_1$.} By \cref{eq:measure_greater_thant_pw_measure}, we know that the range of $\tilde{i}_1$ is contained in the set of all $f\in L^1_{\underline{h}}(M,\mathcal{BV}(I,N))$ such that
    \begin{align*}
    \int_M \lvert Df(x)\rvert_N(I)\dif\mu_M(x)< \infty.
    \end{align*}
 Let us now prove the reverse inclusion. Thus, let $f\in L^1_{\underline{h}}(M,\mathcal{BV}(I,N))$ such that
    \begin{align*}
    \int_M \lvert Df(x)\rvert_N(I)\dif\mu_M(x) < \infty.
    \end{align*}
    We already know that $\bar{i}_1^{-1}\circ (q_1)_*(f) \in L^1(I,L^1_h(M,N))$ and, by \cref{th:extension_embed_iterated_simple}, we know that
    $$\hat{f}\coloneqq (\overline{\operatorname{sec}}_{I,1}^{-1}\circ \bar{i}_1^{-1}\circ (q_1)_*)(f) = (\overline{\operatorname{sec}}_{M,1}^{-1}\circ (q_1)_*)(f)$$
    belongs to $L^1_{\hat{h}}(I\times M, N)$. Also, by the definitions of $\overline{\operatorname{sec}}_{M,1}$, $\overline{\operatorname{sec}}_{I,1}$ and $q_1$ (\cref{th:extension_embed_iterated_simple,prop:cadlag_representation_bv}), it satisfies $\hat{f}(\cdot,x)=f(x)$ in $L^1(I,N)$ for $\mu_M$-a.e.~$x\in M$ and $\hat{f}(t,\cdot)=(\bar{i}_1^{-1}\circ (q_1)_*)(f)(t)$ in $L^1_h(M,N)$ for $\lebesgue$-a.e.~$t\in I$. However, $f$ differs from $\underline{h}$ on a measurable set $M_f$ of $\sigma$-finite $\mu_M$-measure, so that $\hat{f}$ differs from $\hat{h}$ on a measurable set contained in the measurable rectangle $I\times M_f$. Hence, we get, using the Fubini--Tonelli theorem \cite[Proposition 5.2.1]{cohn2013measure}, that for all $(s,t)\in I^2$ such that $s< t$
    \begin{align*}\sup\limits_{0< \tau < t-s}\int_s^{t-\tau}\Delta_{1,\tau} (\bar{i}_1^{-1}\circ (q_1)_*)(f)(u)\dif u &= \sup\limits_{0< \tau < t-s}\int_s^{t-\tau} \int_{M_f} \Delta_{N,\tau} \hat{f}(u,x)\dif\mu_M(x)\dif u\\
    &= \sup\limits_{0< \tau < t-s} \int_{M_f} \int_s^{t-\tau} \Delta_{N,\tau} \hat{f}(u,x)\dif u\dif\mu_M(x)\\
    &\leq \int_M \sup\limits_{0< \tau < t-s}\int_s^{t-\tau}\Delta_{N,\tau} f(x)(u)\dif u \dif\mu_M(x)\\
    &= \int_M \lvert Df(x)\rvert_N((s,t))\dif\mu_M(x) < \infty.
    \end{align*}
    This yields that $(\bar{i}_1^{-1}\circ (q_1)_*)(f)\in BV(I,L^1_h(M,N))$ (\cref{prop:equiv_def_bv}) and we can define $c \coloneqq (Q_1^{-1}\circ \bar{i}_1^{-1}\circ (q_1)_*)(f)$, which belongs to $\mathcal{BV}(I,L^1_h(M,N))$ and satisfies $f= \tilde{i}_1(c)$. Hence, $f$ belongs to the range of $\tilde{i}_1$. We thus proved that the range of $\tilde{i}_1$ is the set of all $f\in L^1_{\underline{h}}(M,\mathcal{BV}(I,N))$ such that 
    $$\int_M \lvert Df(x)\rvert_N(I)\dif \mu_M(x) < \infty.$$
    In addition, we get, from the previous inequality, that for all $c\in \mathcal{BV}(I,L^1_h(M,N))$
    \begin{equation}
    \label{eq:pw_measure_greater_than_measure}
        \lvert Dc\rvert_1\leq\int_M \lvert D\tilde{i}_1(c)(x)\rvert_N\dif\mu_M(x).
    \end{equation}
    
    \par\medskip \noindent \emph{Step 4: Proof of \cref{eq:meausure_equals_pointwise_measure}.} As a consequence of \cref{eq:measure_greater_thant_pw_measure,eq:pw_measure_greater_than_measure}, we get that for all $c\in \mathcal{BV}(I,L^1_h(M,N))$
    \begin{equation*}\lvert Dc\rvert_1 = \int_M \lvert D\tilde{i}_1(c)(x)\rvert_N\dif\mu_M(x).\end{equation*}

    \par\medskip \noindent \emph{Step 5: Proof of \cref{eq:inverse_map_eval_map_bv}.} Let $f$ be in the range of $\tilde{i}_1$. Then, let $c\in \mathcal{BV}(I,L^1_h(M,N))$ be the unique curve such that $f= \tilde{i}_1(c)$ and define
    $$\hat{f}\coloneqq (\overline{\operatorname{sec}}_{I,1}^{-1}\circ \bar{i}_1^{-1}\circ (q_1)_*)(f) = (\overline{\operatorname{sec}}_{M,1}^{-1}\circ (q_1)_*)(f).$$
    Also, note that $\hat{f}= \overline{\operatorname{sec}}_{I,1}^{-1}\circ Q_1(c)$ since $\tilde{i}_1= (q_1^{-1})_*\circ \bar{i}_1\circ Q_1$. Now, there is $f'\in \mathcal{L}_{\underline{h}}^1(M,\mathcal{BV}(I,N))$ such that $D_{1,1}(f',f)= 0$, which implies that $f'(x)=f(x)$ in $\mathcal{D}(I,N)$ for $\mu_M$-a.e.~$x\in M$ (\cref{prop:cadlag_representation_bv}). Thus, we can define the mapping $\tilde{f}: I\times M\to N$ as $\tilde{f}(t,x)\coloneqq f'(x)(t)=(e_{t,1}\circ f')(x)$. To show that $\tilde{f}$ is measurable, consider for all $n\in\mathbb{N}^*$ the mapping $\tilde{f}_n: I\times M\to N$ defined as $\tilde{f}_n(t,x)\coloneqq (e_{t_{n,i},1}\circ f')(x)$ if $t\in [t_{n,i},t_{n,i+1})$ with $i\in I_{n-1}$ and $\tilde{f}_n(b,x)\coloneqq(e_{b,1}\circ f')(x)$, so that $\tilde{f}_n$ belongs to $\mathcal{L}^0(I\times M,N)$ since $e_{t,1}\circ f'\in \mathcal{L}^0(M,N)$ for all $t\in I$ (\cref{prop:measurability_evaluation_bv}). Then, the set of dyadic rationals (scaled by $\lebesgue(I)$ and translated by $a$) being dense in $I$ and $f'(x)$ being right-continuous on $[a,b)$ for all $x\in M$, we have $\tilde{f}(t,x) = \lim_{n\to \infty} \tilde{f}_n(t,x)$ for all $(t,x)\in I\times M$. Therefore, $\tilde{f}$ is measurable as the pointwise limit of measurable mappings (\cref{prop:measurability_pointwise_limit}). In addition, $D_1(\tilde{f}(t,\cdot),(e_{t,1})_*(f))=0$ for all $t\in I$ since $\tilde{f}(\cdot,x)= f(x)$ in $\mathcal{D}(I,N)$ for $\mu_M$-a.e.~$x\in M$. Then, using the definitions of $\hat{f}$, $\overline{\operatorname{sec}}_{M,1}$ (\cref{th:extension_embed_iterated_simple}) and $q_1$ (\cref{prop:cadlag_representation_bv}), we have $d_1(\tilde{f}(\cdot,x),\hat{f}(\cdot,x)) =0$ for $\mu_M$-a.e.~$x\in M$. Hence, $\overline{\operatorname{sec}}_{M,1}$ being an isometry, we get $\hat{D}_1(\tilde{f},\hat{f}) = 0$. Thus, using the definitions of $\hat{f}$, $\overline{\operatorname{sec}}_{I,1}$ (\cref{th:extension_embed_iterated_simple}) and $Q_1$ (\cref{not:bv}), we get $D_1(\tilde{f}(t,\cdot),c(t)) =0$ for $\lebesgue$-a.e.~$t\in I$, which yields $\tilde{i}_1^{-1}(f)(t) = (e_{t,1})_*(f)$ in $L^1_h(M,N)$ for $\lebesgue$-a.e.~$t\in I$. Together with the equality of \cref{eq:meausure_equals_pointwise_measure}, this implies that the curve $t\mapsto (e_{t,1})_*(f)$ belongs to $\mathcal{BV}(I,L^1_h(M,N))$ and since $d_{1,1}$ separates $\mathcal{BV}(I,L^1_h(M,N))$ (\cref{prop:cadlag_representation_bv}), $\tilde{i}_1^{-1}(f)(t) = (e_{t,1})_*(f)$ in $L^1_h(M,N)$ for all $t\in I$, which ends the proof of \cref{eq:inverse_map_eval_map_bv}.
\end{proof}

To better understand why \cref{th:charact_ac_curves} fails when $p=1$, we provide a simple counterexample.

\begin{cexample}[Counterexample to \cref{th:charact_ac_curves} when $p=1$]
Let $I=M=[0,1]$ with $\Sigma_I=\Sigma_M= \mathcal{B}([0,1])$ and $\mu_I=\mu_M=\lebesgue|_{[0,1]}$, let $N=\mathbb{R}$ with the standard metric, and let $h\equiv 0$. Then, define $c:[0,1]\to L^1([0,1],\mathbb{R})$ as $c(t)\coloneqq \mathds{1}_{(0,t)}$. For every $(s,t)\in [0,1]^2$, the curve $c$ satisfies 
$D_1(c(s),c(t)) = \int_{[0,1]}\lvert \mathds{1}_{(0,s)}(x)- \mathds{1}_{(0,t)}(x)\rvert \dif x = \lvert s - t\rvert$. Hence, $c$ belongs to $ \mathcal{AC}^1([0,1],L^1([0,1],\mathbb{R}))$. However, it holds for $\lebesgue$-a.e.~$x\in [0,1]$ that $\tilde{i}_1(c)(x)=\mathds{1}_{(x,1]}$, which is clearly not a continuous curve. 
\end{cexample}

In the next section, we show that \cref{th:charact_ac_curves} allows for the pointwise characterization of the geometry of nonlinear Lebesgue spaces for $p> 1$ and that, in these spaces, a notion of speed can be defined for absolutely continuous curves despite the lack of differential structure outside the case where the target space $N$ is linear.

\section{Applications}
\label{sec:applications}
\subsection{\texorpdfstring{Geometry of nonlinear Lebesgue spaces for $p>1$}{Geometry of nonlinear Lebesgue spaces for p>1}}
\label{sec:geometry}

\subsubsection{Characterization of geodesics}
\label{sec:charact_geodesics}

Let us first recall the definition of \emph{constant speed geodesics}.

\begin{definition}[Geodesics]
\label{def:constant_geodesics}
    Let $X$ be a metric space with metric $d_X: X^2\to [0,\infty]$. A curve $c: I\to X$ is called a \emph{constant speed geodesic} if and only if it satisfies $d_X(c(a),c(b))<\infty$ and for all $(s,t)\in I^2$  
    $$d_X(c(s),c(t)) = \left\lvert\frac{t - s }{ b - a }\right\rvert d_X(c(a),c(b)).$$
    The set of all such curves is denoted by $\mathcal{G}(I,X)$ and is a subset of $\mathcal{AC}^p(I,X)$ for all $p\in [1,\infty]$.
\end{definition}

Then, \cref{th:charact_ac_curves} yields a characterization of constant speed geodesics in nonlinear Lebesgue spaces for $p> 1$.

\begin{theorem}[Characterization of geodesics for $p>1$]
\label{th:charact_geodesics}
    Let $h\in L^0(M,N)$ and $p\in (1,\infty)$. Suppose that $N$ is complete. Then, the isometric mapping $\tilde{i}_p$ defined in \cref{th:charact_ac_curves} satisfies 
    \begin{align*}
    \tilde{i}_p(\mathcal{G}(I,\Lph(M,N))) \subset L^p_{\underline{h}}(M,\mathcal{G}(I,N)).
    \end{align*}
\end{theorem}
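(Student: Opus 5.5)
Let $c\in\mathcal{G}(I,\Lph(M,N))$ and set $L\coloneqq D_p(c(a),c(b))$ and $f\coloneqq\tilde{i}_p(c)$. Since $\mathcal{G}(I,\Lph(M,N))\subset\mathcal{AC}^p(I,\Lph(M,N))$, \cref{th:charact_ac_curves} applies to $c$. The plan is to exploit the equality case of two chained inequalities: Hölder in time and Minkowski/Jensen across $M$. This should force, for $\mu_M$-a.e.~$x$, the curve $f(x)$ to have constant metric speed equal to $d_N(f(x)(a),f(x)(b))/(b-a)$, which characterizes constant speed geodesics.

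\emph{Step 1 (constant speed of $c$).} For a constant speed geodesic, $\lvert c'\rvert_p(t)=L/(b-a)$ for $\lebesgue$-a.e.~$t$, directly from \cref{def:constant_geodesics} and \cref{th:metric_derivative}. By \cref{eq:derivative_equals_pw_derivative} and the Fubini--Tonelli theorem, using $\sigma$-finiteness of the set where $f\neq\underline{h}$ as in the proof of \cref{th:charact_ac_curves},
\[
\int_M\int_I \lvert f(x)'\rvert_N^p(t)\dif t\dif\mu_M(x)=\int_I\lvert c'\rvert_p^p(t)\dif t=\frac{L^p}{(b-a)^{p-1}}.
\]

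\emph{Step 2 (lower bound by endpoints).} By \cref{eq:inverse_map_evaluation_map}, $c(a)=(e_{a,p})_*(f)$ and $c(b)=(e_{b,p})_*(f)$. Hence
\[
L^p=\int_M d_N(f(x)(a),f(x)(b))^p\dif\mu_M(x).
\]
For a.e.~$x$, $f(x)\in\mathcal{AC}^p(I,N)$. Combining the control in \cref{th:metric_derivative} with Hölder's inequality gives
\[
d_N(f(x)(a),f(x)(b))^p\le\Big(\int_I\lvert f(x)'\rvert_N\Big)^p\le (b-a)^{p-1}\int_I\lvert f(x)'\rvert_N^p .
\]
Integrating over $M$ and comparing with Step 1 shows that the total chain is an equality. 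Since the pointwise inequality holds a.e. and the integrals are finite, equality must hold for $\mu_M$-a.e.~$x$ in both inequalities.

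\emph{Step 3 (conclude geodesic).} Fix such an $x$ and write $\gamma=f(x)$. Equality in Hölder with $p>1$ (strict convexity) forces $\lvert\gamma'\rvert_N$ to be a.e.~constant, equal to $\ell/(b-a)$ with $\ell\coloneqq d_N(\gamma(a),\gamma(b))$. The other equality gives $\ell=\int_I\lvert\gamma'\rvert_N$. For $s<t$, the triangle inequality gives
\[
\ell\le d_N(\gamma(a),\gamma(s))+d_N(\gamma(s),\gamma(t))+d_N(\gamma(t),\gamma(b)).
\]
Each term is bounded by the integral of the speed over the corresponding subinterval, and these bounds sum to $\ell$. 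So every term attains its bound, which yields $d_N(\gamma(s),\gamma(t))=\frac{t-s}{b-a}\ell$, i.e.~$\gamma\in\mathcal{G}(I,N)$.

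\emph{Step 4 (measurability).} It remains to check that $f$ defines an element of $L^p_{\underline h}(M,\mathcal{G}(I,N))$: after modifying $f$ on a null set, it should be a measurable, separably valued map into $\mathcal{G}(I,N)$. The set $\mathcal{G}(I,N)$ is closed in $(\mathcal{C}(I,N),d_\infty)$, hence Borel in $(\mathcal{AC}^p(I,N),d_p)$ by \cref{prop:comparison_dp_dinf}. One then redefines $f$ on the null set, e.g.~equal to $\underline h$, which takes values in constant curves, hence in geodesics.

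I expect the main obstacle to be the a.e.~extraction of equality in Step 2. This step requires finiteness of all quantities and correct use of $\sigma$-finiteness for the Fubini interchange. The rigidity argument in Step 3 is routine.
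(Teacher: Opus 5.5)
Your proposal is correct and follows essentially the same route as the paper. Both arguments combine the geodesic identity $D_p(c(a),c(b))^p=(b-a)^{p-1}\int_I\lvert c'\rvert_p^p(t)\dif t$ with \cref{eq:derivative_equals_pw_derivative} and \cref{eq:inverse_map_evaluation_map}, squeeze the pointwise triangle/H\"older chain to get equality for $\mu_M$-a.e.~$x$, and deduce constant speed. Your three-term triangle inequality gives the geodesic identity for all $s<t$ directly, whereas the paper passes through rational times and continuity. Your Step~4 measurability check and redefinition on a null set make explicit what the paper leaves implicit.
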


\begin{remark}[On relaxing the finiteness assumption on $d_N$]
As highlighted in \cref{rem:relax_metric_charact_ac}, \cref{th:charact_ac_curves} holds when $d_N$ is not assumed finite, but $h$ is assumed constant. In that case, \cref{th:charact_geodesics} also holds.
\end{remark}

\begin{proof}[Proof of \cref{th:charact_geodesics}]
    Let $h\in \cLs(M,N)$ and $p\in (1,\infty)$. Let $c\in \mathcal{G}(I,\Lph(M,N))$. By \cref{th:charact_ac_curves}, $\tilde{i}_p(c)$ belongs to $L^p_{\underline{h}}(M,\mathcal{AC}^p(I,N))$ since $\mathcal{G}(I,\Lph(M,N))$ is a subset of $ \mathcal{AC}^p(I,\Lph(M,N))$. Then, defining the measurable set  
    $$M_c\coloneqq \{x\in M: d_p(\tilde{i}_p(c)(x),\underline{h}(x)) >0\} = \{x\in M: d_\infty(\tilde{i}_p(c)(x),\underline{h}(x)) >0\}$$
    of $\sigma$-finite $\mu_M$-measure, we have that $\lvert \tilde{i}_p(c)(x)'\rvert_N(t) = 0$ for all $(x,t)\in (M\setminus M_c)\times I$ since $\underline{h}(x)$ is constant over $I$. Therefore, using the Fubini--Tonelli theorem \cite[Proposition 5.2.1]{cohn2013measure}, we get that for all $(s,t)\in I^2$ such that $s< t$
    \begin{align*}
        \int_M d_N(\tilde{i}_p(c)(x)(s),\tilde{i}_p(c)(x)(t))^p\dif\mu_M(x) &= D_p(c(s),c(t))^p\\
        &= \lvert t -s\rvert^{p-1}\int_s^t\lvert c'\rvert_p^p(u)\dif u\\
        &= \lvert t -s\rvert^{p-1}\int_s^t \int_M \lvert \tilde{i}_p(c)(x)'\rvert_N^p(u)\dif\mu_M(x)\dif u\\
        &= \lvert t -s\rvert^{p-1}\int_s^t \int_{M_c} \lvert \tilde{i}_p(c)(x)'\rvert_N^p(u)\dif\mu_M(x)\dif u\\
        &= \lvert t -s\rvert^{p-1}\int_{M_c} \int_s^t\lvert \tilde{i}_p(c)(x)'\rvert_N^p(u)\dif u\dif\mu_M(x)\\
        &= \lvert t -s\rvert^{p-1}\int_M \int_s^t\lvert \tilde{i}_p(c)(x)'\rvert_N^p(u)\dif u\dif\mu_M(x).
    \end{align*}
    Therefore, together with the triangle inequality and Hölder's inequality, this yields that for $\mu_M$-a.e.~$x\in M$
    $$d_N(\tilde{i}_p(c)(x)(a),\tilde{i}_p(c)(x)(b))^p = \left(\int_a^b\lvert \tilde{i}_p(c)(x)'\rvert_N(t)\dif t\right)^p = \lvert b -a\rvert^{p-1}\int_a^b\lvert \tilde{i}_p(c)(x)'\rvert_N^p(t)\dif t < \infty.$$
    Thus, for $\lebesgue$-a.e.~$t\in I$
    $$\lvert \tilde{i}_p(c)(x)'\rvert(t) = \frac{d_N(\tilde{i}_p(c)(x)(a),\tilde{i}_p(c)(x)(b))}{\lvert b - a\rvert}.$$
    Hence, for $\mu_M$-a.e.~$x\in M$ we have that for all $(s,t)\in (\mathbb{Q}\cap I)^2$ such that $s< t$
    $$d_N(\tilde{i}_p(c)(x)(s),\tilde{i}_p(c)(x)(t)) =\left\lvert\frac{ t -s}{ b - a}\right\rvert d_N(\tilde{i}_p(c)(x)(a),\tilde{i}_p(c)(x)(b)).$$
    By uniform continuity of $\tilde{i}_p(c)(x)$ and density of $\mathbb{Q}\cap I$ in $I$, this holds for all $(s,t)\in I^2$ such that $s< t$, hence $\tilde{i}_p(c)(x)\in \mathcal{G}(I,N)$ for $\mu_M$-a.e.~$x\in M$ and $\tilde{i}_p(c)\in L^p_{\underline{h}}(M,\mathcal{G}(I,N))$.
\end{proof}

In the next two sections, we show that \cref{th:charact_geodesics} allows for the pointwise characterization of the geometry of nonlinear Lebesgue spaces for $p>1$.

\subsubsection{Characterization of the length structure}
\label{sec:charact_length}

Before delving into the length structure of nonlinear Lebesgue spaces, let us point out that a reminder on length spaces is provided in \ref{sec:metric_geometry}. Now, for $p\in [1,\infty]$, denote by $L_p$ the length induced by the metric $D_p$ on $\Lph(M,N)$ (\cref{def:length_curve}). Thus, the intrinsic metric induced by $D_p$ (\cref{def:intrinsic_metric}) is defined, for all $(f,f')\in \Lph(M,N)^2$, as
$$\widehat{D}_p(f,f') \coloneqq \inf \left\{L_p(c): c\in \mathcal{AC}(I,\Lph(M,N)), c(a)=f, c(b)=f'\right\}.$$

A first notable result on the length structure (see \cref{def:length_spaces} for a definition of length spaces)  of nonlinear Lebesgue spaces is that it is induced by the length structure of the target space. 

\begin{proposition}[Length structure of nonlinear Lebesgue spaces]
\label{prop:length_lebesgue}
    Let $h\in L^0(M,N)$ and $p\in [1,\infty]$. Suppose that $\Lph(M,N)$ is trivial (\cref{prop:trivial_lebesgue}) or that $N$ is a complete and separable length (resp.~geodesic) space. Then, $L^p_h(M,N)$ is a length (resp.~geodesic) space.
\end{proposition}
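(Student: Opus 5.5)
The trivial case is immediate: if $\Lph(M,N)=\{h\}$, then every pair of points is joined by a constant curve. So I assume that $N$ is complete, separable and a length space. For $\varepsilon\geq 0$ I plan to build a curve between any $f,f'\in\Lph(M,N)$ by choosing, measurably in $x\in M$, an almost-minimizing curve from $f(x)$ to $f'(x)$ in $N$. The choice $\varepsilon=0$ should then give the geodesic case and $\varepsilon>0$ the length case, with one argument covering all $p\in[1,\infty]$.

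\emph{Step 1 (a closed set of good curves).} For $\varepsilon\geq 0$, let $\Gamma_\varepsilon$ be the set of $\gamma\in\mathcal{C}(I,N)$ such that
$$d_N(\gamma(s),\gamma(t))\leq (1+\varepsilon)\left\lvert\tfrac{t-s}{b-a}\right\rvert d_N(\gamma(a),\gamma(b))\quad\text{for all $(s,t)\in I^2$.}$$
This set is closed in the Polish space $(\mathcal{C}(I,N),d_\infty)$. Let $\pi=(e_a,e_b):\Gamma_\varepsilon\to N^2$ be the continuous endpoint map. I claim $\pi$ is surjective. If $y=y'$, a constant curve works. If $y\neq y'$ and $\varepsilon>0$, the length property gives a curve of length at most $(1+\varepsilon)d_N(y,y')$. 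Reparametrizing it proportionally to arclength on $I$ gives a $\gamma$ with $d_N(\gamma(s),\gamma(t))\leq \lvert t-s\rvert L/(b-a)$, so $\gamma\in\Gamma_\varepsilon$. If $N$ is geodesic, a constant speed geodesic (\cref{def:constant_geodesics}) lies in $\Gamma_0$.

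\emph{Step 2 (measurable selection).} This is the main obstacle. The plan is to apply the Jankov--von Neumann uniformization theorem to the closed, hence Borel, set $\operatorname{graph}(\pi)$. This gives a universally measurable map $\sigma:N^2\to\Gamma_\varepsilon$ with $\pi\circ\sigma=\mathrm{id}$; one could instead use a Kuratowski--Ryll-Nardzewski argument on the closed-valued fibres $\pi^{-1}(y,y')$. Fix $f,f'\in\cLph(M,N)$. The map $x\mapsto (f(x),f'(x))$ is Borel measurable, so $\sigma\circ(f,f')$ is measurable with respect to the completion $\overline{\Sigma}_M$. Since $e_t$ is continuous, $x\mapsto c(t)(x)\coloneqq \sigma(f(x),f'(x))(t)$ is $\mu_M$-measurable, and it is separably valued because $N$ is separable. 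By \cref{rem:extension_mu_measurable_mappings}, each $c(t)$ then defines an element of $L^0(M,N)$. Moreover, for $t\in I$ and $x\in M$,
$$d_N(c(t)(x),h(x))\leq d_N(f(x),h(x))+(1+\varepsilon)\,d_N(f(x),f'(x)),$$
so $c(t)\in\Lph(M,N)$, and by construction $c(a)=f$ and $c(b)=f'$.

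\emph{Step 3 (conclusion).} Taking the $\lVert\cdot\rVert_{p,\mu_M}$ norm of the pointwise inequality defining $\Gamma_\varepsilon$ gives, for every $p\in[1,\infty]$,
$$D_p(c(s),c(t))\leq (1+\varepsilon)\left\lvert\tfrac{t-s}{b-a}\right\rvert D_p(f,f').$$
Hence $c$ is Lipschitz, in particular absolutely continuous, and $L_p(c)\leq (1+\varepsilon)D_p(f,f')$. Letting $\varepsilon\to 0$ yields $\widehat{D}_p=D_p$, so $\Lph(M,N)$ is a length space.

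In the geodesic case, take $\varepsilon=0$. The displayed inequality combined with the triangle inequality through $c(s)$ and $c(t)$ forces equality for every pair $(s,t)$. Therefore $c\in\mathcal{G}(I,\Lph(M,N))$, and $\Lph(M,N)$ is a geodesic space.
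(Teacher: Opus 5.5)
Your construction is the paper's own. You take a closed set of connecting curves in the Polish space $(\mathcal{C}(I,N),d_\infty)$ whose Lipschitz constant is controlled by the endpoint distance. You then take a universally measurable selection (Jankov--von Neumann instead of the paper's Aumann theorem, which gives the same kind of selection), compose it with $x\mapsto(f(x),f'(x))$, and integrate the pointwise Lipschitz bound. Steps 1 and 3 are fine, and your triangle-inequality argument showing that the $\varepsilon=0$ curve is a constant speed geodesic is a useful explicit detail.

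The gap is in Step 2. You assert that $\sigma\circ(f,f')$ is $\overline{\Sigma}_M$-measurable simply because $(f,f')$ is Borel measurable, and this does not follow. A universally measurable set $U\subset N^2$ differs from a Borel set by a $\lambda$-null set for every $\sigma$-finite Borel measure $\lambda$, but not for arbitrary ones. To pull this back into $\overline{\Sigma}_M$ you need a finite measure $\nu$ with the same null sets as $\mu_M$, so that you can use $\lambda=(f,f')_*\nu$. Such a $\nu$ exists when $\mu_M$ is $\sigma$-finite, but not in general. For example, if $\Sigma_M$ is the Borel $\sigma$-algebra of an uncountable Polish space and $\mu_M$ is counting measure, then $\overline{\Sigma}_M=\Sigma_M$. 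The preimage of an analytic non-Borel set under a Borel map (e.g.\ the identity) is then not measurable. This is exactly why the paper's proof begins by reducing to a $\sigma$-finite $\mu_M$ and replacing it by an equivalent finite measure $\nu_M$. It then selects with respect to $\pi_*\nu_M$ and passes to a Borel version $\tilde S_\kappa$ of the selection that agrees with it $\pi_*\mu_M$-almost everywhere.

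For $p<\infty$ the repair fits directly into your setup, in three observations:
\begin{itemize}
\item The set $E\coloneqq\{x\in M: f(x)\neq f'(x)\}$ is $\sigma$-finite, because $\int_M d_N(f,f')^p\dif\mu_M<\infty$.
\item On $M\setminus E$ nothing has to be selected. The only element of $\Gamma_\varepsilon$ with $\gamma(a)=\gamma(b)=y$ is the constant curve, so $\sigma$ is continuous on the diagonal and $c(t)=f$ there.
\item On $E$, the argument above applies with a finite measure equivalent to $\mu_M|_E$.
\end{itemize}
For $p=\infty$, however, $\{f\neq f'\}$ need not be $\sigma$-finite; two distinct constant maps on a non-$\sigma$-finite space differ everywhere. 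That case needs a further argument. The paper's own reduction relies on $\{f\neq h\}$ being $\sigma$-finite, so it is subject to the same restriction.
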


\begin{proof}
    The proof follows the same approach used in \cite[Proposition~1]{lisini2007characterization} for Wasserstein spaces, which essentially consists in constructing a curve whose length is controlled by the distance between its endpoints from the measurable selection of pointwise curves satisfying such a constraint.

    \noindent Let $p\in [1,\infty]$, $h\in \cLs(M,N)$, $(f,f')\in \cLph(M,N)^2$ and $\kappa > 1$. In addition, let $h^2(x)\coloneqq(h(x),h(x))$ and define the mapping $\pi: M\to N^2$ as $\pi(x)\coloneqq(f(x),f'(x))$ which belongs to $\mathcal{L}^p_{h^2}(M,N^2)$ since $(f,f')\in \cLph(M,N)^2$. When $\Lph(M,N)$ is trivial, that is, $\Lph(M,N)=\left\{[h]_M\right\}$, it is also a geodesic space as it contains a single element. Therefore, assume that $\Lph(M,N)$ is nontrivial and that $N$ is a complete and separable length space.  

    \par\medskip \noindent \emph{Step 1: Reduction to the case where $\mu_M$ is $\sigma$-finite.} The mappings $f$ and $f'$ both differ from $h$ on a measurable set of $\sigma$-finite $\mu_M$-measure \cite[Proposition 3.4]{serieys2025nonlinear}. Hence, $f$ and $f'$ differ on a set of $\sigma$-finite $\mu_M$-measure, so that we can assume that $\mu_M$ is $\sigma$-finite in the rest of the proof. Now, since $\mu_M$ is assumed $\sigma$-finite, there exists a finite measure which has the same null sets, for instance, $\nu_M: A\mapsto  \sum_{n\in\mathbb{N}} 2^{-n}\frac{\mu_M(A\cap B_n)}{\mu_M(B_n)+1}$ where $(B_n)_{n\in\mathbb{N}}$ is a sequence of measurable sets of finite $\mu_M$-measure such that $M=\cup_{n\in\mathbb{N}} B_n$
    
    \par\medskip \noindent \emph{Step 2: Construction of a Borel measurable selection of curves connecting pairs of points in $N$.} Then, define the multi-valued application $\Gamma_\kappa: N^2\to 2^{C(I,N)}$ as follows
    $$\Gamma_\kappa(y,y') \coloneqq \left\{\gamma \in \mathcal{C}(I,N):\gamma(a)=y,\gamma(b)=y',\mathrm{Lip}(\gamma)\lvert b - a \rvert \leq \kappa\,d_N(y,y')\right\}.$$
    Since $N$ is a length space (\cref{def:length_spaces}), $\Gamma_\kappa(y,y')$ is non-empty for each $(y,y')\in N^2$. Now, using the lower semicontinuity of the Lipschitz constant, we can show that the graph of $\Gamma_\kappa$ defined as 
    $$G(\Gamma_\kappa) \coloneqq \left\{(y,y',\gamma)\in N^2\times \mathcal{C}(I,N): \gamma\in \Gamma_\kappa(y,y')\right\}$$
    is closed. Indeed, taking a sequence $((y_n,y_n',\gamma_n))_{n\in\mathbb{N}}$ in $G(\Gamma_\kappa)$ that converges to $(y,y',\gamma)\in N^2\times \mathcal{C}(I,N)$, we have that $d_N(y_n,y)\to 0$, $d_N(y_n',y')\to 0$ and $d_\infty(\gamma_n,\gamma)\to 0$ as $n\to\infty$. Hence, by uniqueness of the limit, $\gamma(a) = y$ and $\gamma(b) = y'$ and, by lower semicontinuity of the Lipschitz constant \cite[Proposition~2.3.4~(iv)]{burago2022course}, we can pass to the limit in $\mathrm{Lip}(\gamma_n)\lvert b  -a\vert \leq \kappa\,d_N(y_n,y_n')$, so that $\mathrm{Lip}(\gamma)\lvert b - a\vert\leq \kappa\,d_N(y, y')$, which yields that $\gamma \in \Gamma_\kappa(y,y')$ and thus that $(y,y',\gamma)\in G(\Gamma_\kappa)$. Since $(\mathcal{C}(I,N),d_\infty)$ is complete and separable and $G(\Gamma_\kappa)$ is closed, we have, by Aumann's measurable selection theorem (\cref{th:aumman_selection_theorem}), that there exists a $\pi_*\nu_M$-measurable mapping (see \cref{rem:extension_mu_measurable_mappings} for a definition) $S_\kappa: N\times N \to \mathcal{C}(I,N)$ such that $S_\kappa(y,y')\in \Gamma_\kappa(y,y')$ for all $(y,y')\in N^2$. Since, $\mathcal{C}(I,N)$ is separable and $\pi_*\mu_M$ has the same null sets as $\pi_*\nu_M$, there exists a Borel measurable mapping $\tilde{S}_\kappa$ such that $\tilde{S}_\kappa(y,y') = S_\kappa(y,y')$ for $\pi_*\mu_M$-a.e. $(y,y')\in N^2$ \cite[Proposition~2.12]{serieys2025nonlinear}. 
    
    \par\medskip \noindent \emph{Step 3: Construction of a curve in $L^p_h(M,N)$ from the Borel measurable selection.} Define $c(t)\coloneqq e_t\circ \tilde{S}_\kappa\circ \pi$ for all $t\in I$. Since we have $(\pi, \tilde{S}_\kappa, e_t)\in \mathcal{L}^0(M,N^2)\times \mathcal{L}^0(N^2,\mathcal{C}(I,N))\times \mathcal{L}^0(\mathcal{C}(I,N),N)$ with the Borel $\sigma$-algebras on $N^2$ and $\mathcal{C}(I,N)$, we get, by composition, that $c(t)\in \mathcal{L}^0(M,N)$ for all $t\in I$. Now, recalling that $\tilde{S}_\kappa (f(x),f'(x))\in \Gamma_\kappa(f(x),f'(x))$ for $\mu_M$-a.e.~$x\in M$, we have, for all $(s,t)\in I^2$, that
    \begin{align*}
    D_p(c(s),c(t))^p &= \int_M d_N(\tilde{S}_\kappa(f(x),f'(x))(s), \tilde{S}_\kappa(f(x),f'(x))(t))^p\dif\mu_M(x)\\
    &\leq \int_M \mathrm{Lip}(\tilde{S}_\kappa(f(x),f'(x)))^p\lvert s - t\rvert^p\dif\mu_M(x)\\
    &\leq \int_M \kappa^p\, d_N(f(x),f'(x))^p\left\lvert\frac{s - t}{b - a}\right\rvert^p\dif\mu_M(x)\\
    &= \kappa^p\, D_p(f,f')^p\left\lvert\frac{s - t}{b - a}\right\rvert^p.
    \end{align*}
    Since $c(a) = f\in \cLph(M,N)$, this yields that $c(t)\in \cLph(M,N)$ for all $t\in I$ and that, up to the identification with an equivalence class for each $t\in I$, $c$ determines a curve in $\mathcal{AC}(I,\Lph(M,N))$ such that $L_p(c) \leq \kappa\,D_p(f,f')$. Since we can construct such a curve for an arbitrary $\kappa > 1$, this yields that $\widehat{D}_p(f,f') = D_p(f,f')$, that is, $\Lph(M,N)$ is a length space. When $N$ is a geodesic space, the proof works with $\kappa = 1$. 
\end{proof}

\begin{remark}[On the separability assumption on $N$]
\label{rem:separability_length}
    Note that the proof of \cref{prop:length_lebesgue} only requires the completeness and separability assumptions on $N$ to ensure that the pointwise selection of curves is measurable. When $N$ is uniquely geodesic and $d_N$ is jointly convex, for instance, when $N$ is a global NPC space (\cref{def:alexandrov_curv}), $N$ need not be separable for the conclusion of \cref{prop:length_lebesgue} to hold since there is no measurable selection of geodesics to construct as any pair of points in $N$ is joined by a unique geodesic and geodesics in $N$ are continuous with respect to their endpoints thanks to the joint convexity of $d_N$. The question of whether the assumptions of completeness and separability on $N$ can be relaxed remains open.
\end{remark}

\begin{remark}[Related results in the literature]
    We emphasize that while the conclusion of \cref{prop:length_lebesgue} in the geodesic case already appeared in a work (under the assumption that $h$ is constant) by J.~Jost \cite[Lemma 4.1.2]{jost1997nonpositive}, the proof seemed incomplete as it selected pointwise geodesics without checking the measurability with respect to endpoints,  which is precisely what is done in the proof of \cref{prop:length_lebesgue}, hence missing the key assumptions of completeness and separability on $N$. As highlighted in \cref{rem:separability_length}, the question of whether the completeness and separability assumptions on $N$ can be relaxed remains open.
\end{remark}

The reverse implication holds when restricting to $p\in (1,\infty)$ and a measure $\mu_M$ that is not purely infinite, but does not require $N$ to be separable.

\begin{proposition}[Length structure of the target space]
\label{prop:length_target}
    Let $h\in L^0(M,N)$ and $p\in (1,\infty)$. Suppose that $\mu_M$ is not purely infinite and that $N$ is complete. Then, if $\Lph(M,N)$ is a length (resp.~geodesic) space, so is $N$.
\end{proposition}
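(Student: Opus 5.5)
We use the closed isometric copy of $N$ from \cref{prop:closed_embedding_target}. Since $\mu_M$ is not purely infinite, it provides $\iota: N\to\Lph(M,N)$, isometric up to the factor $\mu_M(A)^{1/p}$, with $\iota(y)|_A\equiv y$ and $\iota(y)|_{M\setminus A}=h|_{M\setminus A}$, where $A$ is measurable and $\mu_M(A)\in(0,\infty)$. Fix $y,y'\in N$ and $\kappa>1$. Because $\Lph(M,N)$ is a length space, there is a curve $c\in\mathcal{AC}(I,\Lph(M,N))$ from $\iota(y)$ to $\iota(y')$ with $L_p(c)\le\kappa D_p(\iota(y),\iota(y'))=\kappa\,\mu_M(A)^{1/p}d_N(y,y')$. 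After a constant speed reparametrization, $c$ is Lipschitz, hence lies in $\mathcal{AC}^p$ and has constant metric derivative $L_p(c)/(b-a)$. In the geodesic case we take $c\in\mathcal{G}(I,\Lph(M,N))$ with $\kappa=1$.

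Next we apply \cref{th:charact_ac_curves}, which needs $N$ complete and $p\in(1,\infty)$. Set $f\coloneqq\tilde{i}_p(c)\in L^p_{\underline{h}}(M,\mathcal{AC}^p(I,N))$. By \cref{eq:inverse_map_evaluation_map}, $f(x)(a)=y$ and $f(x)(b)=y'$ for $\mu_M$-a.e.~$x\in A$. By \cref{eq:derivative_equals_pw_derivative} and Fubini--Tonelli (restricting to $A$, which has finite measure),
\begin{equation*}
\int_A\int_I\lvert f(x)'\rvert_N^p(t)\dif t\dif\mu_M(x)\le\int_I\lvert c'\rvert_p^p(t)\dif t=(b-a)^{1-p}L_p(c)^p\le(b-a)^{1-p}\kappa^p\mu_M(A)\,d_N(y,y')^p.
\end{equation*}
This is an averaging argument: the left-hand side is an integral over a set of measure $\mu_M(A)$. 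Hence there is a set of positive measure in $A$ on which $\int_I\lvert f(x)'\rvert_N^p\le(b-a)^{1-p}\kappa^p d_N(y,y')^p$. Pick such an $x$ that also satisfies the endpoint identities. Hölder's inequality then gives a curve $\gamma\coloneqq f(x)\in\mathcal{AC}^p(I,N)$ from $y$ to $y'$ whose length satisfies $\int_I\lvert\gamma'\rvert_N\le(b-a)^{1-1/p}\bigl(\int_I\lvert\gamma'\rvert_N^p\bigr)^{1/p}\le\kappa\,d_N(y,y')$. Letting $\kappa\to1$ shows that $N$ is a length space. In the geodesic case, \cref{th:charact_geodesics} is more direct: $f(x)\in\mathcal{G}(I,N)$ for a.e.~$x$. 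Taking such an $x\in A$ satisfying the endpoint identities yields a constant speed geodesic from $y$ to $y'$.

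The main obstacle is the pointwise identification of endpoints, namely that $f(x)(a)=y$ for a.e.~$x\in A$ holds simultaneously with the integral bound. This follows because \cref{eq:inverse_map_evaluation_map} holds for every $t$, including $t=a,b$, and $(e_{a,p})_*f=c(a)=\iota(y)$ in $\Lph(M,N)$. That equality forces $f(x)(a)=y$ off a null set of $A$. One must also make sure that the intersection of finitely many full-measure subsets of $A$, together with the positive-measure set given by the averaging, is nonempty, which is the reason we need $\mu_M(A)>0$. The remaining routine point is the constant speed reparametrization within $\mathcal{AC}^p$; since $\Lph(M,N)$ is complete by \cref{prop:completeness_lebesgue}, this is standard.
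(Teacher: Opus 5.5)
Your proof is correct and follows essentially the same route as the paper. You embed $y,y'$ via $\iota$, pull an almost-minimizing $\mathcal{AC}^p$ curve in $\Lph(M,N)$ back through $\tilde{i}_p$, and integrate the pointwise energies over $A$ using \cref{eq:derivative_equals_pw_derivative}; the geodesic case also follows from \cref{th:charact_geodesics}, as the paper notes. The differences are cosmetic: the paper phrases the integration step as a contradiction (assuming every curve from $y$ to $y'$ has energy exceeding $\kappa\,d_N(y,y')^p$) rather than your direct averaging selection of a good fiber $x\in A$, and it obtains the $\mathcal{AC}^p$ curve from \cref{prop:energy_min_dist} rather than from an explicit constant speed reparametrization.
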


\begin{remark}[Why it fails when $\mu_M$ is purely infinite]
    When $\lvert N\rvert = 1$, $\Lph(M,N)$ is trivial (\cref{prop:trivial_lebesgue}), hence a geodesic space, but so is $N$ (since it contains a single element). Thus, the conclusion of \cref{prop:length_target} holds in that case. However, when $\mu_M$ is purely infinite and $\lvert N\rvert > 1$, we know that $\Lph(M,N)$ remains trivial  (\cref{prop:trivial_lebesgue}), hence a geodesic space, regardless of the nature of the target space $N$.
\end{remark}

\begin{proof}[Proof of \cref{prop:length_target}]
    Let $(y,y')\in N^2$ and suppose there exists $\kappa > 1$ such that for all $\gamma \in \mathcal{AC}^p(I,N)$ such that $\gamma(a)=y$ and $\gamma(b)=y'$ we have 
    $$\lebesgue(I)^{p-1}\int_I \lvert \gamma'\rvert_N^p(t)\dif t > \kappa\,d_N(y,y')^p.$$
    Since the measure $\mu_M$ is not purely infinite, we have, by \cref{prop:closed_embedding_target}, that there exists an isometric mapping $\iota: N\to \Lph(M,N)$, so that $\iota(y)$ and $\iota(y')$ belong to $\Lph(M,N)$. In addition, there exists a measurable set $A$ with $\mu_M(A)\in (0,\infty)$ such that $\iota(z)|_A \equiv z$ and $\iota(z)|_{M\setminus A}= h|_{M\setminus A}$ for all $z\in N$. Now, since $\Lph(M,N)$ is a length space, there exists $c \in \mathcal{AC}^p(I,\Lph(M,N))$ such that $c(a) = \iota(y)$, $c(b)=\iota(y')$ and $\lebesgue(I)^{p-1}\int_I \lvert c'\rvert_p^p(t)\dif t \leq \kappa\, D_p(\iota(y),\iota(y'))^p$ (\cref{prop:energy_min_dist}). Then, by \cref{th:charact_ac_curves}, $\tilde{i}_p(c)\in L^p_{\underline{h}}(M,\mathcal{AC}^p(I,N))$ with $\tilde{i}_p(c)(a)= \iota(y)$ and $\tilde{i}_p(c)(b) = \iota(y')$, so that for $\mu_M$-a.e.~$x\in A$ the curve $\tilde{i}_p(c)(x)\in \mathcal{AC}^p(I,N)$ is such that $\tilde{i}_p(c)(x)(a)=y$ and $\tilde{i}_p(c)(x)(b)=y'$. Hence, by assumption, we have that for $\mu_M$-a.e.~$x\in A$
    $$\lebesgue(I)^{p-1}\int_I \lvert \tilde{i}_p(c)(x)'\rvert_N^p(t)\dif t > \kappa\,d_N(\iota(y)(x),\iota(y')(x))^p.$$
    Integrating over $A$ and using the Fubini--Tonelli theorem \cite[Proposition 5.2.1]{cohn2013measure} since $A$ has finite $\mu_M$-measure, we get that 
    \begin{align*}\lebesgue(I)^{p-1} \int_I\int_A \lvert \tilde{i}_p(c)(x)'\rvert_N^p(t)\dif\mu_M(x)\dif t &= \lebesgue(I)^{p-1} \int_A\int_I \lvert \tilde{i}_p(c)(x)'\rvert_N^p(t)\dif t\dif\mu_M(x)\\
    &>\kappa\int_A d_N(\iota(y)(x),\iota(y')(x))^p\dif\mu_M(x).
    \end{align*}
    Thus, recalling that $\lvert c'\rvert_p^p(t) = \int_M \lvert \tilde{i}_p(c)(x)'\rvert_N^p(t)\dif\mu_M(x)$ and that $\iota(y)|_{M\setminus A} = \iota(y')|_{M\setminus A}$, we have that 
    $$\lebesgue(I)^{p-1}\int_I \lvert c'\rvert_p^p(t)\dif t > \kappa\,D_p(\iota(y),\iota(y'))^p,$$
    hence a contradiction. The conclusion that $d_N$ coincides with the induced intrinsic metric $\widehat{d}_N$ (\cref{def:intrinsic_metric}), that is, that $N$ is a length space, then follows from the fact that the intrinsic metric can be equivalently defined as the infimum over the $p$-energy of curves (\cref{prop:energy_min_dist}). When $\Lph(M,N)$ is a geodesic space, the above proof works with $\kappa=1$. Alternatively, the geodesic space case can be proved using \cref{th:charact_geodesics} since, if $\Lph(M,N)$ is a geodesic space, $\iota(y)$ and $\iota(y')$ can be connected by a geodesic $c$ in $\Lph(M,N)$, so that, by \cref{th:charact_geodesics}, the curve $\tilde{i}_p(c)(x)$ is a geodesic connecting $y$ and $y'$ for $\mu_M$-a.e.~$x\in A$. Also, note that the proof does not require the separability assumption on $N$.
\end{proof}

As a direct consequence of \cref{prop:length_lebesgue,prop:length_target}, the length structure of nonlinear Lebesgue spaces is characterized by the length structure of the target space.

\begin{theorem}[Characterization of the length structure]
\label{th:length_structure_lebesgue} Let $h\in L^0(M,N)$ and $p\in (1,\infty)$. Suppose that $\mu_M$ is not purely infinite and that $N$ is complete and separable. Then, the following assertions are equivalent:
\begin{enumerate}[label=(\roman*)]
    \item $\Lph(M,N)$ is a length (resp.~geodesic) space.
    \item $N$ is a length (resp.~geodesic) space.
\end{enumerate}
\end{theorem}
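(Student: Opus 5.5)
The equivalence follows by combining \cref{prop:length_lebesgue} and \cref{prop:length_target}, after checking that the hypotheses of \cref{th:length_structure_lebesgue} cover both. The plan is to treat the two implications separately, and within each to handle the length and geodesic cases in parallel.

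For (ii)$\implies$(i), assume $N$ is a length (resp.~geodesic) space. Since $N$ is assumed complete and separable, $N$ is then a complete and separable length (resp.~geodesic) space. This is exactly the hypothesis of \cref{prop:length_lebesgue}, which holds for every $p\in[1,\infty]$ and in particular for $p\in(1,\infty)$. That proposition then yields that $\Lph(M,N)$ is a length (resp.~geodesic) space. Note that the alternative hypothesis there (triviality of $\Lph(M,N)$) is not needed here.

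For (i)$\implies$(ii), assume $\Lph(M,N)$ is a length (resp.~geodesic) space. The hypotheses of \cref{prop:length_target} are $p\in(1,\infty)$, $\mu_M$ not purely infinite, and $N$ complete. All three are part of the standing assumptions of \cref{th:length_structure_lebesgue}; separability is simply not used. Hence \cref{prop:length_target} gives that $N$ is a length (resp.~geodesic) space.

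There is no genuine obstacle, since both directions are direct invocations. The only point requiring care is bookkeeping.
\begin{itemize}
\item The assumption that $\mu_M$ is not purely infinite is essential for (i)$\implies$(ii). The remark following \cref{prop:length_target} shows that otherwise $\Lph(M,N)$ is trivial and hence geodesic regardless of $N$.
\item Separability is essential only for (ii)$\implies$(i), through the measurable selection step of \cref{prop:length_lebesgue}.
\end{itemize}
I would state these observations briefly to justify why the hypotheses of the theorem are the union of those of the two propositions.
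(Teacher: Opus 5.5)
Your proposal is correct and matches the paper's argument exactly: the theorem follows from \cref{prop:length_lebesgue} for (ii)$\implies$(i) and from \cref{prop:length_target} for (i)$\implies$(ii), and your check that the hypotheses match is the same one the paper relies on. One small caveat: separability is \emph{used} (for the measurable selection) rather than shown to be \emph{essential}, since the paper explicitly leaves open whether it can be dropped.
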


In the next section, we prove that the geodesic case of \cref{th:length_structure_lebesgue} has some implications on the characterization of curvature bounds in nonlinear Lebesgue spaces using curvature bounds in the target space.

\subsubsection{\texorpdfstring{Characterization of curvature bounds for $p=2$}{Characterization of curvature bounds for p=2}}
\label{sec:charact_curvature}

A notable consequence of \cref{th:charact_geodesics,th:length_structure_lebesgue} is that the curvature of nonlinear Lebesgue spaces, in the sense of Alexandrov (see \cite[Chapter~4]{burago2022course} for a definition), depends solely on the Alexandrov curvature of the target space for $p=2$. Let us first recall the definition of the notion of curvature in the sense of Alexandrov. 

\begin{definition}[Alexandrov curvature]
\label{def:alexandrov_curv}
Let $X$ be a complete geodesic space (\cref{def:geodesic_space}). $X$ is said to be of global \emph{nonnegative curvature} (NNC), respectively of global \emph{nonpositive curvature} (NPC), in the sense of Alexandrov if and only if, for every point $z\in X$ and constant speed geodesic $\gamma: [0,1]\to X$ (\cref{def:constant_geodesics}), the inequality (see \cite{lebedeva2010curvature} or \cite[Theorem~4.4]{sturm2012space} for global NNC)
$$d_X(z,\gamma(t))^2\geq (1 -t)\, d_X(z,\gamma(0))^2 + t\, d_X(z,\gamma(1))^2 - (1-t)t\,d_X(\gamma(0),\gamma(1))^2$$
 holds for all $t\in [0,1]$ and the converse inequality in the NPC case (see \cite[Corollary~1.5]{sturm1999metric} for global NPC).
A complete geodesic space of global NNC (resp.~NPC) in the sense of Alexandrov is thus called a \emph{global NNC (resp.~NPC) space}.
    
\end{definition}

Then, the Alexandrov curvature of nonlinear Lebesgue spaces for $p=2$ is determined by the Alexandrov curvature of the target space.

\begin{proposition}[Alexandrov curvature of nonlinear Lebesgue spaces]
\label{prop:curvature_lebesgue}
    Let $h\in L^0(M,N)$. Then, either $L^2_h(M,N)$ is trivial (\cref{prop:trivial_lebesgue}), hence is trivially both a global NNC space and a global NPC space, or the following assertions hold:
    \begin{enumerate}[label=(\roman*)]
        \item if $N$ is a separable and global NNC space, $L^2_h(M,N)$ is also a global NNC space.
        \item if $N$ is a global NPC space, $L^2_h(M,N)$ is also a global NPC space.
    \end{enumerate}
\end{proposition}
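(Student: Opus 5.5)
The trivial case is immediate: a one-point space is a complete geodesic space, and every comparison inequality reduces to $0\geq 0$. So assume $L^2_h(M,N)$ is nontrivial. In both assertions I first need $L^2_h(M,N)$ to be a complete geodesic space. Completeness follows from \cref{prop:completeness_lebesgue}, since $N$ is complete by \cref{def:alexandrov_curv}.

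For the geodesic property:
\begin{itemize}
\item In case (i), $N$ is complete, separable and geodesic, so \cref{prop:length_lebesgue} gives that $L^2_h(M,N)$ is geodesic.
\item In case (ii), separability is not assumed. I would instead use \cref{rem:separability_length}. A global NPC space is uniquely geodesic, and its distance is jointly convex along geodesics. Hence the map $(y,y')\mapsto\gamma_{y,y'}$ into $(\mathcal{C}([0,1],N),d_\infty)$ is continuous, indeed $d_N(\gamma_{y,y'}(t),\gamma_{z,z'}(t))\le (1-t)d_N(y,z)+t\,d_N(y',z')$. Then $c(t)(x):=\gamma_{f(x),f'(x)}(t)$ is the composition of a Borel map with a continuous map. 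It is separably valued because $(f,f')$ is. The estimate of Step 3 of \cref{prop:length_lebesgue}, with $\kappa=1$, shows that $c$ is a geodesic in $L^2_h(M,N)$.
\end{itemize}

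For the comparison inequality, fix $g\in L^2_h(M,N)$ and a constant speed geodesic $c:[0,1]\to L^2_h(M,N)$. The key step is to realize $c$ pointwise. By \cref{th:charact_geodesics} with $p=2$ and $I=[0,1]$, $\tilde{i}_2(c)\in L^2_{\underline{h}}(M,\mathcal{G}(I,N))$. By \eqref{eq:inverse_map_evaluation_map}, for every $t\in I$ we have $c(t)=(e_{t,2})_*(\tilde{i}_2(c))$. Choose a representative $\gamma_x:=\tilde{i}_2(c)(x)$ which is a constant speed geodesic for $\mu_M$-a.e.\ $x$. Then for each fixed $t$, $c(t)(x)=\gamma_x(t)$ for $\mu_M$-a.e.\ $x$. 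Apply the pointwise NNC (resp.\ NPC) inequality in $N$ with $z=g(x)$ and the geodesic $\gamma_x$. This gives, for $\mu_M$-a.e.\ $x$,
\[
d_N(g(x),\gamma_x(t))^2\ \geq\ (1-t)d_N(g(x),\gamma_x(0))^2+t\,d_N(g(x),\gamma_x(1))^2-(1-t)t\,d_N(\gamma_x(0),\gamma_x(1))^2,
\]
with the reverse inequality in the NPC case. Now integrate over $M$. Every term has finite integral, since each is $D_2$ between elements of $L^2_h(M,N)$, so the integrals can be split linearly. This yields exactly the inequality of \cref{def:alexandrov_curv} in $L^2_h(M,N)$, with $D_2^2$ in place of $d_N^2$.

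The main obstacle is applying \cref{th:charact_geodesics} in case (ii) without separability. That theorem, through \cref{th:charact_ac_curves}, only needs $N$ complete, and its Step 1 reduces to a separable subspace, so I expect it to apply as stated. In case (ii) a shortcut is also available: since geodesics are unique, the pointwise geodesic constructed above must coincide with $c$, so $c(t)(x)=\gamma_{c(0)(x),c(1)(x)}(t)$ directly. This identification is the step I would check most carefully. The remaining subtlety is measurability of $x\mapsto d_N(g(x),\gamma_x(t))$. It holds because $\gamma_x(t)$ agrees $\mu_M$-a.e.\ with the measurable map $c(t)$, and by \cref{prop:measurability_evaluation_ac} the evaluation $e_{t,2}$ is Borel measurable on $(\mathcal{AC}^2(I,N),d_2)$.
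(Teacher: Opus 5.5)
Your proposal is correct and follows the paper's proof. The structure matches: completeness comes from \cref{prop:completeness_lebesgue}; the geodesic property comes from \cref{prop:length_lebesgue} in case (i) and from the continuous geodesic selection of \cref{rem:separability_length} in case (ii); the pointwise realization $c(t)=(e_{t,2})_*(\tilde{i}_2(c))$ comes from \cref{th:charact_geodesics,th:charact_ac_curves}; then you apply the comparison inequality in $N$ and integrate over $M$. Your optional shortcut in case (ii) is only non-circular if the uniqueness it invokes is uniqueness of geodesics in $N$, applied to $\tilde{i}_2(c)(x)$, and not in $L^2_h(M,N)$. That again routes through \cref{th:charact_geodesics}, so the shortcut adds nothing beyond your main argument.
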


\begin{proof}
    Let $h\in L^0(M,N)$. When $L^2_h(M,N)$ is trivial, that is, $L^2_h(M,N)=\left\{h\right\}$, it is both a global NNC space and a global NPC space as it contains a single element. Therefore, assume that $L^2_h(M,N)$ is nontrivial.  
    \begin{enumerate}[label=(\roman*),wide]
        \item \emph{Step 1: $L^2_h(M,N)$ is complete and geodesic.} First note that, $N$ being complete, separable and geodesic, we have, by \cref{prop:completeness_lebesgue}, that $L^2_h(M,N)$ is complete and, by \cref{prop:length_lebesgue}, that it is geodesic. The proof then comes down to verifying that every geodesic triangle satisfies the condition of NNC (\cref{def:alexandrov_curv}). 
        
        \par\medskip \noindent \emph{Step 2: NNC triangle comparison.} Let $c\in \mathcal{G}([0,1],L^2_h(M,N))$ and $f\in L^2_h(M,N)$. Then, by \cref{th:charact_geodesics}, $\tilde{i}_2(c)(x)$ belongs to $\mathcal{G}([0,1],N)$ for $\mu_M$-a.e.~$x\in M$ and, since $N$ is a global NNC space, we get for $\mu_M$-a.e.~$x\in M$ that the inequality 
    \begin{align*}d_N(f(x),\tilde{i}_2(c)(x)(t))^2 \geq& (1-t)d_N(f(x),\tilde{i}_2(c)(x)(0))^2 + t d_N(f(x),\tilde{i}_2(c)(x)(1))^2 \\
    &- (1-t)t\,d_N(\tilde{i}_2(c)(x)(0),\tilde{i}_2(c)(x)(1))^2
    \end{align*}
    holds for all $t\in [0,1]$. Hence, integrating over $M$ and recalling that $c(t)=(e_{t,2})_*(\tilde{i}_2(c))$ for all $t\in [0,1]$ (\cref{th:charact_ac_curves}), we get that the inequality 
        \begin{equation*}D_2(f,c(t))^2\geq (1 -t)\, D_2(f,c(0))^2 + t\, D_2(f,c(1))^2 - (1-t)t\,D_2(c(0),c(1))^2\end{equation*}
    holds for all $t\in [0,1]$. We thus proved that $L^2_h(M,N)$ is a global NNC space.
    \item The NPC case follows from the same arguments as (i), but with reverse inequalities. In that case, the proof does not require the use of \cref{prop:length_lebesgue}, so that the conclusion holds without assuming that $N$ is separable (\cref{rem:separability_length}). \qedhere
    \end{enumerate} 
\end{proof}

\begin{remark}[Related results in the literature]
    Assertion (ii) of \cref{prop:curvature_lebesgue} already appeared in works (under the assumption that $h$ is constant for some) by J.~Jost (see \cite[Section 2]{jost1994equilibrium} and \cite[Corollary~4.1.1]{jost1997nonpositive}), K.-T.~Sturm (see \cite[Proposition~3.3]{Sturm2001}) and M.~Bačák (see \cite[Proposition~1.2.18]{bacak2014convex}).
\end{remark}

The reverse implication holds when restricting to a measure $\mu_M$ that is not purely infinite, but does not require $N$ to be separable. 

\begin{proposition}[Alexandrov curvature of the target space]
\label{prop:curvature_target}
    Let $h\in L^0(M,N)$. Suppose that $\mu_M$ is not purely infinite. Then, if $L^2_h(M,N)$ is a global NNC (resp.~NPC) space, so is $N$.
\end{proposition}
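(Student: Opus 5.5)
The plan is to transfer completeness, the geodesic property and the quadrilateral comparison from $L^2_h(M,N)$ to $N$ through the closed isometric image $\iota(N)$ of \cref{prop:closed_embedding_target}. Since $\mu_M$ is not purely infinite, that result provides a measurable set $A$ with $\mu_M(A)\in(0,\infty)$ and a mapping $\iota:N\to L^2_h(M,N)$, isometric up to the factor $\mu_M(A)^{1/2}$, such that $\iota(y)|_A\equiv y$ and $\iota(y)|_{M\setminus A}=h|_{M\setminus A}$. Indeed, $D_2(\iota(y),\iota(y'))^2=\mu_M(A)\,d_N(y,y')^2$. If $\lvert N\rvert=1$ there is nothing to prove, so assume $\lvert N\rvert>1$; then $L^2_h(M,N)$ is nontrivial.

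\emph{Completeness.} Let $(y_n)$ be a Cauchy sequence in $N$. Then $(\iota(y_n))$ is Cauchy in $L^2_h(M,N)$, hence converges, and since $\iota(N)$ is closed the limit has the form $\iota(y)$. The scaled isometry then gives $y_n\to y$, so $N$ is complete.

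\emph{Geodesic property.} I would reuse the argument of \cref{prop:length_target}. For $y,y'\in N$, take a constant speed geodesic $c\in\mathcal{G}([0,1],L^2_h(M,N))$ joining $\iota(y)$ to $\iota(y')$. By \cref{th:charact_geodesics}, $\tilde{i}_2(c)(x)\in\mathcal{G}([0,1],N)$ for $\mu_M$-a.e. $x$. By \eqref{eq:inverse_map_evaluation_map}, its endpoints are $y$ and $y'$ for $\mu_M$-a.e. $x\in A$. Since $\mu_M(A)>0$, at least one such $x$ exists, which yields a geodesic in $N$ from $y$ to $y'$. This step requires the completeness of $N$ already established, because \cref{th:charact_ac_curves} assumes it; it does not require separability.

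\emph{Comparison inequality.} Let $z\in N$ and $\gamma\in\mathcal{G}([0,1],N)$. The key observation is that $t\mapsto\iota(\gamma(t))$ is a constant speed geodesic in $L^2_h(M,N)$, because $D_2(\iota(\gamma(s)),\iota(\gamma(t)))=\mu_M(A)^{1/2}\lvert t-s\rvert\,d_N(\gamma(0),\gamma(1))$. Applying the NNC (resp. NPC) inequality of \cref{def:alexandrov_curv} in $L^2_h(M,N)$ to the point $\iota(z)$ and this geodesic, every squared distance equals $\mu_M(A)$ times the corresponding squared distance in $N$. Dividing by $\mu_M(A)>0$ gives exactly the NNC (resp. NPC) inequality in $N$.

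The main obstacle is the geodesic property. This is the only step that needs the characterization of geodesics in \cref{th:charact_geodesics}, together with the a.e. identification of endpoints on $A$. Once geodesics in $N$ exist, the comparison inequality is a direct pullback through the scaled isometry $\iota$.
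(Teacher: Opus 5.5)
Your proof is correct and follows the paper's argument: completeness of $N$ is transferred through the closed scaled-isometric image $\iota(N)$, the geodesic property is obtained as in \cref{prop:length_target}, and the comparison inequality is pulled back along $t\mapsto\iota(\gamma(t))$ after dividing by $\mu_M(A)$. The only differences are cosmetic. You prove completeness directly where the paper cites an external result, and for the geodesic step you use \cref{th:charact_geodesics}, which is the alternative route the paper itself notes in the proof of \cref{prop:length_target}.
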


\begin{remark}[Why it fails when $\mu_M$ is purely infinite]
    When $\lvert N\rvert = 1$, $L^2_h(M,N)$ is trivial (\cref{prop:trivial_lebesgue}), hence both a global NNC and NPC space, but so is $N$ (since it contains a single element). Thus, the conclusion of \cref{prop:curvature_target} holds in that case. However, when $\mu_M$ is purely infinite and $\lvert N\rvert > 1$, we know that $L^2_h(M,N)$ remains trivial  (\cref{prop:trivial_lebesgue}), hence both a global NNC and NPC space, regardless of the nature of the target space $N$.
\end{remark}

\begin{proof}[Proof of \cref{prop:curvature_target}]
    We only prove the NNC case since the NPC case follows from the same arguments, but with reversed inequalities.

    \par\medskip \noindent \emph{Step 1: $N$ is complete and geodesic.} First, since $L^2_h(M,N)$ is complete and $\mu_M$ is not purely infinite, $N$ is complete, by \cite[Proposition 4.2]{serieys2025nonlinear}. Then, $L^2_h(M,N)$ being geodesic, $\mu_M$ not being purely infinite and $N$ being complete, we have, by \cref{prop:length_target}, that $N$ is geodesic. The proof then comes down to verifying that every geodesic triangle satisfies the condition of NNC (\cref{def:alexandrov_curv}). 
    
    \par\medskip \noindent \emph{Step 2: NNC triangle comparison.} Let $\gamma \in \mathcal{G}([0,1],N)$ and $z\in N$. By \cref{prop:closed_embedding_target}, there exists a scaled isometric mapping $\iota: N\to L^2_h(M,N)$, so that the curve $c:t\to \iota(\gamma(t))$ belongs to $\mathcal{G}([0,1],L^2_h(M,N))$ and $f\coloneqq\iota(z)\in L^2_h(M,N)$. Since, $L^2_h(M,N)$ is NNC, we have for all $t\in [0,1]$ that 
    $$D_2(f,c(t))^2\geq (1 -t)\, D_2(f,c(0))^2 + t\, D_2(f,c(1))^2 - (1-t)t\,D_2(c(0),c(1))^2.$$
    Hence, $\iota$ being isometric (up to a scaling factor),
    \begin{equation*}d_N(z,\gamma(t))^2\geq (1 -t)\, d_N(z,\gamma(0))^2 + t\, d_N(z,\gamma(1))^2 - (1-t)t\,d_N(\gamma(0),\gamma(1))^2.\end{equation*}
    
    \noindent We thus proved that $N$ is a global NNC space.
\end{proof}

As a direct consequence of \cref{prop:curvature_lebesgue,prop:curvature_target}, the Alexandrov curvature of nonlinear Lebesgue spaces for $p=2$ is characterized by the Alexandrov curvature of the target space.

\begin{theorem}[Characterization of the Alexandrov curvature]
\label{th:curvature_lebesgue}
    Let $h\in L^0(M,N)$. Suppose that $\mu_M$ is not purely infinite and that $N$ is separable. Then, the following assertions are equivalent:
    \begin{enumerate}[label=(\roman*)]
        \item $L^2_h(M,N)$ is a global NNC (resp.~NPC) space.
        \item $N$ is a global NNC (resp.~NPC) space.
    \end{enumerate}
\end{theorem}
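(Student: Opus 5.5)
The plan is to derive \cref{th:curvature_lebesgue} directly from \cref{prop:curvature_lebesgue,prop:curvature_target}, checking in each direction that the hypotheses of these propositions are met under the present assumptions ($\mu_M$ not purely infinite, $N$ separable).

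For (ii)$\implies$(i), assume $N$ is a global NNC (resp.~NPC) space. By \cref{def:alexandrov_curv}, $N$ is then complete and geodesic, and by hypothesis it is separable. First we dispose of the case where $L^2_h(M,N)$ is trivial (\cref{prop:trivial_lebesgue}), for instance when $\lvert N\rvert=1$: it is then trivially both NNC and NPC, as noted in \cref{prop:curvature_lebesgue}. Otherwise, assertion (i) of \cref{prop:curvature_lebesgue} applies in the NNC case, and here separability is used. Assertion (ii) of \cref{prop:curvature_lebesgue} applies in the NPC case without further assumptions. Either way, $L^2_h(M,N)$ is a global NNC (resp.~NPC) space.

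For (i)$\implies$(ii), assume $L^2_h(M,N)$ is a global NNC (resp.~NPC) space. Since $\mu_M$ is not purely infinite, \cref{prop:curvature_target} applies verbatim and gives that $N$ is a global NNC (resp.~NPC) space; separability is not even needed in this direction. Internally, that proposition recovers completeness of $N$ from completeness of $L^2_h(M,N)$, and the geodesic property of $N$ through \cref{prop:length_target}. It then transfers the comparison inequality through the scaled isometric embedding $\iota$ of \cref{prop:closed_embedding_target}; since both sides of the comparison inequality are homogeneous of degree two in the distances, the scaling factor cancels.

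No step is a genuine obstacle, since all the substantive work, namely the measurable selection of geodesics and the pointwise characterization of geodesics from \cref{th:charact_geodesics}, is already contained in the two propositions. The only point requiring care is the trivial case in the direction (ii)$\implies$(i). When $\lvert N\rvert=1$, both spaces are single points and the equivalence is immediate. When $\lvert N\rvert>1$, the assumption that $\mu_M$ is not purely infinite guarantees nontriviality by \cref{prop:trivial_lebesgue}, so the nontrivial branch of \cref{prop:curvature_lebesgue} is the relevant one.
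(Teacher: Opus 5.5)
Your proposal is correct and matches the paper, which obtains \cref{th:curvature_lebesgue} directly by combining \cref{prop:curvature_lebesgue} for (ii)$\implies$(i), where separability is needed only in the NNC case, and \cref{prop:curvature_target} for (i)$\implies$(ii), which uses only that $\mu_M$ is not purely infinite. Your handling of the trivial case and your remark that the scaling factor of $\iota$ cancels in the degree-two comparison inequality are both accurate.
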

\begin{remark}[On the separability assumption on $N$]
    Note that, in the NPC case, the conclusion of \cref{th:curvature_lebesgue} holds without the separability assumption on $N$ (\cref{prop:curvature_lebesgue}). 
\end{remark}

In the next section, we prove that, when the target space has a differential structure satisfying appropriate conditions, a notion of speed can be defined for absolutely continuous curves in nonlinear Lebesgue spaces despite the lack of differential structure.

\subsection{\texorpdfstring{Speed of absolutely continuous curves for $p > 1$}{Speed of absolutely continuous curves for p > 1}}

\label{sec:speed_ac}
\subsubsection{Measurable and Lebesgue sections}

First, recall the definition of a \emph{section} of a surjective mapping.
\begin{definition}[Sections]
    Let $E$ and $B$ be sets and $\pi_E: E\to B$ be a surjective mapping. Then, a mapping $s: B \to E$ is called a \emph{section of $\pi_E$} if $\pi_E\circ s = \operatorname{id}_B$. The set of all sections of $\pi_E$ is denoted by $\mathcal{S}(\pi_E)$.
\end{definition}

Asking for a metric on $E$ might be a bit restrictive, so we only require it to be a topological space in the following sections and try to exploit its fibered structure. One can then define the set of \emph{measurable sections}.

\begin{definition}[Measurable sections]
\label{def:measurable_sections}
    Suppose that $E$ is a topological space, that $(B,\Sigma_B)$ is a measurable space and that $\pi_E: E\to B$ is a surjective mapping. Then, a mapping $s: B\to E$ is called a \emph{measurable section of $\pi_E$} if it belongs to $\mathcal{S}(\pi_E)\cap \mathcal{L}^0(B,E)$. Then, define:
    \begin{enumerate}[label=(\roman*)]
        \item $\mathcal{S}^0_{\operatorname{s}}(\pi_E)$ the set of all measurable sections of $\pi_E$ with separable range.
        \item $S^0(\pi_E)$ the set of all equivalence classes of measurable sections of $\pi_E$ with separable range.
    \end{enumerate}
\end{definition}

\begin{remark}[On the definition of measurability]
    Note that the definition of measurable mappings given in \cref{def:measurable_map} holds when the target space is just a topological space, so that $\mathcal{L}^0(B,E)$ makes sense in \cref{def:measurable_sections} despite $E$ being only a topological space.
\end{remark}

Similarly, we can define \emph{Lebesgue sections} given that we only have a \emph{fibered metric} on the topological space $E$, that is, a Borel measurable mapping $m_E: E^2 \to [0,\infty)$ such that its restriction $m_x \coloneqq m_E|_{\pi_E^{-1}(\{x\})^2}$, for some $x\in B$, is a metric on the fiber $\pi_E^{-1}(\{x\})$ which induces the topology of the fiber.

\begin{definition}[Lebesgue sections]
    Let $h\in S^0(\pi_E)$ and $p\in [1,\infty]$. Suppose that $E$ is a topological space, that $(B,\Sigma_B,\mu_B)$ is a measure space, that $\pi_E: E\to B$ is a surjective mapping and that $m_{E}$ is a fibered metric on $E$. Then, a mapping $s: B\to E$ is called \emph{$p$-Lebesgue section} if it belongs to $\mathcal{S}^0_{\operatorname{s}}(\pi_E)$ and $D_{p,\pi_E}(s,h) < \infty$
    with 
    $$D_{p,\pi_E}(s,h)\coloneqq\begin{cases}
        \left(\int_B m_x(s(x),h(x))^p \dif\mu_B(x)\right)^{1/p},&\text{if $p\in [1,\infty)$}\\
        \mu_B\text{-}\esssup_{x\in B} m_x(s(x),h(x)),&\text{if $p=\infty$}
    \end{cases}.$$
    Then, define:
    \begin{enumerate}[label=(\roman*)]
        \item $\mathcal{S}^p_h(\pi_E)$ the set of all $p$-Lebesgue sections of $\pi_E$ with $h$ as base mapping.
        \item $S^p_h(\pi_E)$ the set of all equivalence classes of $p$-Lebesgue sections of $\pi_E$ with $h$ as base mapping.
    \end{enumerate}
    Hence, $D_{p,\pi_E}$ induces a finite metric on $S^p_h(\pi_E)$. When $m_E$ is a metric on $E$ which induces its topology, $S^p_h(\pi_E)$ is a subset of $\Lph(B,E)$ (\cref{def:lebesgue_spaces}) and $D_{p,\pi_E} = D_p$ (\cref{def:lp_metrics}).
\end{definition}
\begin{remark}[On the choice of $h$ for vector bundles]
    When $\pi_E: E \to B$ is a vector bundle, that is, the fibers $(\pi_E^{-1}(\{x\}))_{x\in B}$ are vector spaces, $h$ is usually taken as the zero section (if it is measurable) of $\pi_E$, that is, $h$ maps to each $x\in B$ the identity element of $\pi_E^{-1}(\{x\})$. In that case, we denote $\mathcal{S}^p(\pi_E)\coloneqq \mathcal{S}^p_h(\pi_E)$ and $S^p(\pi_E)\coloneqq S^p_h(\pi_E)$.
\end{remark}

In the next section, we show that the speed of absolutely continuous curves in differentiable manifolds satisfying suitable conditions can be defined as Lebesgue sections. 

\subsubsection{Speed of absolutely continuous curves in Finsler manifolds}

Let us first recall the definition of \emph{Finsler manifolds} introduced by R.~S.~Palais \cite[Definition 2.1]{palais1966lusternik} (see \cite[Definition 2.1]{jimenez2011some} for a more recent exposition and a comparison with other existing definitions of Finsler manifolds).

\begin{definition}[Finsler manifolds]
\label{def:finsler}
    Let $k\in \mathbb{N}^*\cup\{\infty\}$. A $\mathcal{C}^k$ manifold $X$ modelled on a Banach space $(\mathbb{B},\lVert \cdot \rVert_\mathbb{B})$ (\cref{def:banach_manifolds}) is called \emph{Finsler} when it is equipped with a mapping $b_X: TX \to [0,\infty)$ such that:
    \begin{enumerate}[label=(\roman*)]
        \item for every $x\in X$, the restriction $b_x\coloneqq b_X|_{T_xX}: T_xX\to [0,\infty)$ is a norm on the tangent space $T_xX$ which induces the manifold topology, that is, for every chart $(U,\varphi)$ such that $x\in U$, the norm $b_x(\dif\varphi^{-1}(\varphi(x))(\cdot))$ is equivalent to $\lVert \cdot \rVert_\mathbb{B}$.
        \item for every $x_0\in X$, $\varepsilon >0$ and chart $(U,\varphi)$ such that $x_0\in U$, there is an open neighborhood $V\subset U$ of $x_0$ such that if $x\in V$ and $v\in \mathbb{B}$
        $$\frac{1}{1+\varepsilon}b_{x_0}(\dif \varphi^{-1}(\varphi(x_0))(v))\leq b_x(\dif \varphi^{-1}(\varphi(x))(v))\leq (1+\varepsilon)\,b_{x_0}(\dif \varphi^{-1}(\varphi(x_0))(v)).$$
    \end{enumerate}
\end{definition}

\begin{remark}[On the continuity of $b_X$]
    Note that, when $(X,b_X)$ is a Finsler manifold as defined in \cref{def:finsler}, the mapping $b_X: TX\to [0,\infty)$ is continuous \cite[Theorem 2.7]{palais1966lusternik}.
\end{remark}

A Finsler structure induces a metric space structure \cite[Theorem 3.3]{palais1966lusternik}.

\begin{proposition}[Metric space structure of Finsler manifolds]
\label{prop:metric_space_finsler}
    Suppose that $(X,b_X)$ is a $\mathcal{C}^1$ Finsler manifold. Then, $X$ is a metric space when equipped with the metric $d_X: X^2 \to [0,\infty]$ defined as 
    $$d_X(x,x')\coloneqq \inf\left\{\int_I b_X(\dot{c}(t))\dif t: c\in \mathcal{C}^1(I,X), c(a)=x,c(b)=x'\right\}.$$
\end{proposition}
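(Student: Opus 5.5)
The plan is to check the metric axioms one at a time for $d_X$, allowing the value $+\infty$ as the statement does. The infimum is $+\infty$ when $x$ and $x'$ lie in different path components. Separation is the only axiom that needs the Finsler conditions of \cref{def:finsler} in a quantitative way. The other axioms only use that each $b_x$ is a norm, together with reparametrization invariance of $\int_I b_X(\dot c(t))\dif t$.

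First, $d_X(x,x)=0$ because the constant curve is $\mathcal{C}^1$ with $\dot c\equiv 0$, and nonnegativity is clear. For symmetry, I would reverse a competitor $c$ via $\bar c(t)\coloneqq c(a+b-t)$. Then $\dot{\bar c}(t)=-\dot c(a+b-t)$, and $b_{c(s)}(-v)=b_{c(s)}(v)$ since $b_{c(s)}$ is a norm, so the change of variables $s=a+b-t$ gives equal lengths.

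For the triangle inequality, I would fix $\varepsilon>0$ and take competitors $c_1$ from $x$ to $x'$ and $c_2$ from $x'$ to $x''$, each of length within $\varepsilon$ of the infimum. Concatenating them naively only gives a piecewise $\mathcal{C}^1$ curve, so I would reparametrize first. Fix a $\mathcal{C}^1$ nondecreasing surjection $\theta:[0,1]\to[0,1]$ with $\theta'(0)=\theta'(1)=0$, for instance $\theta(s)=3s^2-2s^3$. Replace $c_1$ by $s\mapsto c_1(a+(b-a)\theta(s))$ on $[0,1]$, and do the same for $c_2$. Homogeneity of $b_x$ and the change of variables formula for the nondecreasing $\mathcal{C}^1$ map $\theta$ show that lengths are unchanged. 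Both reparametrized curves now have zero velocity at their endpoints. Running the first on $[a,(a+b)/2]$ and the second on $[(a+b)/2,b]$ after affine rescaling therefore gives a $\mathcal{C}^1$ curve from $x$ to $x''$, and its length is the sum of the two lengths. Letting $\varepsilon\to0$ yields $d_X(x,x'')\le d_X(x,x')+d_X(x',x'')$.

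The main step is separation: if $x\neq x'$ then $d_X(x,x')>0$. I would fix a chart $(U,\varphi)$ at $x_0\coloneqq x$. By (i) of \cref{def:finsler} there is $C\ge1$ with $C^{-1}\lVert v\rVert_\mathbb{B}\le b_{x_0}(\dif\varphi^{-1}(\varphi(x_0))(v))$. By (ii) with $\varepsilon=1$ there is an open $V\subset U$ containing $x_0$ such that $b_y(\dif\varphi^{-1}(\varphi(y))(v))\ge \tfrac{1}{2C}\lVert v\rVert_\mathbb{B}$ for all $y\in V$ and $v\in\mathbb{B}$. Using that the manifold is Hausdorff (part of \cref{def:banach_manifolds}), I would choose $r>0$ such that the closed ball $\bar B(\varphi(x_0),r)\subset\varphi(V)$ and $x'\notin\varphi^{-1}(\bar B(\varphi(x_0),r))$. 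Any $\mathcal{C}^1$ curve $c$ from $x$ to $x'$ must leave this set. Let $t^*$ be the first time at which $\lVert\varphi(c(t))-\varphi(x_0)\rVert_\mathbb{B}=r$. On $[a,t^*]$ the curve stays in $V$, and there $\dot c(t)=\dif\varphi^{-1}(\varphi(c(t)))\big((\varphi\circ c)'(t)\big)$. Hence
$$\int_I b_X(\dot c)\dif t\;\ge\;\frac{1}{2C}\int_a^{t^*}\lVert(\varphi\circ c)'(t)\rVert_\mathbb{B}\dif t\;\ge\;\frac{r}{2C},$$
by the fundamental theorem of calculus in $\mathbb{B}$. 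Taking the infimum gives $d_X(x,x')\ge r/(2C)>0$. The delicate points are the chart bookkeeping and the use of the Hausdorff property, which I expect to be the main care required. Beyond that, the argument is the standard one of \cite[Theorem 3.3]{palais1966lusternik}.
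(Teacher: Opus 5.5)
The paper gives no argument of its own here; it only cites \cite[Theorem 3.3]{palais1966lusternik}. Your treatment of $d_X(x,x)=0$, of symmetry and of the triangle inequality is fine, and the $3s^2-2s^3$ reparametrization is a clean way to keep concatenations $\mathcal{C}^1$. The gap is in separation, and it has two parts.

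First, the Hausdorff property is not part of \cref{def:banach_manifolds}. The remark on topologies induced by atlases, just before that definition, says explicitly that the induced topology is not required to be Hausdorff. Without it the proposition is false. Take the line with two origins $0_1,0_2$, i.e.\ two copies of $\mathbb{R}$ glued along $\mathbb{R}\setminus\{0\}$ by the identity, with $b_X=\lvert\cdot\rvert$. Consider the curve that lies in the first chart on $[a,b)$ and in the second on $(a,b]$, and reads $t\mapsto\varepsilon\sin(\pi(t-a)/(b-a))$ in both. It is $\mathcal{C}^1$, joins $0_1$ to $0_2$, and has length $2\varepsilon$. Hence $d_X(0_1,0_2)=0$. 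So Hausdorffness (or regularity) has to be added as a hypothesis taken from the setting of the cited theorem; it cannot be read off the paper's definitions.

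Second, even granting it, you use the Hausdorff property where it is not needed: choosing $r$ only requires injectivity of $\varphi$. You then skip the step that actually needs it, namely the existence of $t^*$ with $c([a,t^*])\subset V$. Knowing that $c$ leaves $K\coloneqq\varphi^{-1}(\bar B(\varphi(x_0),r))$ produces a point on the chart sphere only if $K$ is closed in $X$. That holds when $\mathbb{B}$ is finite-dimensional, since $K$ is then compact, or when $X$ is regular. It does not follow from Hausdorffness alone when $\mathbb{B}$ is infinite-dimensional. In the two-origins example, the curve exits $K$ at time $b$ without ever touching the sphere.

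Here is a way to close the gap using Hausdorffness and the length bound.
\begin{itemize}
\item Let $s$ be the supremum of the $t$ with $c([a,t])\subset\varphi^{-1}(B(\varphi(x_0),r))$.
\item On $[a,s)$ you have $\lVert(\varphi\circ c)'\rVert_{\mathbb{B}}\le 2C\,b_X(\dot c)$, with $b_X(\dot c)$ continuous on $I$. So $\varphi\circ c$ has finite length on $[a,s)$, and $\varphi(c(t))\to z\in\bar B(\varphi(x_0),r)$ as $t\uparrow s$.
\item Thus $c(t)\to\varphi^{-1}(z)$, and uniqueness of limits gives $c(s)=\varphi^{-1}(z)\in V$.
\item Moreover $\lVert z-\varphi(x_0)\rVert_{\mathbb{B}}=r$. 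Otherwise either $s<b$ and $c$ stays in the open set $\varphi^{-1}(B(\varphi(x_0),r))$ slightly beyond $s$, or $s=b$ and $x'=c(b)\in K$.
\end{itemize}
Taking $t^*\coloneqq s$, your estimate then gives $d_X(x,x')\ge r/(2C)$.
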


In addition, the induced metric is locally equivalent to the norm \cite[Lemma 1.3]{jaramillo2014characterization}.

\begin{proposition}[Local equivalence of the norm and the metric]
\label{prop:property_finsler}
    Let $(X,b_X)$ be a $\mathcal{C}^1$ Finsler manifold. Then, for every $x_0 \in X$, $\varepsilon >0$ and chart $(U,\varphi)$ such that $x_0\in U$, there exists an open neighborhood $V\subset U$ of $x_0$ such that for all $(x,x')\in V$ we have
    $$\frac{1}{1+\varepsilon}d_X(x,x')\leq b_{x_0}(\dif\varphi^{-1}(\varphi(x_0))(\varphi(x) - \varphi(x'))) \leq (1+\varepsilon)\, d_X(x,x').$$
\end{proposition}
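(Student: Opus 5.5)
Fix $x_0\in X$, $\varepsilon>0$ and a chart $(U,\varphi)$ with $x_0\in U$. Write $\|v\|_0\coloneqq b_{x_0}(\dif\varphi^{-1}(\varphi(x_0))(v))$ for $v\in\mathbb{B}$; by (i) of \cref{def:finsler} this is a norm on $\mathbb{B}$ equivalent to $\lVert\cdot\rVert_\mathbb{B}$. The plan is to compare, on a small convex coordinate ball, the Finsler length of $\mathcal{C}^1$ curves with the $\|\cdot\|_0$-length of their coordinate images, using (ii) of \cref{def:finsler} with a small $\delta>0$ to be chosen in terms of $\varepsilon$.

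\emph{Upper bound on $\|\varphi(x)-\varphi(x')\|_0$.} Apply (ii) of \cref{def:finsler} with $\delta$ to get an open $W\subset U$ around $x_0$ on which $b_x(\dif\varphi^{-1}(\varphi(x))v)\geq (1+\delta)^{-1}\|v\|_0$. Choose $r>0$ with $B\coloneqq B_{\|\cdot\|_0}(\varphi(x_0),r)\subset\varphi(W)$ and set $V\coloneqq\varphi^{-1}(B_{\|\cdot\|_0}(\varphi(x_0),r/3))$. For $x,x'\in V$ and any $\mathcal{C}^1$ curve $c$ joining them, there are two cases. If $c$ stays in $\varphi^{-1}(B)$, then
\[\int_I b_X(\dot c)\,\dif t\geq(1+\delta)^{-1}\int_I\|(\varphi\circ c)'\|_0\,\dif t\geq(1+\delta)^{-1}\|\varphi(x)-\varphi(x')\|_0.\]
If $c$ leaves $\varphi^{-1}(B)$, then the same estimate on the portion before the exit shows its length is at least $(1+\delta)^{-1}\cdot 2r/3$. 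This exceeds $(1+\delta)^{-1}\|\varphi(x)-\varphi(x')\|_0$, since the latter is $<2r/3$. Taking the infimum in \cref{prop:metric_space_finsler} gives $\|\varphi(x)-\varphi(x')\|_0\leq(1+\delta)d_X(x,x')$.

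\emph{Lower bound.} For $x,x'\in V$, use the straight segment $c(t)=\varphi^{-1}(\varphi(x)+\tfrac{t-a}{b-a}(\varphi(x')-\varphi(x)))$, which lies in $\varphi^{-1}(B)\subset W$ by convexity. The other inequality in (ii) of \cref{def:finsler} yields $d_X(x,x')\leq\int_I b_X(\dot c)\,\dif t\leq(1+\delta)\|\varphi(x)-\varphi(x')\|_0$. Choosing $\delta=\varepsilon$ gives both inequalities of \cref{prop:property_finsler}, using $\|\varphi(x)-\varphi(x')\|_0=\|\varphi(x')-\varphi(x)\|_0$.

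\emph{Main obstacle.} The delicate step is the exit case: I must ensure that curves leaving the neighborhood cannot be shorter than the coordinate distance. This is why the radii are chosen as $r/3$ versus $r$. I also need the chart image to contain a $\|\cdot\|_0$-ball, which holds because $\varphi(U)$ is open in $\mathbb{B}$ and the norms are equivalent.
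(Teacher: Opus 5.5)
Your argument follows the standard Palais-type comparison: the paper gives no proof of its own here and only cites \cite[Lemma 1.3]{jaramillo2014characterization}. The coordinate-segment bound, the case where $c$ stays in $\varphi^{-1}(B)$, and the choice $\delta=\varepsilon$ are all fine. The one step that does not go through as written is the exit case.

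Let $t^*$ be the first time with $c(t^*)\notin\varphi^{-1}(B)$. Your claim that the portion of $c$ before the exit has Finsler length at least $(1+\delta)^{-1}\cdot 2r/3$ presupposes that $\sup_{t<t^*}\lVert\varphi(c(t))-\varphi(x_0)\rVert_0\geq r$. You cannot get this from continuity of $\varphi$ at $c(t^*)$, because $c(t^*)$ need not lie in $U$. The finite-dimensional argument is also unavailable. There, $\varphi^{-1}(\overline{B}_{r'})$ with $r'<r$ is compact, hence closed in $X$, so the curve cannot leave $\varphi^{-1}(B)$ while its coordinates stay in a smaller ball. In $\mathbb{B}$ closed balls are not compact, and topology alone does not rule this out. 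The choice of radii $r/3$ versus $r$ only handles the arithmetic once this fact is known.

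The missing idea is to combine the completeness of $\mathbb{B}$ with the lower bound in (ii) of \cref{def:finsler}.
\begin{itemize}
\item On $[a,t^*)$ one has $\lVert(\varphi\circ c)'(t)\rVert_0\leq(1+\delta)\,b_X(\dot c(t))$.
\item The map $t\mapsto b_X(\dot c(t))$ is continuous on $I$, since $b_X$ is continuous. Hence $\varphi\circ c$ is Lipschitz on $[a,t^*)$, and $y^*\coloneqq\lim_{t\to t^{*-}}\varphi(c(t))$ exists in $\mathbb{B}$.
\item Suppose $\lVert y^*-\varphi(x_0)\rVert_0<r$. Then $y^*\in B\subset\varphi(U)$, and continuity of $\varphi^{-1}$ gives $c(t)\to\varphi^{-1}(y^*)$.
\item Limits are unique because $X$ is Hausdorff; this is implicit in $d_X$ being a metric compatible with the topology (\cref{prop:metric_space_finsler}), and it is exactly where Hausdorffness is used. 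So $c(t^*)=\varphi^{-1}(y^*)\in\varphi^{-1}(B)$, a contradiction.
\item Hence $\lVert y^*-\varphi(x_0)\rVert_0\geq r$. Letting $t\to t^{*-}$ in your in-ball estimate gives $\int_a^{t^*}b_X(\dot c)\,\dif t\geq(1+\delta)^{-1}\lVert y^*-\varphi(x)\rVert_0\geq(1+\delta)^{-1}\,2r/3$.
\end{itemize}
This is exactly the bound your exit case needs. With this inserted, your proof is complete.
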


We will see that \cref{prop:property_finsler} is useful to show that the norm of the speed and the metric derivative coincide for absolutely continuous curves. To define a notion of speed, let us first recall a definition of absolutely continuous curves in Banach spaces that is stronger than \cref{def:ac_curves}.

\begin{definition}[Strongly absolutely continuous curves in Banach spaces]
\label{def:strong_ac_curves}
 Let $p\in [1,\infty]$. When $\mathbb{B}$ is a Banach space, define the set $\mathcal{AC}^p_{\operatorname{s}}(I,\mathbb{B})$ of \emph{strongly absolutely continuous curves}, that is, the set of all continuous curves $c:I\to \mathbb{B}$ such that there is $\eta\in L^p(I,\mathbb{B})$ satisfying 
 $$c(t) - c(s) =\int_s^t \eta(u)\dif u\quad \text{for all $a\leq s\leq t\leq b$ },$$
 where the latter integral is meant in the sense of Bochner (see \cite[Section 1.2.a]{hytonen2016analysis} for a complete exposition).
\end{definition}

\cref{def:strong_ac_curves} extends to Banach manifolds through the use of local charts \cite[Definition 1.2]{schmeding2016manifolds}.

\begin{definition}[Strongly absolutely continuous curves in Banach manifolds]
\label{def:strong_ac_finsler}
    Let $p\in [1,\infty]$. When $X$ is a $\mathcal{C}^1$ Banach manifold (\cref{def:banach_manifolds}), define the set $\mathcal{AC}^p_{\operatorname{s}}(I,X)$ of all continuous curves $c: I\to X$ such that there is a partition $a=t_0< t_1<\ldots< t_n=b$ and charts $(U_i,\varphi_i)$ of $X$ for $1\leq i \leq n$ such that 
    $c([t_{i-1},t_i])\subset U_i$ and $\varphi_i\circ c|_{[t_{i-1},t_i]}\in \mathcal{AC}^p_{\operatorname{s}}([t_{i-1},t_i],\varphi_i(U_i))$.
\end{definition}
\begin{remark}[On the invariance to the choice of partition and charts]
    As pointed out in \cite[Lemma 3.21]{glockner2015measurable}, once a curve satisfies the conditions of \cref{def:strong_ac_finsler} for a partition of $I$ and a collection of charts, the conditions are also satisfied for every other partition $a=t_0< t_1<\ldots< t_n=b$ of $I$ and collection of charts $(U_i,\varphi_i)_{1\leq i\leq n}$ of $X$ such that $c([t_{i-1},t_i])\subset U_i$ for all $1\leq i \leq n$.
\end{remark}

One can then define a Banach bundle of $L^p$ speeds over the set of strongly absolutely continuous curves in Finsler manifolds.

\begin{proposition}[Vector bundle over $\mathcal{AC}^p_{\operatorname{s}}(I,X)$ in Finsler manifolds]
\label{prop:vector_bundle_ac}
    Let $p\in [1,\infty]$. Suppose that $(X,b_X)$ is a $\mathcal{C}^1$ Finsler manifold. Then, the set $L^p(I,TX) \coloneqq \cup_{c\in \mathcal{AC}^p_{\operatorname{s}}(I,X)} S^p(\pi_{c^*TX})$ forms a Banach bundle 
    $$\pi_{L^p(I,TX)}\coloneqq (\pi_{TX})_*: L^p(I,TX)\to \mathcal{AC}^p_{\operatorname{s}}(I,X),$$
     where $c^*TX\coloneqq \{(t,v)\in I\times TX: \pi_{TX}(v) =c(t)\} $ is the pullback by $c$ of the tangent bundle $TX$ and the projection $\pi_{c^*TX}: c^*TX\to I$ is given by the canonical projection on the first coordinate $\pi_{c^*TX}\coloneqq \operatorname{proj}_1$.
\end{proposition}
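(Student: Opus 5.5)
To prove \cref{prop:vector_bundle_ac}, I will check three things in turn. First, each fiber $S^p(\pi_{c^*TX})$ is a Banach space. Second, the projection $(\pi_{TX})_*$ is well defined and surjective. Third, the union carries local trivializations modelled on a fixed Banach space. Throughout, $I$ is equipped with $\lebesgue$ and $c^*TX$ carries the fibered metric $m_t(v,w)\coloneqq b_{c(t)}(v-w)$ on the fiber $T_{c(t)}X$. The base mapping is the zero section $t\mapsto(t,0_{c(t)})$.

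\textbf{Fibers and projection.} Fix $c\in\mathcal{AC}^p_{\operatorname{s}}(I,X)$ and choose a partition $a=t_0<\dots<t_n=b$ with charts $(U_i,\varphi_i)$ as in \cref{def:strong_ac_finsler}.

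On $[t_{i-1},t_i]$, the chart differential identifies $T_{c(t)}X$ with $\mathbb{B}$ through $v\mapsto \dif\varphi_i(c(t))v$. Conditions (i)--(ii) of \cref{def:finsler} and the compactness of $c([t_{i-1},t_i])$ give constants $C_i\ge1$ such that
$$C_i^{-1}\lVert\dif\varphi_i(c(t))v\rVert_{\mathbb{B}}\le b_{c(t)}(v)\le C_i\lVert\dif\varphi_i(c(t))v\rVert_{\mathbb{B}}$$
uniformly in $t$. To get these constants, cover the image by finitely many neighborhoods $V$ given by (ii) with $\varepsilon=1$, then use (i) at the finitely many centers.

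This gives a bijection $\Phi_c:S^p(\pi_{c^*TX})\to L^p(I,\mathbb{B})$, built piece by piece. Measurability of sections transfers because $\dif\varphi_i$ is continuous on $TU_i$, since $X$ is $\mathcal{C}^1$. Separability of range transfers as well. The map is bi-Lipschitz for $D_{p,\pi_{c^*TX}}$ and the Bochner $L^p$ norm. Since $L^p(I,\mathbb{B})$ is Banach, so is the fiber, with pointwise linear operations.

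The projection is surjective because the zero section lies over every $c$. It recovers $c$ because $\pi_{TX}\circ s(t)=c(t)$ for a section $s$ of $c^*TX$. Here $(\pi_{TX})_*$ is understood as composing with the second coordinate, so the first coordinate is dropped.

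\textbf{Local trivializations.} This is the step I expect to be the main obstacle, because $\mathcal{AC}^p_{\operatorname{s}}(I,X)$ has no manifold structure given earlier in the paper. I would first try to take the topology induced by $d_\infty$ on curves. Around $c_0$ with chart data $(t_i,U_i,\varphi_i)$, let $\mathcal{U}_{c_0}$ be the set of curves $c$ in $\mathcal{AC}^p_{\operatorname{s}}$ with $c([t_{i-1},t_i])\subset U_i$ for all $i$.

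\begin{itemize}
\item The set $\mathcal{U}_{c_0}$ is open. Each $c_0([t_{i-1},t_i])$ is compact inside the open set $U_i$, so it sits at positive distance from its complement.
\item By the remark after \cref{def:strong_ac_finsler}, the same partition and charts are admissible for every $c\in\mathcal{U}_{c_0}$.
\end{itemize}

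Define $\Psi:\pi^{-1}(\mathcal{U}_{c_0})\to\mathcal{U}_{c_0}\times L^p(I,\mathbb{B})$ by $\Psi(s)\coloneqq(\pi(s),\Phi_{\pi(s)}(s))$. It is fiberwise a linear isomorphism by the first step. On overlaps of two such neighborhoods, the transition is $\eta\mapsto\big(t\mapsto \dif(\varphi'_j\circ\varphi_i^{-1})(\varphi_i(c(t)))\eta(t)\big)$. This is a multiplication operator by a bounded, strongly measurable operator-valued function, and it depends continuously on $c$ in the $d_\infty$ topology. The continuity comes from uniform continuity of $\dif(\varphi'_j\circ\varphi_i^{-1})$ on compact pieces; to be safe, I would shrink to neighborhoods where this derivative is uniformly continuous.

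So the transitions are continuous into the bounded invertible operators on $L^p(I,\mathbb{B})$ with the strong, and I expect also the norm, topology. This yields the Banach bundle structure, and the union's topology is defined so that the maps $\Psi$ are homeomorphisms.

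If norm continuity fails, I would instead state the bundle structure with strongly continuous transitions, which is the usual meaning for such $L^p$ bundles. For $p=\infty$ the same argument works, with essential suprema replacing integrals.
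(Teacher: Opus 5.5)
Your proposal is correct, but it takes a different route from the paper and proves more than the paper claims.

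\textbf{What the paper does.} Its proof is minimal. It notes that the zero section of $\pi_{c^*TX}$ is continuous, so it is a legitimate base mapping. It gets the vector space structure of $S^p(\pi_{c^*TX})$ from pointwise operations, and completeness by ``performing the same proof as'' \cref{prop:completeness_lebesgue}. It observes that $(\pi_{TX})_*$ is surjective with fibers $S^p(\pi_{c^*TX})$, and stops. \cref{rem:structure_bundle_ac} states explicitly that ``Banach bundle'' means only a surjection with Banach fibers, and leaves any further structure open. So your local trivializations are not needed for the statement.

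\textbf{What your fiber step buys.} The chart identification $\Phi_c$ of each fiber with the Bochner space $L^p(I,\mathbb{B})$ is bi-Lipschitz and linear. This gives completeness directly. It avoids having to check that an a.e.\ fiberwise limit of sections is again a measurable, separably valued section of $c^*TX$, which the paper's ``same proof'' leaves implicit when $TX$ is only a topological space. The price is the uniform norm-equivalence constants, and your compactness argument using (i)--(ii) of \cref{def:finsler} supplies them correctly.

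\textbf{Your trivialization step.} The argument over the $d_\infty$ topology is sound, and your hedge about norm versus strong continuity is unnecessary. Since $\varphi'_j\circ\varphi_i^{-1}$ is $\mathcal{C}^1$, its derivative is continuous into $\mathcal{L}(\mathbb{B})$ with the operator norm. Set $A_c(t)=\dif(\varphi'_j\circ\varphi_i^{-1})(\varphi_i(c(t)))$. Uniform convergence $c_n\to c$ on the compact pieces then gives $\sup_t\lVert A_{c_n}(t)-A_c(t)\rVert\to 0$. This bounds the operator norm of the difference of the multiplication operators on $L^p(I,\mathbb{B})$, so the transitions are norm continuous. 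You therefore obtain a genuine topological Banach bundle, which answers at the topological level part of what the paper's remark leaves open.

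\textbf{One small point.} Both you and the paper skip this. For the projection to be well defined on equivalence classes, with disjoint fibers, you need that two curves agreeing $\lebesgue$-a.e.\ are equal. This follows from their continuity.
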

\begin{remark}[On the structure of $L^p(I,TX)$] 
\label{rem:structure_bundle_ac}
We emphasize that this result says nothing about a potential differentiable structure of $L^p(I,TX)$. This question is left open for now and will be further investigated in future work. Calling $\pi_{L^p(I,TX)}$ a \enquote{Banach bundle} is thus a slight abuse of notation since it is only a surjective mapping having Banach fibers.
\end{remark}
\begin{proof}[Proof of \cref{prop:vector_bundle_ac}]
    Let $p\in [1,\infty]$. For each $c\in \mathcal{AC}^p_{\operatorname{s}}(I,X)$, the zero section of $\pi_{c^*TX}$ is continuous, hence Borel measurable, as it is given by the composition of the zero section of $\pi_{TX}$, which is $\mathcal{C}^1$, and of $c$, which is continuous. Hence, $S^p(\pi_{c^*TX})$ is defined without ambiguity using the norm $b_{p,\pi_{c^*TX}}: S^p(\pi_{c^*TX})\to [0,\infty)$ defined as 
    $$b_{p,\pi_{c^*TX}}(v)\coloneqq\begin{cases}
        \left(\int_I b_{c(t)}(v(t))^p \dif t\right)^{1/p},&\text{if $p\in [1,\infty)$}\\
        \lebesgue\text{-}\esssup_{t\in I} b_{c(t)}(v(t)),&\text{if $p=\infty$}
    \end{cases}.$$
    In addition, $S^p(\pi_{c^*TX})$ is a Banach space. Indeed, $\pi_{c^*TX}$ being a Banach bundle, $S^p(\pi_{c^*TX})$ inherits its vector space structure from the pointwise operations of addition and scalar multiplication and is complete by performing the same proof as \cref{prop:completeness_lebesgue}. Then, we can define 
    $$L^p(I,TX) \coloneqq \cup_{c\in \mathcal{AC}^p_{\operatorname{s}}(I,X)} S^p(\pi_{c^*TX}).$$
    Furthermore, the mapping 
    $$\pi_{L^p(I,TX)}\coloneqq (\pi_{TX})_*: L^p(I,TX)\to \mathcal{AC}^p_{\operatorname{s}}(I,X)$$
    is surjective and its fibers are given by $\pi_{L^p(I,TX)}^{-1}(\{c\}) = S^p(\pi_{c^*TX})$ for all $c\in \mathcal{AC}^p_{\operatorname{s}}(I,X)$. Hence, $\pi_{L^p(I,TX)}$ is a Banach bundle.
\end{proof}

The mapping that associates its speed to each strongly absolutely continuous curve in a Finsler manifold is then defined as a section of the Banach bundle given by \cref{prop:vector_bundle_ac}.

\begin{proposition}[Speed of strongly absolutely continuous curves in Finsler manifolds]
\label{prop:speed_strong_ac_curves}
    Let $p\in [1,\infty]$. Suppose that $(X,b_X)$ is a $\mathcal{C}^1$ Finsler manifold. Then, the mapping
    $$\partial_X:\left\{\begin{array}{ccl}
        \mathcal{AC}^p_{\operatorname{s}}(I,X)&\longrightarrow&  L^p(I,TX)\\
        c&\longmapsto& \dot{c}\coloneqq\partial_X c
    \end{array}\right.$$
    is a section of $\pi_{L^p(I,TX)}$.
\end{proposition}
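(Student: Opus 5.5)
To prove \cref{prop:speed_strong_ac_curves}, I will first give a precise meaning to $\dot{c}$ for $c\in \mathcal{AC}^p_{\operatorname{s}}(I,X)$ and then check the section property. The first step is to fix a partition $a=t_0<\ldots<t_n=b$ and charts $(U_i,\varphi_i)$ as in \cref{def:strong_ac_finsler}. On each $[t_{i-1},t_i]$ the curve $\varphi_i\circ c$ lies in $\mathcal{AC}^p_{\operatorname{s}}([t_{i-1},t_i],\mathbb{B})$, so there is a Bochner $L^p$ density $\eta_i$. By the Lebesgue differentiation theorem for Bochner integrals, $\eta_i(t)$ is the derivative of $\varphi_i\circ c$ at $\lebesgue$-a.e.~$t$. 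I then set
$$\dot{c}(t)\coloneqq \big(t,\dif\varphi_i^{-1}(\varphi_i(c(t)))(\eta_i(t))\big)\in c^*TX \quad\text{for } t\in[t_{i-1},t_i),$$
with a fixed convention at $t=b$. Since $\eta_i$ is an a.e.~derivative, the chain rule for $\mathcal{C}^1$ transition maps shows that on overlaps this definition is independent of the chart and of the partition, up to $\lebesgue$-null sets, in the spirit of \cite[Lemma 3.21]{glockner2015measurable}. Hence the equivalence class of $\dot{c}$ is well defined. By construction $\pi_{c^*TX}\circ\dot{c}=\operatorname{id}_I$, so $\dot c$ is a section of $\pi_{c^*TX}$.

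Next I check that $\dot c\in \mathcal{S}^0_{\operatorname{s}}(\pi_{c^*TX})$, namely that it is measurable with separable range. The function $\eta_i$ is strongly measurable, hence an a.e.~limit of simple functions with separable range. The map $(t,v)\mapsto (t,\dif\varphi_i^{-1}(\varphi_i(c(t)))v)$ is continuous from $[t_{i-1},t_i]\times\mathbb{B}$ into $I\times TX$, because $\varphi_i^{-1}$ is $\mathcal{C}^1$ and $c$ is continuous. Composing, $\dot c$ restricted to each piece is, after modification on a null set, a pointwise limit of measurable maps with separable range, so \cref{prop:measurability_pointwise_limit} applies. Gluing finitely many pieces preserves these properties.

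The main point is integrability, $b_{p,\pi_{c^*TX}}(\dot c)<\infty$. I plan to bound $b_{c(t)}(\dot c(t))$ by a constant times $\lVert\eta_i(t)\rVert_{\mathbb{B}}$, uniformly in $t\in[t_{i-1},t_i]$, and then use $\eta_i\in L^p$.
\begin{itemize}
\item Condition (ii) of \cref{def:finsler} gives, around each point $c(s)$, a neighbourhood on which $b_x(\dif\varphi_i^{-1}(\varphi_i(x))\cdot)$ is within a factor $1+\varepsilon$ of $b_{c(s)}(\dif\varphi_i^{-1}(\varphi_i(c(s)))\cdot)$.
\item Condition (i) makes the latter equivalent to $\lVert\cdot\rVert_{\mathbb{B}}$.
\item Compactness of $c([t_{i-1},t_i])$ then lets me extract finitely many such neighbourhoods, which yields a uniform constant $C_i$ with $b_{c(t)}(\dot c(t))\le C_i\lVert\eta_i(t)\rVert_{\mathbb{B}}$.
\end{itemize}
This handles both $p<\infty$ and $p=\infty$.

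Therefore $\dot c\in S^p(\pi_{c^*TX})=\pi_{L^p(I,TX)}^{-1}(\{c\})$, using the zero-section base mapping from the proof of \cref{prop:vector_bundle_ac}. It follows that $\pi_{L^p(I,TX)}(\partial_X c)=c$ for every $c$, that is, $\partial_X$ is a section of $\pi_{L^p(I,TX)}$.
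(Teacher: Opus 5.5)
Your proposal is correct and follows the same route as the paper: fix a partition and charts as in \cref{def:strong_ac_finsler}, take the Bochner densities $\eta_i$, set $\dot c(t)=\dif\varphi_i^{-1}(\varphi_i(c(t)))(\eta_i(t))$ on each piece, and check chart independence, measurability with separable range, and the section property. Your version is in fact more complete than the paper's proof, which asserts chart independence and membership of $\dot c$ in $S^p(\pi_{c^*TX})$ without justification; you supply the Lebesgue-differentiation and chain-rule argument for chart independence, and the uniform bound $b_{c(t)}(\dot c(t))\le C_i\lVert\eta_i(t)\rVert_{\mathbb{B}}$ for integrability, which you obtain from conditions (i)--(ii) of \cref{def:finsler} and compactness of $c([t_{i-1},t_i])$.
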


\begin{proof}
    Let $p\in [1,\infty]$ and $c\in \mathcal{AC}^p_{\operatorname{s}}(I,X)$. Thus, there exist a partition $a=t_0< t_1<\ldots< t_n=b$ and charts $(U_i,\varphi_i)$ of $X$ for $1\leq i \leq n$ such that 
    $c([t_{i-1},t_i])\subset U_i$ and $\varphi_i\circ c|_{[t_{i-1},t_i]}\in \mathcal{AC}^p_{\operatorname{s}}([t_{i-1},t_i],\varphi_i(U_i))$. Hence, denoting $\mathbb{B}$ the Banach space on which $X$ is modelled, there exists $\eta_i\in L^p([t_{i-1},t_i],\mathbb{B})$ such that for all $(s,t)\in [t_{i-1},t_i]^2$ satisfying $s<t$ we have 
    $$(\varphi_i\circ c)(t) - (\varphi_i\circ c)(s) = \int_s^t \eta_i(u)\dif u.$$
 Then, we can define for $\lebesgue$-a.e.~$t\in I$
 $$\dot{c}(t)\coloneqq \sum_{i=1}^n \mathds{1}_{[t_{i-1},t_i]}(t) \dif\varphi_i^{-1}(\varphi_i(c(t)))(\eta_i(t))\in T_{c(t)}X,$$
 which does not depend on the choice of charts. Therefore, $\dot{c}$ is measurable with separable range as the sum of measurable mappings with separable ranges and is a section of $\pi_{c^*TX}$. Thus, up to the identification with its class of equivalence, $\dot{c}$ belongs to $S^p(\pi_{c^*TX})$.
\end{proof}

\begin{remark}[Related results in the literature]
    \cref{prop:speed_strong_ac_curves} was proved by W.~P.~A.~Klingenberg \cite[Proposition 2.3.16]{klingenberg1995riemannian} in the case of finite-dimensional Riemannian manifolds and by A.~Schmeding  \cite[Proposition 2.8]{schmeding2016manifolds} in the case of (strong) infinite-dimensional Riemannian manifolds. Both results are corollaries of \cref{prop:speed_strong_ac_curves} since any Hilbert manifold with a (strong) continuous Riemannian metric is a Finsler manifold \cite[Theorem 2.12]{palais1966lusternik}. However, we emphasize that, unlike \cite[Proposition 2.3.16]{klingenberg1995riemannian} and \cite[Proposition 2.8]{schmeding2016manifolds}, \cref{prop:speed_strong_ac_curves} tells nothing about the regularity of the section which maps any absolutely continuous curve to its $L^p$ speed. This question is left open for now and will be further investigated in future work as it is related to \cref{rem:structure_bundle_ac}.
\end{remark}

Now, one could ask under which conditions on $X$ can the above construction transpose to absolutely continuous curves in the sense of \cref{def:ac_curves}. To answer this question, let us first recall that, when $X$ is a Banach space satisfying the Radon--Nikod\'ym property (see \cite[Section 1.6.2]{lisini2006absolutely} for a definition), the set of absolutely continuous curves defined in \cref{def:ac_curves} coincides with the set of strongly absolutely continuous curves defined in \cref{def:strong_ac_curves} \cite[Theorem~1.16]{lisini2006absolutely}. 

\begin{proposition}[Coincidence of $\mathcal{AC}^p(I,X)$ and $\mathcal{AC}^p_{\operatorname{s}}(I,X)$ in RNP Banach spaces]
\label{prop:speed_ac_rnp}
Let $p\in [1,\infty]$. Suppose that $X$ is a Banach space satisfying the Radon--Nikod\'ym property. Then, $\mathcal{AC}^p(I,X)=\mathcal{AC}_{\operatorname{s}}^p(I,X)$. 
\end{proposition}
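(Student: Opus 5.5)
The plan is to prove the two inclusions separately. The inclusion $\mathcal{AC}^p_{\operatorname{s}}(I,X)\subset\mathcal{AC}^p(I,X)$ holds in every Banach space and does not use the Radon--Nikod\'ym property. If $c(t)-c(s)=\int_s^t\eta(u)\dif u$ with $\eta\in L^p(I,X)$, then the Bochner integral satisfies $\lVert\int_s^t\eta\rVert\leq\int_s^t\lVert\eta(u)\rVert\dif u$. Hence $m\coloneqq\lVert\eta(\cdot)\rVert$ belongs to $L^p(I)$ and satisfies \cref{eq:control_ac_def}, so $c\in\mathcal{AC}^p(I,X)$ by \cref{def:ac_curves}.

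The reverse inclusion is where the Radon--Nikod\'ym property is needed. Let $c\in\mathcal{AC}^p(I,X)$ and let $m\in L^p(I)$ be as in \cref{def:ac_curves}; \cref{th:metric_derivative} lets us take $m=\lvert c'\rvert_X$. I would define a vector measure $\nu$ on the Borel sets of $I$ by setting $\nu((s,t])\coloneqq c(t)-c(s)$ on intervals and extending additively to finite disjoint unions of intervals. The bound $\lVert\nu(A)\rVert\leq\int_A m\dif t$ holds on this algebra. Because $m\in L^1(I)$, this bound makes $\nu$ countably additive there, so by Carathéodory-type arguments for vector measures $\nu$ extends to a countably additive, $\lebesgue$-absolutely continuous $X$-valued measure. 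The same bound shows that $\nu$ has bounded variation, with $\lvert\nu\rvert\leq m\,\lebesgue$.

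By the Radon--Nikod\'ym property of $X$, there is then $\eta\in L^1(I,X)$ (Bochner) with $\nu(A)=\int_A\eta\dif t$. In particular $c(t)-c(s)=\int_s^t\eta(u)\dif u$ for all $s\leq t$. To conclude that $c\in\mathcal{AC}^p_{\operatorname{s}}(I,X)$, it remains to upgrade $\eta$ from $L^1$ to $L^p$. Since $\int_A\lVert\eta\rVert\dif t=\lvert\nu\rvert(A)\leq\int_A m\dif t$ for every Borel set $A$, we get $\lVert\eta(t)\rVert\leq m(t)$ for $\lebesgue$-a.e.~$t$. This places $\eta$ in $L^p(I,X)$, and it also covers the case $p=\infty$.

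I expect the main obstacle to be the construction and countable additivity of $\nu$, together with the identity $\lvert\nu\rvert(A)=\int_A\lVert\eta\rVert$. The latter is standard for Bochner indefinite integrals, but it should be cited carefully. In practice I would simply invoke \cite[Theorem~1.16]{lisini2006absolutely}, where exactly this argument is carried out, and note that the $L^p$-integrability follows from the pointwise bound $\lVert\eta\rVert\leq\lvert c'\rvert_X$.
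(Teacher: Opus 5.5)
Your proposal is correct and takes essentially the same route as the paper, which does not prove this proposition but simply recalls it from \cite[Theorem~1.16]{lisini2006absolutely}, the reference you fall back on at the end. Your self-contained sketch is a sound unpacking of that cited result. It extends the increments $c(t)-c(s)$ to an $\lebesgue$-continuous vector measure of bounded variation, uses the Radon--Nikod\'ym property to obtain a Bochner density $\eta$, and deduces $\lVert\eta\rVert\leq m$ $\lebesgue$-a.e., which places $\eta$ in $L^p(I,X)$, including for $p=\infty$.
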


Then, \cref{prop:speed_ac_rnp} extends to Finsler manifolds modelled on a Banach space satisfying the Radon--Nikod\'ym property through the use of local charts, so that the notion of strongly absolutely continuous curves given by \cref{def:strong_ac_finsler} and the notion of absolutely continuous curves given by \cref{def:ac_curves} also coincide.

\begin{proposition}[Coincidence of $\mathcal{AC}^p(I,X)$ and $\mathcal{AC}^p_{\operatorname{s}}(I,X)$ in RNP Finsler manifolds]
\label{prop:ac_are_strong_ac}
    Let $p\in [1,\infty]$. Suppose that $(X,b_X)$ is a $\mathcal{C}^1$ Finsler manifold modeled on a Banach space $\mathbb{B}$ satisfying the Radon--Nikod\'ym property. Then, $\mathcal{AC}^p(I,X) = \mathcal{AC}^p_{\operatorname{s}}(I,X)$.
\end{proposition}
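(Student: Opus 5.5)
We prove the two inclusions separately, working chart by chart and using \cref{prop:property_finsler} to compare $d_X$ with chart norms, and \cref{prop:speed_ac_rnp} in the model space $\mathbb{B}$.

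\emph{Inclusion $\mathcal{AC}^p_{\operatorname{s}}(I,X)\subset\mathcal{AC}^p(I,X)$.} Let $c\in\mathcal{AC}^p_{\operatorname{s}}(I,X)$ with partition $(t_i)$, charts $(U_i,\varphi_i)$ and derivatives $\eta_i\in L^p([t_{i-1},t_i],\mathbb{B})$. On $[t_{i-1},t_i]$ the restricted curve $\varphi_i^{-1}\circ((\varphi_i\circ c)|_{[s,t]})$ is $\mathcal{C}^1$ only almost everywhere, so rather than using the definition of $d_X$ in \cref{prop:metric_space_finsler} directly, we argue locally.
\begin{enumerate}[label=(\arabic*)]
\item By compactness of $c([t_{i-1},t_i])$ and \cref{prop:property_finsler} with $\varepsilon=1$, refine the partition so that each piece lies in a neighbourhood $V$ of some $x_0$. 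There, $d_X(c(s),c(t))\le 2\,b_{x_0}(\dif\varphi^{-1}(\varphi(x_0))(\varphi(c(t))-\varphi(c(s))))$.
\item By condition (i) of \cref{def:finsler}, this is bounded by $C\|\int_s^t\eta(u)\dif u\|_{\mathbb{B}}\le C\int_s^t\|\eta(u)\|_{\mathbb{B}}\dif u$.
\item Summing over the finitely many pieces with the triangle inequality gives \cref{eq:control_ac_def} with $m=C\sum_i\mathds{1}_{[t_{i-1},t_i]}\|\eta_i\|_{\mathbb{B}}\in L^p(I)$.
\end{enumerate}

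\emph{Inclusion $\mathcal{AC}^p(I,X)\subset\mathcal{AC}^p_{\operatorname{s}}(I,X)$.} Let $c\in\mathcal{AC}^p(I,X)$, which is continuous.
\begin{enumerate}[label=(\arabic*)]
\item For each $t\in I$, pick a chart $(U,\varphi)$ around $c(t)$ and the neighbourhood $V$ given by \cref{prop:property_finsler} with $\varepsilon=1$. By continuity, some relatively open interval around $t$ is mapped into $V$.
\item By compactness of $I$ and a Lebesgue-number argument, obtain a partition $a=t_0<\dots<t_n=b$ with $c([t_{i-1},t_i])\subset V_i\subset U_i$.
\item On $[t_{i-1},t_i]$, \cref{prop:property_finsler} and the equivalence of $b_{x_0}(\dif\varphi_i^{-1}(\varphi_i(x_0))\cdot)$ with $\|\cdot\|_{\mathbb{B}}$ give
$$\|\varphi_i(c(t))-\varphi_i(c(s))\|_{\mathbb{B}}\le C_i\,d_X(c(s),c(t))\le C_i\int_s^t m(r)\dif r.$$
Hence $\varphi_i\circ c|_{[t_{i-1},t_i]}\in\mathcal{AC}^p([t_{i-1},t_i],\mathbb{B})$.
\item Since $\mathbb{B}$ has the Radon--Nikod\'ym property, \cref{prop:speed_ac_rnp} (applied on $[t_{i-1},t_i]$, which satisfies \cref{assum:time}) gives $\varphi_i\circ c|_{[t_{i-1},t_i]}\in\mathcal{AC}^p_{\operatorname{s}}([t_{i-1},t_i],\mathbb{B})$. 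Its values lie in $\varphi_i(U_i)$, so this is exactly \cref{def:strong_ac_finsler}.
\end{enumerate}

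\emph{Main obstacle.} The delicate point is the localization, which needs a uniform two-sided comparison between $d_X$ and the chart norm on each piece. \cref{prop:property_finsler} supplies this, but only on small neighbourhoods of each point.

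The plan handles this by choosing neighbourhoods via compactness before refining the partition. This uses the fact that $c$ is continuous for the manifold topology, which coincides with the $d_X$-topology, in the second inclusion; in the first inclusion, $c$ is continuous by definition.

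If $d_X$ takes infinite values, this causes no issue. Absolutely continuous curves stay in one finite-distance component, and the bounds above are local.
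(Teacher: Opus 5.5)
Your proof is correct and takes the same route as the paper: localize along a partition with charts, compare $d_X$ with the chart norm in both directions using \cref{prop:property_finsler} and the norm equivalence of \cref{def:finsler}, and apply \cref{prop:speed_ac_rnp} in $\mathbb{B}$ for the nontrivial inclusion. You are in fact more careful than the paper: it covers $c([t_{i-1},t_i])$ by charts $U_i$ and then applies \cref{prop:property_finsler}, which only holds on the smaller neighbourhoods $V\subset U$, without refining the partition, and it dismisses the inclusion $\mathcal{AC}^p_{\operatorname{s}}(I,X)\subset\mathcal{AC}^p(I,X)$ in one line.
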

\begin{proof}
    Let $p\in [1,\infty]$. The inclusion $\mathcal{AC}^p_{\operatorname{s}}(I,X)\subset \mathcal{AC}^p(I,X)$ follows from \cref{prop:property_finsler}. Let us prove the reverse inclusion. Let $c\in \mathcal{AC}^p(I,X)$. Since $I$ is compact and $c$ is continuous, there exist a partition $a=t_0<t_1<\ldots<t_n=b$ and charts $(U_i,\varphi_i)$ of $X$ for $1\leq i\leq n$ such that $c([t_{i-1},t_i])\subset U_i$. By \cref{prop:property_finsler}, the fact that $c\in \mathcal{AC}^p(I,X)$ implies that $\varphi_i\circ c|_{[t_{i-1},t_i]}\in \mathcal{AC}^p([t_{i-1},t_i],\varphi_i(U_i))$ and, since $\mathbb{B}$ has the Radon--Nikod\'ym property, $\varphi_i\circ c|_{[t_{\bar{i}_1},t_i]}\in \mathcal{AC}^p_{\operatorname{s}}([t_{i-1},t_i],\varphi_i(U_i))$, by \cref{prop:speed_ac_rnp}. Hence, $c\in \mathcal{AC}^p_{\operatorname{s}}(I,X)$.
\end{proof}

As a consequence of \cref{prop:ac_are_strong_ac,prop:property_finsler}, the mapping that associates its speed to each absolutely continuous curve is defined as in \cref{prop:speed_strong_ac_curves} and the norm of the speed coincides with the metric derivative, when $X$ has a Finsler structure and is modelled on a Banach space satisfying the Radon--Nikod\'ym property. 

\begin{theorem}[Speed of absolutely continuous curves in RNP Finsler manifolds]
\label{th:speed_ac_curves}
    Let $p\in [1,\infty]$. Suppose that $(X,b_X)$ is a $\mathcal{C}^1$ Finsler manifold modelled on a Banach space satisfying the Radon--Nikod\'ym property. Then, the mapping
    $$\partial_X:\left\{\begin{array}{ccl}
        \mathcal{AC}^p(I,X)&\longrightarrow& L^p(I,TX)\\
        c&\longmapsto& \dot{c}\coloneqq\partial_X c
    \end{array}\right.$$
    is a section of $\pi_{L^p(I,TX)}$ and satisfies for all $c\in \mathcal{AC}^p(I,X)$ that  
    \begin{align}
    \label{eq:norm_speed_equals_metric_derivative}
    \lvert c'\rvert_X(t) = b_{c(t)}(\dot{c}(t))\quad\text{for $\lebesgue$-a.e.~$t\in I$},
    \end{align}
\end{theorem}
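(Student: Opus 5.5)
By \cref{prop:ac_are_strong_ac}, $\mathcal{AC}^p(I,X)=\mathcal{AC}^p_{\operatorname{s}}(I,X)$, so the first assertion is \cref{prop:speed_strong_ac_curves} applied verbatim. The domain of $\pi_{L^p(I,TX)}$ is then exactly $\mathcal{AC}^p(I,X)$, and $\partial_X$ is a section of it. What remains is \cref{eq:norm_speed_equals_metric_derivative}, and I would prove it locally in one chart at a time.

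Fix $c\in\mathcal{AC}^p(I,X)$ and take a partition $a=t_0<\dots<t_n=b$ with charts $(U_i,\varphi_i)$ such that $c([t_{i-1},t_i])\subset U_i$. Then $\gamma_i\coloneqq\varphi_i\circ c|_{[t_{i-1},t_i]}$ is strongly absolutely continuous with Bochner density $\eta_i\in L^p([t_{i-1},t_i],\mathbb{B})$. The a.e.~points to keep are the interior times $t$ that are simultaneously
\begin{itemize}
\item Lebesgue points of $\eta_i$ in the Bochner sense, so that $(\gamma_i(s)-\gamma_i(t))/(s-t)\to\eta_i(t)$ in $\mathbb{B}$ as $s\to t$ (Lebesgue differentiation for Bochner integrals);
\item points where the metric derivative $\lvert c'\rvert_X(t)$ exists (\cref{th:metric_derivative}).
\end{itemize}

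At such a $t$, fix $\varepsilon>0$ and apply \cref{prop:property_finsler} with base point $x_0=c(t)$. This gives a neighborhood $V$ on which $d_X(c(s),c(t))$ lies between $(1+\varepsilon)^{-1}$ and $(1+\varepsilon)$ times $b_{c(t)}\big(\dif\varphi_i^{-1}(\varphi_i(c(t)))(\gamma_i(s)-\gamma_i(t))\big)$. By continuity of $c$, $c(s)\in V$ for $s$ close to $t$. Dividing by $\lvert s-t\rvert$ and letting $s\to t$, I use two facts:
\begin{itemize}
\item $w\mapsto b_{c(t)}(\dif\varphi_i^{-1}(\varphi_i(c(t)))w)$ is a norm on $\mathbb{B}$ equivalent to $\lVert\cdot\rVert_{\mathbb{B}}$ by \cref{def:finsler}(i), hence continuous;
\item the difference quotients of $\gamma_i$ converge in $\mathbb{B}$ to $\eta_i(t)$.
\end{itemize}
Together these give
\[
(1+\varepsilon)^{-1}\,b_{c(t)}(\dot c(t))\le\lvert c'\rvert_X(t)\le(1+\varepsilon)\,b_{c(t)}(\dot c(t)),
\]
using $\dot c(t)=\dif\varphi_i^{-1}(\varphi_i(c(t)))(\eta_i(t))$ and absolute homogeneity of the norm to handle the sign of $s-t$. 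Letting $\varepsilon\to0$ yields equality. Since the partition points form a null set, the equality holds for $\lebesgue$-a.e.~$t\in I$.

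The delicate step is the interplay of base points in \cref{prop:property_finsler}: the norm there is frozen at $x_0=c(t)$, which is exactly what is needed, but $V$ depends on $t$. This is harmless because the argument is pointwise in $t$ and only requires $c(s)\in V$ for $s$ near $t$. A second point to check is that the Bochner Lebesgue differentiation theorem applies; it does, since $\eta_i$ is strongly measurable and locally integrable. Measurability and the separable range of $t\mapsto b_{c(t)}(\dot c(t))$ follow from continuity of $b_X$, though the identity itself does not need this.
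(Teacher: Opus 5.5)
Your proposal is correct and follows the same route as the paper. You identify $\mathcal{AC}^p(I,X)$ with $\mathcal{AC}^p_{\operatorname{s}}(I,X)$ via \cref{prop:ac_are_strong_ac}, build $\dot c$ chartwise as in \cref{prop:speed_strong_ac_curves}, and extract the two-sided $(1+\varepsilon)$ bound from \cref{prop:property_finsler} before letting $\varepsilon\to0$. You also make explicit two steps the paper leaves implicit: the chart difference quotients converge to $\eta_i(t)$ at Bochner--Lebesgue points, and the norm frozen at $c(t)$ is continuous on $\mathbb{B}$.
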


\begin{proof}
    The proof follows by identifying absolutely continuous curves and strongly absolutely continuous curves using \cref{prop:ac_are_strong_ac}, repeating the arguments in the proof of \cref{prop:speed_strong_ac_curves} to construct the speed and using \cref{prop:property_finsler} to show that the norm of the speed and the metric derivative coincide.
    
    \par\medskip \noindent \emph{Step 1: Construction of the speed.} Let $p\in [1,\infty]$ and $c\in \mathcal{AC}^p(I,X)$. By \cref{prop:ac_are_strong_ac}, $c\in \mathcal{AC}^p_{\operatorname{s}}(I,X)$. Hence, there exist a partition $a=t_0< t_1<\ldots< t_n=b$ and charts $(U_i,\varphi_i)$ of $X$ for $1\leq i \leq n$ such that 
    $c([t_{i-1},t_i])\subset U_i$ and $\varphi_i\circ c|_{[t_{i-1},t_i]}\in \mathcal{AC}^p_{\operatorname{s}}([t_{i-1},t_i],\varphi_i(U_i))$. Thus, denoting $\mathbb{B}$ the Banach space on which $X$ is modelled, there exists $\eta_i\in L^p([t_{i-1},t_i],\mathbb{B})$ such that for all $(s,t)\in [t_{i-1},t_i]^2$ satisfying $s<t$ we have 
    $$(\varphi_i\circ c)(t) - (\varphi_i\circ c)(s) = \int_s^t \eta_i(u)\dif u.$$
 Then, we can define for $\lebesgue$-a.e.~$t\in I$
 $$\dot{c}(t)\coloneqq \sum_{i=1}^n \mathds{1}_{[t_{i-1},t_i]}(t) \dif\varphi_i^{-1}(\varphi_i(c(t)))(\eta_i(t))\in T_{c(t)}X,$$
 which does not depend on the choice of charts. Therefore, $\dot{c}$ is measurable with separable range as the sum of measurable mappings with separable ranges and is a section of $\pi_{c^*TX}$. 
 Thus, up to the identification with its class of equivalence, $\dot{c}$ belongs to $S^p(\pi_{c^*TX})$.
 
 \par\medskip \noindent \emph{Step 2: Proof of \cref{eq:norm_speed_equals_metric_derivative}.} Finally, thanks to \cref{prop:property_finsler}, we get that for $\lebesgue$-a.e.~$t\in I$ it holds for all $\varepsilon>0$ that 
 $$\frac{1}{1+\varepsilon}\lvert c'\rvert_X(t)\leq b_{c(t)}(\dot{c}(t))\leq (1+\varepsilon)\lvert c'\vert_X(t).$$
 Hence, we have 
 \begin{equation*}\lvert c'\rvert_X(t) = b_{c(t)}(\dot{c}(t))\quad\text{for $\lebesgue$-a.e.~$t\in I$}.\qedhere\end{equation*}
\end{proof}

\begin{remark}[Related results in the literature]
    \cref{th:speed_ac_curves} was proved by A.~Burtscher \cite[Theorem 2.2 and Proposition 4.10]{burtscher2012length} in the case of finite-dimensional manifolds equipped with continuous Riemannian metrics. This result is a corollary of \cref{th:speed_ac_curves} since any Hilbert manifold with a continuous Riemannian metric is a Finsler manifold \cite[Theorem 2.12]{palais1966lusternik}. Also, since every Banach space is trivially a Finsler manifold, we get back from \cref{th:speed_ac_curves} the well-known fact that the norm of the speed of an absolutely continuous curve in a Banach space satisfying the Radon--Nikod\'ym property coincides with its metric derivative (see \cite[Theorem 1.16]{lisini2006absolutely} or \cite[Remark 1.1.3]{ambrosio2005gradient}).
\end{remark}

In the next section, we define the speed of absolutely continuous curves in nonlinear Lebesgue spaces in a similar manner as \cref{th:speed_ac_curves}, when the target space is a Finsler manifold and despite the lack of differential structure on nonlinear Lebesgue spaces outside the linear case. 

\subsubsection{Speed of absolutely continuous curves in nonlinear Lebesgue spaces}

Let us first define a notion of tangent bundles over nonlinear Lebesgue spaces.

\begin{proposition}[Tangent bundles on nonlinear Lebesgue spaces]
\label{prop:tangent_bundle_lebesgue}
    Let $h\in L^0(M,N)$ and $p\in [1,\infty]$. When $(N,b_N)$ is a $\mathcal{C}^1$ Finsler manifold, define the tangent bundle of $\Lph(M,N)$ as the disjoint union
    $$T\Lph(M,N)\coloneqq\cup_{f\in \Lph(M,N)} S^p(\pi_{f^*TN}),$$
    where $f^*TN\coloneqq\{(x,v)\in M\times TN: \pi_{TN}(v)=f(x)\}$ is the pullback by $f\in \Lph(M,N)$ of the tangent bundle $TN$ and the projection $\pi_{f^*TN}: f^*TN\to M$ is given by the canonical projection on the first coordinate $\pi_{f^*TN}\coloneqq \operatorname{proj}_1$. In addition, the mapping $$\pi_{T\Lph(M,N)}\coloneqq (\pi_{TN})_*: T\Lph(M,N)\to \Lph(M,N)$$
    forms a Banach bundle over $\Lph(M,N)$ and we can define the mapping $B_p:T\Lph(M,N)\to [0,\infty)$ such that for all $f\in \Lph(M,N)$ it induces a norm on the tangent space $T_f\Lph(M,N)\coloneqq S^p(\pi_{f^*TN})$ defined as   
    $$B_p|_{T_f\Lph(M,N)}(z)\coloneqq \begin{cases}
    \left(\int_M b_{f(x)}(z(x))^p\dif\mu_M(x)\right)^{1/p},&\text{if $p\in [1,\infty)$}\\
    \mu_M\text{-}\esssup_{x\in M} b_{f(x)}(z(x)),&\text{if $p=\infty$}
    \end{cases},$$
    which we also denote by $B_f\coloneqq B_p|_{T_f\Lph(M,N)}$ for short and refer to as \emph{$\mathcal{L}^p$ fibered norm}.
\end{proposition}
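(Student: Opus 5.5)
The proof will follow that of \cref{prop:vector_bundle_ac}, with $I$ replaced by the measure space $(M,\Sigma_M,\mu_M)$ and the curve $c$ replaced by a Lebesgue mapping $f$. First I will check that each fiber $S^p(\pi_{f^*TN})$ is well-defined and independent of the representative of $f$. For a representative $f\in\cLph(M,N)$, the pullback $f^*TN$ is a subset of $M\times TN$. The projection $\operatorname{proj}_1$ onto $M$ is surjective, since every fiber $T_{f(x)}N$ is nonempty. As base mapping I will take the zero section $x\mapsto(x,0_{f(x)})$, which is the composition of $f$ with the zero section of $\pi_{TN}$. That zero section is $\mathcal{C}^1$, hence continuous, so the base mapping is measurable and has separable range because $f$ does. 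The fibered metric is $m_x(v,w)\coloneqq b_{f(x)}(v-w)$. It is Borel measurable on $(f^*TN)^2$ because $b_N$ is continuous (\cite[Theorem 2.7]{palais1966lusternik}) and subtraction is continuous on the fibered product $TN\times_N TN$. By condition (i) of \cref{def:finsler}, it induces the topology of each fiber $T_{f(x)}N$. If $f\sim_M f'$, then sections over $f$ and over $f'$ agree after modification on a $\mu_M$-null set, so I will identify $S^p(\pi_{f^*TN})$ with $S^p(\pi_{f'^*TN})$. The union defining $T\Lph(M,N)$ is therefore indexed by equivalence classes.

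Second, I will show that each fiber is a Banach space under $B_f$. The vector space operations are pointwise. Measurability of a sum $z+z'$ follows because addition $TN\times_N TN\to TN$ is continuous and both sections have separable range. By Minkowski's inequality, $z+z'$ is again at finite $B_f$-distance from zero, and the same holds for scalar multiples. The axioms of a norm for $B_f$ (positivity modulo null sets, homogeneity, triangle inequality) come from the pointwise norms $b_{f(x)}$ combined with the $L^p$ or essential supremum norm, exactly as $D_p$ is shown to be a metric. For completeness I will reproduce the proof of \cref{prop:completeness_lebesgue}. Given a Cauchy sequence, I will extract a subsequence with summable increments, obtain pointwise a.e. convergence in each complete fiber $T_{f(x)}N$, and conclude that the limit is measurable with separable range by \cref{prop:measurability_pointwise_limit}, which applies in the Borel structure of $TN$ restricted to the fibers.

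Finally, the map $\pi_{T\Lph(M,N)}=(\pi_{TN})_*$ sends a section $z\in S^p(\pi_{f^*TN})$ to $[\pi_{TN}\circ z]_M=[f]_M$. It is therefore well-defined, it is surjective because the zero section lies over every $f$, and its fibers are exactly the spaces $S^p(\pi_{f^*TN})$. As in \cref{rem:structure_bundle_ac}, I will state explicitly that "Banach bundle" only means a surjection with Banach fibers.

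The main obstacle is the completeness step, specifically the measurability of the pointwise limit. $TN$ is only a topological space, so \cref{prop:measurability_pointwise_limit} cannot be applied directly. My first attempt will be to metrize locally using charts: the range of the sequence is separable, so countably many charts cover it, and on each chart I can work with the trivialization $U\times\mathbb{B}$ and pass to the limit there. Failing that, I will restrict to the case where $TN$ is metrizable, for example when $N$ is a separable Finsler manifold.
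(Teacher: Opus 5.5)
Your proposal is correct and follows the same route as the paper's proof. Both use the zero section of $\pi_{f^*TN}$ as base mapping (measurable as the zero section of $\pi_{TN}$ composed with $f$), pointwise vector operations, completeness by rerunning the proof of \cref{prop:completeness_lebesgue}, and surjectivity of $(\pi_{TN})_*$ with fibers $S^p(\pi_{f^*TN})$; you are in fact more careful than the paper about representatives and measurability. Your chart-based fix for the pointwise-limit step works: $f(M)$ is separable in the metric space $N$, so countably many chart domains $U_k$ cover it, and on each $f^{-1}(U_k)$ the limit can be taken in $\mathbb{B}$ through the trivialization. The fallback of restricting to metrizable $TN$, which would weaken the statement, is therefore unnecessary.
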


\begin{remark}[On the structure of $TL^p_h(M,N)$]
    Recall that $L^p_h(M,N)$ does not have a differential structure despite $N$ having one. Similarly, $TL^p_h(M,N)$ has a priori no differential structure, so that calling it a \enquote{tangent bundle} is a slight abuse of notation since it is only the disjoint union of Banach spaces and $\pi_{T\Lph(M,N)}$ is just a surjective mapping with Banach fibers.
\end{remark}

\begin{proof}[Proof of \cref{prop:tangent_bundle_lebesgue}]
    Let $h\in L^0(M,N)$ and $p\in [1,\infty]$. For each $f\in \Lph(M,N)$, the zero section of $\pi_{f^*TN}$ is measurable as it is given by the composition of the zero section of $\pi_{TN}$, which is $\mathcal{C}^1$, and of $f$, which is measurable. Hence, $S^p(\pi_{f^*TN})$ is defined without ambiguity using the norm $B_{p,\pi_{f^*TN}}: S^p(\pi_{f^*TN})\to [0,\infty)$ defined as 
    $$B_{p,\pi_{f^*TN}}(z)\coloneqq\begin{cases}
        \left(\int_M b_{f(x)}(z(x))^p \dif \mu_M(x)\right)^{1/p},&\text{if $p\in [1,\infty)$}\\
        \mu_M\text{-}\esssup_{x\in M} b_{f(x)}(z(x)),&\text{if $p=\infty$}
    \end{cases}.$$
    In addition, $S^p(\pi_{f^*TN})$ is a Banach space. Indeed, $\pi_{f^*TN}$ being a Banach bundle, $S^p(\pi_{f^*TN})$ inherits its vector space structure from the pointwise operations of addition and scalar multiplication and is complete by performing the same proof as \cref{prop:completeness_lebesgue}. Then, we can define 
    $$T\Lph(M,N) \coloneqq \cup_{f\in \Lph(M,N)} S^p(\pi_{f^*TN}).$$
    Furthermore, the mapping 
    $$\pi_{T\Lph(M,N) }\coloneqq (\pi_{TN})_*: T\Lph(M,N) \to \Lph(M,N) $$
    is surjective and its fibers are given by $\pi_{T\Lph(M,N) }^{-1}(\{f\}) = S^p(\pi_{f^*TN})$ for all $f\in \Lph(M,N)$. Hence, $\pi_{T\Lph(M,N)}$ is a Banach bundle. Furthermore, the mapping $B_p:T\Lph(M,N)\to [0,\infty)$ is defined as $B_p|_{T_f\Lph(M,N)}\coloneqq B_{p,\pi_{f^*TN}}$ for all $f\in \Lph(M,N)$.
\end{proof}

There is a priori no guarantee that the mapping $B_p$ defined in \cref{prop:tangent_bundle_lebesgue} is measurable in any sense, hence we might not be able to integrate functionals over $T\Lph(M,N)$ involving $B_p$. To define a topology on $T\Lph(M,N)$ and make $B_p$ continuous on it, we will need a metric on $TN$, which will also be required to apply \cref{th:charact_ac_curves} in order to define the speed of absolutely continuous curves in nonlinear Lebesgue spaces. Such a metric can be defined as soon as $N$ is equipped with a spray or, equivalently, with a connection (see \cite[Chapter IV, \S3]{lang2012fundamentals} for the definitions of the notions of sprays and connections). 

\begin{proposition}[Splitting norm on $TN$]
\label{prop:splitting_norm}
    Suppose that $(N,b_N)$ is a $\mathcal{C}^2$ Finsler manifold with a spray $S_N$ (or, equivalently, a connection). Then, if $K_N$ denotes the connector associated to $S_N$ (see \cite[Lemma 4.4]{lang2012fundamentals} for a definition) and $T\pi_{TN}$ the tangent mapping of $\pi_{TN}$ (see \cite[Chapter III,\S2]{lang2012fundamentals} for a definition), the $\mathcal{C}^1$ Banach manifold $TN$ can be equipped with the \emph{splitting norm} $m_{TN}: TTN \to [0,\infty)$ defined for all $v\in TN$ and $\xi\in T_vTN$ as
    \begin{equation}
    \label{eq:splitting_norm}
    m_{TN}|_{T_vTN}(\xi)^2 \coloneqq b_{\pi_{TN}(v)}(T\pi_{TN}(\xi))^2 + b_{\pi_{TN}(v)}(K_N(\xi))^2,\end{equation}
    so that $TN$ becomes a metric space when equipped with the induced metric $d_{TN}: TN^2\to [0,\infty]$ defined as
    $$d_{TN}(v,v')\coloneqq\inf\left\{\int_I m_{TN|T_vTN}(\dot{u})\dif t:u\in \mathcal{C}^1(I,TN),u(a)=v,u(b)=v'\right\}.$$
\end{proposition}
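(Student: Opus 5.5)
The strategy is to check two things in turn: that the formula \cref{eq:splitting_norm} defines a Finsler structure on $TN$ in the sense of \cref{def:finsler}, and that \cref{prop:metric_space_finsler} can then be applied to $(TN,m_{TN})$. Since $N$ is a $\mathcal{C}^2$ Banach manifold modelled on $\mathbb{B}$, the tangent bundle $TN$ is a $\mathcal{C}^1$ Banach manifold modelled on $\mathbb{B}\times\mathbb{B}$. Its charts are $(TU,T\varphi)$ for charts $(U,\varphi)$ of $N$, with $T\varphi(v)=(\varphi(\pi_{TN}(v)),\dif\varphi(\pi_{TN}(v))(v))$.

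\emph{Step 1: pointwise norm.} Fix $v\in TN$ with $x=\pi_{TN}(v)$. I would first recall from \cite[Chapter IV, \S3]{lang2012fundamentals} that the connector $K_N$ restricts to a continuous linear map $T_vTN\to T_xN$. Together with $T\pi_{TN}$, it gives a topological linear isomorphism $T_vTN\cong T_xN\oplus T_xN$, namely the horizontal/vertical splitting. In a chart, $\xi\in T_vTN$ is represented by a pair $(\xi_1,\xi_2)\in\mathbb{B}^2$. In these coordinates $T\pi_{TN}(\xi)$ corresponds to $\xi_1$, and $K_N(\xi)$ corresponds to $\xi_2+\Gamma_{\varphi(x)}(\tilde v,\xi_1)$, where $\Gamma$ is the Christoffel-type bilinear map attached to the spray and $\tilde v$ is the local representative of $v$. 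The map $(\xi_1,\xi_2)\mapsto(\xi_1,\xi_2+\Gamma(\tilde v,\xi_1))$ is a bounded linear bijection with bounded inverse. Since $b_x$ is a norm equivalent to $\lVert\cdot\rVert_{\mathbb{B}}$, the square root of the right-hand side of \cref{eq:splitting_norm} is a norm on $T_vTN$ equivalent to the product norm on $\mathbb{B}^2$. This gives condition (i) of \cref{def:finsler}.

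\emph{Step 2: local uniform equivalence.} Fix $v_0\in TN$, $\varepsilon>0$ and a chart. By the continuity properties of $b_N$ (condition (ii) of \cref{def:finsler} for $N$), the term $b_x(\cdot)$ compares to $b_{x_0}(\cdot)$ within factor $1+\varepsilon'$ for $x$ near $x_0$. The spray is $\mathcal{C}^1$ because $N$ is $\mathcal{C}^2$, so $\Gamma$ is continuous. Hence for $v$ near $v_0$ the operator $(\xi_1,\xi_2)\mapsto \xi_2+\Gamma_{\varphi(x)}(\tilde v,\xi_1)$ is close in operator norm to its value at $v_0$. I would combine these two facts with an elementary perturbation inequality $\lVert A\xi\rVert\le\lVert A_0\xi\rVert+\lVert A-A_0\rVert\lVert\xi\rVert$, together with the equivalence from Step 1, to obtain the two-sided $(1+\varepsilon)$ bound. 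I expect this uniformity estimate to be the main obstacle, because it needs the operator-norm continuity of $v\mapsto\Gamma(\tilde v,\cdot)$ and a careful choice of $\varepsilon'$ in terms of $\varepsilon$ and the equivalence constants at $v_0$.

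\emph{Step 3: metric.} Once $(TN,m_{TN})$ is known to be a $\mathcal{C}^1$ Finsler manifold, \cref{prop:metric_space_finsler} applies directly. It shows that $d_{TN}$, defined as an infimum of lengths of $\mathcal{C}^1$ curves, is a metric with possibly infinite values. This is the statement of \cref{prop:splitting_norm}. Finally, I would note that the choice of connection affects only which metric is obtained, not whether the construction is well defined: chart independence of $K_N$ and $T\pi_{TN}$ makes $m_{TN}$ intrinsic.
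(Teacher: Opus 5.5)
Your proposal is correct in substance and follows the paper's route. The paper likewise obtains the fibrewise isomorphism $T_vTN\cong T_xN\oplus T_xN$ from Dombrowski's splitting $(\pi_{TN},T\pi_{TN},K_N)$, builds the norm from $b_N$, and gets the metric by transposing the Sasaki-type construction of \cite[Chapter V, \S4]{lang2012fundamentals}; your Steps 1--3 make that sketch explicit by checking conditions (i)--(ii) of \cref{def:finsler} and invoking \cref{prop:metric_space_finsler}.

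One justification in Step 2 is wrong. On a $\mathcal{C}^2$ manifold $TTN$ is only a $\mathcal{C}^0$ bundle, so the spray is not $\mathcal{C}^1$, and its regularity is in any case a hypothesis rather than a consequence of the regularity of $N$. The operator-norm continuity of $(x,v)\mapsto\Gamma_{\varphi(x)}(\tilde v,\cdot)$ that you need must therefore come from the connection datum itself, namely continuity of the local Christoffel maps into the Banach space of bounded bilinear maps. Mere joint continuity of the spray's local representative does not give it in infinite dimensions.
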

\begin{proof}
    The result is a consequence of Dombrowski's splitting theorem \cite[Theorem 4.5]{lang2012fundamentals}, which states that the mapping
    $$(\pi_{TN},T\pi_{TN},K_N): TTN\to TN\oplus TN\oplus TN$$
    is an isomorphism of fiber bundles over $N$. Indeed, the conclusion of \cref{prop:splitting_norm} follows from the same arguments exposed at the end of \cite[Chapter V, \S4]{lang2012fundamentals} or in \cite[Section 2.1]{schmeding2016manifolds}, but defining the splitting norm as in \cref{eq:splitting_norm} using $b_N$ instead of a Riemannian metric. In addition, the induced metric is compatible with the topology of $TN$. 
\end{proof}

\begin{remark}[Borel measurability of the zero section of $\pi_{T\Lph(M,N)}$]
\label{rem:measurability_zero_section}
    When $TN$ is equipped with the splitting norm, its zero section $s_{0,N}$ is isometric from $N$ to $TN$, so that the zero section of $\pi_{T\Lph(M,N)}$ defined as $s_{0,p} \coloneqq (s_{0,N})_*$ is isometric from $\Lph(M,N)$ to $T\Lph(M,N)$.
\end{remark}

As a direct consequence of \cref{prop:splitting_norm}, the tangent bundles over nonlinear Lebesgue spaces defined in \cref{prop:tangent_bundle_lebesgue} become metric spaces as soon as $N$ is equipped with a spray or, equivalently, a connection.

\begin{corollary}[Metric on $TL^p(M,N)$]
\label{cor:metric_tangent_lebesgue}
    Let $h\in L^0(M,N)$ and $p\in [1,\infty]$. Suppose that $(N,b_N)$ is a $\mathcal{C}^2$ Finsler manifold with a spray (or, equivalently, a connection). Then, $T\Lph(M,N)$ is a metric space when equipped with the metric $D_{p,TN}: T\Lph(M,N)^2 \to [0,\infty]$ defined as 
    $$D_{p,TN}(z,z')\coloneqq\begin{cases}
    \left(\int_M d_{TN}(z(x),z'(x))^p\dif\mu_M(x)\right)^{1/p},&\text{if $p\in [1,\infty)$}\\
    \mu_M\text{-}\esssup_{x\in M} d_{TN}(z(x),z'(x)),&\text{if $p=\infty$}
    \end{cases}.$$
\end{corollary}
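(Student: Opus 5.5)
The plan is to check, in turn, that $D_{p,TN}$ is well defined on equivalence classes and that it satisfies the metric axioms, allowing the value $\infty$ as in \cref{prop:metric_space_finsler}. The fact used throughout is that, by \cref{prop:splitting_norm}, $d_{TN}$ is a (possibly infinite) metric on $TN$ compatible with its manifold topology. Consequently $d_{TN}:TN^2\to[0,\infty]$ is continuous for the product topology. Note that $z$ and $z'$ may lie in different fibres $T_f\Lph(M,N)$ and $T_{f'}\Lph(M,N)$; the definition of $D_{p,TN}$ does not care about this, since it only uses $d_{TN}$ pointwise.

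\emph{Step 1: measurability.} Let $z\in S^p(\pi_{f^*TN})$ and $z'\in S^p(\pi_{f'^*TN})$, and fix separably valued measurable representatives. These are mappings $M\to TN$, obtained by composing a section of $\pi_{f^*TN}$ with the projection onto $TN$. I would check that $x\mapsto d_{TN}(z(x),z'(x))$ is measurable. This is the only delicate point, because the Borel $\sigma$-algebra of $TN\times TN$ need not coincide with $\mathcal{B}(TN)\otimes\mathcal{B}(TN)$. To handle it, I restrict to the separable closed set $S\coloneqq\overline{z(M)\cup z'(M)}\subset TN$. For separable metric spaces one has $\mathcal{B}(S\times S)=\mathcal{B}(S)\otimes\mathcal{B}(S)$. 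Hence $x\mapsto(z(x),z'(x))$ is measurable into $S\times S$, and composing with the continuous map $d_{TN}|_{S\times S}$ gives a measurable function with values in $[0,\infty]$. The integral (for $p<\infty$) and the $\mu_M$-essential supremum (for $p=\infty$) are then defined, possibly equal to $\infty$.

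\emph{Step 2: independence of representatives.} If $z_1\sim_M z_2$, then $d_{TN}(z_1(x),z'(x))=d_{TN}(z_2(x),z'(x))$ for $\mu_M$-a.e.\ $x$. Neither the integral nor the essential supremum changes, so $D_{p,TN}$ is well defined on $T\Lph(M,N)$.

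\emph{Step 3: metric axioms.} Symmetry is inherited pointwise from $d_{TN}$. For the triangle inequality, the pointwise bound $d_{TN}(z,z'')\le d_{TN}(z,z')+d_{TN}(z',z'')$ is combined with Minkowski's inequality in $L^p(M)$ for $[0,\infty]$-valued functions, or with subadditivity of the essential supremum when $p=\infty$; both remain valid when infinite values occur. For separation, $D_{p,TN}(z,z')=0$ forces $d_{TN}(z(x),z'(x))=0$ for $\mu_M$-a.e.\ $x$. Since $d_{TN}$ separates points, $z(x)=z'(x)$ a.e. In particular $f=\pi_{TN}\circ z=\pi_{TN}\circ z'=f'$ a.e., so $z$ and $z'$ lie in the same fibre and define the same class, exactly as in \cref{prop:lp_separates_equiv_class}.

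The main obstacle is Step 1, namely the measurability of $x\mapsto d_{TN}(z(x),z'(x))$ without any separability assumption on $TN$; the reduction to the separable closure of the ranges is what I expect to resolve it. As a consistency check, \cref{rem:measurability_zero_section} shows that the zero section $s_{0,p}$ is isometric. Hence on zero sections $D_{p,TN}$ restricts to $D_p$, which is consistent with the construction.
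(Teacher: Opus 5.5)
Your proposal is correct and takes the same route as the paper. The paper gives no separate argument: it treats the corollary as an immediate consequence of $d_{TN}$ being a metric on $TN$ (\cref{prop:splitting_norm}), combined with the same pointwise $\mathcal{L}^p$ construction used for $D_p$ in \cref{sec:lebesgue_mappings}. Your measurability step (via the separable closure of the ranges), the representative-independence check and the separation argument simply make explicit the details the paper leaves implicit.
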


Similarly to \cref{prop:vector_bundle_ac} and using both \cref{prop:tangent_bundle_lebesgue,cor:metric_tangent_lebesgue}, one can define a Banach bundle of $L^p$ speeds over the set of absolutely continuous curves in nonlinear Lebesgue spaces.

\begin{proposition}[Vector bundle over $\mathcal{AC}^p(I,\Lph(M,N))$]
\label{prop:vector_bundle_ac_lebesgue}
    Let $h\in L^0(M,N)$ and $p\in [1,\infty]$. Suppose that $(N,b_N)$ is a $\mathcal{C}^2$ Finsler manifold with a spray (or, equivalently, a connection). Then, the set 
    $$L^p(I,T\Lph(M,N)) \coloneqq \cup_{c\in \mathcal{AC}^p(I,\Lph(M,N))} S^p(\pi_{c^*T\Lph(M,N)})$$ forms a Banach bundle 
    $$\pi_{L^p(I,T\Lph(M,N))}\coloneqq (\pi_{T\Lph(M,N)})_*: L^p(I,T\Lph(M,N))\to \mathcal{AC}^p(I,\Lph(M,N)),$$
     where $c^*T\Lph(M,N)\coloneqq \{(t,z)\in I\times T\Lph(M,N): \pi_{T\Lph(M,N)}(z) =c(t)\} $ is the pullback by $c$ of the tangent bundle $T\Lph(M,N)$ and the projection $\pi_{c^*T\Lph(M,N)}: c^*T\Lph(M,N)\to I$ is given by the canonical projection on the first coordinate $\pi_{c^*T\Lph(M,N)}\coloneqq \operatorname{proj}_1$. 
\end{proposition}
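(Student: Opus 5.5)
The plan mirrors the proof of \cref{prop:vector_bundle_ac}, with $I$ as base space and $T\Lph(M,N)$ as total space. Fix $c\in \mathcal{AC}^p(I,\Lph(M,N))$ and first make sense of $S^p(\pi_{c^*T\Lph(M,N)})$. This requires three ingredients.
\begin{enumerate}[label=(\roman*)]
\item A topology on $c^*T\Lph(M,N)\subset I\times T\Lph(M,N)$. I take the product of the usual topology on $I$ with the one induced by the metric $D_{p,TN}$ from \cref{cor:metric_tangent_lebesgue}.
\item A fibered metric. On the fiber over $t$, which is $\{t\}\times T_{c(t)}\Lph(M,N)$, I use $(z,z')\mapsto B_{c(t)}(z-z')$, the $\mathcal{L}^p$ fibered norm of \cref{prop:tangent_bundle_lebesgue}.
\item A measurable base section. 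I take the zero section $t\mapsto (t,s_{0,p}(c(t)))$.
\end{enumerate}

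For (iii), \cref{rem:measurability_zero_section} says that $s_{0,p}$ is isometric from $\Lph(M,N)$ to $(T\Lph(M,N),D_{p,TN})$. Composing with the continuous curve $c$ therefore gives a continuous map $I\to c^*T\Lph(M,N)$, hence a Borel measurable one with separable range. So the zero section belongs to $\mathcal{S}^0_{\operatorname{s}}(\pi_{c^*T\Lph(M,N)})$ and can serve as the base mapping, exactly as in the proof of \cref{prop:vector_bundle_ac}.

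Next I check that the fibered norm $(t,z)\mapsto B_{c(t)}(z)$ is Borel on $c^*T\Lph(M,N)$ and induces the fiber topology. On each fiber, $d_{TN}$ restricted to $T_yN$ is comparable to $b_y$ locally, by Dombrowski's splitting in \cref{prop:splitting_norm} together with \cref{prop:property_finsler}. I would try to use this to compare $D_{p,TN}$ with $B_{c(t)}$ on $T_{c(t)}\Lph(M,N)$. This comparison, together with the measurability of $B_p$, is where I expect the main obstacle, since the paper itself warns after \cref{prop:tangent_bundle_lebesgue} that $B_p$ has no a priori measurability.

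If the global comparison fails, I fall back on writing $B_p$ as a supremum of continuous functions. Concretely, I would express $b_N\circ$(vertical projection) as a pointwise limit of $d_{TN}$-continuous quantities, using the continuity of $b_N$ on $TN$. Monotone convergence in $x$ would then make $z\mapsto B_p(z)$ lower semicontinuous for $D_{p,TN}$, hence Borel. That is enough for the definition of $p$-Lebesgue sections, since the definition only needs the fibered metric to be Borel.

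Granting these, $S^p(\pi_{c^*T\Lph(M,N)})$ is well defined with the norm
$$v\mapsto \Big(\int_I B_{c(t)}(v(t))^p\dif t\Big)^{1/p}$$
for $p<\infty$, and the essential supremum for $p=\infty$. It is a vector space under pointwise operations, since each fiber $T_{c(t)}\Lph(M,N)=S^p(\pi_{c(t)^*TN})$ is a Banach space by \cref{prop:tangent_bundle_lebesgue}. Completeness follows by repeating the proof of \cref{prop:completeness_lebesgue} with the fibers in place of $N$: extract a fast Cauchy subsequence, take pointwise limits in the complete fibers, and apply Fatou's lemma.

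Finally, I set $L^p(I,T\Lph(M,N))$ to be the disjoint union of these spaces over $c\in\mathcal{AC}^p(I,\Lph(M,N))$. The pushforward $(\pi_{T\Lph(M,N)})_*$ maps each $S^p(\pi_{c^*T\Lph(M,N)})$ to $c$, since $\pi_{T\Lph(M,N)}(v(t))=c(t)$ for every $t$. Its fiber over $c$ is exactly $S^p(\pi_{c^*T\Lph(M,N)})$, and each fiber is nonempty because it contains the zero section. The map is therefore surjective with Banach fibers, which is the sense of ``Banach bundle'' in \cref{rem:structure_bundle_ac}.
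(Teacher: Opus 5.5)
Your proposal follows the paper's proof step for step: the zero section $t\mapsto s_{0,p}(c(t))$ is continuous by \cref{rem:measurability_zero_section} and the continuity of $c$, which makes $S^p(\pi_{c^*T\Lph(M,N)})$ well defined with the norm $v\mapsto(\int_I B_{c(t)}(v(t))^p\dif t)^{1/p}$; the vector space structure is pointwise; completeness comes from rerunning the proof of \cref{prop:completeness_lebesgue}; and $(\pi_{T\Lph(M,N)})_*$ is surjective with fibers $S^p(\pi_{c^*T\Lph(M,N)})$. The measurability of the fibered norm, which you single out as the main obstacle, is not checked in the paper at all: it declares $S^p(\pi_{c^*T\Lph(M,N)})$ \enquote{defined without ambiguity} once the zero section is measurable, so your lower-semicontinuity fallback (which follows directly from the continuity of $b_N$ on $TN$ and Fatou's lemma) is extra care rather than a missing step, but note that the paper's definition of a fibered metric also requires it to induce the topology of each fiber, so your claim that Borel measurability alone suffices is not accurate.
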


\begin{remark}[On the structure of $L^p(I,T\Lph(M,N))$] 
As in \cref{rem:structure_bundle_ac}, we emphasize that this result says nothing about a potential topological manifold structure of $L^p(I,T\Lph(M,N))$. This question is left open for now and will be further investigated in future work. Calling $\pi_{L^p(I,T\Lph(M,N))}$ a \enquote{Banach bundle} is thus a slight abuse of notation since it is only a surjective mapping having Banach fibers.
\end{remark}
\begin{proof}[Proof of \cref{prop:vector_bundle_ac_lebesgue}]
    Let $h\in L^0(M,N)$ and $p\in [1,\infty]$. For each $c\in \mathcal{AC}^p(I,L^p_h(M,N))$, the zero section of $\pi_{c^*T\Lph(M,N)}$ is continuous, hence Borel measurable, as the composition of the zero section of $\pi_{T\Lph(M,N)}$, which is continuous  (\cref{rem:measurability_zero_section}), and of $c$, which is also continuous. Hence, $S^p(\pi_{c^*T\Lph(M,N)})$ is defined without ambiguity using the norm $B_{p,\pi_{c^*T\Lph(M,N)}}: S^p(\pi_{c^*T\Lph(M,N)})\to [0,\infty)$ defined as 
    $$B_{p,\pi_{c^*T\Lph(M,N)}}(z)\coloneqq\begin{cases}
        \left(\int_I B_{c(t)}(z(t))^p \dif t\right)^{1/p},&\text{if $p\in [1,\infty)$}\\
        \lebesgue\text{-}\esssup_{t\in I} B_{c(t)}(z(t)),&\text{if $p=\infty$}
    \end{cases}.$$
    In addition, $S^p(\pi_{c^*T\Lph(M,N)})$ is a Banach space. Indeed, $\pi_{c^*T\Lph(M,N)}$ being a Banach bundle, $S^p(\pi_{c^*T\Lph(M,N)})$ inherits its vector space structure from the pointwise operations of addition and scalar multiplication and is complete by performing the same proof as \cref{prop:completeness_lebesgue}. Then, we can define 
    $$L^p(I,T\Lph(M,N)) \coloneqq \cup_{c\in \mathcal{AC}^p(I,\Lph(M,N))} S^p(\pi_{c^*T\Lph(M,N)}).$$
    Furthermore, the mapping 
    $$\pi_{L^p(I,T\Lph(M,N))}\coloneqq (\pi_{T\Lph(M,N)})_*: L^p(I,T\Lph(M,N))\to \mathcal{AC}^p(I,\Lph(M,N))$$
    is surjective and its fibers are given by $\pi_{L^p(I,T\Lph(M,N))}^{-1}(\{c\}) = S^p(\pi_{c^*T\Lph(M,N)})$ for all $c\in \mathcal{AC}^p(I,\Lph(M,N))$. Hence, $\pi_{L^p(I,T\Lph(M,N))}$ is a Banach bundle.
\end{proof}

Then, similarly to \cref{th:speed_ac_curves}, the mapping that associates its speed to each absolutely continuous curve in a nonlinear Lebesgue space is defined as a section of the Banach bundle given by \cref{prop:vector_bundle_ac_lebesgue} and the norm of the speed coincides with the metric derivative.

\begin{theorem}[Speed of absolutely continuous curves in nonlinear Lebesgue spaces for $p> 1$]
\label{th:speed_ac_lebesgue}
    Let $h\in L^0(M,N)$ and $p\in (1,\infty)$. Suppose that $(N,b_N)$ is a connected and complete $\mathcal{C}^2$ Finsler manifold with a spray (or, equivalently, a connection) and that $N$ is modelled on a Banach space satisfying the Radon--Nikod\'ym property. Then, the mapping
    $$\partial_p:\left\{\begin{array}{ccl}
        \mathcal{AC}^p(I,\Lph(M,N))&\longrightarrow& L^p(I,T\Lph(M,N))\\
        c&\longmapsto& \dot{c}\coloneqq\partial_p c
    \end{array}\right.$$
    is a section of $\pi_{L^p(I,T\Lph(M,N))}$ and satisfies for all $c\in \mathcal{AC}^p(I,\Lph(M,N))$ that
    \begin{align}
    \label{eq:norm_speed_equals_metric_derivative_lebesgue}
    \lvert c'\rvert_p(t) = B_{c(t)}(\dot{c}(t))\quad\text{for $\lebesgue$-a.e.~$t\in I$},
    \end{align}
\end{theorem}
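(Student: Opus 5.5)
The plan is to transport the problem to the target space through the isometry $\tilde{i}_p$ of \cref{th:charact_ac_curves}, apply \cref{th:speed_ac_curves} pointwise in $x$, and reassemble. First, note that a connected complete $\mathcal{C}^2$ Finsler manifold carries the finite metric $d_N$ of \cref{prop:metric_space_finsler}, so $N$ is a complete metric space and \cref{th:charact_ac_curves} applies.

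Fix $c\in \mathcal{AC}^p(I,\Lph(M,N))$ and choose a representative $f\in \mathcal{L}^p_{\underline{h}}(M,\mathcal{AC}^p(I,N))$ of $\tilde{i}_p(c)$. For $\mu_M$-a.e.~$x$, \cref{th:speed_ac_curves} gives a speed $\dot{f}(x)\in S^p(\pi_{f(x)^*TN})$ with $b_{f(x)(t)}(\dot{f}(x)(t))=\lvert f(x)'\rvert_N(t)$ for $\lebesgue$-a.e.~$t$. Define the candidate speed by
\begin{equation*}
\dot{c}(t)(x)\coloneqq \dot{f}(x)(t).
\end{equation*}
By \cref{eq:inverse_map_evaluation_map}, $c(t)=(e_{t,p})_*(f)$ for every $t$. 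Hence $\dot{c}(t)$ is, for each $t$, a candidate section of $\pi_{c(t)^*TN}$, so $\dot{c}$ lands in the correct fibres.

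The norm identity is then formal. Using \cref{eq:derivative_equals_pw_derivative}, for $\lebesgue$-a.e.~$t$ we get
\begin{equation*}
\lvert c'\rvert_p^p(t)=\int_M\lvert f(x)'\rvert_N^p(t)\dif\mu_M(x)=\int_M b_{c(t)(x)}(\dot{c}(t)(x))^p\dif\mu_M(x)=B_{c(t)}(\dot{c}(t))^p.
\end{equation*}
This exchange of "a.e.~$t$ for a.e.~$x$" and "a.e.~$x$ for a.e.~$t$" requires joint measurability on $I\times M$. The same identity yields $\int_I B_{c(t)}(\dot{c}(t))^p\dif t=\int_I\lvert c'\rvert_p^p<\infty$, so $\dot{c}$ has finite $L^p$ norm in the fibre over $c$. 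Once well-definedness is established, the section property $\pi_{L^p(I,T\Lph(M,N))}(\dot{c})=c$ is immediate from the construction.

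The main obstacle is measurability. We need to show that $(t,x)\mapsto \dot{f}(x)(t)\in TN$ is jointly measurable with separable range for the metric $d_{TN}$ of \cref{prop:splitting_norm}. From this, two consequences follow:
\begin{enumerate}[label=(\roman*)]
\item for a.e.~$t$, $\dot{c}(t)$ is a measurable separably valued section, hence lies in $T_{c(t)}\Lph(M,N)$ with finite $B_{c(t)}$-norm;
\item $t\mapsto\dot{c}(t)$ is measurable with separable range into $(T\Lph(M,N),D_{p,TN})$.
\end{enumerate}

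To prove joint measurability, I would first reduce to separable $N$ as in Step~1 of the proof of \cref{th:charact_ac_curves}; the range of $f$ is separable in $\mathcal{AC}^p(I,N)$. Next, I would express $\dot{f}(x)(t)$ as a pointwise limit of difference quotients read in charts:
\begin{equation*}
\dot{f}(x)(t)=\lim_{\tau\to0}\dif\varphi^{-1}(\varphi(f(x)(t)))\Big(\tfrac{\varphi(f(x)(t+\tau))-\varphi(f(x)(t))}{\tau}\Big).
\end{equation*}
This limit exists for a.e.~$(t,x)$ by the RNP representation together with the Lebesgue differentiation theorem for Bochner integrals. A countable cover by charts handles the choice of $\varphi$ locally, and the points where the limit fails to exist form a measurable null set, in the same way as the set $A$ in Step~3 of the proof of \cref{th:charact_ac_curves}.

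Each approximant is jointly measurable, because $(t,x)\mapsto f(x)(t)$ is measurable (again by Step~3 of \cref{th:charact_ac_curves}, using \cref{prop:measurability_evaluation_ac}) and $\varphi$ and $\dif\varphi^{-1}$ are continuous. \cref{prop:measurability_pointwise_limit} then gives joint measurability with separable range. Finally, Fubini--Tonelli, applied on $I\times M_c$ with $M_c$ $\sigma$-finite as before, yields (i) and (ii).
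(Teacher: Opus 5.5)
Your proposal follows the paper's proof essentially step for step. You transport through $\tilde{i}_p$, define the pointwise speed as a limit of chart difference quotients of $(t,x)\mapsto f(x)(t)$ (measurable and separably valued by \cref{prop:measurability_pointwise_limit}), show the exceptional set is measurable and null via Fubini--Tonelli on $I\times M_c$, and integrate the identity of \cref{th:speed_ac_curves} against \cref{eq:derivative_equals_pw_derivative}.

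There are two differences from the paper, and the second needs a fix. First, you make the difference quotients globally Borel by a Borel chart-index selection among countably many charts of $N$ covering the separable closed set of values; this works by Lindel\"of, but the manifold $N$ itself cannot be replaced by that set. The paper instead uses a locally finite atlas with a partition of unity on a neighbourhood of the diagonal of $N\times N$, which needs no separability. Second, in your item (ii), Fubini--Tonelli alone does not give Borel measurability with separable range of $t\mapsto\dot{c}(t)$ into $(T\Lph(M,N),D_{p,TN})$. You should add the approximation by rectangular almost simple mappings from \cref{prop:density_iterated_almost_simple}, as the paper does.
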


\begin{remark}[On relaxing the connectedness assumption on $N$]
    The assumption that $N$ is connected in \cref{th:speed_ac_lebesgue} is solely made to ensure that both $d_N$ and $d_{TN}$ are finite in order to apply \cref{th:charact_ac_curves}. However, as remarked in \cref{rem:relax_metric_charact_ac}, the conclusion of \cref{th:charact_ac_curves} holds when $d_N$ is not assumed finite as long as $h$ is assumed constant. Hence, \cref{th:speed_ac_lebesgue} holds without assuming that $N$ is connected, but requires $h$ to be constant in that case.
\end{remark}

\begin{proof}[Proof of \cref{th:speed_ac_lebesgue}]
    Let $h\in \cLs(M,N)$ and $p\in (1,\infty)$. 
    
    \par\medskip \noindent \emph{Step 1: Construction of the speed.} Let $c\in\mathcal{AC}^p(I,\Lph(M,N))$. Then, by \cref{th:charact_ac_curves}, $\tilde{i}_p(c)$ belongs to $ L^p_{\underline{h}}(M,\mathcal{AC}^p(I,N))$ and is such that 
    $$\int_M\int_I \lvert \tilde{i}_p(c)(x)'\rvert_N^p(t)\dif t\dif\mu_M(x) < \infty.$$
    Also, the measurable set $M_c\coloneqq \{x\in M: d_\infty(\tilde{i}_p(c)(x),\underline{h}(x)) > 0\}$ has $\sigma$-finite $\mu_M$-measure and there is $\hat{c}\in\cLs(I\times M,N)$ such that $\hat{c}(t,\cdot) = c(t)$ in $\Lph(M,N)$ for all $t\in I$ and $\hat{c}(\cdot,x) = \tilde{i}_p(c)(x)$ in $\mathcal{C}(I,N)$ for $\mu_M$-a.e.~$x\in M$ (\cref{th:extension_embed_iterated_simple,th:charact_ac_curves}). Then, since $N$ is a metric space, it is paracompact \cite[Theorem 5.1.3]{engelking1977general}. Thus, we can choose a locally finite atlas $\{(U_\alpha,\varphi_\alpha)\}_{\alpha\in A}$ of $N$ indexed by some set $A$ and define the sets $W_\alpha \coloneqq U_\alpha\times U_\alpha\subset N\times N$ for all $\alpha\in A$ as well as the neighborhood of the diagonal $\mathcal{O}\coloneqq \cup_{\alpha\in A} W_\alpha$. Since $\mathcal{O}$, equipped with the metric induced from $N\times N$, is a metrizable space, it is paracompact. Hence, there exists a locally finite continuous partition of unity $\{\rho_\alpha\}_{\alpha\in A}$ (\cref{def:partition_unity}) subordinated to the cover $\{W_\alpha\}_{\alpha\in A}$ \cite[Theorem 5.1.9]{engelking1977general}. Since $\{\rho_\alpha\}_{\alpha\in A}$ is locally finite, it can be assumed, up to replacing it with an appropriate partition of unity, that $$\operatorname{supp}_{\mathcal{O}}(\rho_\alpha)\coloneqq \overline{\{(y,y')\in \mathcal{O}:\rho_\alpha(y,y') \neq 0\}}^{\,\mathcal{O}}\subset W_\alpha$$ for every $\alpha\in A$ \cite[Section 16, p.~ 165]{kriegl1997convenient}. Define for every $\tau \neq 0$ the mapping $G_\tau: N\times N\to TN$ as 
    $$G_\tau(y,y')\coloneqq \begin{cases}
        \sum_{\alpha\in A} \rho_\alpha(y,y') \mathrm{d}\varphi_\alpha^{-1}(\varphi_\alpha(y))(\varphi_\alpha(y') - \varphi_\alpha(y))/\tau,&\text{if $(y,y')\in \mathcal{O}$}\\
        s_{0,N}(y), &\text{if $(y,y')\notin \mathcal{O}$}
    \end{cases}$$
    with $s_{0,N}$ the zero section of $\pi_{TN}$, which is continuous since $TN$ is equipped with the splitting metric $d_{TN}$ (\cref{rem:measurability_zero_section}). Hence, $G_\tau|_{\mathcal{O}}$ and $G_\tau|_{(N\times N)\setminus \mathcal{O}}$ are continuous, so that $G_\tau$ is Borel measurable. Then, for every $\tau \neq 0$ define the mapping 
    $$g_\tau:\left\{\begin{array}{ccl}
    I\times \mathcal{AC}^p(I,N)&\longrightarrow& TN\\
    (t,u)&\longmapsto& G_\tau(u(t),\bar{u}(t+\tau))
    \end{array}\right.,$$
    where $\bar{u}$ is the extension of $u$ such that $\bar{u}(t)= u(a)$ if $t< a$ and $\bar{u}(t)= u(b)$ if $t > b$. The mapping $g_\tau$ is Borel measurable from $I\times (\mathcal{AC}^p(I,N),d_\infty)$ to $TN$ for all $\tau\neq 0$ as the composition of such mappings. Hence, the set 
    \begin{align*}
    A&\coloneqq \{(t,x)\in I\times M: (\partial_N \tilde{i}_p(c)(x))(t)\text{ does not exist}\}\\
    &= \{(t,x)\in I\times M: \underset{\tau,\tau'\to 0}{\lim\,\sup}\, d_{TN}(g_\tau(t,\tilde{i}_p(c)(x)),g_{\tau'}(t,\tilde{i}_p(c)(x))) > 0\}
    \end{align*}
    is measurable as, $g_\tau$ being Borel measurable and $\tilde{i}_p(c)$ being measurable from $M$ to $(\mathcal{AC}^p(I,N),d_\infty)$ (\cref{prop:comparison_dp_dinf}), the mapping $(t,x)\mapsto g_\tau(t,\tilde{i}_p(c)(x))$ is measurable from $I\times M$ to $TN$ for every $\tau \neq 0$. In addition, $\lebesgue(A^x) =0$ for $\mu_M$-a.e.~$x\in M$ since $\tilde{i}_p(c)(x)\in \mathcal{AC}^p(I,N)$. Now, since $\tilde{i}_p(c)|_{M\setminus M_c} = \underline{h}|_{M\setminus M_c}$, $A$ is a subset of the measurable rectangle $I\times M_c$ and, $M_c$ having $\sigma$-finite $\mu_M$-measure and $I$ having finite $\lebesgue$-measure, we get, by the Fubini--Tonelli theorem \cite[Proposition 5.2.1]{cohn2013measure}, that $\lebesgue\otimes \mu_M(A)=0$ and $\mu_M(A_t)=0$ for $\lebesgue$-a.e.~$t\in I$. 
    Thus, we can define $\eta: I\times M\to TN$ as 
    $$\eta(t,x)\coloneqq \begin{cases}(\partial_N \tilde{i}_p(c)(x))(t), &\text{if $(t,x)\in (I\times M)\setminus A$}\\
    s_{0,N}(\tilde{i}_p(c)(x)(t)),& \text{if $(t,x)\in A$}
    \end{cases},$$
    which is measurable and separably valued as the pointwise limit of such mappings (\cref{prop:measurability_pointwise_limit}). Indeed, $\eta(t,x) = \lim_{\tau \to 0} g_\tau(t,\tilde{i}_p(c)(x))$ for every $(t,x)\in (I\times M)\setminus A$. In addition, by \cref{prop:comparison_dp_dinf}, $\mathcal{S}\coloneqq\tilde{i}_p(c)(M)$ is separable in $(\mathcal{AC}^p(I,N),d_\infty)$. For every $\tau\neq 0$, the mapping $(t,u)\mapsto (u(t),\bar{u}(t+\tau))$ is continuous from $I\times (\mathcal{AC}^p(I,N),d_\infty)$ to $N\times N$, hence the image $K_\tau$ of $I\times \mathcal{S}$ is separable. Since $G_\tau$ is continuous on $\mathcal{O}$ and agrees with $s_{0,N}\circ\operatorname{proj}_1$ on $(N\times N)\setminus \mathcal{O}$, both $G_\tau(K_\tau\cap \mathcal{O})$ and $G_\tau(K_\tau\setminus \mathcal{O})$ are separable in $(TN,d_{TN})$. Thus, $(t,x)\mapsto g_\tau(t,\tilde{i}_p(c)(x))$ is separably valued. Then,  $\eta(t,x)= (\partial_N \hat{c}(\cdot,x))(t)\in T_{\hat{c}(t,x)}$ for $\lebesgue\otimes \mu_M$-a.e.~$(t,x)\in I\times M$ and, by \cref{th:speed_ac_curves}, we get that for $\lebesgue\otimes\mu_M$-a.e.~$(t,x)\in I\times M$ 
    $$\lvert \tilde{i}_p(c)(x)'\rvert_N(t)= b_{\hat{c}(t,x)}(\eta(t,x)).$$
    Finally, integrating over $I$ and $M$ and using the Fubini--Tonelli theorem \cite[Proposition 5.2.1]{cohn2013measure} yields that 
    \begin{align*}\int_{I\times M} b_{\hat{c}(t,x)}(\eta(t,x))^p\dif\lebesgue\otimes \mu_M(t,x) &= \int_{I\times M} \lvert \tilde{i}_p(c)(x)'\rvert_N^p(t)\dif\lebesgue\otimes \mu_M(t,x)\\
    &= \int_{I\times M_c} \lvert \tilde{i}_p(c)(x)'\rvert_N^p(t)\dif\lebesgue\otimes \mu_M(t,x)\\
    &= \int_{M_c}\int_I \lvert \tilde{i}_p(c)(x)'\rvert_N^p(t)\dif t\dif\mu_M(x)\\
    &= \int_M\int_I \lvert \tilde{i}_p(c)(x)'\rvert_N^p(t)\dif t\dif\mu_M(x) < \infty.
    \end{align*}
    Since $N$ is equipped with a spray (or a connection), this even yields that 
    $$\int_{I\times M} d_{TN}(\eta(t,x),(s_{0,N}\circ \hat{c})(t,x))^p\dif\lebesgue\otimes \mu_M(t,x) <\infty.$$
    Thus, defining $\hat{c}^*s_{0,N}\coloneqq s_{0,N}\circ \hat{c}$, we have that $\eta\in L^p_{\hat{c}^*s_{0,N}}(I\times M, TN)$ and we can define $\dot{c} : t\mapsto \eta(t,\cdot)$. Then, $\dot{c}(t)$ belongs to $L^0(M,TN)$ for a $\lebesgue$-a.e.~$t\in I$. To prove the measurability of $\dot{c}$ from $I$ to $(L^0(M,TN),D_{p,TN})$, note that, by density of $E_{\hat{c}^*s_{0,N},\sigma}(I\times M,TN)\cap L^p_{\hat{c}^*s_{0,N}}(I\times M, TN)$ in $L^p_{\hat{c}^*s_{0,N}}(I\times M, TN)$ (see assertion (i) of \cref{prop:density_iterated_almost_simple}), there exists a sequence $(\hat{g}_n)_{n\in\mathbb{N}}$ in $E_{\hat{c}^*s_{0,N},\sigma}(I\times M,TN)\cap L^p_{\hat{c}^*s_{0,N}}(I\times M, TN)$ such that $\hat{g}_n\to \eta$ in $L^p_{\hat{c}^*s_{0,N}}(I\times M, TN)$ as $n\to \infty$. Thus, define $g_n: t\mapsto \hat{g}_n(t,\cdot)$, which is measurable as an almost simple mapping from $I$ to $(L^0(M,TN),D_{p,TN})$ with base mapping given by the mapping $t\mapsto \hat{c}^*s_{0,N}(t,\cdot)$, which is Borel measurable from $I$ to $(L^0(M,TN),D_{p,TN})$ since $D_{p,TN}(\hat{c}^*s_{0,N}(t,\cdot),c^*s_{0,p}(t))=0$ for all $t\in I$ with $c^*s_{0,p} \coloneqq s_{0,p}\circ c$ the zero section of $\pi_{c^*T\Lph(M,N)}$, which is continuous, hence Borel measurable, as the composition of $s_{0,p}\coloneqq(s_{0,N})_*$, the zero section of $\pi_{T\Lph(M,N)}$, which is continuous (\cref{rem:measurability_zero_section}), and of $c$, which is also continuous. Then, noticing that the union of the sets on which the $\hat{g}_n$'s and $\eta$ differ from $\hat{c}^*s_{0,N}$ is contained in a measurable rectangle $I\times \hat{M}$ such that $\hat{M}$ has $\sigma$-finite $\mu_M$-measure (see step 2 in the proof of assertion (i) of \cref{th:extension_embed_iterated_simple}), we get, using the Fubini--Tonelli theorem \cite[Proposition 5.2.1]{cohn2013measure}, that 
    $$d_{p,p}(g_n,\dot{c})=\hat{D}_p(\hat{g}_n,\eta)\longrightarrow0$$
    as $n\to \infty$, so that, up to the extraction of a subsequence, we can assume for $\lebesgue$-a.e.~$t\in I$ that $D_p(g_n(t),\dot{c}(t))\to 0$ as $n\to \infty$. Hence, $\dot{c}$ is measurable as the pointwise limit of measurable mappings. In addition, since $\mu_M(A_t) = 0$ for $\lebesgue$-a.e.~$t\in I$, we have for $\lebesgue$-a.e.~$t\in I$ that $\dot{c}(t)(x)=\eta(t,x)\in T_{\hat{c}(t,x)}N=T_{c(t)(x)}N$ and $s_{0,N}(\hat{c}(t,x)) = s_{0,N}(c(t)(x))$ for $\mu_M$-a.e.~$x\in M$. Thus, $\dot{c}(t)\in T_{c(t)}\Lph(M,N)$ for $\lebesgue$-a.e.~$t\in I$ and, using the Fubini--Tonelli theorem \cite[Proposition 5.2.1]{cohn2013measure}, we have $\dot{c}\in L^p_{c^*s_{0,p}}(I,T\Lph(M,N))$. Hence, $\dot{c}$ belongs to $S^p(\pi_{c^*T\Lph(M,N)})$. 

    \par\medskip \noindent \emph{Step 2: Proof of \cref{eq:norm_speed_equals_metric_derivative_lebesgue}.} Since $\mu_M(A_t)=0$ for $\lebesgue$-a.e.~$t\in I$, we get that for $\lebesgue$-a.e.~$t\in I$ the equality
    $$\lvert \tilde{i}_p(c)(x)'\rvert_N(t) = b_{c(t)(x)}(\dot{c}(t)(x))$$
    holds for $\mu_M$-a.e.~$x\in M$.
    Hence, integrating over $M$, we have
    \begin{equation*}\lvert c'\rvert_p(t) = \left(\int_M b_{c(t)(x)}(\dot{c}(t)(x))^p\dif\mu_M(x)\right)^{1/p} = B_{c(t)}(\dot{c}(t))\quad\text{for $\lebesgue$-a.e.~$t\in I$ }.\qedhere\end{equation*}
\end{proof}

Finally, combining \cref{prop:length_lebesgue,th:speed_ac_lebesgue} shows that the metric $D_p$ on nonlinear Lebesgue spaces coincides with the metric induced by the infimum of the energy functional associated with the fibered norm $B_p$ (\cref{prop:tangent_bundle_lebesgue}) over absolutely continuous curves. This is analogous to the situation on standard Finsler manifolds (\cref{prop:metric_space_finsler}); the only difference being the class of curves over which the infimum is taken, since nonlinear Lebesgue spaces lack a differential structure.

\begin{corollary}[$\mathcal{L}^p$ metrics are induced by $\mathcal{L}^p$ fibered norms]
\label{cor:min_norm_lebesgue}
    Let $h\in L^0(M,N)$ and $p\in (1,\infty)$. Suppose that $(N,b_N)$ is a connected, complete and separable $\mathcal{C}^2$ Finsler manifold with a spray (or, equivalently, a connection) and that $N$ is modelled on a Banach space satisfying the Radon--Nikod\'ym property. Then, we have for all $(f,f')\in L^p_h(M,N)^2$ that 
    $$D_p(f,f')=\inf\left\{\lebesgue(I)^{1-1/p}\left(\int_I B_{c(t)}(\dot{c}(t))^p\dif t\right)^{1/p}: c\in \mathcal{AC}^p(I,\Lph(M,N)),c(a)=f,c(b)=f'\right\}.$$
\end{corollary}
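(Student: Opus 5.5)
The plan is to combine the length-space characterization of \cref{prop:length_lebesgue} with the speed identity \cref{eq:norm_speed_equals_metric_derivative_lebesgue} of \cref{th:speed_ac_lebesgue}. The strategy is to show the two inequalities between $D_p(f,f')$ and the infimum separately. The key observation is that, for $c\in\mathcal{AC}^p(I,\Lph(M,N))$, \cref{th:speed_ac_lebesgue} lets us rewrite the integrand:
\[
\int_I B_{c(t)}(\dot{c}(t))^p\dif t=\int_I \lvert c'\rvert_p^p(t)\dif t.
\]
The functional under the infimum is therefore exactly $\lebesgue(I)^{1-1/p}\big(\int_I\lvert c'\rvert_p^p\dif t\big)^{1/p}$, the $p$-energy-type quantity for the metric space $\Lph(M,N)$. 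This reduces the statement to a purely metric one, to which \cref{prop:energy_min_dist} from the appendix applies.

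\emph{Lower bound.} Let $c$ be admissible with $c(a)=f$ and $c(b)=f'$. By \cref{th:metric_derivative} we have
\[
D_p(f,f')\le \int_I\lvert c'\rvert_p\dif t.
\]
Hölder's inequality then bounds the right-hand side by
\[
\lebesgue(I)^{1-1/p}\Big(\int_I\lvert c'\rvert_p^p\dif t\Big)^{1/p}.
\]
Taking the infimum over admissible $c$ gives $D_p\le\inf$.

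\emph{Upper bound.} First, $N$ is a length space: a connected Finsler manifold whose metric is defined as an infimum of lengths of $\mathcal{C}^1$ curves (\cref{prop:metric_space_finsler}) has intrinsic metric equal to $d_N$. It is also complete and separable by assumption. Hence \cref{prop:length_lebesgue} shows that $\Lph(M,N)$ is a length space. Then, for any $\kappa>1$, \cref{prop:energy_min_dist} (or directly the construction in the proof of \cref{prop:length_lebesgue}, which produces a curve that is $\kappa D_p(f,f')/(b-a)$-Lipschitz) gives a curve $c\in\mathcal{AC}^p(I,\Lph(M,N))$ joining $f$ to $f'$ such that
\[
\lebesgue(I)^{p-1}\int_I\lvert c'\rvert_p^p\dif t\le\kappa^p D_p(f,f')^p.
\]
Substituting the speed identity and letting $\kappa\to1$ yields $\inf\le D_p(f,f')$.

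The main point needing care is the use of the identity $\lvert c'\rvert_p=B_{c(t)}(\dot c(t))$ from \cref{th:speed_ac_lebesgue}. This requires the Lipschitz curve to lie in $\mathcal{AC}^p(I,\Lph(M,N))$, which is clear since Lipschitz curves belong to $\mathcal{AC}^\infty\subset\mathcal{AC}^p$ on a bounded interval. A second check is that the Finsler metric $d_N$ is finite and genuinely a length metric. Finiteness follows from connectedness, and the length property is built into the definition in \cref{prop:metric_space_finsler}, because the length of a $\mathcal{C}^1$ curve computed with $d_N$ is at most its Finsler length. With these in place, the rest is Hölder's inequality and the earlier results.
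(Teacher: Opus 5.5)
Your proposal is correct and follows the paper's route. The paper obtains the corollary by combining \cref{prop:length_lebesgue}, which makes $\Lph(M,N)$ a length space, with the energy reformulation of \cref{prop:energy_min_dist} and the identity $\lvert c'\rvert_p(t)=B_{c(t)}(\dot{c}(t))$ from \cref{th:speed_ac_lebesgue}; your explicit check that the Finsler metric $d_N$ is a length metric fills in a step the paper leaves implicit.
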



\section*{Acknowledgements}

This work is part of the author's PhD thesis. The author thus thanks his advisor Joan Alexis Glaun\`es (MAP5, Université Paris Cité) for his support, useful advice, and for introducing him to the subject of metamorphoses of manifold-valued images, which led him to dig into the theory of nonlinear Lebesgue spaces. The author also thanks Th\'eo Dumont (LIGM, Université Gustave Eiffel) for suggestions that significantly helped improve the exposition of the present article as well as Alain Trouv\'e (Centre Borelli, ENS Paris-Saclay) for careful proofreading of early versions of this article and fruitful technical discussions.

\newpage

\section*{Notation and terminology}
\label{sec:notations}

This section clarifies some recurrent notions and notation used in the present article:
\subsection*{Spaces}
\begin{itemize}
    \item $\mathbb{N}$ denotes the set of nonnegative integers (including $0$). 
    \item $\mathbb{N}^*$ denotes the set of positive integers (excluding $0$). 
    \item $\mathbb{R}$ denotes the set of real numbers also referred to as the \emph{real line}. 
    \item $\mathbb{R}_+$ denotes the set of nonnegative real numbers (including $0$).
    \item $\mathbb{R}^*$ denotes the set of nonzero real numbers (excluding $0$). 
     \item $\mathbb{Q}$ denotes the set of rational numbers.
\end{itemize}
\subsection*{Topology}

\begin{itemize} 
   \item $f(X)\coloneqq \left\{ f(x): x\in X\right\}$ denotes the \emph{range} of a mapping $f: X\to Y$ between sets $X$ and $Y$.
   \item $\operatorname{carr}(f)\coloneqq\{x\in X: f(x)\neq 0\}$ denotes the \emph{carrier} of a mapping $f: X\to \mathbb{R}$ defined on a set $X$
   \item $\overline{A}$ denotes the \emph{closure} of the subset $A \subset X$ in the topological space $X$, that is, the intersection of all closed subsets of $X$ that include $A$.
    \item $\overline{A}^{d_X}$ denotes the closure of the subset $A \subset X$ in the metric space $(X,d_X)$.
    \item $\operatorname{supp}(f)\coloneqq\overline{\operatorname{carr}(f)}$ denotes the \emph{support} of a mapping $f: X\to \mathbb{R}$ defined on a topological space $X$.
    \item A set $X$ is called \emph{countable} if there exists an injective mapping $\varphi: X\to \mathbb{N}$.
    \item A subset $S$ of a (semi-)metric space $(X,d_X)$ is called \emph{dense} when for each element $x$ of $X$ and real number $\varepsilon > 0$ there exists an element $s$ of $S$ such that $d_X(x,s) < \varepsilon$.
    \item A metric space $(X,d_X)$ is called \emph{separable} when it has a countable dense subset.
    \item A sequence $(x_n)_{n\in\mathbb{N}}$ in a metric space $(X,d_X)$ is called \emph{Cauchy} when for all real $\varepsilon > 0$ there exists an integer $n_0$ such that for all $n\geq n_0$ and $m\geq n_0$, $d_X(x_n,x_m) < \varepsilon$.
    \item A metric space $(X,d_X)$ is called \emph{complete} when every Cauchy sequence converges to an element of $X$.
    \item $A^c$ denotes the \emph{complement} of a subset $A$ of a set $X$, that is, $A^c \coloneqq \left\{x \in X: x\notin A\right\}$.
    \item $A\setminus B\coloneqq A \cap B^c$ denotes the \emph{set difference} between two subsets $A$ and $B$ of a set $X$. 
    \item $A\Delta A'\coloneqq (A\setminus A')\cup (A'\setminus A)$ denotes the \emph{symmetric set difference} between two subsets $A$ and $A'$ of a set $X$.
\end{itemize}

\subsection*{Measure theory}
\begin{itemize}
    \item $\mathscr{L}^d$ denotes the $d$-dimensional Lebesgue measure.
    \item $\#$ denotes the counting measure.
    \item A measure $\mu_M$ on a measurable space $(M,\Sigma_M)$ is called \emph{purely infinite} if its range is reduced to $\{0,\infty\}$.
    \item $\sigma(\mathcal{C})$ denotes the smallest $\sigma$-algebra that contains a family $\mathcal{C}$ of subsets of a set $M$.
    \item A subset of a measurable space $(M,\Sigma_M)$ is called \emph{measurable} when it belongs to $\Sigma_M$.
    \item $\mathcal{F}_{\mu_M}$ denotes the set of measurable subsets of a measure space $(M,\Sigma_M,\mu_M)$ with finite $\mu_M$-measure.
    \item $\Sigma_M|_B\coloneqq \left\{B\cap A: A\in \Sigma_M\right\}$ denotes the restriction of a $\sigma$-algebra $\Sigma_M$ on a set $M$ to subsets of $B\in \Sigma_M$.
    \item $\mu_M|_B$ denotes the restriction of a measure $\mu_M$ on a measurable space $(M,\Sigma_M)$ to $\Sigma_M|_B$ for some $B\in\Sigma_M$.
    \item $\mathcal{B}(M)$ denotes the \emph{Borel $\sigma$-algebra} on a topological space $M$.
    \item A measure $\mu_M$ on a topological space $M$ is called \emph{Borel} when it is defined on $\mathcal{B}(M)$.
    \item A subset $Z$ of a measure space $(M,\Sigma_M,\mu_M)$ is called \emph{$\mu_M$-null} if there exists $A\in \Sigma_M$ such that $Z\subset A$ and $\mu_M(A) = 0$. 
    \item $\mathcal{Z}_{\mu_M}$ denotes the set of all $\mu_M$-null sets of a measure space $(M,\Sigma_M,\mu_M)$.
    \item A property $\mathcal{P}$ that depends on the choice of point $x$ of a measure space $(M,\Sigma_M,\mu_M)$ is said to hold \emph{$\mu_M$-almost everywhere} ($\mu_M$-a.e. for short) if $\left\{x\in M: \text{$\mathcal{P}(x)$ is false}\right\}$ is a $\mu_M$-null set.
    \item In a measure space $(M,\Sigma_M,\mu_M)$, $\overline{\Sigma}_M$ denotes the \emph{completion of the $\sigma$-algebra $\Sigma_M$} with respect to $\mu_M$ and is defined as $\overline{\Sigma}_M\coloneqq \sigma(\Sigma_M \cup \mathcal{Z}_{\mu_M}) = \{ A\cup Z:(A,Z)\in \Sigma_M\times \mathcal{Z}_{\mu_M}\}$.
    \item In a measure space $(M,\Sigma_M,\mu_M)$, $\overline{\mu}_M$ denotes the \emph{completion of the measure $\mu_M$} and is defined as the mapping $\bar{\mu}_M: \overline{\Sigma}_M\to [0,\infty]$ such that $\bar{\mu}_M(A\cup Z)=\mu_M(A)$ for all $(A,Z)\in \Sigma_M\times \mathcal{Z}_{\mu_M}$. It is itself a measure on $(M,\overline{\Sigma}_M)$.
\end{itemize}
\label{sec:notations-end}

{\normalsize
\bibliography{bibfile}
}
\bibliographystyle{alpha}

\newpage

\appendix

\renewcommand{\theproposition}{\Alph{section}.\arabic{proposition}} 
\renewcommand{\thelemma}{\Alph{section}.\arabic{lemma}} 
\renewcommand{\thecorollary}{\Alph{section}.\arabic{corollary}} 
\renewcommand{\thedefinition}{\Alph{section}.\arabic{definition}} 
\renewcommand{\thetheorem}{\Alph{section}.\arabic{theorem}} 
\renewcommand{\theremark}{\Alph{section}.\arabic{remark}} 

\section{Omitted results}

\subsection{\texorpdfstring{Comparison of the Borel structures induced by $d_p$ and $d_\infty$ on absolutely continuous curves}{Comparison of the Borel structures induced by dp and dinf on absolutely continuous curves}}
\label{appendix:comparison_dp_dinf}

Throughout this section, let $p\in(1,\infty)$ and let $(X,d_X)$ be a generic metric space. We prove \cref{prop:comparison_dp_dinf}. The main ingredient is that $d_p$ and $d_\infty$ induce the same topology on every sublevel of the $p$-energy (\cref{def:energy}). 

\begin{proposition}[Continuity of the identity mapping from $d_p$ to $d_\infty$]
\label{prop:continuity_id_dp_dinf}
    Let $p\in (1,\infty)$, $R \geq 0$ and define the set $$K_R\coloneqq \left\{c\in \mathcal{AC}^p(I,X): \int_I \lvert c'\rvert_X^p(t)\dif t \leq R \right\}.$$ 
    Then, the identity mapping $\operatorname{id}_R: (K_R,d_p)\to (K_R,d_\infty)$ is continuous.
\end{proposition}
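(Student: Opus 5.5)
We prove that on $K_R$ a uniform Hölder modulus of continuity lets us control pointwise distances by $L^p$ distances. First, for every $c\in K_R$ and $(s,t)\in I^2$ with $s\le t$, \cref{th:metric_derivative} and Hölder's inequality give
\begin{equation*}
d_X(c(s),c(t))\le \int_s^t \lvert c'\rvert_X(r)\dif r\le \lvert t-s\rvert^{1-1/p}\Big(\int_I \lvert c'\rvert_X^p(r)\dif r\Big)^{1/p}\le R^{1/p}\lvert t-s\rvert^{1-1/p}.
\end{equation*}
So every curve of $K_R$ is $(1-1/p)$-Hölder with the same constant $R^{1/p}$. Here $p>1$ is used essentially, since it makes the exponent $1-1/p$ positive.

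Next, fix $c,c'\in K_R$ and $t\in I$, and let $\delta\in(0,b-a]$. Choose a subinterval $J\subset I$ of length $\delta$ that contains $t$. For every $s\in J$, the triangle inequality and the Hölder bound give
\begin{equation*}
d_X(c(t),c'(t))\le d_X(c(s),c'(s)) + 2R^{1/p}\delta^{1-1/p}.
\end{equation*}
Raise this to the power $p$, use $(u+v)^p\le 2^{p-1}(u^p+v^p)$, and average over $s\in J$. This yields
\begin{equation*}
d_X(c(t),c'(t))^p\le 2^{p-1}\Big(\delta^{-1}d_p(c,c')^p + 2^pR\,\delta^{p-1}\Big).
\end{equation*}
Take the supremum over $t\in I$. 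The result bounds $d_\infty(c,c')$ by a quantity $\omega(d_p(c,c'))$ that depends only on $R$, $p$ and $b-a$. Choosing $\delta=\min\{b-a,\,d_p(c,c')^{1/p}\}$, or simply taking $\delta$ small first and then $d_p$ small, shows that $\omega(r)\to0$ as $r\to0$.

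It follows that $\operatorname{id}_R$ is in fact uniformly continuous from $(K_R,d_p)$ to $(K_R,d_\infty)$. The only real point is the averaging step that converts integral closeness into pointwise closeness, and it hinges on the equicontinuity of $K_R$. The constraint $\delta\le b-a$ is the only boundary case to watch, and the choice of $J\subset I$ handles it.
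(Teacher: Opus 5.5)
Your proof is correct and takes essentially the same route as the paper: the uniform $(1-1/p)$-Hölder bound with constant $R^{1/p}$ on $K_R$, and then the observation that a pointwise gap between two curves persists on a subinterval of $I$ of length comparable to $\delta$, which forces $d_p$ to be large. The difference is one of presentation: you write this as a direct averaging estimate, which gives an explicit modulus and hence uniform continuity, while the paper argues by contradiction along a sequence; note that when $d_p(c,c')=0$ your choice $\delta=d_p(c,c')^{1/p}$ is not admissible, but letting $\delta\to 0$ in your bound handles that case.
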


\begin{proof}
    Let $p\in (1,\infty)$ and $R\geq 0$. When $R=0$, all curves in $K_R$ are constant and $d_p(c,c') = \lebesgue(I)^{1/p} d_\infty(c,c')$ for every $(c,c')\in K_R^2$, so $\operatorname{id}_R$ is continuous. Let us assume that $R > 0$ in the remainder of the proof. 

    \par\medskip \noindent \emph{Step 1: Uniform Hölder continuity of curves in $K_R$.} Let us first observe that, if $c\in K_R$, we have for all $(s,t)\in I^2$ that, by Hölder's inequality 
    $$d_X(c(s),c(t)) \leq \int_{\min\{s,t\}}^{\max\{s,t\}} \lvert c'\rvert_X(t)\dif t \leq \lvert t - s\rvert^{1-1/p} \left(\int_I \lvert c'\rvert_X^p(t)\dif t\right)^{1/p}\leq \lvert t - s\rvert^{1-1/p} R^{1/p}.$$
    Thus, we have that for all $(c,c')\in K_R^2$ and $(s,t)\in I^2$ that 
    \begin{equation}
    \label{eq:bound_curves}
    \lvert d_X(c(t),c'(t)) - d_X(c(s),c'(s))\rvert \leq 2\,R^{1/p} \lvert t - s\vert^{1- 1/p}.\end{equation}
    
    \par\medskip \noindent \emph{Step 2: Continuity of the identity mapping.} Let $(c_n)_{n\in\mathbb{N}}$ be a sequence in $K_R$ and $c$ be a curve in $K_R$ such that $d_p(c_n,c)\to 0$ as $n\to \infty$. Suppose that $d_\infty(c_n,c)\not\to 0$ as $n\to \infty$. Then, up to the extraction of a subsequence, we can assume that there exists $\varepsilon > 0$ such that $d_\infty(c_n,c) \geq 2\varepsilon$ for all $n\in \mathbb{N}$. Now, since the mapping $t\mapsto d_X(c_n(t),c(t))$ is continuous for all $n\in\mathbb{N}$, there exists $t_n \in I$ such that $d_X(c_n(t_n),c(t_n)) \geq \varepsilon$. Then, define $\delta \coloneqq (\varepsilon/(4 R^{1/p}))^{1/(1-1/p)}$, so that, using \cref{eq:bound_curves}, we have for all $t\in I$ such that $\lvert t - t_n\rvert \leq \delta$ 
    $$d_X(c_n(t),c(t))\geq d_X(c_n(t_n),c(t_n)) - \lvert d_X(c_n(t),c(t)) - d_X(c_n(t_n),c(t_n))\rvert \geq \varepsilon - \varepsilon/2 \geq \varepsilon /2.$$
    Therefore, since $\lebesgue(I\cap [t_n - \delta, t_n +\delta]) \geq \min\{\delta, b-a\}$, we get that 
    $$d_p(c_n,c)^p = \int_I d_X(c_n(t),c(t))^p\dif t \geq (\varepsilon/2)^p \min\{\delta, b -  a\} > 0.$$
    Since this holds for every $n\in\mathbb{N}$, this yields a contradiction. Hence, $d_\infty(c_n,c)\to 0$ as $n\to \infty$.
\end{proof}

As an immediate consequence of \cref{prop:continuity_id_dp_dinf}, the $p$-energy is lower semicontinuous on $(\mathcal{AC}^p(I,X),d_p)$.

\begin{corollary}[Lower semicontinuity of the $p$-energy for $d_p$]
\label{cor:lsc_energy}
    Let $p\in (1,\infty)$. Then, the $p$-energy viewed as a mapping 
    $$E_{p,X}: \left\{\begin{array}{ccl}
    (\mathcal{AC}^p(I,X),d_p)&\longrightarrow& [0,\infty)\\
    c&\longmapsto& \int_I \lvert c'\rvert_X^p(t)\dif t
\end{array}\right.$$
is lower semicontinuous.
\end{corollary}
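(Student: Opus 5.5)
We must show that $E_{p,X}$ is lower semicontinuous on $(\mathcal{AC}^p(I,X),d_p)$. Fix a sequence $(c_n)_{n\in\mathbb{N}}$ converging to $c$ for $d_p$, and set $\ell\coloneqq\liminf_{n\to\infty}E_{p,X}(c_n)$. We need $E_{p,X}(c)\leq \ell$. If $\ell=\infty$ there is nothing to prove. Otherwise we extract a subsequence realizing the $\liminf$ with $E_{p,X}(c_n)\leq R$ for some finite $R$ and all $n$. All $c_n$ then lie in $K_R$.

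The next step is to show that $c\in K_R$ as well, or at least to control its energy by $\ell$. For this we use the equivalent difference-quotient characterization of the energy, in the spirit of \cref{cor:equiv_def_ac} and Lisini's lemma. For $c\in\mathcal{AC}^p(I,X)$ we expect
$$\int_I \lvert c'\rvert_X^p(t)\dif t=\sup_{0<\tau<b-a}\int_a^{b-\tau}(\Delta_{X,\tau}c(t))^p\dif t .$$
The inequality ``$\geq$'' follows from Hölder's inequality applied to $d_X(c(t),c(t+\tau))\leq\int_t^{t+\tau}\lvert c'\rvert_X$, exactly as in Step 5 of the proof of \cref{th:charact_ac_curves}. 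The inequality ``$\leq$'' follows from Fatou's lemma, since $\lvert c'\rvert_X(t)=\lim_{\tau\to0}\Delta_{X,\tau}c(t)$ almost everywhere by \cref{th:metric_derivative}.

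With this formula in hand, we prove lower semicontinuity of each truncated functional $\Phi_\tau(\gamma)\coloneqq\int_a^{b-\tau}(\Delta_{X,\tau}\gamma(t))^p\dif t$ with respect to $d_p$. Since $d_p(c_n,c)\to0$, a further subsequence gives $c_n(t)\to c(t)$ for $\lebesgue$-a.e.\ $t$. Then $d_X(c_n(t),c_n(t+\tau))\to d_X(c(t),c(t+\tau))$ for a.e.\ $t$, and Fatou's lemma gives $\Phi_\tau(c)\leq\liminf_n\Phi_\tau(c_n)\leq \liminf_n E_{p,X}(c_n)=\ell$. Taking the supremum over $\tau$ then yields $E_{p,X}(c)\leq \ell$.

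Alternatively, \cref{prop:continuity_id_dp_dinf} upgrades the convergence to uniform convergence on $K_R$, because $c$ is continuous. Pointwise convergence then holds everywhere, and subsequence extraction is unnecessary.

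The main obstacle is the bookkeeping between subsequences. We first choose a subsequence realizing the $\liminf$, and then take a further subsequence for a.e.\ convergence. Since a sub-subsequence of a sequence realizing the $\liminf$ still has limit $\ell$, this causes no loss. A second point to verify is the difference-quotient identity for the energy. If needed, it can be replaced by the one-sided bound via Fatou, which is the only direction actually used.
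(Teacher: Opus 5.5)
Your main argument is correct, but it takes a different route from the paper. The paper enlarges the bound to $R'=\max\{R,E_{p,X}(c)\}$ so that $c$ and all the $c_n$ lie in $K_{R'}$. It then uses \cref{prop:continuity_id_dp_dinf} to upgrade $d_p$-convergence to $d_\infty$-convergence, and quotes the known lower semicontinuity of the $p$-energy under uniform convergence.

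You instead identify $E_{p,X}$ with $\sup_{0<\tau<b-a}\Phi_\tau$ on $\mathcal{AC}^p(I,X)$. You then show that each $\Phi_\tau$ is $d_p$-lower semicontinuous, using a.e.\ convergence along a subsequence and Fatou's lemma. Requiring convergence at $t$ and $t+\tau$ simultaneously is harmless, since translates of null sets are null.

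What each route buys:
\begin{itemize}
\item Yours avoids both \cref{prop:continuity_id_dp_dinf} and the external citation, and is entirely elementary.
\item Yours also shows more: $\gamma\mapsto\sup_\tau\Phi_\tau(\gamma)$ is lower semicontinuous on all of $(L^p(I,X),d_p)$. This is exactly the fact the paper uses without proof in \cref{rem:sobolev_borel}.
\item The paper's route is nearly free once \cref{prop:continuity_id_dp_dinf} is established, and that proposition is needed anyway for \cref{prop:comparison_dp_dinf}.
\end{itemize}

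Two small corrections:
\begin{itemize}
\item Your alternative sketch is essentially the paper's proof. However, \cref{prop:continuity_id_dp_dinf} as stated requires the limit to lie in the same sublevel set as the $c_n$. You should take $R'=\max\{R,E_{p,X}(c)\}<\infty$ rather than appeal to mere continuity of $c$.
\item Contrary to your closing remark, both directions of the difference-quotient identity are used. The Fatou direction $E_{p,X}(c)\le\sup_\tau\Phi_\tau(c)$ is applied to the limit. The H\"older direction $\Phi_\tau(c_n)\le E_{p,X}(c_n)$ is applied to the sequence, in the step $\liminf_n\Phi_\tau(c_n)\le\ell$. Since you prove both, there is no gap.
\end{itemize}
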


\begin{proof}
    Let $p\in (1,\infty)$ and let $(c_n)_{n\in\mathbb{N}}$ be a sequence in $(\mathcal{AC}^p(I,X),d_p)$ converging to $c\in \mathcal{AC}^p(I,X)$. Then, let $L\coloneqq \underset{n\to \infty}{\lim\inf} E_{p,X}(c_n)$. If $L= \infty$, there is nothing to prove. If $L <\infty$, we can assume, up to the extraction of a subsequence, that $E_{p,X}(c_n)\to L$ as $n\to \infty$ and there exists $R > 0$ such that $E_{p,X}(c_n) \leq R$ for all $n\in\mathbb{N}$. Then, defining $R'\coloneqq \max\{R, E_{p,X}(c)\}$ and  $K_{R'}\coloneqq\{ c\in \mathcal{AC}^p(I,X): E_{p,X}(c)\leq R'\}$, we have, by continuity of the identity mapping $\operatorname{id}_{R'}: (K_{R'},d_p)\to (K_{R'},d_\infty)$ (\cref{prop:continuity_id_dp_dinf}) that $d_\infty(c_n,c)\to 0$ as $n\to \infty$. Hence, by the lower semicontinuity of $E_{p,X}$ on $(\mathcal{AC}^p(I,X),d_\infty)$ \cite[p.~18]{ambrosio2024metric}, $E_{p,X}(c)\leq \underset{n\to \infty}{\lim\inf} E_{p,X}(c_n)$.
\end{proof}

We next use \cref{prop:continuity_id_dp_dinf,cor:lsc_energy} to prove \cref{prop:comparison_dp_dinf}.

\begin{proof}[Proof of \cref{prop:comparison_dp_dinf}]
    Let $p\in (1,\infty)$.

    \begin{enumerate}[label=(\roman*),wide]
        \item \emph{Step 1: Borel sets for $d_p$ are Borel for $d_\infty$.} Since $d_p\leq \lebesgue(I)^{1/p}d_\infty$, the identity mapping $(\mathcal{AC}^p(I,X),d_\infty)\to (\mathcal{AC}^p(I,X),d_p)$ is continuous. Hence, $\mathcal{B}(\mathcal{AC}^p(I,X),d_p)\subset \mathcal{B}(\mathcal{AC}^p(I,X),d_\infty)$. 
        
        \par\medskip \noindent \emph{Step 2: Borel sets for $d_\infty$ are Borel for $d_p$.} Let $\mathcal{O}$ be an open subset of $(\mathcal{AC}^p(I,X),d_\infty)$ and define $K_n\coloneqq \{c\in \mathcal{AC}^p(I,X): E_{p,X}(c) \leq n\}$ for all $n\in\mathbb{N}$. Then, $\mathcal{O}= \cup_{n\in\mathbb{N}} (\mathcal{O}\cap K_n)$ and, by the continuity of the identity mapping $\operatorname{id}_n: (K_n,d_p)\to (K_n,d_\infty)$ (\cref{prop:continuity_id_dp_dinf}), we know that $\mathcal{O}\cap K_n$ is open in $(K_n,d_p)$. However, since $E_{p,X}$ is lower semicontinuous on $(\mathcal{AC}^p(I,X),d_p)$ (\cref{cor:lsc_energy}), $K_n$ is a Borel subset of $(\mathcal{AC}^p(I,X),d_p)$, hence $\mathcal{O}\cap K_n$ is one too. Therefore, $\mathcal{O}$ being the countable union of Borel subsets of $(\mathcal{AC}^p(I,X),d_p)$, $\mathcal{O}$ is itself Borel for $d_p$. Hence, $\mathcal{B}(\mathcal{AC}^p(I,X),d_\infty)\subset \mathcal{B}(\mathcal{AC}^p(I,X),d_p)$.

    \item Let $\mathcal{S}\subset \mathcal{AC}^p(I,X)$. 
    
    \par\medskip \noindent \emph{Step 1: Separable sets for $d_\infty$ are separable for $d_p$.} The fact that when $(\mathcal{S},d_\infty)$ is separable, $(\mathcal{S},d_p)$ must also be separable follows from the fact that $d_p\leq \lebesgue(I)^{1/p}d_\infty$. 
    
    \par\medskip \noindent \emph{Step 2: Separable sets for $d_p$ are separable for $d_\infty$.} Assume that $(\mathcal{S},d_p)$ is separable. First, observe that $\mathcal{S} = \cup_{n\in \mathbb{N}} (\mathcal{S}\cap K_n)$. Since $(\mathcal{S},d_p)$ is separable, so is $(\mathcal{S}\cap K_n,d_p)$ \cite[(3.10.9)]{dieudonne1960treatise}. Then, by continuity of the identity mapping $\operatorname{id}_n: (K_n,d_p)\to (K_n,d_\infty)$ (\cref{prop:continuity_id_dp_dinf}), $(\mathcal{S}\cap K_n,d_\infty)$ is separable as the continuous image of a separable set. As the countable union of separable sets for $d_\infty$, $(\mathcal{S},d_\infty)$ is separable.\qedhere
    \end{enumerate}
\end{proof}

\subsection{\texorpdfstring{Relaxation of assertion (iii) in \cref{prop:density_iterated_almost_simple}}{Relaxation of the assertion (iii) in Proposition}}

Let us provide a proof of the alternative density result mentioned in \cref{rem:relax_density}.
\begin{proposition}[Relaxation of the assumptions in (iii) of \cref{prop:density_iterated_almost_simple}]
\label{prop:alternative_density}
Let $h\in E(M,N)$ and $p\in [1,\infty)$. Suppose that $(I,\Sigma_I,\mu_I)$ is a measure space, with no further assumptions. Then, the closure of the target space of $\operatorname{sec}_{M,p}$ is 
$$\overline{E_{\underline{h}}(M, E(I,N))\cap L^p_{\underline{h}}(M,L^0(I,N))}^{\,D_{p,p}}= L^p_{\underline{h}}(M,L^0(I,N)).$$
\end{proposition}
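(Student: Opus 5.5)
The plan is to mirror the proof of assertion (ii) of \cref{prop:density_iterated_almost_simple}, in two approximation steps. First, approximate an arbitrary $f$ in $L^p_{\underline{h}}(M,L^0(I,N))$ by an almost simple mapping with values in $L^0(I,N)$. Then approximate each of its finitely many values by simple mappings from $I$ to $N$, while keeping the base-mapping part untouched.

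Fix $\varepsilon>0$ and $f\in L^p_{\underline{h}}(M,L^0(I,N))$. Since $h\in E(M,N)$, the mapping $\underline{h}=\iota\circ h$ takes finitely many values, each a constant curve in $E(I,N)$. Hence $\underline{h}$ is separably valued and measurable into $(L^0(I,N),d_p)$. The space $(L^0(I,N),d_p)$ is a metric space, possibly with infinite metric, so \cref{prop:density_almost_simple} applies with target $L^0(I,N)$ and base mapping $\underline{h}$. I expect its proof to carry over verbatim, since it uses only separability of the range of $f$ and the finiteness of $D_{p,p}(f,\underline{h})$, not the finiteness of $d_p$ on the whole target. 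If this is in doubt, I would instead use \cref{rem:metric_nonfinite}-style reasoning, restricting to the set of curves at finite $d_p$-distance from the (finitely many) values of $\underline{h}$.

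This yields $s$ with $D_{p,p}(f,s)<\varepsilon/2$ and the following structure. There is $B\in\mathcal{F}_{\mu_M}$ with $s|_{M\setminus B}=\underline{h}|_{M\setminus B}$. On $B$, the mapping $s$ takes finitely many values $\gamma_i\in L^0(I,N)$ on sets $B_i$, with $i\in I_s$. Refining by the finitely many level sets of $h$, I may assume $h\equiv z_i$ on each $B_i$. Since $D_{p,p}(s,\underline{h})<\infty$, we have $d_p(\gamma_i,z_i)<\infty$ whenever $\mu_M(B_i)>0$. In other words, each $\gamma_i$ lies in $L^p_{z_i}(I,N)$, with base mapping the constant curve $z_i$.

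Next, for each such $i$, apply \cref{prop:density_almost_simple} on $(I,\Sigma_I,\mu_I)$ with base mapping $z_i$. This gives $\tilde\gamma_i\in E_{z_i}(I,N)$ with $d_p(\gamma_i,\tilde\gamma_i)^p<\varepsilon^p/(2^p|I_s|(\mu_M(B_i)+1))$. A mapping in $E_{z_i}(I,N)$ is almost simple with constant base $z_i$, so it takes finitely many values overall. Hence $\tilde\gamma_i\in E(I,N)$, and this is exactly where constancy of $h$ on $B_i$ is used.

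Define $\tilde s$ to equal $\tilde\gamma_i$ on $B_i$ and $\underline{h}$ on $M\setminus B$. Then $\tilde s$ is almost simple with values in $E(I,N)$, and $D_{p,p}(s,\tilde s)^p=\sum_i\mu_M(B_i)\,d_p(\gamma_i,\tilde\gamma_i)^p<(\varepsilon/2)^p$. Moreover $D_{p,p}(\tilde s,\underline{h})<\infty$ by the triangle inequality. The triangle inequality then gives $D_{p,p}(f,\tilde s)<\varepsilon$. Finally, the inclusion of the left-hand closure in $L^p_{\underline{h}}(M,L^0(I,N))$ holds because each approximant lies in that set and $D_{p,p}$-limits stay at finite distance from $\underline{h}$.

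The main obstacle is the first step: justifying \cref{prop:density_almost_simple} when the target $(L^0(I,N),d_p)$ carries a possibly infinite metric, and checking that the almost simple approximant can be taken with values at finite $d_p$-distance from the corresponding values of $\underline{h}$. For this I would re-run the proof of \cite[Proposition 3.10]{serieys2025nonlinear}: pick a countable dense subset of the separable range $f(M)$ and build nearest-point simple approximations, keeping $\underline{h}(x)$ whenever no candidate is closer. This ensures that $d_p(s(x),f(x))\le d_p(\underline{h}(x),f(x))$ pointwise, so dominated convergence applies exactly as in the finite case.
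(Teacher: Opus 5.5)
Your argument is correct. Its key observation is the same as in the paper: on each level set of the simple mapping $h$ the base curve is a constant $z$, so almost simple approximants in $L^p_{z}(I,N)$ are genuinely simple. The difference is that you reverse the order of the two operations.

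\textbf{The paper's order.} It splits $M$ first into the finitely many level sets $B_j$ of $h$ and notes that $L^p_{\underline{h}}(B_j,L^0(I,N))=L^p_{\underline{h}}(B_j,L^p_{h_j}(I,N))$. On each piece the target therefore carries a finite metric, and the two-step argument of \cref{prop:density_iterated_almost_simple} applies verbatim. Density on $M$ is then recovered through the finite product over $j$ and gluing.

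\textbf{Your order.} You run the first approximation step on all of $M$, with the extended-metric target $(L^0(I,N),d_p)$. You split along the level sets of $h$ only when approximating the finitely many values $\gamma_i$.

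\textbf{What each buys.} The paper's ordering means only the finite-metric version of \cref{prop:density_almost_simple} is ever needed. Yours avoids the product identification and gluing, but you must re-prove that density result for extended metrics.

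Your sketch of that re-proof is sound, with one gap. The nearest-point rule only gives the pointwise bound $d_p(s_n(x),f(x))\le d_p(\underline{h}(x),f(x))$; it does not make $s_n$ almost simple. To fix this, also set $s_n=\underline{h}$ on $\{d_p(f,\underline{h})\le 1/n\}$. The complement of this set has finite $\mu_M$-measure by Chebyshev's inequality. Pointwise convergence and domination both survive this truncation, so dominated convergence still applies.

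Your fallback, restricting to curves at finite $d_p$-distance from the values of $\underline{h}$, would not by itself make the metric finite. When $\mu_I(I)=\infty$, distinct constant curves are at infinite $d_p$-distance from each other. The fallback only works once $M$ is split along the level sets of $h$, which is exactly the paper's decomposition.
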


\begin{proof}
    Let $h\in \mathcal{E}(M,N)$ and $p\in [1,\infty)$. Then, we can assume that there exists a finite indexing set $J$ such that $h(M)=\{y_j\in N:j\in J\}$ and define $B_j\coloneqq h^{-1}(\{y_j\})$, so that $M = \cup_{j\in J} B_j$ and we have $L^p_{\underline{h}}(M,L^0(I,N))\cong \prod_{j\in J} L^p_{\underline{h}}(B_j,L^0(I,N))$ since $J$ is finite (otherwise the gluing mapping may produce non-integrable mappings). However, on each $B_j$, we get that $L^p_{\underline{h}}(B_j,L^0(I,N)) = L^p_{\underline{h}}(B_j,L^p_{h_j}(I,N))$ with $h_j\equiv y_j$ and the density of $E_{\underline{h}}(B_j,E(I,N))\cap L^p_{\underline{h}}(B_j,L^p_{h_j}(I,N))$ follows from the same arguments as the proof of \cref{prop:density_iterated_almost_simple}. Hence, we get that $\prod_{j\in J} E_{\underline{h}}(B_j,E(I,N))\cap L^p_{\underline{h}}(B_j,L^p_{h_j}(I,N))$ is dense in $\prod_{j\in J} L^p_{\underline{h}}(B_j,L^p_{h_j}(I,N))$ by \cite[Corollary 2.3.5]{engelking1977general}. Finally, by noticing that 
    $E_{\underline{h}}(M,E(I,N)) \cap L^p_{\underline{h}}(M,L^0(I,N))\cong \prod_{j\in J} E_{\underline{h}}(B_j,E(I,N)) \cap L^p_{\underline{h}}(B_j,L^p_{h_j}(I,N))$
    since $J$ is finite (otherwise the gluing mapping may produce mappings that are not almost simple), we get that $E_{\underline{h}}(M,E(I,N))\cap L^p_{\underline{h}}(M,L^0(I,N))$ is a dense subspace of  $(L^p_{\underline{h}}(M,L^0(I,N)),D_{p,p})$.
\end{proof}

\section{Reminders}

\subsection{Length spaces}
\label{sec:metric_geometry}

This section introduces the basic notions of metric geometry on length spaces (see \cite[Chapter 2]{burago2022course} for a complete exposition on that matter). 

In a metric space $X$ with metric $d_X: X^2\to [0,\infty)$, the \emph{length} of an absolutely continuous curve (\cref{def:ac_curves}) is defined as

\begin{definition}[Length of a curve]
\label{def:length_curve}
The \emph{length} of a curve $c\in \mathcal{AC}(I,X)$ induced by $d_X$ is defined as 
$$L_X(c) \coloneqq \int_I \lvert c'\rvert_X(t)\dif t.$$
\end{definition}
Taking the infimum over curves connecting two points happens to induce a metric, potentially infinite-valued, when there is no absolutely continuous curve connecting the two points.
\begin{definition}[Intrinsic metric]
\label{def:intrinsic_metric}
The \emph{intrinsic metric}, $\widehat{d}_X: X^2 \to [0,\infty]$, induced by $d_X$ is defined as
\begin{align*}
\widehat{d}_X(x,x') \coloneqq \inf\{L_X(c): c\in \mathcal{AC}(I,X), c(a) = x, c(b) = x'\}.
\end{align*}
\end{definition}
A natural question arising from this construction is whether the intrinsic metric coincides with the original metric. This is exactly the notion of \emph{length space}.
\begin{definition}[Length spaces]
\label{def:length_spaces}
    $X$ is called a \emph{length space} if and only if $d_X = \widehat{d}_X$.
\end{definition} 

\begin{remark}[On showing that a metric space is a length space] Note that, by the triangle inequality, $d_X \leq \widehat{d}_X$. Hence, showing that a metric space is a length space usually comes down to showing that, for all $(x,x')\in X^2$ and all $\varepsilon > 0$ (resp.~$\kappa > 1$), there exists a curve $c\in \mathcal{AC}(I,X)$ connecting $x$ and $x'$ such that $L_X(c) \leq d_X(x,x') +\varepsilon$ (resp.~$L_X(c) \leq \kappa\, d_X(x,x')$). 
\end{remark}

If the infimum is a minimum, $X$ is called a \emph{geodesic space}. 
\begin{definition}[Geodesic spaces]
\label{def:geodesic_space}
    $X$ is called a \emph{geodesic space} if and only if for all $(x,x')\in X^2$ there exists $c\in \mathcal{AC}(I,X)$ connecting $x$ and $x'$ such that $d_X(x,x') = L_X(c)$.
\end{definition}

Curves minimizing the length between their endpoints are called \emph{minimizing geodesics} of $X$ and can be reparametrized as \emph{constant speed geodesic}, that is, as curves $c: I \to X$ satisfying for all $(s,t)\in I^2$ that
$$d_X(c(s),c(t)) = \left\lvert \frac{s - t}{b-a}\right\rvert d_X(c(a),c(b)).$$

Note that a constant speed geodesic is necessarily a minimizing geodesic for its endpoints. Now, let us recall the definition of the \emph{$p$-energy} of a curve.
\begin{definition}[Energy of a curve]
\label{def:energy}
    Let $p\in[1,\infty)$. Then, the \emph{$p$-energy} of a curve $c\in \mathcal{AC}^p(I,X)$ is defined as 
$$E_{p,X}(c) \coloneqq \int_I \lvert c'\rvert_X^p(t)\dif t.$$
The $1$-energy being the length, that is, $L_X(c) = E_{1,X}(c)$. 
\end{definition}

Then, the infimum of the length over $\mathcal{AC}(I,X)$ can be shown to be equal to the infimum over $\mathcal{AC}^p(I,X)$ of the $p$-th root of the $p$-energy for all $p\geq 1$.
\begin{proposition}[Energy minimizing distance]
\label{prop:energy_min_dist}
    Let $p\in [1,\infty)$ and $(x,x')\in X^2$. Then, 
    $$\widehat{d}_X(x,x') = \inf\left\{\lebesgue(I)^{1-1/p}E_{p,X}(c)^{1/p}: c\in \mathcal{AC}^p(I,X),c(a)=x,c(b)=x'\right\}.$$
\end{proposition}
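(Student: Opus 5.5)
We prove the two inequalities separately, using only the definition of $\widehat{d}_X$ (\cref{def:intrinsic_metric}), reparametrization of curves, and Hölder's inequality. Denote by $\Phi(x,x')$ the infimum on the right-hand side.

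\emph{Step 1: $\widehat{d}_X(x,x')\leq \Phi(x,x')$.} Let $c\in\mathcal{AC}^p(I,X)$ connect $x$ to $x'$. Since $I$ has finite measure, $L^p(I)\subset L^1(I)$, so $c\in\mathcal{AC}(I,X)$. Hölder's inequality gives
$$L_X(c)=\int_I\lvert c'\rvert_X(t)\dif t\leq \lebesgue(I)^{1-1/p}E_{p,X}(c)^{1/p}.$$
Taking the infimum over such $c$ yields this inequality.

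\emph{Step 2: $\Phi(x,x')\leq \widehat{d}_X(x,x')$.} If $\widehat{d}_X(x,x')=\infty$ there is nothing to prove. Otherwise, fix $\varepsilon>0$ and take $c\in\mathcal{AC}(I,X)$ connecting $x$ to $x'$ with $L_X(c)\leq \widehat{d}_X(x,x')+\varepsilon$. If $L_X(c)=0$, then $c$ is constant, so $x=x'$; the constant curve then has zero energy and we are done. Otherwise, reparametrize $c$ by constant speed.

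Concretely, set $\ell(t)\coloneqq\int_a^t\lvert c'\rvert_X(u)\dif u$, which is continuous and nondecreasing from $I$ onto $[0,L]$ with $L\coloneqq L_X(c)$. Define
$$\tilde c(s)\coloneqq c\bigl(\ell^{-1}\{a+ (s-a)L/(b-a)\}\bigr)$$
for $s\in I$, using any point of the preimage of the rescaled arclength. This is well defined because $c$ is constant on intervals where $\ell$ is constant: $d_X(c(u),c(v))\leq \ell(v)-\ell(u)=0$ there. By \cref{th:metric_derivative}, $d_X(c(u),c(v))\leq \lvert\ell(v)-\ell(u)\rvert$, hence
$$d_X(\tilde c(s),\tilde c(s'))\leq \frac{L}{b-a}\lvert s-s'\rvert.$$
So $\tilde c$ is Lipschitz, lies in $\mathcal{AC}^p(I,X)$, satisfies $\tilde c(a)=x$ and $\tilde c(b)=x'$, and has $\lvert\tilde c'\rvert_X\leq L/(b-a)$ almost everywhere.

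Therefore
$$\lebesgue(I)^{1-1/p}E_{p,X}(\tilde c)^{1/p}\leq (b-a)^{1-1/p}(b-a)^{1/p}\frac{L}{b-a}=L\leq \widehat{d}_X(x,x')+\varepsilon.$$
Letting $\varepsilon\to0$ concludes.

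The main obstacle is the arclength reparametrization in Step 2: making $\tilde c$ well defined on the flat pieces of $\ell$, and justifying the Lipschitz bound via minimality of the metric derivative. This is standard, for instance \cite[Chapter 2]{burago2022course}, so we would cite it rather than redo it. For $p=1$ the statement reduces to the definition, since $E_{1,X}=L_X$ and the prefactor equals $1$.
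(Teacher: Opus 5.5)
Your proof is correct up to one slip. Since $\ell$ maps $I$ onto $[0,L]$, the value to invert is $(s-a)L/(b-a)$, not $a+(s-a)L/(b-a)$. The latter leaves $[0,L]$ as soon as $a\neq 0$, so $\tilde c$ would be undefined near one of the endpoints. The paper's $s_\varepsilon$ takes values in $[0,\lebesgue(I)]$ rather than $I$, so it carries the same harmless shift. With that fixed, every step goes through.

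Your overall strategy matches the paper's: H\"older gives $\widehat{d}_X$ below the energy infimum, and a reparametrization to (nearly) constant speed gives the converse. The reparametrization itself is implemented differently.

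\emph{The paper's route.} It perturbs the arclength into $s_\varepsilon$ by adding $(t-a)\varepsilon\lebesgue(I)^{-1}$, which makes $s_\varepsilon$ strictly increasing. Its inverse $t_\varepsilon$ is then a homeomorphism, and $\tilde c=c\circ t_\varepsilon$ is an honest reparametrization with $L_X(\tilde c)=L_X(c)$. The bound $\lvert\tilde c'\rvert_X\le\lebesgue(I)^{-1}(L_X(c)+\varepsilon)$ comes from the chain rule $\lvert\tilde c'\rvert_X(s)=\lvert c'\rvert_X(t_\varepsilon(s))\,\dot t_\varepsilon(s)$. That step relies on a.e.\ differentiability of $t_\varepsilon$ with the stated derivative. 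It also needs $s_\varepsilon$ to map the null set where $\lvert c'\rvert_X$ fails to exist onto a null set, which holds because $s_\varepsilon$ is absolutely continuous.

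\emph{Your route.} You use the exact arclength and handle its lack of injectivity by noting that $c$ is constant on each level set of $\ell$. You then read the Lipschitz constant $L/(b-a)$ directly off the control inequality $d_X(c(u),c(v))\le\lvert\ell(v)-\ell(u)\rvert$. This avoids differentiating an inverse function and any chain rule for metric derivatives, and the reparametrization itself loses no $\varepsilon$.

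Your argument is therefore somewhat more elementary. The paper's version has the advantage that $\tilde c$ differs from $c$ by a homeomorphism of $I$, so length invariance is immediate, though your argument never needs that fact.
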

\begin{proof}
    The proof follows the same approach as the proof of (i) in \cite[Theorem~3.7]{trouve1995infinite} or \cite[Proposition~3.8]{pierron2024graded}, which consists in finding a constant-speed approximation for each absolutely continuous curve.
    
    \noindent Let $(x,x') \in X^2$ be such that there exists $c \in \mathcal{AC}(I,X)$ connecting $x$ and $x'$, and $\varepsilon > 0$. Then, define $s_\varepsilon: I\to I$ as $$s_\varepsilon(t) \coloneqq \frac{\int_a^t \lvert c'\rvert_X(u) \dif u + (t-a)\varepsilon\lebesgue(I)^{-1}}{\int_I \lvert c'\rvert_X(u)\dif u + \varepsilon}\lebesgue(I).$$
    Its inverse mapping $t_\varepsilon$ is $\lebesgue$-a.e.~differentiable, with derivative given, for $\lebesgue$-a.e.~$s\in I$, by 
    $$\dot{t}_\varepsilon(s) = \frac{\lebesgue(I)^{-1}(\int_I \lvert c'\rvert_X(u)\dif u + \varepsilon)}{\lvert c'\rvert_X(t_\varepsilon(s)) + \varepsilon\lebesgue(I)^{-1}} > 0.$$
    Now, setting $\tilde{c} \coloneqq c \circ t_\varepsilon$, we have, for $\lebesgue$-a.e. $s\in I$,
    \begin{align*}\lvert \tilde{c}'\rvert_X(s) &= \lim\limits_{t\to s}\dfrac{d_X(c\circ t_\varepsilon(s),c\circ t_\varepsilon(t))}{\lvert t_\varepsilon(s) - t_\varepsilon(t)\rvert}\dfrac{\lvert t_\varepsilon(s) - t_\varepsilon(t)\rvert}{\lvert s - t\rvert } \\
    &= \lvert c'\rvert_X (t_\varepsilon(s)) \,\dot{t}_\varepsilon(s)
    \leq \lebesgue(I)^{-1}\left(\int_I\lvert c'\rvert_X(u) \dif u + \varepsilon\right).
    \end{align*}
    Therefore, for all $p\in [1,\infty)$, $\tilde{c} \in \mathcal{AC}^\infty(I,X) \subset \mathcal{AC}^p(I,X)$ and, by Hölder's inequality,
    $$L_X(c) = L_X(\tilde{c}) \leq \lebesgue(I)^{1-1/p}E_{p,X}(\tilde{c})^{1/p} \leq L_X(c) + \varepsilon.$$
    Since $\varepsilon$ is arbitrary, this yields the conclusion of \cref{prop:energy_min_dist}.
\end{proof}
\begin{remark}[On showing that a metric space is a length space with the energy]
    Thus, showing that $X$ is a length space also comes down to showing that, for all $(x,x')\in X^2$ and all $\varepsilon >0$ (resp.~$\kappa > 1$), there exists a curve $c\in \mathcal{AC}^p(I,X)$ connecting $x$ and $x'$ such that $\lebesgue(I)^{p-1}E_{X,p}(c) \leq d_X(x,x')^p +\varepsilon$ (resp.~$\lebesgue(I)^{p-1}E_{X,p}(c) \leq \kappa\,d_X(x,x')^p$).
\end{remark}

An important consequence of this result is that, when $X$ is a geodesic space, a curve minimizes the $p$-energy, $p\in [1,\infty)$,  if and only if it is a constant speed geodesic.

\subsection{Measurable multifunctions}

We recall below the statement of Aumann's measurable selection theorem \cite[III-\textsection 4-28-2)]{castaing1977measurable} (see \cite[Corollary~18.27]{aliprantis2006infinite} for a more recent exposition).

\begin{theorem}[Aumann's measurable selection theorem]
\label{th:aumman_selection_theorem}
Let $(O,\Sigma_O)$ be a measurable space, $(X,d_X)$ be a complete and separable metric space. Let $\Gamma$ be a multifunction from $O$ to non-empty subsets of $X$, whose graph $G$ belongs to $\Sigma_O\otimes \mathcal{B}(X)$. Then, there exists a $(\hat{\Sigma}_O, \mathcal{B}(X))$-measurable mapping $S$ such that $S(o)\in \Gamma(o)$ for all $o\in O$, where $\hat{\Sigma}_O$ denotes the intersection of all completions of $\Sigma_O$ with respect to each finite measure on $(O,\Sigma_O)$. Such a mapping $S$ is called $(\hat{\Sigma}_O, \mathcal{B}(X))$-measurable selection.
\end{theorem}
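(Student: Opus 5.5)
The plan is to reduce to a classical uniformization statement in Polish spaces, namely the Jankov--von Neumann theorem, through a countable reduction of $\Sigma_O$. Universal measurability is then transferred back along a measurable coding map.

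\emph{Step 1: countable reduction.} The family of sets $E\in\Sigma_O\otimes\mathcal{B}(X)$ that belong to $\Sigma_0\otimes\mathcal{B}(X)$ for some countably generated $\Sigma_0\subset\Sigma_O$ is a $\sigma$-algebra. It contains all measurable rectangles, since $\mathcal{B}(X)$ is countably generated by separability of $X$. Hence there exist $A_1,A_2,\ldots\in\Sigma_O$ with $G\in\sigma(A_n:n\in\mathbb{N})\otimes\mathcal{B}(X)$. Define $\varphi:O\to C\coloneqq\{0,1\}^{\mathbb{N}}$ by $\varphi(o)\coloneqq(\mathds{1}_{A_n}(o))_{n\in\mathbb{N}}$. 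This map is $(\Sigma_O,\mathcal{B}(C))$-measurable, and $\sigma(A_n)=\varphi^{-1}(\mathcal{B}(C))$. By the same monotone class argument, $G=(\varphi\times\operatorname{id}_X)^{-1}(G')$ for some Borel set $G'\subset C\times X$.

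\emph{Step 2: uniformization in the Polish space $C\times X$.} Since $C\times X$ is Polish and $G'$ is Borel, the projection $P\coloneqq\operatorname{proj}_C(G')$ is analytic. By the Jankov--von Neumann theorem there is a map $s':P\to X$ whose graph lies in $G'$ and which is measurable with respect to the $\sigma$-algebra generated by analytic sets. In particular $s'$ is universally measurable. I expect this to be the main obstacle if one does not simply cite it. To sketch it, I would write $G'$ as the continuous image of $\mathbb{N}^{\mathbb{N}}$ via a surjection $F$. For $c\in P$, I would pick the lexicographically least $\omega$ with $\operatorname{proj}_C F(\omega)=c$ and set $s'(c)\coloneqq\operatorname{proj}_X F(\omega)$. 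One then checks that preimages of basic open sets are Boolean combinations of analytic sets. Because each $\Gamma(o)$ is nonempty, every $o\in O$ has some $y$ with $(\varphi(o),y)\in G'$, so $\varphi(O)\subset P$. We may therefore define $S\coloneqq s'\circ\varphi$. Then $(\varphi(o),S(o))\in G'$, that is, $S(o)\in\Gamma(o)$ for all $o\in O$.

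\emph{Step 3: measurability of $S$.} Let $U\subset X$ be open and let $\mu$ be any finite measure on $(O,\Sigma_O)$. The set $B\coloneqq s'^{-1}(U)$ is universally measurable in $C$. Apply this to the finite Borel measure $\varphi_*\mu$, extended by zero outside $P$ (note $P$ is universally measurable). This gives Borel sets $B_1\subset B\subset B_2$ with $\varphi_*\mu(B_2\setminus B_1)=0$. Hence $\varphi^{-1}(B_1)\subset S^{-1}(U)\subset\varphi^{-1}(B_2)$, the outer sets lie in $\Sigma_O$, and $\mu(\varphi^{-1}(B_2\setminus B_1))=0$. So $S^{-1}(U)$ lies in the $\mu$-completion of $\Sigma_O$. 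Since $\mu$ was arbitrary, $S^{-1}(U)\in\hat{\Sigma}_O$, and $S$ is $(\hat{\Sigma}_O,\mathcal{B}(X))$-measurable.
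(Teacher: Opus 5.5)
The paper gives no proof of this statement. It recalls the theorem in the appendix as a known result, citing Castaing--Valadier and Aliprantis--Border, and uses it as a black box in Step 2 of the proof of \cref{prop:length_lebesgue}. Your argument is a correct, self-contained derivation:
\begin{itemize}
\item The countable reduction is sound: the family you describe is a $\sigma$-algebra containing the rectangles.
\item The coding map $\varphi$ into Cantor space gives $G=(\varphi\times\operatorname{id}_X)^{-1}(G')$ with $G'$ Borel.
\item Jankov--von Neumann gives a $\sigma(\Sigma^1_1)$-measurable, hence universally measurable, uniformization $s'$ on the analytic set $P\supset\varphi(O)$.
\item The sandwich $\varphi^{-1}(B_1)\subset S^{-1}(U)\subset\varphi^{-1}(B_2)$ correctly transfers universal measurability to $\hat{\Sigma}_O$.
\end{itemize}

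Compared with simply citing the theorem, your route shows exactly where the hypotheses on $X$ are used. Completeness and separability make $C\times X$ Polish, so $P$ is analytic and uniformization applies. Separability gives $\mathcal{B}(C)\otimes\mathcal{B}(X)=\mathcal{B}(C\times X)$, so $G'$ is Borel there. The price is that you reduce Aumann's theorem to two other nontrivial descriptive-set-theoretic facts: Jankov--von Neumann, and Lusin's theorem that analytic sets are universally measurable.

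Three small inaccuracies, none affecting correctness:
\begin{itemize}
\item A rectangle $A\times B$ lies in $\sigma(\{A\})\otimes\mathcal{B}(X)$ trivially, so countable generation of $\mathcal{B}(X)$ is not needed there. It is needed for the identification $\mathcal{B}(C)\otimes\mathcal{B}(X)=\mathcal{B}(C\times X)$ in the last sentence of your Step~1.
\item In the uniformization sketch there are infinitely many lexicographically smaller $t$ of a given length. The preimages are therefore countable Boolean combinations of analytic sets, which still lie in $\sigma(\Sigma^1_1)$.
\item Nothing needs extending in Step~3, since $\varphi_*\mu$ is already a finite Borel measure on all of $C$.
\end{itemize}
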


\subsection{Differentiable manifolds}

\renewcommand{\thedefinition}{\Alph{section}.\arabic{definition}} 
\renewcommand{\thetheorem}{\Alph{section}.\arabic{theorem}} 

This section briefly recalls the construction of differentiable manifolds modelled on Banach spaces (see \cite[Chapter~II]{lang2012fundamentals} for a complete exposition on that matter). 

Let us first recall a definition of \emph{atlases} by paraphrasing \cite[Chapter~II,\S 1]{lang2012fundamentals}.

\begin{definition}[Atlases]
\label{def:atlases}
Let $r\in \mathbb{N}\cup \{\infty\}$. An \emph{atlas of class $\mathcal{C}^r$} on a set $X$ is a family of pairs $\{(U_\alpha,\varphi_\alpha)\}_{\alpha\in A}$ indexed by a set $A$ satisfying the following conditions:
\begin{enumerate}[label=(\roman*)]
    \item each $U_\alpha$ is a subset of $X$ and the $U_\alpha$'s cover $X$.
    \item each $\varphi_\alpha$ is a bijection from $U_\alpha$ onto an open subset $\varphi_\alpha (U_\alpha)$ of some Banach space $\mathbb{B}_\alpha$ and for all $(\alpha,\alpha')\in A^2$ the set $\varphi_\alpha(U_\alpha \cap U_{\alpha'}) $ is open in $\mathbb{B}_\alpha$.
    \item the mapping $\varphi_\alpha \circ \varphi_{\alpha'}^{-1}: \varphi_{\alpha'}(U_\alpha\cap U_{\alpha'}) \to \varphi_\alpha(U_\alpha\cap U_{\alpha'})$ is a $\mathcal{C}^r$ isomorphism for each pair of indices $(\alpha,\alpha')\in A^2$.
\end{enumerate}
Two atlases $\{(V_\beta, \psi_\beta)\}_{\beta\in B}$ and $\{(U_\alpha,\varphi_\alpha)\}_{\alpha\in A}$ on $X$ indexed by some sets $A$ and $B$ are said to be \emph{compatible} if each pair $(V_\beta, \psi_\beta)$ is such that the mapping $\varphi_\alpha \circ \psi_\beta^{-1}: \psi_\beta(U_\alpha\cap V_\beta)\to \varphi_\alpha(U_\alpha\cap V_\beta)$ is a $\mathcal{C}^r$ isomorphism for all pairs $(U_\alpha,\varphi_\alpha)$ and conversely. This relation defines an equivalence relation between atlases on $X$.
\end{definition}

\begin{remark}[On topologies induced by atlases]
    Note that an atlas uniquely defines a topology in which the charts $\{(U_\alpha,\varphi_\alpha)\}_{\alpha\in A}$ are such that the $U_\alpha$'s are open sets and the $\varphi_\alpha$'s are topological isomorphisms. This induced topology is not required to be Hausdorff.
\end{remark}

 Using the equivalence relation of compatibility between atlases, \emph{Banach manifolds} are then defined as follows:

\begin{definition}[Banach manifolds]
\label{def:banach_manifolds}
    Let $r\in \mathbb{N}\cup \{\infty\}$. A \emph{$\mathcal{C}^r$ Banach manifold} is a set $X$ together with an equivalence class of atlases of class $\mathcal{C}^r$ (\cref{def:atlases}) on this set.
\end{definition}

\subsection{Partitions of unity}

On topological spaces, one is often interested in using \emph{partitions of unity} to define mappings locally and extend them to the whole space. We thus recall a definition inspired by the exposition in \cite[Chapter~5, p.~300]{engelking1977general}.

\begin{definition}[Partitions of unity]
\label{def:partition_unity}
    Suppose $N$ is a topological space. Then,
    \begin{enumerate}[label=(\roman*)]
        \item a family $\{\rho_s\}_{s\in S}$, indexed by a set $S$, of continuous mappings from $N$ to $[0,1]$ is called a \emph{partition of unity} if $\sum_{s\in S} \rho_s(y) = 1$ for all $y\in N$ and denote by $\operatorname{carr}(\rho_s)\coloneqq \{x\in N: \rho_s(x) > 0\}$ the \emph{carrier} of $\rho_s$.
        \item a partition of unity $\{\rho_s\}_{s\in S}$ is called \emph{locally finite} if for all $y\in N$ there exist a neighborhood $U_y\subset N$ and  a finite set $S_y \subset S$ such that for all $y'\in U_y$ we have $\rho_s(y')=0$ for all $s\in S\setminus S_y$ and $\sum_{s\in S_y} \rho_s(y')=1$. This is equivalent to saying that the cover $\{\operatorname{carr}(\rho_s)\}_{s\in S}$ of $N$ is \emph{locally finite}, that is, for all $y\in N$ there exists a neighborhood $U_y\subset N$ such that $\{s\in S:\operatorname{carr}(\rho_s)\cap U_y \neq \emptyset\}$ is finite.
        \item a partition of unity $\{\rho_s\}_{s\in S}$ is called \emph{subordinated} to an open cover $\{U_\alpha\}_{\alpha\in A}$ of $N$ indexed by some set $A$ if $\{\operatorname{carr}(\rho_s)\}_{s\in S}$ is a \emph{refinement} of $\{U_\alpha\}_{\alpha\in A}$, that is, for all $s \in S$ there exists $\alpha \in A$ such that $\operatorname{carr}(\rho_s)\subset U_\alpha$.
    \end{enumerate}
\end{definition}
\end{document}